\def\be{\mathbf{e}}
\def\R{\mathbb{R}}
\def\N{\mathbb{N}}
\def\Z{\mathbb{Z}}
\def\mx{\mathbf{x}}
\def\mX{\mathbf{X}}
\def\ttv{\mathtt{v}}
\def\tta{\mathtt{a}}
\def\ttb{\mathtt{b}}
\def\match{\text{match}}
\def\mod{\text{mod}}
\def\dsp{\displaystyle}
\newtheorem{hypo}{Assumption}[section]
\newtheorem{theo}[hypo]{Theorem}
\newtheorem{lema}[hypo]{Lemma}
\newtheorem{prop}[hypo]{Proposition}
\newtheorem{rem}[hypo]{Remark}
\newcommand{\bn}{{\mathbf n}}
\newcommand{\bx}{{\mathbf x}}
\newcommand{\bX}{{\mathbf X}}
\newcommand{\hole}{\text{\rm hole}}
\newcommand{\IR}{{\mathbb R}}
\newcommand{\IN}{{\mathbb N}}
\newcommand{\IZ}{{\mathbb Z}}
\newcommand{\macro}{\text{macro}}
\newcommand\xnorm[2]{\left\lVert #1 \right\rVert_{#2}}
\newcommand{\OmegaTop}   {\Omega_{\mathrm{T}}}
\newcommand{\OmegaBottom}{\Omega_{\mathrm{B}}}
\newcommand{\LTop}       {L'}
\newcommand{\LBottom}    {L }
\newcommand{\HTop}       {H_{\mathrm{T}}}
\newcommand{\HBottom}    {H_{\mathrm{B}}}
\newcommand{\eOne}       {\mathrm{e}_1}
\newcommand{\eTwo}       {\mathrm{e}_2}
\newcommand{\Hone}       {\mathrm{H}^1}
\newcommand{\HoneGammaD} {\Hone_{\Gamma_D}}
\newcommand{\HonehalfG}   {\mathrm{H}^{\nicefrac12}     (\Gamma)}
\newcommand{\HonehalfzzG} {\mathrm{H}^{\nicefrac12}_{00}(\Gamma)}
\newcommand{\eg}{\textit{e.\,g\mbox{.}}\xspace}
\newcommand{\ie}{\textit{i.\,e\mbox{.}}\xspace}
\newcommand{\cf}{\textrm{cf.}\xspace}
\definecolor{mydarkcyan}{rgb}{0,0.5,0.5}
\definecolor{mygreen}{RGB}{0,204,255}
\newcommand{\bds}[1]{{#1}}
\begin{document}
\begin{center}
{\Large{When a thin periodic layer meets corners: asymptotic analysis of a singular Poisson problem}}\\[0.5cm]

{\large{B\'erang\`ere Delourme$^{a,}$\footnote{Part of this work was
      carried out where the author was on research leave at Laboratoire
    POEMS, INRIA-Saclay, ENSTA, UMR CNRS 2706, France}, Kersten Schmidt$^{b,c}$, Adrien Semin$^c$}}\\[0.5cm]

{\small $a$: Universit\'e Paris 13, Sorbone Paris Cit\'e, LAGA, UMR
  7539, 93430 Villetaneuse, France} \\
{\small $b$: Research center Matheon, 10623 Berlin, Germany}\\
{\small $c$: Institut f\"ur Mathematik, Technische Universit\"at Berlin, 10623 Berlin, Germany} \\
\end{center}

\noindent \textbf{Abstract}\\
\noindent The present work deals with the resolution of the Poisson equation in
a bounded domain made of a thin and periodic layer of finite
length placed into a homogeneous medium. %
We provide and justify a high order asymptotic expansion which takes into
account the boundary layer effect occurring in the vicinity of
the periodic layer as well as the corner singularities appearing in
the neighborhood of the extremities of the layer. %
Our approach combines 
the method of matched
asymptotic expansions and the method of periodic surface homogenization, and a
complete justification is included in the paper or its appendix. \\

\noindent \textbf{Keywords}\\
asymptotic analysis, periodic surface 
homogenization, singular asymptotic expansions.


\tableofcontents

\section*{Introduction}

The present work is dedicated  to the construction of a high order
asymptotic expansion of the solution to a Poisson problem posed in a
polygonal domain which excludes a set of similar small obstacles equi-spaced 
along the line between two re-entrant corners.
The distance between two consecutive obstacles, which appear to be holes in the domain,
and the diameter of the obstacles are of the same order of
magnitude $\delta$, which is supposed to be small compared to the 
dimensions of the domain. 
The presence of this thin periodic layer of holes is responsible for
the appearance of two different kinds of singular
behaviors. First, a highly oscillatory boundary layer appears in the vicinity of the
periodic layer. Strongly localized, it decays exponentially fast as the distance to the
periodic layer increases. Additionaly, since the thin periodic layer has a finite
length and ends in corners of the boundary, corners singularities come up in the neighborhood of its
extremities. The objective of this work is to provide a sophisticated asymptotic
expansion that takes into account these two types of singular behaviors. \\

\noindent The boundary layer effect occurring in
the vicinity of the periodic layer is well-known. It can be described
using a two-scale asymptotic expansion (inspired by the periodic homogenization
theory) that
superposes slowly varying macroscopic terms and periodic
correctors that have a two-scale behavior: these functions are the
combination of highly
oscillatory and decaying functions (periodic of period $\delta$ with
respect to the
tangential direction of the periodic interface and exponentially
decaying with respect to $d/\delta$, $d$ denoting the distance to the
periodic interface) multiplied by slowly varying functions. This
boundary layer effect has been widely investigated since the work of
Sanchez-Palencia \cite{RapportSanchezPalencia,SanchezPalencia},
Achdou~\cite{Achdou,AchdouCR} and
Artola-Cessenat~\cite{ArtolaCessenat, ArtolaCessenat2}. In particular,
high order asymptotics have been derived in
\cite{AchdouPironneauValentin,Madureira,CiupercaJaiPoignard,BreschMilisic2010} for the
Laplace equation and in   
\cite{poirier2006impedance,Poirier} for the Helmholtz
equation.  \\

\noindent On the other hand, corner singularities appearing when
dealing with singularly perturbed boundaries have also been widely investigated. Among the
numerous examples of such singularly perturbed problems, we can mention 
the cases of small inclusions~(see \cite[chapter 2]{LivreNazarov1}
for the case of one inclusion and
\cite{MR2573145} for the case of several inclusions), perturbed
corners~\cite{DaugeTordeuxVialVersionLongue}, 
propagation of waves in  
thin slots~\cite{fente1,fente2}, the diffraction by wires~\cite{XavierArticle}, or  the mathematical investigation of
patched antennas~\cite{BendaliMakhloufTordeux}.  
Again, this 
effect can be depicted using two-scale asymptotic
expansion methods that are the method of multiscale expansion 
(sometimes called compound method) and the method of matched asymptotic expansions
(see \cite{VanDyke,LivreNazarov1,Ilin}). 
Following these
methods, the solution of the perturbed problem may be seen as the
superposition of slowly varying macroscopic terms that do not see directly the
perturbation and microscopic terms that take into account the local
perturbation.  \\

\noindent Recently, Vial and co-authors~\cite{VialThese,CalozVial} investigated a Poisson problem 
in a polygonal domain surrounded by a thin and homogeneous layer, while Nazarov~\cite{Nazarov400}
studied the resolution of a general elliptic problem in a polygonal domain with periodically changing boundary.
In their studies they have combined the two different kinds of asymptotic expansions 
mentioned above in order to deal with both corner singularities and the boundary layer effect. %
Based on the multiscale method, the authors of~\cite{VialThese,CalozVial} constructed and justified a complete
asymptotic expansion for the case of the homogeneous layer. %
For the periodic boundary in~\cite{Nazarov400} the first terms of the asymptotic expansion have been constructed and
error estimates have been carried out. 
This asymptotic expansion relies on a sophisticated analysis of solution behavior at infinity for the Poisson problem in an infinite cone with oscillating boundary with Dirichlet boundary conditions by Nazarov~\cite{Nazarov143},
where he published an analysis for Neumann boundary conditions in~\cite{Nazarov205}.
In the present paper, we are going to extend the work for the homogeneous layer and the periodic boundary 
by constructing explicitely and rigorously justifying asymptotic
expansion for the above mentioned periodic layer transmission problem to any order (with Neumann boundary conditions on the perforations of the layer).\\


\section{Description of the problem and main results}
\label{sec:intr-sett-probl}

\subsection{Description of the problem}

In this section we are going to define the domain of interest $\Omega^\delta \in \IR^2$, its limit when $\delta\to0$ and the problem considered. 
With the coordinates $\mathbf{x} = (x_1, x_2)$ of $\IR^2$ let $\OmegaBottom$  and  $\OmegaTop$ be the two adjacent rectangular domains defined by
\begin{align*}
\OmegaBottom &= (-L,L)\times(-H_B,0)\ , &
\OmegaTop &= 
(-\LTop, \LTop) \times (0, \HTop)\ ,
\end{align*}
where $\LTop > \LBottom$, $\HBottom$ and $\HTop$ are positive numbers.
We denote by $\Gamma$ the common interface of $\OmegaBottom$ and $\OmegaTop$, \ie,
\begin{align*}
\overline{\Gamma} = \overline{\partial \OmegaBottom} \cap \overline{\partial \OmegaTop} 
\quad \text{and} \quad 
\Gamma = (-\LBottom, \LBottom) \times \{ 0 \}.
\end{align*}
and we consider the (non-convex) polygonal domain (see Fig.~\ref{fig:Omega})
\begin{align*}
  \Omega = \OmegaBottom \cup \OmegaTop \cup \Gamma\ ,
\end{align*}
which has
two reentrant corners at $\mathbf{x}_{O}^\pm = ( \pm\LBottom,0)$ with both an angle of $\frac{3\pi}{2}$.

\begin{figure}[htbp]
        \centering
        \begin{subfigure}[b]{0.42\textwidth}
                \includegraphics[width=\textwidth]{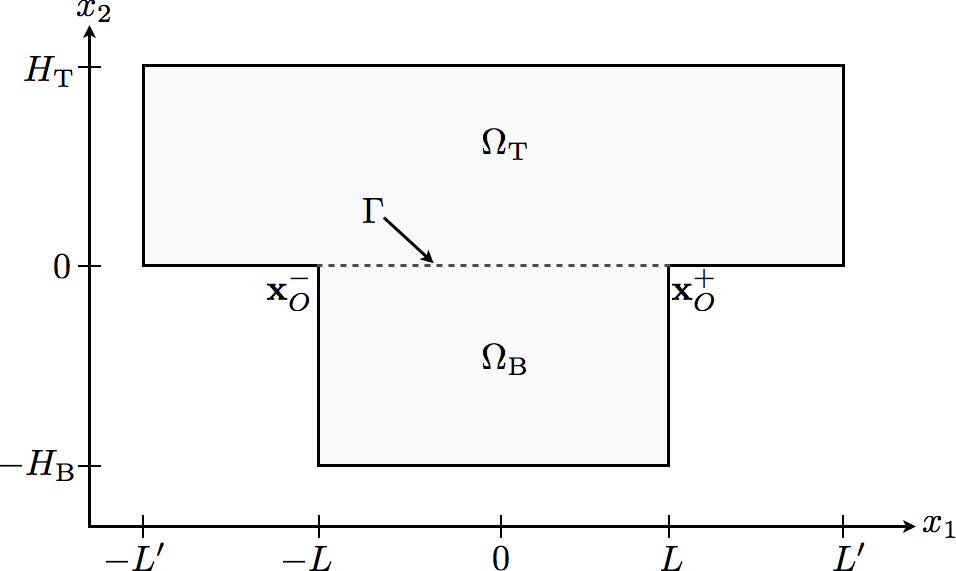}
                \caption{The domain $\Omega = \OmegaTop \cap \OmegaBottom
                  \cap \Gamma$.}
                \label{fig:Omega}
        \end{subfigure}
        \qquad 
        \begin{subfigure}[b]{0.42\textwidth}
                \includegraphics[width=\textwidth]{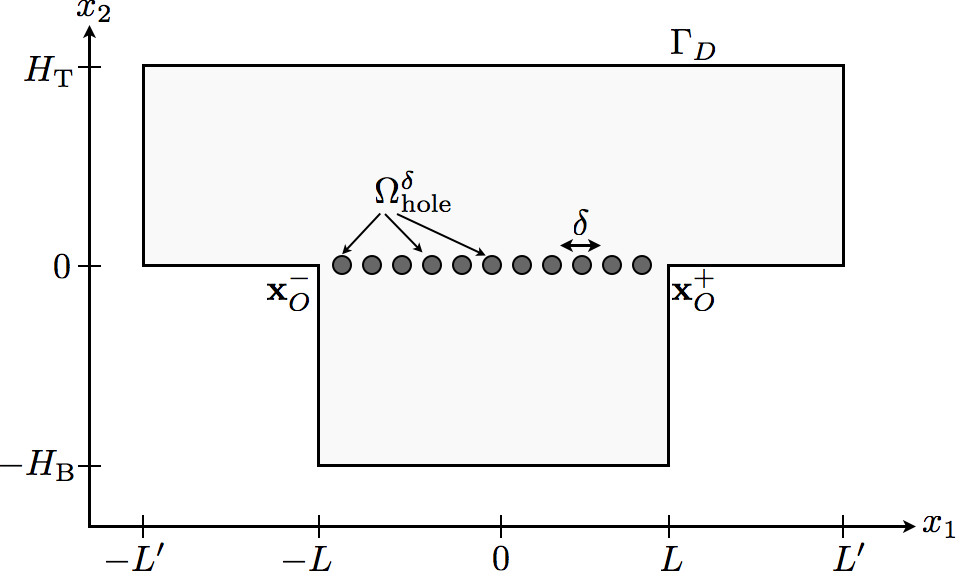}
                \caption{The domain of interest $\Omega^\delta = \Omega \backslash \overline{\Omega^\delta_\hole}$.}
                \label{fig:OmegaDelta}
        \end{subfigure}
        
        \caption{Illustration of the polygonal domain $\Omega$ and the domain of interest $\Omega^\delta$.}\label{fig:dessinsDomaines}
\end{figure}

\noindent Besides, let $\widehat{\Omega}_{\hole} \in \IR^2$ be a {\emph smooth}
canonical bounded open set
(not necessarily connected) strictly included in the domain $(0,1)
\times (-1,1)$. Then, let $N^\ast := \N \setminus \{0 \}$ denote the set of positive integers
and let
 $\delta$ be a positive real number (that is
supposed to be small) such that 
\begin{equation}
\frac{2\LBottom}{\delta} = q \in \N^\ast.
\end{equation}

\noindent %
Now, let $\Omega^\delta_{\hole}$ be 
a thin (periodic) layer consisting 
of $q$ equi-spaced similar obstacles
which can be defined 
by scaling and shifting the canonical obstacle $\widehat{\Omega}_{\hole}$
(see Fig.~\ref{fig:OmegaDelta}):
\begin{equation}
  \label{eq:layer_Omega_hole}
  \Omega^\delta_{\hole} = \bigcup_{\ell = 1}^{q} 
  \left\{ -\LBottom \eOne + \delta \{ \widehat{\Omega}_{\hole} +
        (\ell-1) \eOne \} \,  \right\}.
\end{equation}
Here, $\eOne$ and $\eTwo$ denote the unit vectors of $\IR^2$ and $\delta$ is assumed to be smaller
than $\HTop$ and $\HBottom$ such that $\Omega^\delta_\hole$ does not touch the top or bottom boundaries of $\Omega$. %
Finally, we define our domain of interest as
\begin{align*}
  \Omega^\delta = (\OmegaBottom \cup \OmegaTop \cup \Gamma) \backslash \overline{\Omega^\delta_{\hole}}.
\end{align*}
Its boundary $\partial\Omega^\delta$ consists of the boundary of the set of holes $\Gamma^\delta = \partial\Omega^\delta_{\hole}$
and $\Gamma_D = \partial \Omega^\delta \setminus \Gamma^\delta = \partial \Omega$, the boundary of $\Omega$.
Here and in what follows,  we denote by $\bn$ the outward unit normal
vector of $\partial\Omega^\delta$. %
Note, that in the limit $\delta\to 0$ the repetition of holes degenerates to the interface $\Gamma$, 
the domain $\Omega^\delta$ to the domain $\OmegaTop \cup \OmegaBottom$ and
its boundary $\partial\Omega^\delta$ to $\partial\Omega \cup \Gamma$.  \\


\noindent The domain $\Omega^\delta$  being defined, we can introduce the
problem to be considered in this article: Seek $u^\delta$
solution to 
\begin{equation}
  \label{eq:perturbed_laplace}
  \left\lbrace\quad
    \begin{aligned}
      - \Delta u^\delta &= f, &\quad&
      \text{in }
      \Omega^\delta\ ,\\
      \nabla u^\delta \cdot \bn &= 0, &&\text{on } \Gamma^\delta ,\\
      u^\delta &= 0, &&\text{on }\Gamma_D\ ,
    \end{aligned}
  \right.
\end{equation}
where $f \in L^2(\Omega^\delta)$. It is natural to search for $u^\delta \in \HoneGammaD(\Omega^\delta)$
where 
\begin{align}
\label{eq:HoneGammaD}
\HoneGammaD(\Omega^\delta) = \left\{ u \in H^1(\Omega^\delta) \; \mbox{such that}
  \; u =0 \; \mbox{on} \; \Gamma_D \right\}.
\end{align}
The well-posedness of problem (\ref{eq:perturbed_laplace}) in
$\HoneGammaD(\Omega^\delta)$ directly
follows from Lax-Milgram theorem:
\begin{prop}[Existence, uniqueness and stability]
  \label{prop:existence_uniqueness_u_delta}
  Let $f \in L^2(\Omega^\delta)$.  Then, for any $\delta>0$ there exists a
  unique solution $u^ \delta$ of problem~\eqref{eq:perturbed_laplace} in
  $\HoneGammaD(\Omega^\delta)$, and with a constant $C$ (independent of~$\delta$) it holds
  \begin{equation}
    \label{eq:prop_norm_u_norm_f}
    \xnorm{u^\delta}{\Hone(\Omega^\delta)} \leqslant
    C\,\xnorm{f}{\mathrm{L}^2(\Omega^\delta)}\, .
  \end{equation}
\end{prop}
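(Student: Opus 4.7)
The plan is to apply the Lax--Milgram theorem to the standard weak formulation: find $u^\delta \in \HoneGammaD(\Omega^\delta)$ such that
\begin{equation*}
  \int_{\Omega^\delta} \nabla u^\delta \cdot \nabla v \, d\mathbf{x} = \int_{\Omega^\delta} f\,v\, d\mathbf{x}
  \qquad \text{for all } v \in \HoneGammaD(\Omega^\delta).
\end{equation*}
Continuity of both the bilinear and linear forms on $\HoneGammaD(\Omega^\delta)$ is immediate from Cauchy--Schwarz. Once existence, uniqueness, and the energy estimate $\xnorm{\nabla u^\delta}{\mathrm{L}^2(\Omega^\delta)} \leqslant C\,\xnorm{f}{\mathrm{L}^2(\Omega^\delta)}$ are known, the bound~\eqref{eq:prop_norm_u_norm_f} on the full $\Hone$-norm follows from the Poincar\'e inequality below, so the only nontrivial point is coercivity with a constant uniform in~$\delta$.

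The heart of the matter is therefore a \emph{uniform Poincar\'e inequality}: there exists $C>0$ independent of $\delta$ such that
\begin{equation*}
  \xnorm{v}{\mathrm{L}^2(\Omega^\delta)} \leqslant C\,\xnorm{\nabla v}{\mathrm{L}^2(\Omega^\delta)}
  \qquad \text{for every } v \in \HoneGammaD(\Omega^\delta).
\end{equation*}
One cannot simply extend $v$ by zero across the holes, because $v$ carries only a Neumann condition on~$\Gamma^\delta$ and its trace there does not vanish. The plan is instead to build an extension operator $P_\delta : H^1(\Omega^\delta) \to H^1(\Omega)$ satisfying $(P_\delta v)\big|_{\Omega^\delta} = v$, $P_\delta v = 0$ on $\Gamma_D$, and
\begin{equation*}
\xnorm{\nabla P_\delta v}{\mathrm{L}^2(\Omega)} \leqslant K\,\xnorm{\nabla v}{\mathrm{L}^2(\Omega^\delta)},
\end{equation*}
with $K$ independent of $\delta$. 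Such a $P_\delta$ is obtained cell-by-cell: on each scaled reference cell $-\LBottom\eOne + \delta\bigl((\ell-1)\eOne + (0,1)\times(-1,1)\bigr)$, one uses a single extension operator on the canonical geometry (the smooth, bounded set $(0,1)\times(-1,1)\setminus\overline{\widehat{\Omega}_{\hole}}$ satisfies the cone condition, so an operator with the required gradient bound exists by the classical Stein/Calder\'on construction), and transports it by the affine scaling $\mathbf{x}\mapsto -\LBottom\eOne + \delta(\mathbf{x}+(\ell-1)\eOne)$. The scaling is designed precisely so that the $L^2$-norm of the gradient is invariant, which is why $K$ turns out to be $\delta$-independent. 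Outside the cell strip the operator is taken to be the identity.

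Once $P_\delta$ is in hand, the uniform Poincar\'e inequality on $\Omega^\delta$ reduces to the standard Poincar\'e inequality on the fixed polygon $\Omega$ applied to $P_\delta v \in \HoneGammaD(\Omega)$, giving
\begin{equation*}
  \xnorm{v}{\mathrm{L}^2(\Omega^\delta)} \leqslant \xnorm{P_\delta v}{\mathrm{L}^2(\Omega)} \leqslant C_\Omega\,\xnorm{\nabla P_\delta v}{\mathrm{L}^2(\Omega)} \leqslant C_\Omega\, K\,\xnorm{\nabla v}{\mathrm{L}^2(\Omega^\delta)}.
\end{equation*}
Coercivity of the bilinear form on $\HoneGammaD(\Omega^\delta)$ with a $\delta$-independent constant then follows, and Lax--Milgram closes the argument. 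The main obstacle, and the only step that requires real care, is the uniformity of the constant $K$ in the extension operator; it hinges on the self-similar rescaling of the reference cell and on the fact that the obstacle $\widehat{\Omega}_{\hole}$ is smooth and strictly interior to $(0,1)\times(-1,1)$, so that the same reference extension can be used simultaneously for all $\ell$ and all $\delta$.
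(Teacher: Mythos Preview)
Your approach is correct and coincides with the paper's: the paper simply states that well-posedness ``directly follows from Lax--Milgram theorem'' without further detail. You have gone further and correctly identified the only nontrivial point---the $\delta$-uniformity of the Poincar\'e constant---and your cell-by-cell extension argument is a standard and valid way to obtain it.
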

\noindent The objective of this paper is to describe the behavior of $u^\delta$
as $\delta$ tends to $0$. For the sake of simplicity, we shall assume
that $f$ has a compact support in a subset of $\OmegaTop$
with distance $\delta_0 > 0$ to $\Gamma$. %
Our work relies on a construction of an asymptotic expansion of $u^\delta$ as $\delta$
tends to $0$. %

\begin{rem}
  \label{rem:geometrical}
  The construction is for simplicity for the specific geometrical setting,
  where $\Gamma$ is a straight line ending in two corners of the polygonal
  boundary $\partial\Omega$, where the angles between $\Gamma$ and
  $\partial\Omega$ are both ends at angles $\frac{\pi}{2}$ or $\pi$,
  respectively. Nevertheless, the study may be extended to a polygon
  $\Omega$ of different angles.
\end{rem}
\begin{rem} It is worth noting that the choice of the boundary condition
  imposed on the small obstacles $\Gamma^\delta$ constituting the periodic layer, here homogeneous Neumann
  boundary conditions, has a strong impact on the asymptotic expansion.
  A homogeneous Dirchlet condition would yield to a completely different asymptotic expansion (see for instance
  Appendix A in \cite{DelourmeThese}, or \cite{TrefethenChapmanHewett}).
\end{rem}

\begin{rem} The smoothness of $\widehat{\Omega}_{\hole}$ is not required
  for the existence of $u^\delta \in H^1(\Omega^\delta)$. The
  well-posedness result remains valid if $\widehat{\Omega}_{\hole}$ is a
  Lipschitz domain. However, we use
this assumption in the forthcoming analysis
  (In particular in Proposition~\ref{propositionAsymptoticNearField} and Section~\ref{SectionErrorAnalysis}).   
  %
\end{rem}

\subsection{Ansatz of the asymptotic expansion}

As mentioned in the introduction, due to the periodic layer, it seems not possible to write a simple
asymptotic expansion valid in the whole domain. We have to take into account both the boundary
layer effect in the vicinity of $\Gamma$ and the additional corner
singularities appearing in the neighborhood of the two reentrant
corners. To do so, we shall distinguish a {\em far field area} located ' far'
from the reentrant corners $\mx_{O}^ \pm$ and two {\em near
field zones} located in the vicinity of them (see Fig.~\ref{DessinAsymptotic}). 
\begin{figure}[h!]
  \centering
      \includegraphics[width=0.6\textwidth]{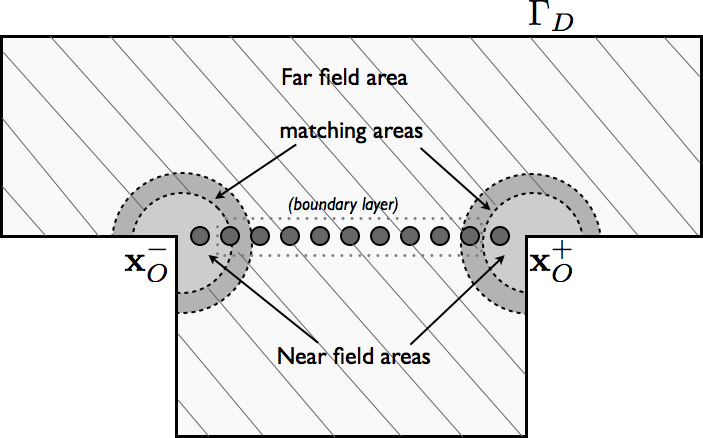}
  \caption{Schematic representation of the overlapping subdomains for the asymptotic expansion.
  The far field area ({\em hatched}) away from the corners
  $\mx_{O}^ \pm$ is overlapping the near field area ({\em light grey}) in the matching zone ({\em dark grey}).}
\label{DessinAsymptotic}
\end{figure}

\subsubsection{Far field expansion} 

\noindent Far from the two corners $\mx_{O}^ \pm$ (hatched area in~Fig.~\ref{DessinAsymptotic}), we shall see that
$u^\delta$ is the superposition of a macroscopic part (that is not
oscillatory) and a boundary layer localized in the neighborhood of
the thin periodic layer. More precisely, we choose 
the following ansatz:   
\begin{equation}\label{FFExpansion}
u^\delta (\mathbf{x}) =  \sum_{(n,q) \in \N^ 2} \delta^{\frac{2}{3} n
   + q} \, u_{FF,n,q}^\delta (\mathbf{x}),
\end{equation}
 where $\mathbf{x} = (x_1, x_2)$, and for $(n,q) \in \N^2$
\begin{equation} \label{def_uFFnq}
u_{FF,n,q}^\delta (\mathbf{x}) =  
\begin{cases}
u_{n,q}^\delta (\mathbf{x}) & \mbox{if} \; |x_1| > \LBottom, \\[1ex]
\chi\left( \frac{x_2}{\delta} \right) u_{n,q}^\delta (\mathbf{x}) +
\Pi_{n,q}^\delta(x_1, \frac{\mathbf{x}}{\delta}) &
\mbox{if} \; |x_1|< \LBottom.  
\end{cases} 
\end{equation}
Here $\chi : \R \mapsto (0,1)$ denotes a smooth cut-off function satisfying 
\begin{equation}\label{defchi}
\chi(t) = \begin{cases}
1 & \mbox{if} \; |t| >2, \\[1ex]
0 & \mbox{if} \; |t|<1.
\end{cases}
\end{equation}
The {\em macroscopic terms} $u_{n,q}^\delta$ are defined in the limit domain
$\OmegaTop \cup \OmegaBottom$. A priori, they are not
continuous across~$\Gamma$.  As for the {\em boundary layer correctors} 
$\Pi_{n,q}^\delta(x_1,X_1,X_2)$ (also sometimes denoted {\em periodic correctors}), and as usual in the periodic
homogenization theory, there are $1$-periodic with respect to
the scaled tangential variable $X_1$. Consequently, they are defined in $(-\LBottom ,
\LBottom) \times \mathcal{B}$, where $\mathcal{B}$ is the infinite
periodicity cell (see Fig.~\ref{fig:PeriodicityCell}): 
\begin{equation}\label{PeriodicityCellDef}
\mathcal{B} = \left\{ (0,1) \times \R \right\} \setminus \overline{\widehat{\Omega}_{\hole}}.
\end{equation}
Moreover, the periodic correctors are super-algebraically decaying as the scaled variable
$X_2$ tends to $\pm \infty$ (they decay faster than any power of $X_2$), more precisely, for any $(k, \ell) \in \N^2$,  
\begin{equation}
\label{eq:exponential_decaying}
\lim_{|X_2| \rightarrow + \infty}%
X_2^k \partial_{X_2}^\ell
\Pi_{n,q}^\delta =0.
\end{equation}
The macroscopic terms as well as the boundary layer corrector terms
might have a polynomial dependence with respect to $\ln \delta$: %
there is $N(n,q)\in \N$ such that
\begin{equation*}
u_{n,q}^\delta =\sum_{s=0}^{N(n,q)} (\ln \delta)^s \, u_{n,q,s}, \quad \mbox{and} \quad \Pi_{n,q}^\delta =\sum_{s=0}^{N(n,q)} (\ln \delta)^s \Pi_{n,q,s},
\end{equation*}
where $u_{n,q,s}$ and $\Pi_{n,q,s}$ do not depend on $\delta$.
\begin{rem}Here
and in what follows, although it might be surprising at first glance, we call far
field expansion the expansion~\eqref{FFExpansion}, \ie, the  superposition of the
macroscopic terms and the boundary layer correctors.  Besides, it should also
be noted that, for any $k \in \N$, we consider $\delta^{\frac{2(n+3k)}{3}+q}$ and
    $\delta^{\frac{2n}{3}+(q+2k)}$ as different scales as they would be different powers of $\delta$. In
    fact, we shall see that $n$ and $q$ play a 
    different role in the asymptotic procedure.  Finally, following
    Remark~\ref{rem:geometrical},  the consideration of the more
    general case of two angles of
    measure $\alpha$,  would yield to an expansion
    of the form~ (\ref{FFExpansion}) substituting $\delta^{\frac{2 n}{3}+q}$
    for $\delta^{\frac{\pi n}{\alpha}+q}$ (see~\cite{CalozVial}).
  \end{rem}

\subsubsection{Near field expansions}
In the vicinity of the two
corners $\mx_{O}^ \pm$ (light grey areas
in~Fig.~\ref{DessinAsymptotic}), the solution varies rapidly in all directions. %
Therefore, we shall see that 
\begin{equation}\label{NFExpansion}
u^\delta(\mathbf{x}) =   \sum_{(n,q) \in \N^2} \delta^{\frac{2n}{3} +
  q  \,}U_{n,q, \pm}^\delta\left(\frac{\mathbf{x}-\mathbf{x}_{O}^\pm}{\delta}\right)\, ,
\end{equation}
for some near field terms $U_{n,q,\pm}^\delta$ defined in the fixed unbounded domains
\begin{equation}\label{definitionOmegaHatpm}
\widehat{\Omega}^- =\mathcal{K}^- \setminus  \bigcup_{\ell \in \N}
\left\{ \overline{\widehat{\Omega}_{\hole}} + \ell \be_{1} \right\}, \quad  \widehat{\Omega}^+ =\mathcal{K}^+ \setminus  \bigcup_{\ell \in \N^\ast}
\left\{  \overline{\widehat{\Omega}_{\hole}} - \ell \be_{1} \right\}
\end{equation}
shown in Figure~\ref{fig:HatOmegaMoins} and \ref{fig:HatOmegaPlus},
where $\mathcal{K}^\pm$ are the unbounded angular domains 
\begin{align*}
 \mathcal{K}^\pm &= \left\{ \mathbf{X} = R^\pm  ( \cos \theta^\pm, \sin \theta^\pm),  R^\pm\in \R^\ast_+, \theta^\pm \in I^\pm\right\} \in \R^2
\end{align*}
of angular sectors $I^+ = (0, \frac{3\pi}{2})$ and $I^- = (-\frac{\pi}{2}, \pi)$. 
If the domain $\widehat{\Omega}_{\hole}$ is symmetric with respect to the
axis $X_1 = 1/2$, then the domain $\widehat{\Omega}^-$ is nothing but the domain
$\widehat{\Omega}^+$ mirrored with respect to the axis $X_1=0$.
However, this is not the case in general. 
Similarly to the far field terms the
 near field terms might also have a polynomial dependence with respect to $\ln
\delta$, \ie, for all $(n,q) \in \N^2$,  there is $N(n,q)\in \N$ such that
\begin{equation*}
U_{n,q,\pm}^\delta=\sum_{s=0}^{N(n,q)} (\ln \delta)^s U_{n,q,\pm,s}\ ,
\end{equation*}
where the functions $U_{n,q,\pm,s}$ do not depend on $\delta$.
\begin{figure}[htbp]
        \centering
        \begin{subfigure}[b]{0.22\textwidth}
          \centering
                {\includegraphics[height=3.7cm]{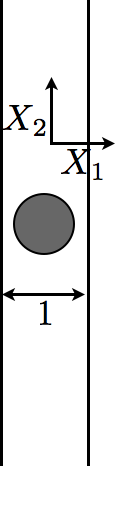}}
                \caption{The periodicity cell $\mathcal{B}$.}
                \label{fig:PeriodicityCell}
        \end{subfigure}
        \begin{subfigure}[b]{0.38\textwidth}
          \centering
                {\includegraphics[height=3.7cm]{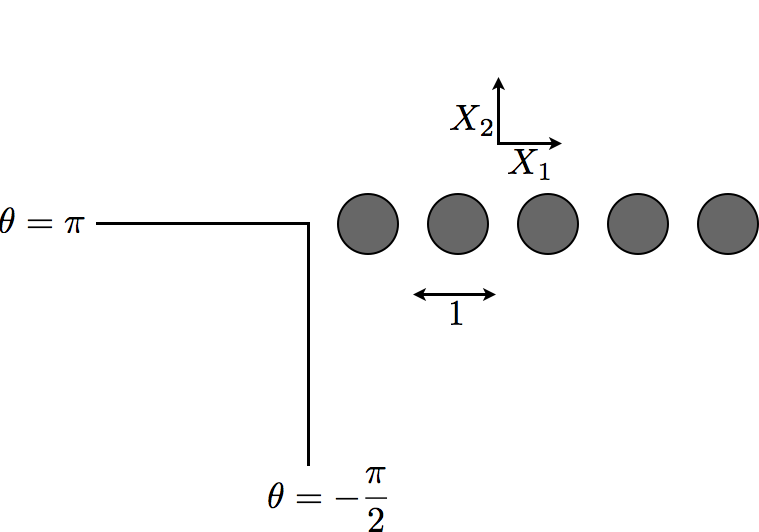}}
                \caption{The domain $\widehat{\Omega}^-$.}
                \label{fig:HatOmegaMoins}
        \end{subfigure}
        \begin{subfigure}[b]{0.38\textwidth}
          \centering
                {\includegraphics[height=3.7cm]{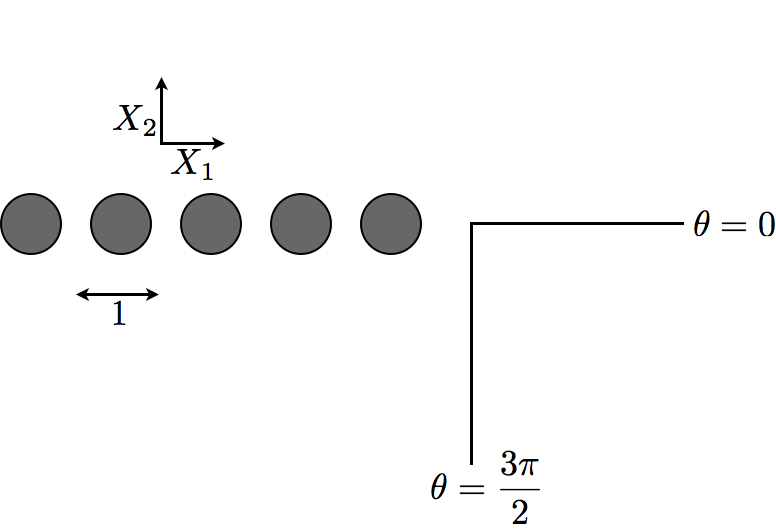}}
                \caption{The domain $\widehat{\Omega}^+$.}
                \label{fig:HatOmegaPlus}
        \end{subfigure}
        \caption{The periodicity cell $\mathcal{B}$ and the normalized
          domains $\widehat{\Omega}^\pm$.}\label{fig:dessinsDomaines2}
\end{figure}

\subsubsection{Matching principle} To link the two different expansions,
we assume that they are both valid in two intermediate areas
  ${\Omega}^{\delta,\pm}_\mathcal{M}$ (dark shaded in Fig.~\ref{DessinAsymptotic}) of the following form:
$$
{\Omega}^{\delta,\pm}_\mathcal{M} = \left\{ \mathbf{x} = (x_1,x_2) \in \Omega,
 \sqrt{\delta} \leq d(\mathbf{x}, \mathbf{x}_O^\pm) \leq 2\,\sqrt{\delta} \right\},
$$
where $d$ denotes the usual Euclidian distance. The precise definition of the matching areas is not important. The
reader might just  keep in mind  that they correspond to
a neighborhood of the corners $\mathbf{x}_{O}^\pm$ of the reentrant
corners for the far field terms (macroscopic and boundary layer correctors)
and to $R^\pm$ going to $+\infty$ for
the near field terms (expressed in the scaled variables).

\subsection{Far and near field equations}
The 'ansatz' being assumed, the next objective is to construct the terms
$u_{n,q}^\delta$, $\Pi_{n,q}^\delta$ and $U_{n,q,\pm}^\delta$ of the far and near field expansions (the asymptotic expansions are justified
later, by
proving error estimates). This is by far the longest part of the
work (Sections \ref{sec:intr-sett-probl} to~\ref{SectionMatchingProcedure}).  The usual starting point of this construction
consists in the formal derivation of the near field and far field
problems, that is to say problems satisfied by the near and far field terms.
\subsubsection{Far field equations: macroscopic  and boundary
  layer correctors equations}
Inserting the far field expansion into the
initial problem~\eqref{eq:perturbed_laplace} and separating the
different powers of $\delta$ (the complete procedure, based on the
separation of the scales, is explained in
Appendix~\ref{SectionScaleSeparation}) gives a collection of equations for the
macroscopic terms and the boundary layer terms:
\paragraph{Macroscopic equations} The macroscopic terms $u_{n,q}^\delta$ satisfy
\begin{equation}\label{FFVolum}
- \Delta u_{n,q}^\delta =  \begin{cases} 
f \; &  \mbox{if} \; n = q = 0, \\
0 \; & \mbox{otherwise,}  \end{cases} \mbox{in} \; \OmegaTop \cup \OmegaBottom,
\end{equation}
together with homogeneous Dirichlet boundary conditions on $\Gamma_D$
\begin{equation}\label{FFDirichlet}
 u_{n,q}^\delta = 0  \; \mbox{on} \; \, \Gamma_D.\\
\end{equation}
\paragraph{Boundary layer corrector equations} The boundary layer
correctors satisfy 
\begin{equation}
    \label{PeriodicCorrectorEquations}
    \left\lbrace
      \begin{aligned}
         - \Delta_{\bX} \Pi_{n,q}^\delta(x_1,\bX)  & = &
         G_{n,q}^\delta & \quad \mbox{in} \;
        \mathcal{B}, \\
        \partial_n \Pi_{n,q}^{\delta} &  = & \;\; 0 & \quad \mbox{on} \;  \partial \widehat{\Omega}_{\hole}.
\end{aligned}
        \right.
  \end{equation}
where, for any $p \in \N$, 
\begin{multline}
  \label{eq:def_Gnq_delta}
  G_{n,q}^\delta  = \sum_{p=0}^q \frac{\partial^p
          u_{n,q-p}^\delta}{{\partial x_2}^p}(x_1,0^+)  [ \Delta, \chi_+] \left(
   \frac{(X_2)^p}{p!} \right)  +
 \sum_{p=0}^q \frac{\partial^p
          u_{n,q-p}^\delta}{{\partial x_2}^p}(x_1,0^-)  [ \Delta, \chi_-] \left(
   \frac{(X_2)^p}{p!} \right) \\
 + \partial_{x_1}^2 \Pi_{n, q-2}^{\delta}(x_1,\bX)
        + 2 \partial_{x_1} \partial_{X_1} \Pi_{n, q-1}^{\delta}(x_1,\bX).  
\end{multline}
Here, for any sufficiently smooth function $v$, $[\Delta, v]$ denotes the commutator between $\Delta$ and $v$, that is to say
$$
[\Delta, v] u = \Delta (v  u)  - v \Delta u = \nabla v \cdot \nabla u
+ u \Delta v.  
$$
Moreover, the smooth truncation functions $\chi_\pm$ are defined by
\begin{equation}\label{DefinitionChiplusmoins}
\chi_\pm(X_2) = \begin{cases}
\, \chi(X_2), & \mbox{if} \pm X_2 >0, \\
0 & \mbox{otherwise},
\end{cases}  \qquad (\chi_\pm(X_2) = 1_{(\pm X_2>0)} \chi(X_2)),
\end{equation}
and, for $p \in \N$,
$$
\frac{\partial^p
          u_{n,q-p}^\delta}{{\partial x_2}^p}(x_1,0^\pm) = \lim_{h\rightarrow
          0^\pm}  \frac{\partial^p
          u_{n,q-p}^\delta}{{\partial x_2}^p}(x_1,h).
$$
Note that equations~(\ref{FFVolum}-\ref{FFDirichlet}), posed in
the domains $\OmegaTop$ and $\OmegaBottom$, do not define entirely the
macroscopic terms. Indeed, we first have to prescribe transmission
conditions
across the interface $\Gamma$ (for instance the jump of their
  trace and the jump of their normal
trace across $\Gamma$). This information will appear to be a consequence of
the boundary layer equations (Section~\ref{SectionTransmissionConditionsBoundaryLayer}). Then, we also have to prescribe the
behavior of the macroscopic terms in the vicinity of the two corner
points $\mathbf{x}_O^\pm$.  This information will be provided by the
matching conditions (Section~\ref{SectionMatchingProcedure}). 
 
\subsubsection{Near field equations}
The near field equations are obtained in a much more direct way. Inserting the near field ansatz~\eqref{NFExpansion} into the Laplace
equation~\eqref{eq:perturbed_laplace} and separating
formally the different powers of $\delta$, it is
easily seen that the near field term $U_{n,q}^\delta$ satisfy
\begin{equation}\label{NearFieldEquation}
\left \lbrace
\begin{aligned}
- \Delta_{\mX} U_{n,q}^\delta & = & 0 & \quad \mbox{in} \;
\widehat{\Omega}^{\pm}, \\
U_{n,q}^\delta & = & 0 & \quad \mbox{on} \; \partial \mathcal{K}^\pm, \\
\partial_n U_{n,q}^\delta & = &0  & \quad \mbox{on}\,
\partial\widehat{\Omega}^{\pm}_{\hole} = \partial
\widehat{\Omega}^\pm  \setminus \partial \mathcal{K}^\pm.
\end{aligned}
\right.
\end{equation}  

\subsection{Outlook of the paper}
\label{sec:intr-sett-probl:outlook}

The remainder of the paper is organized as follows. In
Section~\ref{SectionTransmissionConditionsBoundaryLayer},  we
investigate the boundary layer problems. We derive transmission
condition for the macroscopic term $u_{n,q}^\delta$ up to any
order (Proposition~\ref{PropFormeGeneraleTransmission}). We also obtain an explicit formula for the
periodic correctors $\Pi_{n,q}^\delta$
(see~\eqref{definitionPiq}). In particular, we shall see that the
periodic corrector $\Pi_{n,q}^\delta$ is completely determined 
providing that the macroscopic terms $u_{n,p}^\delta$ are defined for $p\leq q$.\\ 

\noindent Then, Section~\ref{SectionFFS} is
dedicated to the analysis of the far field problems (consisting of the
far field equations~\eqref{FFVolum} together with the transmission
conditions~\eqref{SautFFq},\eqref{SautNormalFFq}). 
We first introduce two families of so-called macroscopic singularities $s_{m,q}^\pm$
(Proposition~\ref{PropExistenceUniquenesmMF} and Proposition~\ref{PropDefinitionsmq}). These functions
are particular solutions of the homogeneous Poisson equations (with
prescribed jump conditions across the interface $\Gamma$) that blow up
in the vicinity of the reentrant corners. These two
families are then used to
derive a quasi-explicit formula for the far field terms
(Proposition~\eqref{Definitionunq}). This quasi-explicit formula
defines the macroscopic terms $u_{n,q}^\delta$ up to the prescription of $2n$ constants called
$\ell_{-m}^\pm(u_{n,q}^\delta)$, $1\leq m \leq n$. \\

\noindent Section~\ref{SectionNF} deals with the resolution of the near field
problems~\eqref{NearFieldEquation}. As done for the macroscopic terms,
we define two families of near field singularities $S_m^\pm$, that are
particular solutions of the homogeneous Poisson problem posed in
$\widehat{\Omega}^{\pm}$  that blow up at infinity
(Proposition~\ref{PropositionExistenceSmplus}). Based on
these near field singularities, we then derive a quasi-explicit formula
for the near field terms~\eqref{FormeExplicitEUnqpm}. Here again,
this quasi-explicit  formula defines the near field terms $U_{n,q,\pm}^\delta$ up to the prescription of $n$ constants called
$\mathscr{L}_{m}(U_{n,q,\pm}^\delta)$, $1\leq m \leq n$. \\
   
\noindent Section~\ref{SectionMatchingProcedure} is dedicated to the derivation of the matching
conditions and the definition of the terms of the asymptotic
expansions. Based on an asymptotic representation of the far field terms close to
the reentrant corners and of the near field terms at infinity, we obtain a
collection of matching
conditions~\eqref{MactchingChampProche},\eqref{matchingChampLointain},\eqref{MactchingChampProcheMoins} and~\eqref{matchingChampLointainMoins} that permit to determine the constants
$\mathscr{L}_{m}(U_{n,q,\pm}^\delta)$ for the near fields and  the
constants $\ell_{-m}^\pm(u_{n,q}^\delta)$ for the macroscopic fields. As a
consequence, all the terms of the asymptotic expansion are then 
constructed (through an iterative procedure).\\

\noindent Finally, Section~\ref{SectionErrorAnalysis} deals with the justification of the
asymptotic. 

\section{Analysis of the boundary layer problems: transmission
  conditions}\label{SectionTransmissionConditionsBoundaryLayer}
This section is dedicated to the analysis of the boundary layers
problems~\eqref{NearFieldEquation}. It permits us to derive
(necessary) transmission
conditions for the macroscopic fields
$u_{n,q}^\delta$ across $\Gamma$ (Proposition~\ref{PropFormeGeneraleTransmission}). For a given $n \in \N$, we shall
propose a recursive procedure to write the jump of the trace and of
the normal trace of
$u_{n,q}^\delta$ as linear combinations of the mean values of the
macroscopic fields of lower order $u_{n,k}^\delta$ ($k<q$) and their tangential derivatives. This procedure is done
by induction on the index $q$ and is completely independent of the
index $n$ and of the superscript $\delta$ (of
  $u_{n,q}^\delta$). That is why we shall omit the index $n$ and the
superscript $\delta$ in this
section. \\

\noindent For any sufficiently smooth function $u$ defined in $\OmegaTop\cup \OmegaBottom$, we denote by $[u(x_1,0)
]_{\Gamma}$ and $\langle u(x_1,0) \rangle_{\Gamma}$ 
its jump and mean values
across $\Gamma$: for $|x_1| < \LBottom$,
\begin{align}\label{definitionMeanJump}
\dsp \left[ u(x_1,0) \right]_{\Gamma} = \lim_{h\rightarrow 0^+}
\left(  u(x_1, h) -  u(x_1,
 -h) \right), \quad 
\dsp \langle u(x_1,0) \rangle_{\Gamma} = \frac{1}{2} \lim_{h\rightarrow 0^+
  } \left(  u(x_1,h) +  u(x_1,
 -h) \right).
\end{align}

\noindent Let $(f_q)_{q\in \N}$ be a sequence of functions belonging to
$L^2(\OmegaTop)\cap L^2(\OmegaBottom)$ that are compactly supported in $\OmegaTop$.  We consider the following sequence of coupled problems (obtained by
rewriting 
\eqref{FFVolum},\eqref{FFDirichlet},\eqref{PeriodicCorrectorEquations} and
omitting the index~$n$):
\begin{equation}\label{ProblemeSectionTransmission}
\left \lbrace
\begin{aligned}
- \Delta u_{q} & = &  f_q &\quad \mbox{in} \; \OmegaTop \cup \OmegaBottom, \\
u_{q} & = & 0 &\quad  \mbox{on} \;  \Gamma_D,
\end{aligned} 
\right.
\quad
\left \lbrace
\begin{array}{rcll}
-\Delta_{\mX} \Pi_{q}(x_1, \mX) &= & G_q(x_1, \mX)  &\quad \mbox{in} \; \mathcal{B}\ , \\
 \partial_n \Pi_{q} &= & 0  &\quad \mbox{on} \;  \partial \widehat{\Omega}_\hole\ , 
\end{array}
\right.
\end{equation}
where
\begin{multline}\label{SecondMembreGq}
 G_q(x_1,\mX)   =   \sum_{p=0}^q \Big( 2 \langle g_p(\mX) \rangle \langle \partial^p_{x_2} u_{q-p}(x_1,0) \rangle_{\Gamma} +
 \frac{1}{2} \left[ g_p(\mX) \right] \left[  \partial^p_{x_2} u_{q-p}(x_1,0) \right]_{\Gamma} \Big) \\[-0.5em]
   + 2 \partial_{x_1} \partial_{X_1} \Pi_{q-1} + \partial_{x_1}^2 \Pi_{q-2}.
\end{multline}
Here, we use  
\begin{equation}\label{SautMoyennegp}
\langle  g_p(\mX) \rangle := \tfrac12 [  \Delta, \chi_+ + \chi_-] \left( \frac{X_2^p}{p!}\right)\ ,
\qquad
\left[  g_p(\mX)   \right] := [  \Delta, \chi_+ - \chi_-] \left( \frac{X_2^p}{p!}\right),
\; p \in \N,
\end{equation}
and later also $g_p^\pm(\mX)   \rangle :=  [\Delta, \chi^\pm] \left( \frac{X_2^p}{p!}\right)$ will be needed.
As previously, we impose $\Pi_q$ to be $1$-periodic with respect to
$X_1$ and to be super-algebraically decaying as $|X_2|$ tends to $+\infty$: for any $(k,
\ell) \in \N^2$, 
\begin{equation}
\label{eq:super-algebraically}
\lim_{|X_2| \rightarrow +\infty}
X_2^k \partial_{X_2}^\ell
\Pi_{q} = 0.
\end{equation}
\noindent Note that the right-hand side $G_q$ in~\eqref{SecondMembreGq}
corresponds to the right-hand side $G_{n,q}^\delta$ of Problem~\eqref{PeriodicCorrectorEquations} (for a given $n$).
The problems for $\Pi_q$, $q \in \IN$ are coupled to the others by the source terms. In difference, the 
problems for $u_q$, $q \in \IN$ are not complete and their coupling to other problems will be exposed in following.\\

 \noindent The present section is organized as follows: in
 Section~\ref{SubSectionPropBL}, we give a standard existence and
   uniqueness result
 (Proposition~\ref{prop:layer_existence_uniqueness_problem_strip}), which shows
 that under two compatibility conditions the boundary layer
 problems for $\Pi_q$ in~(\ref{ProblemeSectionTransmission}) have a unique
 decaying solution. In Section~\ref{SubSectionDerivationu0u1}, we use
 Proposition~\ref{prop:layer_existence_uniqueness_problem_strip} to derive
 transmission conditions for the first two terms $u_0$ and $u_1$ (see
 \eqref{Sautu0}-\eqref{Sautdu0}-\eqref{Sautu1}-\eqref{Sautdu1}), and we obtain
 an explicit tensorial representation for the associated boundary layer
 correctors (cf.
 \eqref{decompositionPi0}-\eqref{decompositionPi1}
 ). Finally the
 approach is extended in Section~\ref{SubSectionTransmissionGeneral} to
 obtain transmission conditions up to any order for the macroscopic fields
 $u_q$.

\begin{rem} The asymptotic construction described in this section
 is entirely similar to the construction of a multi-scale expansion for an
 infinite periodic thin layer (without corner singularity). A
 complete description of this case may be found in
 \cite{RapportSanchezPalencia}, \cite{AchdouCR}, \cite{Ammari}, \cite{Poirier} and
 references therein. 
\end{rem}
\subsection{Preliminary step : existence result for the boundary layer problem}\label{SubSectionPropBL}
In this subsection, we give a standard result of existence for the
boundary layer corrector problems for $\Pi_q$ in
(\ref{ProblemeSectionTransmission}). It will be subsequently
used to construct exponentially decaying boundary layer correctors. 
\noindent Let us introduce the two weighted Sobolev spaces
 \begin{equation}
    \label{eq:prop_layer_existence_uniqueness_space}
    \mathcal{V}^\pm(\mathcal{B}) = \left\lbrace \Pi \in \mathrm{H}_{\text{loc}}^1(\mathcal{B}),
      \Pi(0,X_2) = \Pi(1,X_2),  \text{ and }  \left(\Pi\, w_e^\pm\right)  \in
      \Hone(\mathcal{B}) \right\rbrace, 
    \end{equation}
where the weighting functions $w_e^\pm(X_1,X_2)= \chi(X_2)  \exp( \pm
\frac{| X_2|}{2})$. The functions of $\mathcal{V}^-(\mathcal{B})$ correspond to the
periodic (w.r.t. $X_1$) functions of $H^1_{\text{loc}}(\mathcal{B})$  that grow slower than $\exp(
\frac{| X_2|}{2})$ as $X_2$ tends to $\pm \infty$. By contrast, the
functions of $\mathcal{V}^+(\mathcal{B})$ correspond to the periodic
functions of $H^1_{\text{loc}}(\mathcal{B})$ decaying faster than $\exp(-
\frac{| X_2|}{2})$ as $X_2$ tends to $\pm \infty$. As a
consequence, they are super-algebraically decaying, means they satisfy
\eqref{eq:super-algebraically} for $\ell \leq 1$. Note also
that $\mathcal{V}^+(\mathcal{B}) \subset \mathcal{V}^-(\mathcal{B})$.\\

\noindent Based on this functional framework, we consider the following problem: 
for given $g \in \left(\mathcal{V}^-(\mathcal{B})\right)'$
find $\Pi \in \mathcal{V}^-(\mathcal{B})$ such that
  \begin{equation}
    \label{CanoniqueBoundaryLayer}
\left \lbrace
\begin{array}{rcll}
    \dsp - \Delta_\bX \Pi & = & g & \quad \text{in } \mathcal{B}, \\[1ex]
\dsp \partial_n \Pi 
    & = &  0 &\quad \text{on } \partial \widehat{\Omega}_{\hole}, \\[1ex]
\dsp \partial_{X_1} \Pi(0,X_2) & =&  \partial_{X_1} \Pi(1,X_2), & \quad X_2 \in
\R.
\end{array}
\right.
  \end{equation}
  \begin{prop}
  \label{prop:layer_existence_uniqueness_problem_strip}
 \begin{enumerate}
  \item Problem (\ref{CanoniqueBoundaryLayer})
    has a finite dimensional
    kernel of dimension $2$, spanned by the functions
    $\mathcal{N} = \mathds{1}_{\mathcal{B}}$ and $\mathcal{D}$, where
    $\mathcal{D}$ is the unique harmonic function of
    $\mathcal{V}^-(\mathcal{B})$ such that there exists
    $\mathcal{D}_\infty \in \R$
$$\widetilde{\mathcal{D} }(X_1, X_2) = \mathcal{D}(X_1, X_2)-\chi_+(X_2) ( X_2 + 
    \mathcal{D}_\infty)  -   \chi_-(X_2) (X_2 - \mathcal{D}_\infty) $$
  belongs to $\mathcal{V}^+(\mathcal{B})$. The constant
  $\mathcal{D}_\infty$ only depends on the geometry of the periodicity
  cell $\mathcal{B}$.

  \item If $f$ is orthogonal to $\mathcal{D}$ and $\mathcal{N}$ in
    the $\mathrm{L}^2(\mathcal{B})$ sense, meaning that
    \begin{align}
      \label{eq:prop_layer_existence_uniqueness_compatibility_D}
      \tag{$\mathcal{C}_{\mathcal{D}}$}
      \int_{\mathcal{B}} g(\bX) \mathcal{D}(\bX) d\bX & = 0 \\
      \label{eq:prop_layer_existence_uniqueness_compatibility_N}
      \tag{$\mathcal{C}_{\mathcal{N}}$}
      \int_{\mathcal{B}} g(\bX) \mathcal{N}(\bX) d\bX & = 0
    \end{align}
    then, there exists a unique solution $\Pi \in \mathcal{V}^+(\mathcal{B})$.
\item  Conversely,
  if problems (\ref{CanoniqueBoundaryLayer}) admits a
    solution $\Pi \in \mathcal{V}^+(\mathcal{B})$, then it satisfies the compatibility conditions
    \eqref{eq:prop_layer_existence_uniqueness_compatibility_D},
   \eqref{eq:prop_layer_existence_uniqueness_compatibility_N}.
  \end{enumerate}
\end{prop}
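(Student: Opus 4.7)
The plan is to reduce Problem~\eqref{CanoniqueBoundaryLayer} to a standard boundary-value analysis by exploiting separation of variables in the tangential direction. For $|X_2|>1$ the hole is absent, and any $X_1$-periodic harmonic function decomposes as $\Pi(X_1,X_2)=\sum_{k\in\Z}\Pi_k(X_2)e^{2\pi i k X_1}$, where each $\Pi_k$ with $k\neq 0$ is a linear combination of $e^{\pm 2\pi|k|X_2}$ and $\Pi_0$ is affine in $X_2$. Since $2\pi>1/2$, every $k\neq 0$ mode that sits in $\mathcal{V}^-(\mathcal{B})$ already sits in $\mathcal{V}^+(\mathcal{B})$, so the entire gap between the two spaces is carried by the zeroth mode, a four-dimensional datum (intercepts and slopes at $\pm\infty$). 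This reduction is the backbone of every step.

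For Part~1, the constant $\mathcal{N}=\mathds{1}$ is trivially in the kernel. I would construct $\mathcal{D}$ via the ansatz $\mathcal{D}=\Psi+\widetilde{\mathcal{D}}$ with the explicit lift $\Psi(X_1,X_2)=\chi_+(X_2)(X_2+\mathcal{D}_\infty)+\chi_-(X_2)(X_2-\mathcal{D}_\infty)$. Because $\Psi$ vanishes near the hole, the corrector $\widetilde{\mathcal{D}}$ satisfies homogeneous Neumann conditions and $-\Delta\widetilde{\mathcal{D}}=\Delta\Psi$, a source compactly supported in $1\leqslant|X_2|\leqslant 2$. The value $\mathcal{D}_\infty$ is then selected to enforce the compatibility~$(\mathcal{C}_\mathcal{N})$ for $\Delta\Psi$, which reduces to a single linear equation in the one unknown $\mathcal{D}_\infty$. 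Existence of $\widetilde{\mathcal{D}}\in\mathcal{V}^+(\mathcal{B})$ then follows from Part~2 applied to this compactly supported source (the logical order of the proof being Part~3, then Part~2, then Part~1's construction). Uniqueness of the kernel structure uses the Fourier picture above: any $\mathcal{V}^-$-harmonic function has affine behavior $\alpha_\pm+\beta_\pm X_2$ at $\pm\infty$, and Green's identity with the test function $\mathds{1}$ on the truncated cell $\mathcal{B}_R=\mathcal{B}\cap\{|X_2|<R\}$ yields $\beta_+=\beta_-$, so the kernel is exactly two-dimensional and spanned by $\mathds{1}$ and $\mathcal{D}$.

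Part~3 is the cleanest. Testing $-\Delta\Pi=g$ against $\mathcal{N}$ or $\mathcal{D}$ on $\mathcal{B}_R$ and applying Green's second identity, the interior bulk terms $\Pi\Delta\mathcal{N}$ and $\Pi\Delta\mathcal{D}$ vanish by harmonicity, the Neumann condition on $\partial\widehat{\Omega}_\hole$ and the $X_1$-periodicity at $X_1\in\{0,1\}$ kill the lateral boundary integrals, and the super-algebraic decay of $\Pi\in\mathcal{V}^+(\mathcal{B})$ against the at-most-linear growth of $\mathcal{D}$ makes the remaining contributions at $X_2=\pm R$ vanish in the limit $R\to\infty$, leaving the two compatibility conditions~$(\mathcal{C}_\mathcal{N})$ and~$(\mathcal{C}_\mathcal{D})$.

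Part~2 is the real work and where I expect the main difficulty. I would view $-\Delta$ as a Fredholm operator from $\mathcal{V}^+(\mathcal{B})$ into a suitable dual of $\mathcal{V}^-(\mathcal{B})$, with trivial kernel (any super-algebraically decaying harmonic function vanishes by its Fourier expansion at infinity combined with the variational identity on $\mathcal{B}$) and cokernel of dimension two identified by integration by parts with $\mathrm{span}(\mathcal{N},\mathcal{D})$. Equivalently, one can first solve in $\mathcal{V}^-(\mathcal{B})$ modulo $\mathrm{span}(\mathcal{N},\mathcal{D})$ by a weighted Lax--Milgram argument exploiting the Poincaré-type inequality granted by $1/2<2\pi$, and then inspect the zero-mode asymptotic coefficients $\alpha_\pm+\beta_\pm X_2$ of this solution at $\pm\infty$, showing that $(\mathcal{C}_\mathcal{N})$ and $(\mathcal{C}_\mathcal{D})$ are exactly equivalent to cancelling them modulo the kernel, so that $\Pi\in\mathcal{V}^+(\mathcal{B})$. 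The main obstacle I anticipate is the precise algebraic identification of the two compatibility integrals with the two zero-mode asymptotic parameters, since the boundary integrals on $\partial\widehat{\Omega}_\hole$ arising when pairing with $\mathcal{D}$ (which is not explicit there) must be shown to combine correctly via the Neumann condition and the characterization of $\mathcal{D}$ itself.
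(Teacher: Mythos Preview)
The paper does not prove this proposition; it cites external references (Nazarov, Claeys--Delourme, Kozlov--Maz'ya--Rossmann). Your overall architecture---Fourier separation in $X_1$ to isolate the zero mode, Green's identity on truncated cells for Part~3, and a Fredholm/weighted Lax--Milgram viewpoint for Part~2---matches the standard treatment in those sources.

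There is, however, a genuine gap in your construction of $\mathcal{D}$. You claim $\mathcal{D}_\infty$ is fixed by enforcing $(\mathcal{C}_\mathcal{N})$ on the source $\Delta\Psi$, ``a single linear equation in the one unknown $\mathcal{D}_\infty$''. This is false: $\Psi$ depends only on $X_2$ and vanishes near the hole, so
\[
\int_{\mathcal{B}}\Delta\Psi\,d\mathbf{X}=\int_{-\infty}^{\infty}\Psi''(X_2)\,dX_2=\Psi'(+\infty)-\Psi'(-\infty)=1-1=0
\]
for \emph{every} value of $\mathcal{D}_\infty$; the coefficient of $\mathcal{D}_\infty$ in this integral is $\int_{\R}(\chi_+-\chi_-)''\,dX_2=0$. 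Your ``linear equation'' is $0=0$ and determines nothing. Moreover, invoking Part~2 to place $\widetilde{\mathcal{D}}$ in $\mathcal{V}^+(\mathcal{B})$ is circular, because the hypothesis $(\mathcal{C}_\mathcal{D})$ of Part~2 already involves $\mathcal{D}$.

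The repair is to treat $\mathcal{D}_\infty$ as an output, not an input. Solve variationally, in $\{v:\nabla v\in L^2(\mathcal{B}),\ v\ X_1\text{-periodic}\}/\R$, the problem $-\Delta v=\Delta(X_2\chi(X_2))$ with homogeneous Neumann on the hole; only the single Fredholm condition $\int_\mathcal{B}\Delta(X_2\chi)=0$ is needed here, and it holds by the computation above. Your own Fourier argument then gives $v\to c_\pm$ exponentially as $X_2\to\pm\infty$. Set $\mathcal{D}=X_2\chi+v$, add the unique constant making $c_++c_-=0$, and \emph{define} $\mathcal{D}_\infty:=c_+$. This is precisely the normalization that realizes the form $\chi_+(X_2+\mathcal{D}_\infty)+\chi_-(X_2-\mathcal{D}_\infty)$ in the statement, and no circular appeal to Part~2 is made.
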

\noindent For the proof of the previous proposition we refer the reader to~\cite[Prop.~2.2]{Nazarov205} and~\cite[Sec.~5]{ArticleXavierDelourme}. 
General results on the elliptic problems in infinite cylinder can be found in~\cite{MR1469972} (Chapter 5).
Note that all these results remain the same with a different exponential growth or decay constant in the definition of $w^\pm_e$ unless it does not exceed
an upper bound which is determined by the least exponentially decaying or growing functions in the kernel of $-\Delta$ in $\mathcal~B$.\\

\noindent Based on the previous proposition, we shall construct
$\Pi_q(x_1, \mX)$
in $\mathcal{C}\left((-\LBottom, \LBottom), \mathcal{V}^+(\mathcal{B}) \right)$. Transmission conditions for the macroscopic terms $[u_q]_\Gamma$ and $[\partial_{x_2} u_q]_\Gamma$ will
directly follow from the compatibility conditions (\ref{eq:prop_layer_existence_uniqueness_compatibility_D}),
   (\ref{eq:prop_layer_existence_uniqueness_compatibility_N}) 
applied to Problem (\ref{ProblemeSectionTransmission}) for $\Pi_q$, 
$q \in \N$. It will guarantee 
that the boundary layer correctors $\Pi_q$ are exponentially decaying. 
Let us give a couple of useful 
relations, which are easy to obtain by direct calculations
(noting that $g_0^\pm = [\Delta,\chi_\pm]1$),  and will be extensively used in the
next subsections:
\begin{lema}\label{CalculConditionCompatibilite} The following
  relations hold:
\begin{equation} \label{CalculsPratiques}
\begin{array}{ll}
\dsp \int_{\mathcal{B}} \langle g_0(\mX) \rangle  \mathcal{D}(\mX) d\mX  = 0, &
\dsp \int_{\mathcal{B}} \left[ g_0(\mX) \right]  \mathcal{D}(\mX) d\mX
= -2,  \\[2ex]
\dsp \int_{\mathcal{B}} \langle g_0(\mX) \rangle  \mathcal{N}(\mX) d\mX  = 0,& 
\dsp \int_{\mathcal{B}} \left[ g_0(\mX) \right] \mathcal{N}(\mX) d\mX  =
0,\\[2ex]
\dsp \int_{\mathcal{B}} \langle g_1(\mX) \rangle  \mathcal{D}(\mX) d\mX  = \mathcal{D}_\infty, &
\dsp \int_{\mathcal{B}} \left[ g_1(\mX) \right]  \mathcal{D}(\mX) d\mX  = 0,\\[2ex]
\dsp \int_{\mathcal{B}} \langle g_1(\mX) \rangle  \mathcal{N}(\mX) d\mX  = 0, & 
\dsp \int_{\mathcal{B}} \left[ g_1(\mX) \right] \mathcal{N}(\mX) d\mX
= 2.
\end{array}  
\end{equation}
\end{lema}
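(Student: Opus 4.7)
The plan is to recognize all four quantities $\langle g_0\rangle$, $[g_0]$, $\langle g_1\rangle$, $[g_1]$ as Laplacians of simple, explicit functions on $\mathcal{B}$, and then to evaluate each of the eight integrals by Green's identity on the truncated cell
\[
\mathcal{B}_T \;=\; \mathcal{B}\cap\{|X_2|<T\},
\]
passing to the limit $T\to+\infty$ with the help of the asymptotic behaviour of $\mathcal{D}$ and the explicit form of $\mathcal{N}$.

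First I would observe that, since $X_2^p$ is harmonic for $p=0,1$, the commutator reduces to a pure Laplacian:
\[
\langle g_p\rangle \;=\; \tfrac12\,\Delta\!\left(\chi(X_2)\,\tfrac{X_2^p}{p!}\right),\qquad
[g_p] \;=\; \Delta\!\left((\chi_+-\chi_-)(X_2)\,\tfrac{X_2^p}{p!}\right),\quad p=0,1,
\]
so that all eight integrals have the form $\int_{\mathcal{B}}\Delta v\cdot w\,d\mX$ for $v\in\{\chi,\chi X_2,(\chi_+-\chi_-),(\chi_+-\chi_-)X_2\}$ and $w\in\{\mathcal{D},\mathcal{N}\}$. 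Each $v$ is smooth on $\mathcal{B}$ (the sign singularity at $X_2=0$ is killed by $\chi$, which vanishes for $|X_2|<1$), compactly supported away from the holes, and independent of $X_1$.

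Next I would apply Green's identity on $\mathcal{B}_T$:
\[
\int_{\mathcal{B}_T}\!\Delta v\cdot w\,d\mX \;=\; \int_{\mathcal{B}_T}\! v\,\Delta w\,d\mX \;+\; \int_{\partial\mathcal{B}_T}\!\!\bigl(\partial_n v\cdot w - v\,\partial_n w\bigr)\,ds.
\]
The volume term on the right vanishes since both $\mathcal{D}$ and $\mathcal{N}$ are harmonic. The boundary $\partial\mathcal{B}_T$ splits into three pieces: the hole boundary $\partial\widehat{\Omega}_\hole$, on which the Neumann conditions on $\mathcal{D},\mathcal{N}$ combined with $\chi\equiv 0$, $\chi'\equiv 0$ near $X_2=0$ force the contribution to vanish; the lateral periodic sides $X_1\in\{0,1\}$, which cancel by the $X_1$-periodicity of $v,\mathcal{D},\mathcal{N}$; and the horizontal cuts $X_2=\pm T$, which are the only surviving contributions. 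For $T$ large enough that $\chi\equiv 1$ near $\pm T$, these contributions reduce to explicit one-dimensional integrals over the segment $(0,1)$.

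Finally I would pass to the limit using that $\mathcal{N}\equiv1$ and, by Proposition~\ref{prop:layer_existence_uniqueness_problem_strip}(1),
\[
\mathcal{D}(X_1,X_2) \;=\; X_2 \pm \mathcal{D}_\infty + \widetilde{\mathcal{D}}(X_1,X_2),\qquad X_2\to\pm\infty,
\]
with $\widetilde{\mathcal{D}}\in\mathcal{V}^+(\mathcal{B})$ super-algebraically decaying. This gives $\int_0^1\mathcal{D}(X_1,\pm T)\,dX_1\to \pm T\pm\mathcal{D}_\infty$ and $\int_0^1\partial_{X_2}\mathcal{D}(X_1,\pm T)\,dX_1\to 1$, while all terms containing $\widetilde{\mathcal{D}}$ and its derivatives contribute $0$ in the limit. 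Plugging in the four choices of $v$ and collecting terms (keeping careful track that the outward normal is $+\mathbf{e}_2$ at $X_2=T$ and $-\mathbf{e}_2$ at $X_2=-T$, and that $\chi X_2$ is odd while $(\chi_+-\chi_-)X_2=\chi|X_2|$ is even in $X_2$) produces the eight identities of~\eqref{CalculsPratiques}: the even/odd parity of $v$ versus the even/odd parity of the limiting boundary values of $\mathcal{D}$ governs which of the two contributions from $X_2=\pm T$ survives and which cancels.

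The only delicate point, and the step I expect to require the most care, is the justification that the contributions from $\widetilde{\mathcal{D}}$ and its normal derivative at $X_2=\pm T$ indeed vanish in the limit $T\to\infty$: this is where the super-algebraic decay bundled into the definition of $\mathcal{V}^+(\mathcal{B})$, rather than mere $L^2$ or $H^1$ decay, is essential, since some of the boundary integrands are multiplied by factors of $T$.
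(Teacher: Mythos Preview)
Your proposal is correct and is precisely the ``direct calculation'' the paper alludes to (the paper gives no detailed proof, only the hint $g_0^\pm=[\Delta,\chi_\pm]1$, which is the $p=0$ instance of your observation that $\langle g_p\rangle$ and $[g_p]$ are pure Laplacians). Your Green's identity on $\mathcal{B}_T$ together with the asymptotics of $\mathcal{D}$ from Proposition~\ref{prop:layer_existence_uniqueness_problem_strip} is the natural way to carry this out, and your identification of the one delicate point---controlling the $O(T)$ boundary contributions via the decay of $\widetilde{\mathcal{D}}$---is apt (note that $\widetilde{\mathcal{D}}\in\mathcal{V}^+(\mathcal{B})$ in fact gives exponential decay, which is more than enough).
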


\subsection{Derivation of the first terms}\label{SubSectionDerivationu0u1}
We can now turn to the formal computation of the first solutions of the
sequence of Problems~\eqref{ProblemeSectionTransmission}. We emphasize that the upcoming
iterative procedure is formal in the sense that we shall provide necessary transmission conditions for the
macroscopic terms $u_q$ but we shall not adress the question of
their existence in this part (this question will be investigated in
Section~\ref{SectionFFS}). Throughout this section, we assume
that the macroscopic terms exist and are smooth above and below the
interface $\Gamma$.

\subsubsection{Step 0: $[u_0]_\Gamma$  and $\Pi_0$}\label{SubsubStep0}
The limit boundary layer term (or periodic corrector) $\Pi_0$ is solution
of
\begin{equation}\label{ProblemPi0}
\left\lbrace
\begin{array}{rcll}
- \Delta_{\mX} \Pi_{0}(x_1, \mX) &= & G_0(x_1, \mX)  &\; \mbox{in} \; \mathcal{B}, \\
\partial_n \Pi_{0} & = & 0  &\; \mbox{on} \;  \partial \widehat{\Omega}_\hole,
\end{array}
\right.
\end{equation}
where $
 G_0(x_1, \mX) =  2  \langle g_0(\mX) \rangle \langle u_0(x_1,0) \rangle_{\Gamma} +
 \frac{1}{2} \left[ g_0(\mX) \right] \left[ u_0(x_1,0)
 \right]_{\Gamma}$.
Problem~\eqref{ProblemPi0} is a partial differential equation with
respect to the microscopic variables $X_1$ and $X_2$, wherein the macroscopic variable $x_1$ plays the role
of a parameter. For a fixed $x_1$ in $(-\LBottom, \LBottom)$ (considered as a parameter), $G_0(x_1, \cdot)$ belongs to
$(\mathcal{V}^-(\mathcal{B}))'$ since it is compactly supported.
Then, in view of
Proposition~\ref{prop:layer_existence_uniqueness_problem_strip},
there exists an exponentially decaying solution $\Pi_{0}(x_1, \cdot) \in \mathcal{V}^+(\mathcal{B})$ if and only if the two compatibility conditions  (\ref{eq:prop_layer_existence_uniqueness_compatibility_D},
   \ref{eq:prop_layer_existence_uniqueness_compatibility_N}) (Prop.~\ref{prop:layer_existence_uniqueness_problem_strip}) are
   satisfied. Thanks to the second line of Lemma~\ref{CalculConditionCompatibilite},
\begin{equation}
\int_{\mathcal{B}}  G_0(x_1, \mX)  \mathcal{N}(\mX)  d\mX= 0,
\end{equation}
which means that
(\ref{eq:prop_layer_existence_uniqueness_compatibility_N}) is always
satisfied. Besides, in view of the first line of Lemma~\ref{CalculConditionCompatibilite},
\begin{equation}
\int_{\mathcal{B}}  G_0(x_1, \mX)  \mathcal{D}(\mX)  d\mX = - 
\left[ u_0(x_1,0) 
 \right]_{\Gamma}. 
\end{equation}
As a consequence, we obtain a necessary and sufficient condition for $\Pi_0$ to be
exponentially decaying:
\begin{equation}\label{Sautu0}
\left[ u_0(x_1,0)
 \right]_{\Gamma} =0.
\end{equation}
This condition provides a first transmission condition for the limit
macroscopic term $u_0$. Under the previous condition, $ G_0(x_1, \mX) =  2  \langle
g_0(\mX) \rangle \langle u_0(x_1,0) \rangle_{\Gamma}$, and, using the
linearity of Problem~\eqref{ProblemPi0}, we can obtain a tensorial
representation of $\Pi_0 \in \mathcal{C}\left((-\LBottom, \LBottom),
  \mathcal{V}^+(\mathcal{B})\right)$, in which macroscopic and microscopic variables
are separated:
\begin{equation}\label{decompositionPi0}
\Pi_0(x_1, X) =   \langle u_0(x_1,0) \rangle_{\Gamma} \,  W_{0}^{\mathfrak{t}}(\mX).
\end{equation}
Here the profile function $W_{0}^{\mathfrak{t}}(\mX)$ is the unique function of
$\mathcal{V}^+(\mathcal{B})$ satisfying  
 \begin{equation}\label{ProblemW0t}
\left \lbrace
\begin{array}{rcll}
- \Delta_{\mX} W_{0}^{\mathfrak{t}}(\mX) &= & F_{0}^{\mathfrak{t}}(\mX)  &\; \mbox{in} \; \mathcal{B}, \\
\partial_n W_{0}^{\mathfrak{t}} &= & 0  &\; \mbox{on} \;  \partial
\widehat{\Omega}_\hole,\\
\partial_{X_1} W_{0}^{\mathfrak{t}} (0,X_2) &= &\partial_{X_1} W_{0}^{\mathfrak{t}}(1,X_2),& \; X_2 \in
\R,
\end{array} 
\right. \quad F_{0}^{\mathfrak{t}}(\mX) = 2  \, \langle g_0(\mX) \rangle.\\
\end{equation}
A direct calculation shows that 
\begin{equation}\label{ProprieteW0t}
W_{0}^{\mathfrak{t}}(\mX) = (1  - \chi(X_2)).
\end{equation}
Note that the continuity of $\Pi_0$ with respect to $x_1$ is a
consequence of the continuity of $G_0$ with respect to $x_1$. 
\subsubsection{Step 1: $[\partial_{x_2}
  u_0]_\Gamma$, $[u_1]_\Gamma$, and $\Pi_1$}\label{SubsubStep1}
In view of the general sequence of problems~\eqref{ProblemeSectionTransmission}, the second boundary
layer (or periodic corrector) $\Pi_1$ satisfies
\begin{equation}\label{ProblemPi1}
\left \lbrace
\begin{array}{rcll}
\dsp - \Delta_{\mX} \Pi_{1}(x_1, \mX) & = & G_1(x_1, \mX)  & \; \mbox{in} \; \mathcal{B}, \\
\dsp \partial_n \Pi_{1} & = & 0  &\; \mbox{on} \;  \partial \widehat{\Omega}_\hole.
\end{array} 
\right.
\end{equation}
where, thanks to~\eqref{ProprieteW0t} ($\partial_{X_1} W_{0}^{\mathfrak{t}}=0$),
\begin{multline}\label{G1}
   G_1(x_1, \mX) =   \frac{1}{2} \left[ g_1(\mX) \right]
 \left[\partial_{x_2} u_0(x_1,0) \right]_\Gamma +
 \frac{1}{2} \left[ g_0(\mX) \right] \left[ u_1(x_1,0)
 \right]_{\Gamma}  \\+ F_{0}^{\mathfrak{t}}(\mX)   \langle u_1(x_1,0)
 \rangle_{\Gamma} +  2  \langle g_1(\mX) \rangle\, \langle \partial_{x_2}
 u_0(x_1,0) \rangle_{\Gamma}.
\end{multline}
As for $\Pi_0$,  Problem~\eqref{ProblemPi1} is a partial differential equation with
respect to the microscopic variables $X_1$ and $X_2$, where the macroscopic variable $x_1$ plays the role
of a parameter. For a fixed $x_1$ in $(-\LBottom, \LBottom)$,
$G_1(x_1, \cdot)$ is compactly
supported in $\mathcal{B}$, and, consequently, belongs
to $(\mathcal{V}^-(\mathcal{B}))'$. 
Then, thanks to
Proposition~\ref{prop:layer_existence_uniqueness_problem_strip}, there
exists an exponentially decaying solution $\Pi_1(x_1,\cdot) \in \mathcal{V}^+(\mathcal{B})$ if and only if the two compatibility conditions~(\ref{eq:prop_layer_existence_uniqueness_compatibility_D}),
   (\ref{eq:prop_layer_existence_uniqueness_compatibility_N}) are
   satisfied. In view of
Lemma~\ref{CalculConditionCompatibilite}, $F_{0}^{\mathfrak{t}}(\mX)$, $[g_0(\mX)]$,  $\langle g_1
(\mX) \rangle$ are orthogonal to $\mathcal{N}$. Then, the second
formula of the fourth line
of Lemma~\ref{CalculConditionCompatibilite} gives
\begin{equation*}
\int_{\mathcal{B}} G_1(x_1, \mX)
\mathcal{N}(\mX) d\mX = \left[\partial_{x_2} u_0(x_1,0) \right]_\Gamma.
\end{equation*}
Therefore, the compatibility condition
\eqref{eq:prop_layer_existence_uniqueness_compatibility_N} is
fulfilled if and only if
\begin{equation}\label{Sautdu0}
\left[\partial_{x_2} u_0(x_1,0) \right]_\Gamma = 0.
\end{equation}
Next, using the first and third lines of
Lemma~\ref{CalculConditionCompatibilite}, we obtain
\begin{equation*}
\int_{\mathcal{B}} G_1(x_1, \mX)
\mathcal{D}(\mX) d\mX =  -  \left[ u_1(x_1,0)
 \right]_{\Gamma}   +  2 \mathcal{D}_\infty \, \langle \partial_{x_2}
 u_0(x_1,0) \rangle_{\Gamma}.
\end{equation*}
Therefore, the compatibility condition
\eqref{eq:prop_layer_existence_uniqueness_compatibility_D} is
fulfilled if and only if
\begin{equation}\label{Sautu1}
\left[ u_1(x_1,0) \right]_\Gamma =  2 \mathcal{D}_\infty
\, \langle \partial_{x_2}
 u_0(x_1,0) \rangle_{\Gamma}.
\end{equation}
Under the two conditions~\eqref{Sautdu0}-\eqref{Sautu1},
Problem~\eqref{ProblemPi1} has a unique solution $\Pi_1$ in
$\mathcal{C}\left( (-\LBottom, \LBottom), \mathcal{V}^+(\mathcal{B})\right)$ (the continuity of $\Pi_1$ with respect to $x_1$
results from the continuity of $G_1$ with respect to $x_1$). Using (here again) the linearity of
Problem~\eqref{ProblemPi1}, we can write $\Pi_1$ as a tensorial product between profile functions
that only depend on the microscopic variables $X_1$ and $X_2$, and functions that
only depend on the macroscopic variable $x_1$ (more precisely, the latter functions
consist of the average
traces of the macroscopic terms of order $0$ and $1$ on $\Gamma$):
\begin{equation}\label{decompositionPi1}
\Pi_1(x_1, \mX) = \langle u_1(x_1,0) \rangle_\Gamma\,
W_{0}^{\mathfrak{t}}(\mX)  \;+ \; \langle \partial_{x_2} u_0(x_1,0)
\rangle_\Gamma \, W_{1}^{\mathfrak{n}}(\mX),
\end{equation}
where $W_{0}^{\mathfrak{t}}$ is defined by~\eqref{ProblemW0t} and $W_{1}^{\mathfrak{n}}
\in \mathcal{V}^+(\mathcal{B})$ is the unique decaying solution to the
following problem:
\begin{equation}\label{ProblemW1n}
\left \lbrace
\begin{array}{rcll}
\dsp - \Delta_{\mX} W_{1}^{\mathfrak{n}}(\mX) & = & F_{1}^{\mathfrak{n}}(\mX) + \frac{\mathcal{D}_{1}^{\mathfrak{n}}  }{2}
\dsp [g_0(\mX)] &\; \mbox{in} \; \mathcal{B}, \\[1ex]
\partial_n W_{1}^{\mathfrak{n}} &= &0  &\; \mbox{on} \;  \partial
\widehat{\Omega}_\hole,\\[1ex]
\partial_{X_1} W_{1}^{\mathfrak{n}} (0,X_2) &= &\partial_{X_1} W_{1}^{\mathfrak{n}}(1,X_2), &\; X_2 \in
\R,
\end{array}
\right. 
\end{equation}
where, 
\begin{equation}\label{DefF1n}
F_{1}^{\mathfrak{n}}(\mX)= 2 \langle g_1(\mX) \rangle_\Gamma \quad  \mbox{and}
\quad 
\mathcal{D}_{1}^{\mathfrak{n}} = \int_{B} F_{1}^{\mathfrak{n}}(\mX) \mathcal{D}(\mX)d\mX = 2 \mathcal{D}_\infty.
\end{equation}
It is easily seen that the right-hand side of~\eqref{ProblemW1n} is
orthogonal to both $\mathcal{N}$ and $\mathcal{D}$. A direct computation shows that 
\begin{equation}\label{W1nEgaleDtilde}
W_{1}^{\mathfrak{n}}(\mX) = \widetilde{\mathcal{D}}(\mX),
\end{equation} 
the function $\widetilde{\mathcal{D}}$ being defined in the first point of Proposition~\ref{prop:layer_existence_uniqueness_problem_strip}.

\subsubsection{Step 2: $[\partial_{x_2}
  u_1]_\Gamma$ ($[u_2]_\Gamma$ and $\Pi_2$)}
\label{SubsubStep2}
We can continue the iterative procedure started in the two previous
steps as follows. The periodic corrector $\Pi_2$ satisfies the following
equation
\begin{equation}\label{ProblemPi2}
\left \lbrace
\begin{array}{rcll}
\dsp - \Delta_{\mX} \Pi_{2}(x_1, \mX) & = & G_2(x_1, \mX)  &\; \mbox{in} \; \mathcal{B}, \\
\dsp \partial_n \Pi_{2} &=&  0  &\;\mbox{on} \;  \partial \widehat{\Omega}_\hole.
\end{array}
\right. 
\end{equation}
Here,
\begin{multline}\label{G2}
   G_2(x_1, \mX)  =  
 \frac{1}{2} \left[ g_0(\mX) \right] \left[ u_2
 \right]_{\Gamma}   +   \frac{1}{2} \left[ g_1(\mX) \right]
 \left[\partial_{x_2} u_1 \right]_\Gamma  
 + F_{0}^{\mathfrak{t}}(\mX) \langle u_2
 \rangle_{\Gamma} \; 
\\+ \;  F_{1}^{\mathfrak{n}}(\mX) \, \, \langle \partial_{x_2}
 u_1 \rangle_{\Gamma}  \; +  \; F_{2}^{\mathfrak{t}}(\mX) \, \partial_{x_1}^2 \langle
u_{0}\rangle_{\Gamma} 
\; + \;  F_{2}^{\mathfrak{n}}(\mX) \, \partial_{x_1} \langle \partial_{x_2}
u_{0}\rangle_{\Gamma}.
\end{multline}
$F_{0}^{\mathfrak{t}}(\mX)$ and $F_{1}^{\mathfrak{n}}(\mX)$ are given by~\eqref{ProblemW0t}-\eqref{DefF1n}, and,
\begin{equation}\label{DefF2tF2n}
F_{2}^{\mathfrak{t}}(\mX) =  - 2 \langle g_2(\mX) \rangle + W_{0}^{\mathfrak{t}}(\mX)\ , \quad
F_{2}^{\mathfrak{n}}(\mX) = 2 \, \partial_{X_1} \, W_{1}^{\mathfrak{n}}(\mX). 
\end{equation}
In formula~\eqref{G2}, for the sake of concision, we have omitted the
dependence on $x_1$ 
of the macroscopic terms. To obtain this formula, we have replaced $\Pi_0$ and
$\Pi_1$ with their tensorial representations~(\ref{decompositionPi0}),(\ref{decompositionPi1}),  we have substituted 
$-\partial_{x_2}^2 u_0(x, 0^\pm)$ by $\partial_{x_1}^2u_0(x, 0^\pm)$ using the
macroscopic equation~\eqref{ProblemeSectionTransmission}  ($\Delta
u_{0} = 0$ in the vicinity of $\Gamma$) and we have taken into account
the jump conditions~\eqref{Sautu0},\eqref{Sautdu0} for $u_0$. \\

\noindent  For a fixed $x_1 \in (-\LBottom, \LBottom)$, it is easily verified that $G_2(x_1, \cdot)$ belongs to
$(\mathcal{V}^-(\mathcal{B}))' $: indeed, the first five  terms
of~\eqref{G2} are compactly supported and the last one is
exponentially decaying (more precisely, $w^+_e F_2^{\mathfrak{t}}$ and $w^+_e F_2^{\mathfrak{n}}$
belong to $L^2(\mathcal{B})$). %
Then again, the existence of an exponentially
decaying corrector $\Pi_2(x_1, \cdot) \in \mathcal{V}^+(\mathcal{B})$
results from the orthogonality of
$G_2(x_1, \cdot)$ with $\mathcal{N}$ and
$\mathcal{D}$. As previously, enforcing the compatibility condition
\eqref{eq:prop_layer_existence_uniqueness_compatibility_N}  provides
the transmission condition for the jump
 of the normal trace of $u_1$ across $\Gamma$:
\begin{equation}\label{Sautdu1}
[\partial_{x_2} u_1 ]_\Gamma =  \mathcal{N}_{2}^{\mathfrak{t}} \, \partial_{x_1}^2 \langle
u_0 \rangle_{\Gamma} + \mathcal{N}_{2}^{\mathfrak{n}} \, \partial_{x_1} \langle \partial_{x_2}
u_0 \rangle_{\Gamma},
\end{equation}
where
\begin{equation}\label{DefN2tN2n}
\mathcal{N}_{2}^{\mathfrak{t}} = - \int_{\mathcal{B}} F_{2}^{\mathfrak{t}}(\mX)
\mathcal{N}(\mX) d\mX, \quad \mathcal{N}_{2}^{\mathfrak{n}} = - \int_{\mathcal{B}} F_{2}^{\mathfrak{n}}(\mX)
\mathcal{N}(\mX) d\mX. 
\end{equation}
Then, enforcing the compatibility condition~\eqref{eq:prop_layer_existence_uniqueness_compatibility_N}  provides the jump $[u_2]_\Gamma$, and the
existence of $\Pi_2$ is proved.  Naturally an explicit expression of
$[u_2]_\Gamma$ and a tensorial representation of $\Pi_2$ can be
written (see the upcoming formulas~\eqref{SautFFq}-\eqref{definitionPiq}), but, for the sake of concision, we do not write it here.
\subsection{Transmission conditions up to any
  order}\label{SubSectionTransmissionGeneral}
We are now in a position to extend the previous approach up to any order. For each $q\in \N$, similarly to the first steps, our global iterative
approach relies on the following procedure:
\begin{enumerate}
\item We compute the right-hand side $G_q(x_1,\mX)$  of the
  periodic corrector problem~\eqref{ProblemeSectionTransmission}  of
  order $q$: we write $G_q$ as a tensorial product between functions that only
  depend on the microscopic variables $X_1$ and $X_2$ and functions
  that only depend on the macroscopic variable $x_1$.  More
  specifically, the latter functions consist of 
  the trace and normal trace of the macroscopic terms of order lower than $q$ and their
  tangential derivatives (see \eqref{ProblemPi0},\eqref{G1},\eqref{G2}).
%

\item We compute the normal jump $[\partial_{x_2} u_{q-1}(x_1, 0)]_\Gamma$ by
  enforcing $G_q$ to be orthogonal to $\mathcal{N}$, \ie, to satisfy the compatibility condition
  \eqref{eq:prop_layer_existence_uniqueness_compatibility_N} (see \eqref{Sautu0},\eqref{Sautu1}).
\item We compute the jump $[ u_{q}(x_1, 0)]_\Gamma$ by
  imposing $G_q$ to satisfy the compatibility condition
  \eqref{eq:prop_layer_existence_uniqueness_compatibility_D} ensuring that
  $G_q$ is orthogonal to $\mathcal{D}$ (see  \eqref{Sautdu0},\eqref{Sautdu1}).
\item We write a tensorial representation of the periodic corrector
  $\Pi_q$ introducing at most two new profile functions (see formulas \eqref{decompositionPi0},\eqref{decompositionPi1}).
\end{enumerate}
 Applying this general scheme, we can prove the following proposition,
whose complete proof is postponed in Appendix~\ref{ProofPropositionGeneralTransmission}.
\begin{prop}\label{PropFormeGeneraleTransmission}
Assume that the macroscopic terms $u_q$ satisfying~\eqref{ProblemeSectionTransmission}
exist. Then, there exists four sequences of real constants $\mathcal{N}_{p}^{\mathfrak{n}}$,
$\mathcal{N}_{p}^{\mathfrak{t}}$, $\mathcal{D}_{p}^{\mathfrak{n}}$,
$\mathcal{D}_{p}^{\mathfrak{t}}$ such that
\begin{subequations}
\label{SautFFq+NormalFFq}
\begin{align}\label{SautFFq}
[u_q(x_1,0)]_{\Gamma} &= 
\sum_{p=1}^q \mathcal{D}_p^{\mathfrak{t}} \, \partial_{x_1}^p \langle
u_{q-p}(x_1,0)
\rangle_{\Gamma} \hspace{1.8em} + \sum_{p=1}^{q} 
\mathcal{D}_p^{\mathfrak{n}} \,  \partial_{x_1}^{p-1} \langle \partial_{x_2}
u_{q-p}(x_1,0) \rangle_{\Gamma}\ ,\\
\label{SautNormalFFq}
[\partial_{x_2}u_{q}(x_1,0)]_{\Gamma} &= 
\sum_{p=1}^{q} \mathcal{N}_{p+1}^{\mathfrak{t}} \, \partial_{x_1}^{p+1} \langle
u_{q-p}(x_1,0)
\rangle_{\Gamma}+\sum_{p=1}^{q} 
\mathcal{N}_{p+1}^{\mathfrak{n}} \,  \partial_{x_1}^{p} \langle \partial_{x_2}
u_{q-p}(x_1,0) \rangle_{\Gamma}\ .
\end{align}
\end{subequations}
\end{prop}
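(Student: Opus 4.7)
The plan is to prove the proposition by strong induction on $q$, following the four-step scheme outlined just before the statement. The induction hypothesis at step $q$ will assert both the transmission conditions \eqref{SautFFq}--\eqref{SautNormalFFq} for all indices strictly less than $q$, and a tensorial representation of the form
$$\Pi_k(x_1,\mX)=\sum_{p=0}^{k}\partial_{x_1}^p\langle u_{k-p}(x_1,0)\rangle_\Gamma\,W_p^{\mathfrak{t}}(\mX)+\sum_{p=1}^{k}\partial_{x_1}^{p-1}\langle \partial_{x_2} u_{k-p}(x_1,0)\rangle_\Gamma\,W_p^{\mathfrak{n}}(\mX)$$
for every $k<q$, with fixed profile functions $W_p^{\mathfrak{t}},W_p^{\mathfrak{n}}\in\mathcal{V}^+(\mathcal{B})$ depending only on the geometry of $\mathcal{B}$ (some of them possibly vanishing). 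The cases $q=0,1$, treated in Sections~\ref{SubsubStep0} and~\ref{SubsubStep1} together with the convention of empty sums, supply the base of the induction, with $W_0^{\mathfrak{t}}=1-\chi$, $W_1^{\mathfrak{t}}=0$ and $W_1^{\mathfrak{n}}=\widetilde{\mathcal{D}}$.

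For the induction step, I first compute $G_q$ from \eqref{SecondMembreGq}. Its first sum produces terms containing either mean values $\langle\partial_{x_2}^p u_{q-p}\rangle_\Gamma$ or jumps $[\partial_{x_2}^p u_{q-p}]_\Gamma$. Two reductions are then applied. First, I invoke the macroscopic equation $\Delta u_k=0$ (valid in a neighbourhood of $\Gamma$ because $f$ is supported away from $\Gamma$) to eliminate every normal derivative of order at least two via the identities $\partial_{x_2}^{2j}u_k=(-1)^j\partial_{x_1}^{2j}u_k$ and $\partial_{x_2}^{2j+1}u_k=(-1)^j\partial_{x_1}^{2j}\partial_{x_2}u_k$. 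Second, I use the induction hypothesis to rewrite each jump $[\partial_{x_2}^p u_{q-p}]_\Gamma$ as a tangential differential operator applied to mean values of lower-order macroscopic terms. The remaining contributions $2\partial_{x_1}\partial_{X_1}\Pi_{q-1}$ and $\partial_{x_1}^2\Pi_{q-2}$ are expanded via the tensorial decompositions of $\Pi_{q-1}$ and $\Pi_{q-2}$ from the induction hypothesis. Gathering all contributions yields the separated form
$$G_q(x_1,\mX)=\sum_{p\ge 0}\partial_{x_1}^p\langle u_{q-p}(x_1,0)\rangle_\Gamma\,\Phi_p^{\mathfrak{t}}(\mX)+\sum_{p\ge 0}\partial_{x_1}^p\langle \partial_{x_2} u_{q-1-p}(x_1,0)\rangle_\Gamma\,\Phi_p^{\mathfrak{n}}(\mX)$$
for suitable profiles $\Phi_p^{\mathfrak{t}},\Phi_p^{\mathfrak{n}}\in(\mathcal{V}^-(\mathcal{B}))'$ built from $\langle g_r\rangle$, $[g_r]$ and the lower-order $W_\bullet^{\mathfrak{t,n}}$.

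The transmission conditions are then extracted by testing $G_q$ against $\mathcal{N}$ and $\mathcal{D}$. By Lemma~\ref{CalculConditionCompatibilite}, $[\partial_{x_2}u_{q-1}]_\Gamma$ is the only unknown paired nontrivially with $\mathcal{N}$ (through the $\tfrac12[g_1]$ contribution, with $p=1$), while $[u_q]_\Gamma$ is the only unknown paired nontrivially with $\mathcal{D}$ (through the $\tfrac12[g_0]$ contribution, with $p=0$); all other pairings involve quantities already known from the induction hypothesis. Thus $(\mathcal{C}_\mathcal{N})$ isolates $[\partial_{x_2}u_{q-1}]_\Gamma$ and $(\mathcal{C}_\mathcal{D})$ isolates $[u_q]_\Gamma$, each in precisely the form prescribed by \eqref{SautFFq}--\eqref{SautNormalFFq}. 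The constants $\mathcal{N}_p^{\mathfrak{t,n}}$ and $\mathcal{D}_p^{\mathfrak{t,n}}$ arise as explicit integrals over $\mathcal{B}$ of the $\Phi_p^{\mathfrak{t,n}}$ against $\mathcal{N}$ or $\mathcal{D}$, and are therefore intrinsic geometric data of $\mathcal{B}$. Once both compatibility conditions hold, Proposition~\ref{prop:layer_existence_uniqueness_problem_strip} delivers a unique $\Pi_q\in\mathcal{C}((-\LBottom,\LBottom),\mathcal{V}^+(\mathcal{B}))$, which by linearity in the macroscopic data inherits a tensorial decomposition of the same shape as in the induction hypothesis (adding at most the two new profile functions $W_q^{\mathfrak{t}}$ and $W_q^{\mathfrak{n}}$), closing the induction.

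The main obstacle is the combinatorial bookkeeping behind these reductions: one must track exactly how the two lower-order operators $\partial_{x_1}^2$ and $\partial_{x_1}\partial_{X_1}$ acting on the tensorial decompositions of $\Pi_{q-2}$ and $\Pi_{q-1}$, together with the elimination of higher-order normal derivatives via $\Delta u_k=0$ and the inductive rewriting of jumps, combine so that the coefficient of $\partial_{x_1}^\alpha\langle\partial_{x_2}^\beta u_r\rangle_\Gamma$ in $G_q$ appears only with the right index balance $\alpha+\beta+r=q$ and $\beta\in\{0,1\}$. Making this accounting fully explicit, so that the coefficients organize into exactly two sequences $\mathcal{D}_p^{\mathfrak{t,n}}$ and $\mathcal{N}_p^{\mathfrak{t,n}}$ independent of $q$, is the technical heart of Appendix~\ref{ProofPropositionGeneralTransmission}.
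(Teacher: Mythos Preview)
Your proposal is correct and follows essentially the same approach as the paper's proof in Appendix~\ref{ProofPropositionGeneralTransmission}: strong induction on $q$ carrying simultaneously the transmission conditions and the tensorial representation of $\Pi_k$, reduction of $G_q$ via the harmonicity identities $\partial_{x_2}^{2j}u_k=(-1)^j\partial_{x_1}^{2j}u_k$ and the inductive rewriting of lower-order jumps, then extraction of $[\partial_{x_2}u_{q-1}]_\Gamma$ and $[u_q]_\Gamma$ by testing against $\mathcal{N}$ and $\mathcal{D}$ respectively. The paper makes the bookkeeping explicit by splitting $G_q$ into the pieces $\mathcal{A}_q$, $\mathcal{B}_q$, $2\partial_{x_1}\partial_{X_1}\Pi_{q-1}$ and $\partial_{x_1}^2\Pi_{q-2}$ and deriving closed formulas \eqref{DefFpt}--\eqref{DefDpnNpn} for $F_p^{\mathfrak{t,n}}$, $\mathcal{D}_p^{\mathfrak{t,n}}$, $\mathcal{N}_p^{\mathfrak{t,n}}$, but the logical skeleton is exactly what you describe.
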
 
\noindent In the previous definition,we have used the superscript $\mathfrak{t}$
(in $\mathcal{D}_p^{\mathfrak{t}}$, $\mathcal{N}_p^{\mathfrak{t}}$) to refer to
some constants associated with tangential derivatives of the average trace
of the macroscopic terms. Similarly, the superscript $\mathfrak{n}$ (in $\mathcal{D}_p^{\mathfrak{n}}$,
$\mathcal{N}_p^{\mathfrak{n}}$) is used for the constants associated with tangential
derivatives of the average of the normal trace of the macroscopic terms. 
\begin{rem} \label{RemPropFormeGeneraleTransmission} In the proof of
  Proposition~\ref{PropFormeGeneraleTransmission} (Appendix~\ref{ProofPropositionGeneralTransmission}), we also prove
  simultaneously that there exist two families of decaying profile functions
  $W_{p}^{\mathfrak{t}}$ and $W_{p}^{\mathfrak{n}}$ belonging to $\mathcal{V}^+(\mathcal{B})$
  such that the periodic corrector $\Pi_q \in \mathcal{C}\left( (-\LBottom,
  \LBottom), \mathcal{V}^+(\mathcal{B})\right)$ admits the following representation:
 \begin{equation}\label{definitionPiq}
\Pi_q(x_1, \mX) = \sum_{p=0}^q \partial_{x_1}^p \langle
u_{q-p}(x_1,0)
\rangle_{\Gamma} W_{p}^{\mathfrak{t}}(\mX) +\sum_{p=1}^{q} 
\partial_{x_1}^{p-1} \langle \partial_{x_2}
u_{q-p}(x_1,0) \rangle_{\Gamma} W_{p}^{\mathfrak{n}}(\mX).
\end{equation}
The definitions of the functions $W_p^{\mathfrak{t}}$ and $W_p^{\mathfrak{n}}$ and of the constants $\mathcal{D}_q^{\mathfrak{t}}$, $\mathcal{N}_q^{\mathfrak{t}}$,
$\mathcal{D}_q^{\mathfrak{n}}$, $\mathcal{N}_q^{\mathfrak{n}}$ are given explicitely in \eqref{ProblemWpt},\eqref{DefDptNpt},\eqref{ProblemWpn},
and~\eqref{DefDpnNpn}. 
\end{rem}
\noindent We point out that the periodic correctors $\Pi_q$ do not appear
(explicitly) in~\eqref{SautNormalFFq}: they have been eliminated. In
other words, the resolution of macroscopic and boundary layer problems
are decoupled and the construction of $\Pi_q$ can be made a posteriori.

\section{Analysis of the macroscopic problems (macroscopic singularities)}\label{SectionFFS}

Thanks to the previous section (see in particular
Proposition~\ref{PropFormeGeneraleTransmission}, reminding that the
index $n$ and the superscript $\delta$ have been deliberately omitted
in the previous section), we can see that if the
macroscopic terms $u_{n,q}^\delta$ (solution to \eqref{FFVolum})
exist, they satisfy the
following transmission problems: for any $(n,q)  \in \N^2$,
\begin{subequations}
\label{MacroscopicProblem1}
\begin{equation}
\left \lbrace
\begin{array}{r@{\;}ll}
-\Delta u_{n,q}^\delta & =  f_{n,q} &\quad  \mbox{in} \; \OmegaTop \cup \OmegaBottom,\\[0.5em]
\left[ u_{n,q}^\delta (x_1, 0) \right]_{\Gamma} &= g_{n,q}^\delta ,\\[0.5em]
\left[ \partial_{x_2} u_{n,q}^\delta (x_1, 0) \right]_{\Gamma} & = h_{n,q}^\delta, \\[0.5em]
u_{n,q}^\delta & = 0 &\quad \mbox{on} \, \Gamma_D,
\end{array}
\right. \quad \quad  f_{n,q} = \begin{cases}
f & \mbox{if} \; n=q=0, \\
0 & \mbox{otherwise,}
\end{cases} 
\end{equation}
where 
\begin{align}
g_{n,q}^\delta(x_1) &= \sum_{p=1}^q \mathcal{D}_p^{\mathfrak{t}} \, \partial_{x_1}^p \langle
u_{n,q-p}^\delta(x_1,0)  \rangle_{\Gamma} \hspace{1.8em} + \sum_{p=1}^{q} 
\mathcal{D}_p^{\mathfrak{n}} \,  \partial_{x_1}^{p-1} \langle \partial_{x_2}
u_{n,q-p}^\delta(x_1,0) \rangle_{\Gamma}\ , \label{Defgnq}\\
h_{n,q}^\delta(x_1) &= 
\sum_{p=1}^{q} \mathcal{N}_{p+1}^{\mathfrak{t}} \, \partial_{x_1}^{p+1} \langle
u_{n,q-p}^\delta(x_1,0)
\rangle_{\Gamma}+\sum_{p=1}^{q} 
\mathcal{N}_{p+1}^{\mathfrak{n}} \,  \partial_{x_1}^{p} \langle \partial_{x_2}
u_{n,q-p}^\delta(x_1,0) \rangle_{\Gamma}\ . \label{Defhnq}
\end{align}
\end{subequations}
 As previously mentioned, the constants $\mathcal{D}_q^{\mathfrak{t}}$, $\mathcal{N}_q^{\mathfrak{t}}$,
$\mathcal{D}_q^{\mathfrak{n}}$, $\mathcal{N}_q^{\mathfrak{n}}$, which only depend on the geometry of
the periodicity cell $\mathcal{B}$, are defined in \eqref{DefDptNpt}-\eqref{DefDpnNpn}. \\

\noindent The present section is dedicated to the analysis of 
Problems~\eqref{MacroscopicProblem1}. In
Subsection~\ref{SubsectionVariationalFramework},  we give general
results of well-posedness for transmission problems: we first introduce a variational framework,
then we present an alternative functional framework based on weighted
Sobolev spaces. 
In Subsection~\ref{SubsectionFirstSingularity}, we explain the reason
why the variational framework is not adapted for the resolution 
of Problem~\eqref{MacroscopicProblem1} for $q=1$ (and higher). This leads us to
consider singular (extra-variational) macroscopic terms that may blow
up in the vicinity of the two corners.  In Subsection~\ref{MacroscopicSingularities},
we construct several sequences of singular functions that are used in
Subsection~\ref{SubsectionExpliciteMacro} to write a general formula
for the macroscopic terms (Proposition~\ref{PropExplicitMacro}).
\subsection{General results of existence for transmission
  problem}\label{SubsectionVariationalFramework}
The problems under consideration can be investigated using the general
framework for transmission problems posed in polygonal domains developed in
\cite{Nicaise}. 
In the present paper, we first recall a classical
well-posedness result based on a variational form of the problem. Then, based
on weighted Sobolev spaces, we describe the behavior of the solutions close to
the two reentrant corners.
\subsubsection{Variational framework}
Let us introduce the classical Hilbert spaces associated with our problems  
\begin{align*} 
\HoneGammaD(\OmegaTop\cup\OmegaBottom) &= \left\{ u \in \Hone(\OmegaTop\cup\OmegaBottom),
 \; \mbox{s.t.} \; u
= 0  \text{ on }  \Gamma_D\right\}\ ,
\end{align*}
which incorporates discontinuous functions over $\Gamma$ (see Figure~\ref{fig:Omega}).
Its restrictions to $\OmegaTop$ and $\OmegaBottom$
are denoted by $\HoneGammaD(\OmegaTop)$ and $\HoneGammaD(\OmegaBottom)$.
We denote by $\HonehalfzzG$ the restriction of
the trace of the function $\HoneGammaD(\OmegaTop)$ to $\Gamma$ (for a complete description of the trace of functions, we refer the reader to~\cite{Grisvard85}.), \ie,
\begin{equation*}
\HonehalfzzG = \left\{ \mu \in
    \HonehalfG, \mbox{s.t.} \,\exists\, v\in \HoneGammaD(\OmegaTop): 
    v = \mu \text{ on } \Gamma \right\}.
\end{equation*}
Naturally, the space $\HonehalfzzG$ is also the restriction of 
the trace of the functions of  $\HoneGammaD(\OmegaBottom)$ to $\Gamma$. Based on a variational formulation, and thanks to the Lax-Milgram lemma, we can prove the following 
well-posedness result: 
\begin{prop}\label{PropExistenceUniquenessFF} Let $\mathfrak{f} \in
  L^2(\Omega)$, $\mathfrak{g} \in \HonehalfzzG$,  and $\mathfrak{h} \in
  L^2(\Gamma)$. Then, the following problem has a unique solution $u$
  belonging to $\HoneGammaD(\OmegaTop\cup\OmegaBottom)$:
\begin{equation}\label{MacroscopicProblemModel}
\left \lbrace
\begin{aligned}
      - \Delta u &= \mathfrak{f} &\quad& \text{in } \OmegaTop \cup \OmegaBottom\ ,\\
      \left[ u \right]_{\Gamma} &= \mathfrak{g} &\quad&\text{on }\Gamma\ ,\\
      \left[ \partial_{x_2} u \right]_{\Gamma}  &= \mathfrak{h} &\quad&\text{on }\Gamma\ .  
    \end{aligned}
\right.
\end{equation}
\end{prop}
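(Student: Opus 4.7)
The plan is to reduce the problem to one with a homogeneous jump condition on the trace and then apply the Lax--Milgram lemma on the continuous space $\HoneGammaD(\Omega)$.

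First I would construct a lift $u_{\mathfrak{g}} \in \HoneGammaD(\OmegaTop \cup \OmegaBottom)$ realizing the prescribed jump, by setting $u_{\mathfrak{g}} \equiv 0$ in $\OmegaBottom$ and taking $u_{\mathfrak{g}}|_{\OmegaTop}$ to be any $H^1(\OmegaTop)$-function whose trace on $\partial\OmegaTop$ equals $\mathfrak{g}$ on $\Gamma$ and vanishes on $\partial\OmegaTop \setminus \Gamma$. The existence of such a lift is precisely what the assumption $\mathfrak{g} \in \HonehalfzzG$ guarantees: by the very definition of $\HonehalfzzG$, the extension of $\mathfrak{g}$ by zero to the remainder of $\partial\OmegaTop$ belongs to $H^{1/2}(\partial\OmegaTop)$, and a continuous right-inverse of the trace operator then produces $u_{\mathfrak{g}}$.

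Writing $u = u_{\mathfrak{g}} + v$, the remainder has vanishing jump across $\Gamma$ and therefore belongs to $\HoneGammaD(\Omega)$. Integrating by parts separately on $\OmegaTop$ and $\OmegaBottom$ against a test function $w \in \HoneGammaD(\Omega)$, and using that $w$ is continuous across $\Gamma$ while the two outward normal derivatives of $u$ on $\Gamma$ sum to $-[\partial_{x_2}u]_\Gamma = -\mathfrak{h}$, one sees that problem~\eqref{MacroscopicProblemModel} is equivalent to: find $v \in \HoneGammaD(\Omega)$ such that
\begin{equation*}
  a(v,w) \;=\; \int_\Omega \mathfrak{f}\, w \, d\mathbf{x} \;-\; \int_\Gamma \mathfrak{h}\, w \, dx_1 \;-\; a(u_{\mathfrak{g}}, w) \qquad \forall w \in \HoneGammaD(\Omega),
\end{equation*}
with $a(v,w) = \int_\Omega \nabla v \cdot \nabla w \, d\mathbf{x}$.

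I would then apply Lax--Milgram on $\HoneGammaD(\Omega)$. Continuity of $a$ is immediate, and coercivity follows from the Poincaré inequality, available since the Dirichlet part $\Gamma_D = \partial \Omega$ has positive one-dimensional Hausdorff measure. The right-hand side is continuous on $\HoneGammaD(\Omega)$: Cauchy--Schwarz handles the volume integral; the continuity of the trace map $H^1(\Omega) \to L^2(\Gamma)$ together with $\mathfrak{h} \in L^2(\Gamma)$ handles the boundary integral; and the continuity of $a(u_{\mathfrak{g}}, \cdot)$ is obvious. This gives existence and uniqueness of $v$, hence of $u = u_{\mathfrak{g}} + v$; uniqueness also follows directly by testing the homogeneous variational problem against the difference of two candidate solutions and invoking coercivity.

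The only genuinely delicate point is the first one, namely the construction of the lift: the hypothesis $\mathfrak{g} \in \HonehalfzzG$ (stronger than $H^{1/2}(\Gamma)$) is essential for the zero extension of $\mathfrak{g}$ to the outer boundary of $\OmegaTop$ to remain in $H^{1/2}$, without which no $\HoneGammaD$-lift of the jump would exist and the variational reduction would fail.
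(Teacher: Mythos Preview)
Your proof is correct and follows exactly the approach the paper indicates: the paper does not give a detailed argument but simply states that the result is obtained ``based on a variational formulation, and thanks to the Lax--Milgram lemma''. Your lift-and-reduce construction and the subsequent application of Lax--Milgram on $\HoneGammaD(\Omega)$ are precisely the standard way to make this sentence rigorous.
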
 

\subsubsection{Weighted Sobolev spaces and asymptotic behaviour}
 In the next subsections, we shall study the behavior of the macroscopic
 terms in the neighborhood of the two reentrant corners. It is
 well-known that the Hilbert spaces $H^m(\OmegaBottom)$ (resp. $H^m(\OmegaTop)$) are not well-adapted to
 this investigation. By contrast, the
 weighted Sobolev spaces provide a more convenient functional
 framework. We refer the reader to the Kondrat'ev theory 
(see~\cite{Kondratev1963}, ~\cite[Chap.~5 and Chap.~6]{MR1469972} for a
complete presentation of these spaces and their applications). In this
part, we introduce the weighted Sobolev spaces associated with our
problem following the presentation 
of~\cite[Chap.~6]{MR1469972}. Let us first define the polar coordinates $(r^\pm, \theta^\pm)$
centered at the vertex $\mx_O^\pm$, \ie,
\begin{equation}
x_1 - (\mathbf{x}_{O}^\pm)_1 =  r^\pm \cos(\theta^\pm), \quad x_2 - (\mathbf{x}_{O}^\pm)_2 =  r^\pm \sin(\theta^\pm).
\end{equation}
Next, we consider the two infinite angular (or conical) domains 
$\mathcal{K}_{\mathbf{x}_{O}^\pm}$ centered at $\mathbf{x}_{O}^\pm$  of
opening $\frac{3 \pi}{2}$
\begin{equation}\label{DefinitionKpmW}
\mathcal{K}_{\mathbf{x}_{O}^\pm} = \left\{ (r^\pm \cos \theta^\pm, r^\pm \sin \theta^\pm) \in \R^2,  r^\pm > 0, \theta^\pm
  \in I^\pm \right\}, \quad I^+ = (0, \frac{3\pi}{2}), \quad I^-=(-\frac{\pi}{2}, \pi),
\end{equation}
and, for $\ell \in \{0,1,2\}$, we define the space
$V_{2,\beta}^\ell(\mathcal{K}_{\mathbf{x}_{O}^\pm})$ as the closure of
$\mathcal{C}_0^\infty(\overline{\mathcal{K}_{\mathbf{x}_{O}^\pm}} \setminus \{0\})$ with
respect to the norm
\begin{equation}\label{definitionV2betal}
\| u\|_{V_{2,\beta}^\ell(\mathcal{K}_{\mathbf{x}_{O}^\pm})} = \left( \int_{\mathcal{K}_{\mathbf{x}_{O}^\pm}}
\sum_{|\alpha| \leq \ell}
(r^\pm)^{ 2 (\beta - \ell + |\alpha|) }|\partial_{x_1}^{\alpha_1} \partial_{x_2}^{\alpha_2} u|^2 d\mathbf{x} \right)^{1/2},
 \quad  \alpha = (\alpha_1, \alpha_2) \in \N^2, |\alpha| = \alpha_1 + \alpha_2.
\end{equation}
Then, let
\begin{equation}
  \label{eq:definition_chi_LBottom_pm}
  \chi_{\LBottom}^\pm(\mx)  = (1 -\chi(2 r^\pm/\LBottom))
\end{equation}
be the cut-off function equal to one in the vicinity of $\mx_O^\pm$ and
vanishing in the vicinity of $\mx_O^\mp$ (the support of
  $\chi_{\LBottom}^\pm$ is localized in the neighborhood of the vertice
  $\mx_O^\pm$), and let $\chi_{\LBottom}^0 = 1 - \chi_{\LBottom}^+ -
\chi_{\LBottom}^-$. We remind that the truncation function $\chi$ is defined by~\eqref{defchi}.  For $\ell \in \{ 0,1,2\}$, we introduce
the space $V_{2,\beta}^\ell(\Omega)$
\begin{equation}\label{DefV2beta1}
V_{2,\beta}^{\ell}(\Omega) = \left\{ u \in
  H^\ell_{\text{loc}}(\Omega), \quad \|\chi_{\LBottom}^- u
  \|_{V_{2,\beta}^\ell(\mathcal{K}_{\mathbf{x}_{O}^-})} + \|\chi_{\LBottom}^+ u
  \|_{V_{2,\beta}^\ell(\mathcal{K}_{\mathbf{x}_{O}^+})} + \| \chi_{\LBottom}^0 u
  \|_{H^\ell(\Omega)} < +\infty \right\},
\end{equation}
equipped with the following norm
\begin{equation}\label{DefV2betaNorme}
\| u \|_{V_{2,\beta}^\ell(\Omega) } = \| \chi_{\LBottom}^- u
\|_{V_{2,\beta}^\ell(\mathcal{K}_{\mathbf{x}_{O}^-})} + \| \chi_{\LBottom}^+ u
\|_{V_{2,\beta}^\ell(\mathcal{K}_{\mathbf{x}_{O}^+})} + \| \chi_{\LBottom}^0 u
  \|_{H^\ell(\Omega)}\ .
\end{equation}
Here, we have used the convention $H^0(\Omega) = L^2(\Omega)$. 
Note, that the space $V_{2,\beta}^\ell(\Omega)$ is independent of the exact choice of $\chi$ and so the
truncation functions $\chi_{\LBottom}^\pm$ and that 
\begin{align}
\label{DefV2beta:inclusion}
V_{2,\beta'}^\ell(\Omega) \subset V_{2,\beta}^\ell(\Omega) \text{ for any } \beta' < \beta\ .
\end{align}
In the same way, we also define $V_{2,\beta}^\ell(\OmegaTop)$ (resp. $V_{2,\beta}^\ell(\OmegaBottom)$) as well as their
associated norm $\| \cdot \|_{V_{2,\beta}^\ell(\OmegaTop)}$ (resp. $\|\cdot\|_{V_{2,\beta}^\ell(\OmegaBottom)}$) replacing
$\Omega$ with $\OmegaTop$ (resp. $\OmegaBottom$) in the definitions~\eqref{DefV2beta1} and \eqref{DefV2betaNorme}. Finally, for $\ell \in \{ 1, 2\}$,
we introduce the space $V_{2,\beta}^{\ell- 1/2}(\Gamma)$ of the trace of the functions in $V_{2,
  \beta}^\ell(\OmegaTop)$ on the interface $\Gamma$. As norm in
$V_{2,\beta}^{\ell -1/2}(\Gamma)$ we take
\begin{equation}
\| u \|_{V_{2,\beta}^{1/2}(\Gamma)}   = \inf \left\{ \| v
\|_{V_{2,\beta}^{1}(\OmegaTop)} : v \in V_{2,\beta}^1 , v_{\left|\Gamma \right.} = u \right\}.
 \end{equation}

\noindent When studying the behavior of the far field terms close to the
reentrant corners, the set 
\begin{equation}\label{EnsembleDesCoefficientDeSingularite}
\Lambda = \left \{ \lambda_m \in \R, \; \mbox{such that} \; \lambda_m = \frac{2
    m}{3}, \; m \in \Z \setminus \{ 0\} \right\}
\end{equation}
of  {\em singular exponents} will play a crucial role (see~\cite[Chap.~1 -- 4]{Grisvard85}). 
It consists of the real numbers $\lambda$
whose square $\lambda^2$ is an eigenvalue of the
operator
\begin{equation*}
\mathcal{A} : 
\left\{
\begin{array}{r@{\;}l}
 \mathcal{D}(\mathcal{A}) = H^1_0(0, \frac{3\pi}{2}) \cap H^2(0, \frac{3\pi}{2})  \subset L^2(0,\frac{3\pi}{2}) &\rightarrow L^2(0,\frac{3\pi}{2})\ , \\
u &\mapsto \mathcal{A} u = - u''\ .
\end{array}
\right.
\end{equation*} 
Note that the associated eigenvectors are given by
\begin{equation}\label{DefintionEigenvectorwm}
w_{m}(t) = \sin( \lambda_m t), \quad m \in \Z \setminus \{ 0 \}.
\end{equation}
The following proposition, which is a standard result in the literature on elliptic problems in angular domains (cf.~\cite[Theorem~3.6 and Corollary~4.4]{Nicaise} for the proof), provides
an explicit asymptotic representation of the solution of the transmission problems
in a neighbourhood of the corners (see also~\cite[Chap.~6]{MR1469972} for a complete and detailed explanation of the
overall approach):
\begin{prop}\label{PropositionAsymptoticEspaceAPoids}
Let $\beta <0$ be a real number such that $1-\beta \notin \Lambda$. Assume that $\mathfrak{f}
\in V_{2, \beta}^0(\OmegaTop) \cap V_{2, \beta}^0(\OmegaBottom) \subset L^2(\Omega)$, %
$\mathfrak{g} \in V_{2,\beta}^{3/2}(\Gamma) \subset H^{3/2}(\Gamma)$ 
and $\mathfrak{h} \in  V_{2,\beta}^{1/2}(\Gamma) \subset H^{1/2}(\Gamma)$.
Then, the unique solution $u \in \HoneGammaD(\OmegaTop)\cap \HoneGammaD(\OmegaBottom)$ of Problem~\eqref{MacroscopicProblemModel}
admits the following decomposition:
\begin{equation}\label{asymptoticExpansionPropFF}
u = \sum_{1 \leq  q < \frac{3}{2} (1-\beta)} c_q^\pm\, 
(r^\pm)^{\lambda_q} \,
w_{q,0,\pm}(\theta^\pm) \; + \; w^\pm\ ,
\end{equation}
where $w^\pm \in V_{2,\beta}^2(\OmegaTop) \cap V_{2,\beta}^2(\OmegaBottom)$, $
w_{q,0,+}(t) = w_q(t)$ and $w_{q,0,-}(t) = w_q(t-\frac{\pi}{2})$ (where $w_q$ were given in~\eqref{DefintionEigenvectorwm}). %
Moreover, there exists a constant $C$ independent
of $u$ such that
\begin{equation}
\| w^\pm \|_{V_{2,\beta}^2(\OmegaTop) } + \| w^\pm
\|_{V_{2,\beta}^2(\OmegaBottom) } + \sum_{1 \leq  q < \frac{3(1-\beta)}{2}}
|c_q|^\pm \leq C \left( \|\mathfrak{f} \|_{V_{2,\beta}^0(\OmegaTop) } + \| \mathfrak{f}
\|_{V_{2,\beta}^0(\OmegaBottom) } \right).
\end{equation}
\end{prop}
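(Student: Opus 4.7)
The plan is to reduce \eqref{MacroscopicProblemModel} to a pure Dirichlet problem for $-\Delta$ on the polygon $\Omega$ and then invoke the classical Kondrat'ev-type decomposition as formulated in \cite[Thm.~3.6 and Cor.~4.4]{Nicaise}. Two ingredients will be needed: a one-sided lifting that removes the inhomogeneous jump data while staying in the correct weighted Sobolev space, and the identification of the operator pencil at each reentrant corner whose spectrum turns out to be exactly~$\Lambda$.

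First I would construct $v \in V_{2,\beta}^2(\OmegaTop)$ with $v = 0$ on $\Gamma_D\cap\overline{\OmegaTop}$, $v|_{\Gamma} = \mathfrak{g}$ and $\partial_{x_2} v|_{\Gamma} = -\mathfrak{h}$, which is possible because $\mathfrak{g}\in V_{2,\beta}^{3/2}(\Gamma)$ and $\mathfrak{h}\in V_{2,\beta}^{1/2}(\Gamma)$ and the weighted trace/lifting theory of \cite[Chap.~6]{MR1469972} provides a continuous right inverse for the trace map in these spaces. Extending $v$ by zero to $\OmegaBottom$ gives $\tilde v \in V_{2,\beta}^2(\OmegaTop\cup\OmegaBottom)$, and then $\tilde u := u - \tilde v$ has homogeneous jumps across $\Gamma$, so it extends to an element of $\HoneGammaD(\Omega)$ solving
\begin{equation*}
-\Delta \tilde u \;=\; \mathfrak{f} + \Delta \tilde v \;=:\; \tilde{\mathfrak{f}} \;\in\; V_{2,\beta}^0(\Omega),
\end{equation*}
with a norm controlled linearly by $\|\mathfrak{f}\|_{V_{2,\beta}^0} + \|\mathfrak{g}\|_{V_{2,\beta}^{3/2}(\Gamma)} + \|\mathfrak{h}\|_{V_{2,\beta}^{1/2}(\Gamma)}$.

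At each corner $\mathbf{x}_{O}^\pm$ the Mellin symbol of $-\Delta$ with homogeneous Dirichlet conditions on the two adjacent edges is, up to the rotation aligning the sector with $I^\pm$, the one-dimensional operator $-\partial_\theta^2$ on $H^1_0(0,\tfrac{3\pi}{2})$; its eigenvalues are $\lambda_m^2 = (2m/3)^2$ with eigenfunctions $\sin(\lambda_m\theta)$, so the Kondrat'ev spectrum at each corner is exactly $\Lambda$. The assumption $1-\beta\notin\Lambda$ is the statement that the weight line avoids this spectrum, ruling out logarithmic/resonant terms. Applying \cite[Thm.~3.6]{Nicaise} to $\tilde u$ then produces
\begin{equation*}
\tilde u \;=\; \sum_{1\le q<3(1-\beta)/2} \tilde c_q^{\pm}\,(r^\pm)^{\lambda_q}\,w_{q,0,\pm}(\theta^\pm) \;+\; \tilde w^\pm,
\end{equation*}
with $\tilde w^\pm \in V_{2,\beta}^2(\OmegaTop)\cap V_{2,\beta}^2(\OmegaBottom)$ and a corresponding stability bound; the shift appearing in $w_{q,0,-}$ is precisely the rotation sending $I^+$ to $I^-$. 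Adding $\tilde v$ back absorbs it into the regular remainder (since $\tilde v$ is already in $V_{2,\beta}^2$) and yields \eqref{asymptoticExpansionPropFF} together with the claimed estimate.

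The main obstacle is the lifting step: a naive inverse of the trace built from unweighted $H^{3/2}$--$H^1$ theory does not control the second derivatives in the $r$-weighted $L^2$ norm near $\mathbf{x}_O^\pm$. One must use the weighted-trace right-inverse of \cite[Chap.~6]{MR1469972} (or equivalently a Mellin-based construction near each corner) and check, in particular, that the extension does not silently generate singular contributions at exponents $\lambda_q$ with $\lambda_q < 1-\beta$; only once this is done does the reduction to the homogeneous transmission problem preserve the right-hand side in $V_{2,\beta}^0(\Omega)$ and let the cited theorem be applied cleanly.
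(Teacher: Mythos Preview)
Your proposal is correct and lands on the same citation the paper uses: the paper does not give its own argument for this proposition but simply refers to \cite[Theorem~3.6 and Corollary~4.4]{Nicaise}, and your sketch is precisely a standard route to that result. One small remark: since Nicaise's framework is already set up for transmission problems in polygons, the lifting step you carry out (reducing to a homogeneous Dirichlet problem on $\Omega$) is not strictly necessary---the cited theorem applies directly to \eqref{MacroscopicProblemModel} with inhomogeneous jump data, which also explains why the Mellin pencil you identify coincides with the one for the pure Dirichlet problem (the interface $\Gamma$ does not meet the corners $\mathbf{x}_O^\pm$ transversally as a material interface but lies along an edge, so no additional singular exponents are generated).
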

\noindent The expansion~\eqref{asymptoticExpansionPropFF} is nothing but a modal expansion of the solution $u$ is the vicinity of
the two corners. %
Without doubt a similar expansion could be obtained using the
technique of separation of variables (see~\cite[Chap.~2]{Grisvard92}).
The sum $\sum_{1 \leq  q < \frac{3}{2} (1-\beta)} c_q^\pm\, 
(r^\pm)^{\lambda_q} \, w_{q,0,\pm}(\theta^\pm)$ is an asymptotic expansion for $r^\pm\to0$ whose remainder $w^\pm$ decays faster to zero as any term in the sum. %
Obviously, due the embedding~\eqref{DefV2beta:inclusion} asymptotic expansions of higher order in $r^\pm$
are obtained when $\beta$ is decreased (or $|\beta|$ increased).

%

\subsection{The necessary introduction of singular macroscopic terms}\label{SubsectionFirstSingularity}
\subsubsection{The limit macroscopic term and its behavior in the
  vicinity of the corners}\label{SubsubLimitMacro}
The limit macroscopic term $u_{0,0}^\delta$ satisfies
Problem~\eqref{MacroscopicProblemModel} with $\mathfrak{f}= f_{0,0} = f \in
L^2(\OmegaTop)\cap L^2(\OmegaBottom)$, $\mathfrak{g} = 0$ and
$\mathfrak{h}=0$. In view of Proposition~\ref{PropExistenceUniquenessFF}, there
exists a unique solution $u_{0,0}^\delta$ belonging to $H^1_{0,
  \Gamma_D}(\OmegaTop)\cap \HoneGammaD(\OmegaBottom)$. Indeed, $u_{0,0}^\delta$ is
independent of $\delta$ (it will be denoted by $u_{0,0}$) and belongs to
$H^1_0(\Omega)$, since its trace does not jump across $\Gamma$.\\

\noindent The existence and uniqueness of $u_{0,0}$ being granted, we can investigate its
behavior in the neighborhood of the two reentrant corners. Since we have
  assumed that $f$ is compactly supported in $\OmegaTop$, $f \in
  V_{2,\beta}^0(\OmegaTop) \cap V_{2,\beta}^0(\OmegaBottom) $ for any $\beta \in \R$. Then, in view of
  Proposition~\ref{PropositionAsymptoticEspaceAPoids}, $u_{0,0}$ has the following asymptotic
  expansion in the vicinity of the two corners vertices
  $\mathbf{x}_{O}^\pm$: for any $k \in \N$, there exists $u_{0,0,k} \in
  V^2_{2,\beta}(\OmegaTop)\cap  V^2_{2,\beta}(\OmegaBottom)$ for any $\beta >
  1 - \frac{2(k+1)}{3}$, such that 
\begin{equation}\label{Expansionu00}
u_{0,0} = \sum_{m=1}^{k} c_m^\pm \,
\left(r^\pm\right)^{\frac{2m}{3}} w_{m,0,\pm}(\theta^\pm) + u_{0,0,k},
\end{equation}
where $c_m^\pm$ are real constants continuously depending on $\|f\|_{V^0_{2,\beta}(\Omega)}$. Here again, 
the expansion~\eqref{Expansionu00}
could also be obtained using the method of separation  of
variables. 
\subsubsection{A singular problem defining $u_{0,1}^{\delta}$}

To illustrate the fact that the macroscopic terms of higher orders
cannot always be variational (\ie belonging to $H^1(\OmegaTop)\cap H^1(\OmegaBottom)$), let us
consider the problem satisfied by $u_{0,1}^\delta$,
investigating the regularity of $g_{0,1}^\delta$ and $h_{0,1}^\delta$ defined in~\eqref{Defgnq}
and \eqref{Defhnq} (we
deliberately omit the term $u_{1,0}^\delta$ for a while). In view of the
asymptotic expansion~\eqref{Expansionu00} of $u_{0,0}$,
$$ g_{0,1}^\delta  \sim c_{0,1,\pm} (r^\pm)^{-1/3} \quad  \mbox{and}\quad h_{0,1}^\delta  \sim d_{0,1,\pm} (r^\pm)^{-4/3}   $$
as $r^\pm$ tends to zeros. The constants $c_{0,1,\pm}$ and $d_{0,1,\pm}$ can be
explicitly determined (but, there is not need to write their
complete expression).  As a consequence, $g_{0,1}^\delta$ does not belong to
$H^{1/2}_{0,0}(\Gamma)$ and $h_{0,1}^\delta$ is not in $L^2(\Gamma)$. It
follows that we are not able to construct $u_{0,1}^\delta \in
H^1(\OmegaTop\cup\OmegaBottom)$.  However, we shall see that it is
possible to build a function $u_{0,1}^\delta$ that blows up as $(r^\pm)^{-1/3}$
as $r^\pm$ tends to $0$. Since this function is not in $H^1(\OmegaTop \cup
\OmegaBottom)$, we say that this function is {\em singular}. %
To distinguish from singular functions, we denote functions in $H^1(\OmegaTop \cup
\OmegaBottom)$ as {\em regular} (so not meaning $C^\infty$-regular functions).

\begin{rem} The previous analysis explains why, contrary to the case of
  an infinite thin periodic layer (see \cite{poirier2006impedance}, \cite{ArticleXavierDelourme}), it is not possible to construct
an asymptotic expansion of the form
\begin{equation*}
u^\delta(x_1,x_2) = \sum_{n\in \N} \delta^n \left( u_{n}(\mx) +
\Pi_n(x_1,\mX)  \right),
\end{equation*}
where $u_n \in H^1_{0,
    \Gamma_D}(\OmegaTop)\cap \HoneGammaD(\OmegaBottom)$ and $\Pi_n$ are periodic functions with
respect to $X_1$ exponentially decaying as $X_2$ tends to $\pm \infty$. 

\end{rem}
\begin{rem}
Since it is not possible to construct regular macroscopic terms, we
shall construct singular ones.  Nevertheless, the exact solution $u^\delta$ is
not singular. As a consequence the far field expansion~\eqref{FFExpansion}, which contains
singular terms, can not be valid in the immediate surrounding of the two corners.
Here, a near field expansion \eqref{NFExpansion} has to be introduced,
which replace the singular solution behavior towards the corners in their immediate neighborhood.
\end{rem}

\subsection{Two families of macroscopic singularities  $s_{-m,q}^\pm$}\label{MacroscopicSingularities}
\noindent In this section, we introduce two families of functions, that are $s_{-m,q}^+$ and $s_{-m,q}^-$ for the right and left corner,
that will facilitate the definition of the macroscopic terms. %
The functions are defined recursively in $q$ for each $m \in \N \setminus \{0\}$.
The following subsection is dedicated to the definition of $s_{-m,0}^\pm$, where the functions $s_{-m,q}^\pm$, $q \in \N \setminus \{0\}$ are defined by induction afterwards.

\subsubsection{Harmonic singularities $s_{-m,0}^\pm$ ($m \in \N \setminus \{0\}$)}
For any positive integer $n$, the terms $u_{n,0}^\delta$ are harmonic in $\Omega$. It does not imply
that they vanish because we allow for singular behaviors in the
vicinity of the two corners. The present subsection is dedicated to the
definition of a set of harmonic functions that admit singularities in the
vicinity of the two corners $\mx_{0}^\pm$. The forthcoming
analysis is done for the right corner $\mx_O^+$ but a strictly similar
approach may be carried out for the left corner. To start with, we 
exhibit a sequence of harmonic functions $s_{-m,0}^+$ that behave
like $r^{-2m/3}$ in the vicinity of $\mx_{0}^+$, and which are regular in the vicinity of $\mx_{0}^-$. 

\begin{prop}\label{PropExistenceUniquenesmMF}
Let $m \in \N\setminus\{ 0 \}$. There exists a unique harmonic
function $s_{-m,0}^+$  vanishing on $\Gamma_D$ of the form
\begin{equation}\label{Defsmplus}
s_{-m,0}^+ = (r^+)^{-\frac{2 m}{3}} w_{-m,0,+}(\theta^+) \chi_{\LBottom}^+ +
\widetilde{s}_{-m,0}^+,
\end{equation}
where $\widetilde{s}_{-m,0}^+$  belongs to $H^1_{0}(\Omega)$.
\end{prop}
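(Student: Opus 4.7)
The plan is to exhibit the singular profile explicitly, localize it with the already-defined cut-off $\chi_{\LBottom}^+$, and solve for the regular remainder via a standard variational argument.

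First I would record that the ``model'' profile $\varphi(\mx) := (r^+)^{-\frac{2m}{3}} w_{-m,0,+}(\theta^+)$ is harmonic on the open sector $\mathcal{K}_{\mx_O^+}$ and vanishes on its two straight sides. This follows from the polar-coordinates identity $\Delta(r^\lambda \phi(\theta)) = r^{\lambda - 2}(\lambda^2 \phi + \phi'')$ together with the fact that $\lambda_{-m}^2 = (2m/3)^2$ is an eigenvalue of the angular operator $\mathcal{A}$ defined in Section~\ref{SubsectionVariationalFramework}, with eigenfunction $w_{-m}(t) = \sin\!\bigl(-\tfrac{2m}{3}t\bigr)$ vanishing at $t = 0$ and $t = \tfrac{3\pi}{2}$.

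Next I would set $v := \chi_{\LBottom}^+\,\varphi$. Since $\chi_{\LBottom}^+$ is supported in the disk $\{r^+ \le \LBottom\}$, inside which (up to a choice of $\LBottom$ compatible with the geometry of $\partial\Omega$) $\Omega$ agrees with $\mathcal{K}_{\mx_O^+}$ and $\Gamma_D$ agrees with $\partial\mathcal{K}_{\mx_O^+}$, the function $v$ is well-defined on $\Omega$, smooth away from $\mx_O^+$, and vanishes on $\Gamma_D$. A short commutator computation gives
\[
\Delta v = [\Delta, \chi_{\LBottom}^+]\,\varphi = 2\,\nabla \chi_{\LBottom}^+ \cdot \nabla \varphi + \varphi\,\Delta \chi_{\LBottom}^+,
\]
whose support lies in the annulus $\LBottom/2 \le r^+ \le \LBottom$, bounded away from the singularity. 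Hence $\Delta v \in C^\infty_c(\Omega) \subset L^2(\Omega)$.

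I would then apply Proposition~\ref{PropExistenceUniquenessFF} with data $\mathfrak{f} = \Delta v$ and $\mathfrak{g} = \mathfrak{h} = 0$ (equivalently, Lax--Milgram directly on $H^1_0(\Omega)$) to obtain a unique $\widetilde{s}_{-m,0}^+ \in H^1_0(\Omega)$ solving $-\Delta \widetilde{s}_{-m,0}^+ = \Delta v$ in $\Omega$. Setting $s_{-m,0}^+ := v + \widetilde{s}_{-m,0}^+$ yields a harmonic function on $\Omega$ of the required form, vanishing on $\Gamma_D$. Uniqueness is automatic: two candidates of the prescribed form differ by a function in $H^1_0(\Omega)$ that is harmonic in $\Omega$, hence zero.

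The main obstacle is really a bookkeeping one rather than an analytic one, namely the geometric compatibility of the cut-off with the sector, ensuring that $\theta^+$ is well-defined with range $(0, \tfrac{3\pi}{2})$ throughout $\mathrm{supp}(\chi_{\LBottom}^+) \cap \Omega$ and that the two straight sides of $\mathcal{K}_{\mx_O^+}$ truly coincide with the portion of $\Gamma_D$ supporting $\chi_{\LBottom}^+$ (if not, $\LBottom$ has to be replaced by a smaller scale in the definition of the cut-off). Once this is secured, everything reduces to the standard theory of the Poisson problem in a polygon with compactly supported smooth right-hand side.
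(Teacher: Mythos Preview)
Your argument is correct and follows exactly the route taken in the paper: recognize that $\Delta\bigl(\chi_{\LBottom}^+\,(r^+)^{-2m/3}w_{-m,0,+}\bigr)$ is supported in the annulus $\LBottom/2\le r^+\le \LBottom$ and hence lies in $L^2(\Omega)$, then invoke Lax--Milgram on $H^1_0(\Omega)$ for the remainder. The paper's proof is a one-line version of what you wrote; your added details (harmonicity of the model profile, the commutator computation, and the uniqueness argument) are all correct elaborations of the same approach.
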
 
\begin{proof}
Remarking that $\Delta \widetilde{s}_{-m,0}^\pm$
belongs to $L^2(\Omega)$,  the proof of Proposition~\ref{PropExistenceUniquenesmMF} directly follows from
the Lax-Milgram lemma.
\end{proof}
 \noindent It is worth noting that $s_{-m,0}^+$ does not depend
on the cut off function $\chi_{\LBottom}^+$. Besides, it is easily verified that $s_{-m,0}^+$ belongs to
$V_{2,\beta}^2(\OmegaTop)\cap V_{2,\beta}^2(\OmegaBottom)$ for any $\beta > 1 +  \frac{2 m}{3}$. For instance,
it belongs to $V_{2, \frac{2m}{3} +\frac{7}{6}}^2(\OmegaTop) \cap V_{2,
  \frac{2m}{3} +\frac{7}{6}}^2(\OmegaBottom)$. Naturally, for $m \in \N \setminus \{0 \}$, we can also prove the existence of
a set of functions $s_{-m,0}^-$ of the form
\begin{equation}\label{Defsmmoins}
s_{-m,0}^- = (r^-)^{-\frac{2 m}{3}} w_{-m,0,-}(\theta^-) \chi_{\LBottom}^- +
\widetilde{s}_{-m,0}^-, \quad  \widetilde{s}_{-m,0}^- \in H^1_{0}(\Omega).
\end{equation}  

\noindent As for $u_{0,0}$, we shall write an explicit asymptotic expansion of
${s}_{-m,0}^+$ in the vicinity of the two corners. Applying
Proposition~\eqref{PropositionAsymptoticEspaceAPoids} to the function
$\widetilde{s}_{-m,0}^+$ (noting that $\Delta \widetilde{s}_{-m,0}^+$
vanishes  for $r^\pm <\LBottom/2$), we can prove
the following 
\begin{prop}\label{PropExistenceUniquenesmMF2}
Let $m \in \N\setminus \{0\}$ and $k \in \N$. Then, there exist a function
$r_{-m,k,+}^+$ belonging to  $V_{2,\beta}^2(\OmegaTop)\cap V_{2,\beta}^2(\OmegaBottom)$ for any
$\beta > 1-\frac{2 (k+1)}{3}$, and $k$ real
coefficients $\ell_{q}^+(s_{-m,0}^+)$, $1\leq q\leq k$, such that
\begin{equation}\label{Asymptoticsm0CoinPositif}
s_{-m,0}^+ = (r^+)^{-\frac{2m}{3}} w_{-m,0,+}(\theta^+)  +
\sum_{q=1}^k \ell_{q}^+(s_{-m,0}^+) (r^+)^{\lambda_q}  w_{q,0,+}(\theta^+) + r_{-m,k,+}^+.
\end{equation}
Analogously,
there exist a function $r_{-m,k,-}^+$ belonging to $V_{2,\beta}^2(\OmegaTop)\cap V_{2,\beta}^2(\OmegaBottom)$
for any $\beta > 1 -\frac{2 (k+1)}{3}$ and $k$ real
coefficients $\ell_{q}^-(s_{-m,0}^+)$, $1\leq q\leq k$, such that
\begin{equation}\label{Asymptoticsm0CoinNegatif}
s_{-m,0}^+ =
\sum_{q=1}^k \ell_{q}^-(s_{-m,0}^+) (r^-)^{\lambda_q}  w_{q,0,-}(\theta^-) + r_{-m,k,-}^+.
\end{equation}
\noindent Moreover, for any $\beta > 1 -\frac{2 (k+1)}{3}$, there exists a constant $C$ such that
\begin{multline}\label{Estimationsm0}
\sum_{q=1}^k \left(  | \ell_{q}^+(s_{-m,0}^+) | +   |\ell_{q}^-(s_{-m,0}^+) | \right) 
  + \|r_{-m,k,+}^+ \|_{V_{2,\beta}^{2}(\OmegaTop)} %
  + \|r_{-m,k,+}^+ \|_{V_{2,\beta}^2(\OmegaBottom)} \\
  + \|r_{-m,k,-}^+ \|_{V_{2,\beta}^2(\OmegaTop)}  %
  + \|r_{-m,k,-}^+ \|_{V_{2,\beta}^2(\OmegaBottom)}  
  \; \leq \; C \,\left(  \| s_{-m,0}^+\|_{V_{2,\frac{2m}{3}
      +\frac{7}{6}}^2(\OmegaTop)} +  \|
  s_{-m,0}^+\|_{V_{2,\frac{2m}{3} +\frac{7}{6}}^2(\OmegaBottom)} \right).
\end{multline}
\end{prop}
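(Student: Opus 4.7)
The plan is to decompose $s_{-m,0}^+ = \chi_{\LBottom}^+\,\sigma_{-m,0}^+ + \widetilde{s}_{-m,0}^+$ as in Proposition~\ref{PropExistenceUniquenesmMF}, writing $\sigma_{-m,0}^+(\mathbf{x}) := (r^+)^{-2m/3}\,w_{-m,0,+}(\theta^+)$, and then to apply Proposition~\ref{PropositionAsymptoticEspaceAPoids} to the regular part $\widetilde{s}_{-m,0}^+$. Since $\sigma_{-m,0}^+$ is harmonic throughout the cone $\mathcal{K}_{\mathbf{x}_O^+}$ and $s_{-m,0}^+$ is harmonic in $\Omega$, the remainder $\widetilde{s}_{-m,0}^+ \in H^1_0(\Omega)$ solves the transmission problem~\eqref{MacroscopicProblemModel} with $\mathfrak{g}=\mathfrak{h}=0$ and right-hand side $\mathfrak{f} = [\Delta,\chi_{\LBottom}^+]\sigma_{-m,0}^+$. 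This commutator is supported where $\nabla\chi_{\LBottom}^+\neq0$, i.e.\ in an annulus bounded away from both corners, so $\mathfrak{f}$ is smooth and compactly supported in $\OmegaTop$ and belongs to $V_{2,\beta}^0(\OmegaTop)\cap V_{2,\beta}^0(\OmegaBottom)$ for every $\beta\in\R$.

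Given $k\in\N$, I would choose $\beta$ just above $1-\tfrac{2(k+1)}{3}$, perturbing it slightly if necessary so that $1-\beta\notin\Lambda$; this is always possible since $\Lambda$ is discrete, and the conclusion for the remaining admissible values of $\beta$ follows from the embedding~\eqref{DefV2beta:inclusion}. Proposition~\ref{PropositionAsymptoticEspaceAPoids} then yields real constants $c_q^\pm$ ($1\leq q\leq k$) and remainders $w^\pm \in V_{2,\beta}^2(\OmegaTop)\cap V_{2,\beta}^2(\OmegaBottom)$ with $\widetilde{s}_{-m,0}^+ = \sum_{q=1}^k c_q^\pm (r^\pm)^{\lambda_q} w_{q,0,\pm}(\theta^\pm) + w^\pm$ near $\mathbf{x}_O^\pm$. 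To obtain~\eqref{Asymptoticsm0CoinPositif}, I use that $\chi_{\LBottom}^+\equiv 1$ in a neighborhood of $\mathbf{x}_O^+$, so that $s_{-m,0}^+ = \sigma_{-m,0}^+ + \widetilde{s}_{-m,0}^+$ there; I then set $\ell_q^+(s_{-m,0}^+):=c_q^+$ and define $r_{-m,k,+}^+$ globally by subtracting from $s_{-m,0}^+$ the $\chi_{\LBottom}^+$-truncated versions of all the listed singular modes. The symmetric construction at the other corner, using $\chi_{\LBottom}^+\equiv 0$ near $\mathbf{x}_O^-$ (which eliminates the principal singular mode there), yields~\eqref{Asymptoticsm0CoinNegatif} with $\ell_q^-(s_{-m,0}^+):=c_q^-$.

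For the estimate~\eqref{Estimationsm0}, the stability part of Proposition~\ref{PropositionAsymptoticEspaceAPoids} controls $\sum_q(|c_q^+|+|c_q^-|)$ and the $V_{2,\beta}^2$-norms of $w^\pm$ by $\|\mathfrak{f}\|_{V_{2,\beta}^0(\OmegaTop)} + \|\mathfrak{f}\|_{V_{2,\beta}^0(\OmegaBottom)}$. Since $\mathfrak{f} = [\Delta,\chi_{\LBottom}^+]\sigma_{-m,0}^+$ involves only zeroth- and first-order derivatives of $\sigma_{-m,0}^+$ on the bounded annulus supporting $\nabla\chi_{\LBottom}^+$, and since on this annulus the weights $(r^\pm)^\alpha$ are uniformly bounded from above and below, the right-hand side is dominated by a local $L^2$-norm of $\sigma_{-m,0}^+$ and of $\nabla\sigma_{-m,0}^+$. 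That norm is in turn controlled by $\|s_{-m,0}^+\|_{V_{2,2m/3+7/6}^2(\OmegaTop)} + \|s_{-m,0}^+\|_{V_{2,2m/3+7/6}^2(\OmegaBottom)}$, using that on this annulus $s_{-m,0}^+$ and $\sigma_{-m,0}^+$ differ by the globally $H^1$-function $\widetilde{s}_{-m,0}^+$.

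The main obstacle, to my mind, is the bookkeeping needed to turn $r_{-m,k,\pm}^+$ into a well-defined element of $V_{2,\beta}^2(\OmegaTop)\cap V_{2,\beta}^2(\OmegaBottom)$ on the entire domain: the singular modes $(r^\pm)^{\lambda_q}\,w_{q,0,\pm}(\theta^\pm)$ are only intrinsically meaningful near the corresponding corner, so they must be cut off explicitly, and one must verify that the resulting smooth corrections, supported strictly away from both corners, automatically belong to every $V_{2,\beta}^2$-space by the very definition of the norm~\eqref{DefV2betaNorme}. The remaining verifications are routine algebra with the commutator $[\Delta,\chi_{\LBottom}^\pm]$ and explicit differentiation of the modal functions $(r^\pm)^{\lambda_q} w_{q,0,\pm}$.
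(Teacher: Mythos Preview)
Your proposal is correct and follows essentially the same route as the paper, which simply says the result follows from applying Proposition~\ref{PropositionAsymptoticEspaceAPoids} to $\widetilde{s}_{-m,0}^+$ (noting that $\Delta\widetilde{s}_{-m,0}^+$ vanishes for $r^\pm<L/2$). One small slip: the commutator $[\Delta,\chi_{\LBottom}^+]\sigma_{-m,0}^+$ is supported in an annulus around $\mathbf{x}_O^+$ that straddles $\Gamma$ and therefore meets both $\OmegaTop$ and $\OmegaBottom$, not $\OmegaTop$ alone; this does not affect your argument, since the only property used is that the support is bounded away from both corners.
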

\noindent The formulas~\eqref{Asymptoticsm0CoinPositif},\eqref{Asymptoticsm0CoinNegatif}
provide asymptotic expansions of $s_{m,0}^+$ in the neighborhood of
$\mx_{O}^\pm$. Again, despite their apparent complexity, they are essentially modal expansions of
$\widetilde{s}_{-m,0}^+$ that can be also obtained using the separation of
variables. 
Note that the remainder $r_{-m,k,+}^+$ is orthogonal to the functions $w_{q,0,+}$, for $q\leq k$:
  $$\int_{I^+} r_{-m,k,+}^+(r^+, \theta^+) w_{q,0,+}(\theta^+)
    d\theta^+ = 0 \quad \forall q \leq k\ , $$
if $r^\pm$ is small enough (\ie, where $\chi^\pm_L = 1$). 
In this case, the coefficients $\ell_q^\pm(s_{-m,0}^+)$
can  be computed as
%
\begin{equation}\label{eq:lq:sm0}
\ell_q^\pm(s_{-m,0}^+) =  (r^\pm)^{-\frac{2q}{3}}  \int_{I^\pm}
\widetilde{s}_{-m,0}^+(r^\pm, \theta^\pm) w_{q,0,\pm}(\theta^\pm) d\theta^\pm.
\end{equation}
\begin{rem}\label{remarqueUnicitesm0} %
It is known~\cite[Chap.~6]{MR1469972} that any function $v \in
  V_{2,\beta}^2(\OmegaTop)\cap V_{2,\beta}^2(\OmegaBottom)$ for $\beta >
  1 +  \frac{2 m}{3}$ satisfying $\Delta v = 0$ in $\Omega$ is a linear
  combination of the functions $s_{-k,0}^\pm$, $1 \leq k \leq m$.
\end{rem}
\subsubsection{The families $s_{m,q}^\pm$, $m \in \N \setminus \{0\} $, $q
\in \N \setminus \{0\} $}
In order to construct the macroscopic terms, it is useful to introduce the
family of functions $s^+_{-m,q}$, $(m,q) \in  (\N^\ast)^2$ (remember that $\N^\ast = \IN \setminus \{0\}$), corresponding to the 'propagation' of $s^+_{-m,0}$ 
(recursively) through the
transmissions conditions~\eqref{Defgnq},\eqref{Defhnq}:

\begin{prop}\label{PropDefinitionsmq}
For any $(m,q) \in  (\N^\ast)^2$ there exists a unique function $s^+_{-m,q} \in V^{2}_{2, \beta}(\OmegaTop)\cap V^{2}_{2, \beta}(\OmegaBottom)$ 
for $\beta> 1+\frac{2m}{3}+q$
 satisfying
\begin{equation}\label{Problemsmq}
\left \lbrace
\begin{aligned}
\dsp {- \Delta s^+_{-m,q} } &\; =  & & 0 \quad \text{in } \OmegaTop
\cap \OmegaBottom,\\
s^+_{-m,q} & \;  = &&  0 \quad \mbox{on} \, \Gamma_D\ ,\\
\dsp [s^+_{-m,q} (x_1,0)]_{\Gamma} &\; = && 
\sum_{p=1}^q \mathcal{D}_p^{\mathfrak{t}} \, \partial_{x_1}^p \langle
s^+_{-m,q-p}(x_1,0)
\rangle_{\Gamma} \hspace{1.8em} +\sum_{p=1}^{q} 
\mathcal{D}_p^{\mathfrak{n}} \,  \partial_{x_1}^{p-1} \langle \partial_{x_2}
s^+_{-m,q-p} (x_1,0) \rangle_{\Gamma}\ , \\
\dsp [\partial_{x_2}s^+_{-m,q} (x_1,0)]_{\Gamma} & \;= & &
\sum_{p=1}^{q} \mathcal{N}_{p+1}^{\mathfrak{t}} \, \partial_{x_1}^{p+1} \langle
s^+_{-m,q-p} (x_1,0)
\rangle_{\Gamma}+\sum_{p=1}^{q} 
\mathcal{N}_{p+1}^{\mathfrak{n}} \,  \partial_{x_1}^{p} \langle \partial_{x_2}
s^+_{-m,q-p} (x_1,0) \rangle_{\Gamma},
\end{aligned}
\right.
\end{equation}
which admits the following decompositions 
\begin{itemize} 
\item For any $k\in \N$, there exists a function  $r_{-m,q,k,+}^+$
belonging to $V_{2,\beta'}^2(\OmegaBottom) \cap V_{2,\beta'}^2(\OmegaTop)$
for any $\beta' > 1 -\frac{2(k+1)}{3}$
and real constants $\ell_n^+(s_{-m,q-p}^+)$, $0 \leq p \leq q$, $1 \leq n < (k+1) + \frac32p$ such that 
\begin{multline}\label{Asymptoticsmq}
s_{-m,q}^+ = (r^+)^{-\frac{2m}{3}-q}w_{-m,q,+}(\theta^+,\ln r^+) \\
+ \sum_{p=0}^{q}\; \sum_{1\leq n < (k+1) + \frac{3}{2} p} \ell_n^+(s_{-m,q-p}^+) (r^+)^{\lambda_{n}-p} w_{n,p,+}(\theta^+,\ln r^+) \; + \; r_{-m,q,k,+}^+\ ,
%
\end{multline}
where  
$w_{n,0,+}(\theta^+,\ln r^+) = w_{n,0,+}(\theta^+)$ are given in Proposition~\ref{PropositionAsymptoticEspaceAPoids}
and, for $p\geq 1$, $w_{n,p,+}(\theta^+,
\ln r^+)$ are polynomials in
$\ln r^+$ whose coefficients (functions of $\theta^+$) belong to $\mathcal{C}^\infty([0, \pi])
\cap \mathcal{C}^\infty( [\pi, \frac{3\pi}{2}])$ (here $[a, b]$ denotes the
closure of the intervall $(a,b)$).
%
\noindent \item For any $k\in \N$, there exists a function  $r_{-m,q,k,-}^+$
belonging to $V_{2,\beta'}^2(\OmegaTop)\cap V_{2,\beta'}^2(\OmegaBottom)$
for any $\beta' > 1 -\frac{2(k+1)}{3}$ 
and real constants $\ell_n^-(s_{-m,q-p}^+)$, $0 \leq p \leq q$, $1 \leq n < (k+1) + \frac32p$
such that 
\begin{equation}\label{AsymptoticsmqLeftCorner}
s_{-m,q}^+ =  \sum_{p=0}^{q}
\, \sum_{1 \leq n < (k+1) + \frac{3}{2} p} \ell_n^-(s_{-m,q-p}^+) (r^-)^{\lambda_{n}-p} w_{n,p,-}(\theta^-,
\ln r^-) \; + \; r_{-m,q,k,-}^+,
\end{equation}
where 
$w_{n,0,-}(\theta^-,\ln r^-) = w_{n,0,-}(\theta^-)$ (see Prop.~\ref{PropositionAsymptoticEspaceAPoids})
and $w_{n,p,-}(\theta^-,
\ln r^-)$ are  polynomials in 
$\ln r^-$ whose coefficients (functions of $\theta^-$) belong to $\mathcal{C}^\infty([0, \frac{\pi}{2}])
\cap \mathcal{C}^\infty( [\frac{\pi}{2}, \frac{3\pi}{2}])$.
\end{itemize}
\end{prop}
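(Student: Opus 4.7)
Fix $m \geq 1$. I proceed by induction on $q$. The base case $q = 0$ is Propositions~\ref{PropExistenceUniquenesmMF}--\ref{PropExistenceUniquenesmMF2}. Suppose that $s^+_{-m,q'}$ has been constructed for every $q' < q$ and satisfies the two expansions~(\ref{Asymptoticsmq}),(\ref{AsymptoticsmqLeftCorner}). The strategy is to write $s^+_{-m,q}$ as the sum of an explicit singular ansatz $v_q^+$ localized near $\mx_O^+$ plus a less singular correction $\tilde s_q$ produced by Propositions~\ref{PropExistenceUniquenessFF}--\ref{PropositionAsymptoticEspaceAPoids}.

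\textit{Step 1 --- construction of the right-corner ansatz.}
Plugging the expansion~(\ref{Asymptoticsmq}) of each $s^+_{-m,q-p}$, $1 \leq p \leq q$, into~(\ref{Defgnq}),(\ref{Defhnq}) shows that, as $r^+ \to 0$, the jumps $g_q$ and $h_q$ display leading behaviours $(r^+)^{-\frac{2m}{3}-q}P_q(\ln r^+)$ and $(r^+)^{-\frac{2m}{3}-q-1}Q_q(\ln r^+)$ respectively, with $P_q,Q_q$ explicit polynomials of degree at most $q$ coming from differentiating the lower-order singular terms in $x_1$. I look for a particular function
\begin{equation*}
v_q^+ \;=\; \chi_{\LBottom}^+\,(r^+)^{-\frac{2m}{3}-q}\, w_{-m,q,+}(\theta^+,\ln r^+),
\end{equation*}
where $w_{-m,q,+}(\theta^+,t)$ is polynomial in $t$ with coefficients smooth on $[0,\pi]$ and on $[\pi,\tfrac{3\pi}{2}]$ separately, vanishing at $\theta^+ \in \{0,\tfrac{3\pi}{2}\}$, and such that the two jumps of $v_q^+$ across $\Gamma$ reproduce the prescribed $P_q(\ln r^+)$ and $Q_q(\ln r^+)$ profiles. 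Writing $-\Delta$ in polar coordinates and collecting coefficients of each power of $\ln r^+$ reduces this to a finite triangular system of two-piece ODE transmission problems on $(0,\pi)\cup(\pi,\tfrac{3\pi}{2})$ for the coefficients of $w_{-m,q,+}$; the polynomial dependence in $\ln r^+$ is forced precisely by the collisions of the shifted exponent $-\tfrac{2m}{3}-q$ with the spectrum $\Lambda$ of the angular operator $\mathcal A$ of Proposition~\ref{PropositionAsymptoticEspaceAPoids}, and the system is solved recursively from the highest power of $\ln r^+$ downwards.

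\textit{Step 2 --- regular correction and both-side expansions.}
Setting $\tilde s_q := s^+_{-m,q} - v_q^+$, the function $\tilde s_q$ is sought as the solution of~(\ref{Problemsmq}) with source $-\Delta v_q^+$ and residual jumps $g_q-[v_q^+]_\Gamma$, $h_q-[\partial_{x_2}v_q^+]_\Gamma$; by construction these data belong to $V^0_{2,\beta}(\OmegaTop)\cap V^0_{2,\beta}(\OmegaBottom)$, $V^{3/2}_{2,\beta}(\Gamma)$ and $V^{1/2}_{2,\beta}(\Gamma)$ for any $\beta > 1-\tfrac{2(k+1)}{3}$ with $1-\beta \notin \Lambda$. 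Proposition~\ref{PropExistenceUniquenessFF} (after a standard $H^1$-lift of the jumps) provides existence of $\tilde s_q$, and Proposition~\ref{PropositionAsymptoticEspaceAPoids} applied at $\mx_O^+$ gives its modal expansion; adding $v_q^+$ back yields~(\ref{Asymptoticsmq}). The analysis at the left corner $\mx_O^-$ is simpler: the induction hypothesis~(\ref{AsymptoticsmqLeftCorner}) guarantees that no data with exponent $(r^-)^{-\frac{2m}{3}-q'}$ is generated there, so no new leading ansatz is needed and Proposition~\ref{PropositionAsymptoticEspaceAPoids} at $\mx_O^-$ delivers~(\ref{AsymptoticsmqLeftCorner}) directly.

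\textit{Step 3 --- uniqueness and main obstacle.}
If two candidates share the problem~(\ref{Problemsmq}) and the same leading profile $(r^+)^{-\frac{2m}{3}-q}w_{-m,q,+}(\theta^+,\ln r^+)$ at $\mx_O^+$, their difference is harmonic in $\OmegaTop\cup\OmegaBottom$, has zero jumps across $\Gamma$ and zero trace on $\Gamma_D$; hence it extends to a harmonic function on $\Omega$ with strictly better weighted regularity than $V^2_{2,\beta}$, and iterating Remark~\ref{remarqueUnicitesm0} together with comparison of leading singular parts at each corner forces it to vanish. The genuine technical obstacle is Step 1: keeping track of the polynomial degree in $\ln r^+$ of $w_{-m,q,+}$ and certifying that it does not exceed $q$, despite the fact that each resonance of $-\tfrac{2m}{3}-q$ with $\Lambda$ can potentially raise that degree. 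Once this combinatorial control and the transmission at $\theta^+=\pi$ are settled, the remainder of the argument is a careful but standard combination of the variational theory of Proposition~\ref{PropExistenceUniquenessFF} with the Kondrat'ev expansion of Proposition~\ref{PropositionAsymptoticEspaceAPoids}.
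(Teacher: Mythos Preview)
Your overall induction scheme is the right one and matches the paper, but Step~2 contains a genuine gap: subtracting only the single leading ansatz $v_q^+=\chi_{\LBottom}^+\,(r^+)^{-\frac{2m}{3}-q}w_{-m,q,+}$ does \emph{not} make the residual jump data variational, let alone place it in $V^{3/2}_{2,\beta}(\Gamma)$, $V^{1/2}_{2,\beta}(\Gamma)$ for all $\beta>1-\tfrac{2(k+1)}{3}$. The prescribed jumps depend on $s^+_{-m,q-p}$ for $1\le p\le q$, and by the inductive expansions these produce, after the required tangential differentiations, a whole ladder of singular contributions $(r^+)^{\lambda_n-p}$ (with $n$ ranging from $-m$ up to large positive values and $p$ up to $q$), not only the top one. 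For instance, the term $\ell_1^+(s^+_{-m,0})(r^+)^{2/3}$ in $s^+_{-m,0}$ generates, via $\partial_{x_1}^q$, a contribution $\sim(r^+)^{\frac{2}{3}-q}$ in $[s^+_{-m,q}]_\Gamma$, which survives your subtraction and is not in $H^{1/2}_{00}(\Gamma)$ for $q\ge1$. The same issue arises at the \emph{left} corner: even though no exponent $-\tfrac{2m}{3}-q'$ shows up there, the induction hypothesis~(\ref{AsymptoticsmqLeftCorner}) still yields singular jump data of the form $(r^-)^{\lambda_n-q}$ with $n\ge1$, starting at $(r^-)^{2/3-q}$, so a lift \emph{is} needed at $\mx_O^-$ as well.

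What the paper does is precisely to fill this gap: it first isolates (via a conical transmission lemma for right-hand sides of the form $r^\lambda(\ln r)^j$) an entire family of explicit local solutions $\ttv_{n,p,\pm}(r^\pm,\theta^\pm)=(r^\pm)^{\lambda_n-p}w_{n,p,\pm}(\theta^\pm,\ln r^\pm)$, and then subtracts at \emph{both} corners the full finite combination
\[
\sum_{p=1}^{q}\sum_{n}\ell_n^+(s^+_{-m,q-p})\,\chi_{\LBottom}^+\,\ttv_{n,p,+}
\;+\;
\sum_{p=1}^{q}\sum_{n}\ell_n^-(s^+_{-m,q-p})\,\chi_{\LBottom}^-\,\ttv_{n,p,-},
\]
with $n$ running up to the desired order $k$. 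Only after this complete lift do the residual data land in the weighted spaces required by Proposition~\ref{PropositionAsymptoticEspaceAPoids}; applying that proposition then produces the new coefficients $\ell_n^\pm(s^+_{-m,q})$ and the remainders $r^+_{-m,q,k,\pm}$. Your Step~1 correctly anticipates the need for the functions $w_{-m,q,+}$, but you must extend it to the full family $w_{n,p,\pm}$ (all $n$, both signs), and your Step~2 must lift with all of them, not just the top one.
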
  

\noindent The proof of Proposition~\ref{PropDefinitionsmq} is in Appendix~\ref{AppendixPreuvePropositionsmq}.  It
is strongly based on the explicit resolution of
the Laplace equation in so-called infinite conical domains for particular right-hand sides of the form $r^\lambda (\ln r)^n$,
$\lambda \in \R$, $n \in \N$ (see Section 6.4.2 in \cite{MR1469972} for similar
results). The proof consists of constructing an explicit lift of
the singular part of the jump values \eqref{Asymptoticsmq} in order to reduce the problem to a
variational one (as already done for $s_{-m,0}^+$).
\begin{rem}\label{remarqueDefinitionskqmoins}
 \noindent In the same way, for each $m \in \N^\ast$ we can define by induction a sequence of functions
$(s_{-m,q}^-)_{q\in \N^\ast}$ as follows: $s_{-m,q}^-$ is the unique function belonging to $V^{2}_{2, \beta}(\OmegaTop)\cap V^{2}_{2, \beta}(\OmegaBottom)$ for any
$\beta> 1+\frac{2m}{3} +q$ that satisfies
 the  transmission problem obtained from~\eqref{Problemsmq} by substituting $s_{-m,q-p}^+$ for
 $s_{-m,q-p}^-$ in the jump conditions, and  the asymptotic
 expansions
 obtained interchanging \eqref{Asymptoticsmq} and \eqref{AsymptoticsmqLeftCorner}, replacing the
 superscripts plus by superscripts minus.
\end{rem}
\subsubsection{Annotations to the singular functions}

Let us comment the results of the previous proposition and of Proposition~\ref{PropExistenceUniquenesmMF2}:
\begin{itemize}
\item[--] For $m>0$ fixed, the family $(s_{-m,q}^+)_{q\in\N}$ provides 
  particular singular solutions to~\eqref{MacroscopicProblem1}. 
%
\item[--] The exponents $\lambda$ of $r^-$ and $r^+$ appearing in the asymptotic expansions~\eqref{Asymptoticsmq},\eqref{AsymptoticsmqLeftCorner}
are singular exponents $\lambda \in \Lambda$ as well as 'shifted' singular
exponents of the form $\lambda = \lambda_n - p$, $\lambda_n \in \Lambda$, where the integer $p$ is between
$1$ and~$q$. The most singular part of $s_{-m,q}^+$ in the vicinity of
$\mx_{O}^+$ is 
$
(r^{+})^{-\frac{2}{3}m -q} w_{-m,q,+}(\theta^+, \ln r^+),
$ 
while the most singular part of $s_{-m,q}^+$ in the vicinity of
$\mx_{O}^-$ is 
$
\ell_{1}^-(s_{-m,0}^+) (r^{-})^{\frac{2}{3} -q} w_{1,q,-}(\theta^-, \ln r^-).
$
Consequently $s_{-m,q}^+$ is 'less singular' in the vicinity of the
left corner than in the vicinity of the right one.
\item[--] The function $s_{-m,q}^+$ depends only on the
functions $s_{-m, p}^+$ for $p\leq q$. In others words, providing that
$n\neq m$, the definition
of the families $\{ s_{-m,q}^+, q \in \N\}$,  $\{ s_{-n,q}^+, q \in \N\}$ are 
independently defined.
\item[--] The functions $w_{n,q,\pm}$ are defined
in~\eqref{Definitionwnpplus},\eqref{Definitionwnpmoins}. 
However, for the forthcoming derivation of the asymptotic expansion and its analysis 
their explicit expression is not important. 
Even so these functions will appear again in the definition of the near field singularities
(see Lemma~\ref{LemmeSuperDecroissance}).
\item[--] Problem~\eqref{Problemsmq} alone does not uniquely determine the
function $s_{-m,q}^+$. Indeed, in view of Remark~\ref{remarqueUnicitesm0} the solution of~\eqref{Problemsmq} is defined only up to a linear combination of $\left\{ s_{-n,0}^{\pm},  n\leq m +
  \frac{3}{2}q\right\}$.
However, imposing additionally 
the singular behavior close to the corners given by
\eqref{Asymptoticsmq} and \eqref{AsymptoticsmqLeftCorner} (using the fact
that the functions $w_{n,q,\pm}$ are uniquely determined) restores the
uniqueness (\cf Remark~\ref{remarqueUnicitesm0}).
\item[--] For a given $k \in \N$, the constants
$\ell_{n}^\pm(s_{-m,q}^+)$, $1 \leq q \leq k$, and the remainders
$r_{-m,q,k,\pm}^+$ satisfy an estimate of the
form~\eqref{Estimationsm0} that has been omitted for the sake of
concision.
\item[--] The constants $\ell_{n}^\pm(s_{-m,q}^+)$ are intrinsic to the singularity functions and can be obtained by similar kind of formulas as~\eqref{eq:lq:sm0}. %
Each constant $\ell_{n}^\pm(s_{-m,q}^+)$ appears in the decomposition of several singularity functions in~\eqref{Asymptoticsmq} and~\eqref{AsymptoticsmqLeftCorner}.
\end{itemize}

\subsection{An explicit expression for the macroscopic terms}\label{SubsectionExpliciteMacro}
These part is dedicated to the derivation of a quasi-explicit formula
of the macroscopic terms $u_{n,q}^\delta$ by introducing particular solutions
to Problem~\eqref{MacroscopicProblem1}. As mentioned before, we shall
allow $u_{n,q}^\delta$ to be singular. In view of the previous construction, we shall impose that
\begin{equation*}
\begin{aligned}
& u_{n,q}^\delta \in {V}_{2, \beta}^2(\OmegaTop)\cap {V}_{2, \beta}^2 (\OmegaBottom) \quad
\mbox{for any} \, \beta > 1 + \frac{2 n}{3} + q, \quad n > 0\ ,\\
& u_{0,q}^\delta \in {V}_{2, \beta}^2(\OmegaTop)\cap {V}_{2, \beta}^2 (\OmegaBottom) \quad
\mbox{for any} \, \beta > \frac{1}{3} + q\ .
\end{aligned}
\end{equation*}

\subsubsection{The macroscopic terms $u_{0,q}^\delta$, $q \in \N $}
We remind that the limit macroscopic field 
$u_{0,0}$ (remember that $u_{0,0}^\delta = u_{0,0}\in H^1_{0, \Gamma_D}(\Omega)$) satisfies
Problem~\eqref{MacroscopicProblemModel} with $\mathfrak{f}=
f \in L^2(\OmegaTop)\cap L^2(\OmegaBottom)$, $\mathfrak{g} = 0$ and
$\mathfrak{h}=0$ (see Section~\ref{SubsubLimitMacro}). 
In this subsection we define the functions $u_{0,q}^\delta = u_{0,q}$ 
in the large class of possible singular solutions of~\eqref{MacroscopicProblem1}
by imitating the iterative procedure of the previous subsection for the
definition of the singular functions $s_{-m,q}^+$
(\ie, by 'propagating' $u_{0,0}$ (recursively) through the transmission
conditions~\eqref{Defgnq},\eqref{Defhnq}), by which in turn no additional singular functions are added.

\begin{prop}\label{PropDefinitions0q}
For any $q \in \IN^\ast$ there exists a unique function $u_{0,q}^\delta = u_{0,q} \in V^{2}_{2, \beta}(\OmegaTop)\cap V^{2}_{2, \beta}(\OmegaBottom)$ for 
$\beta> \frac{1}{3}+q$ of~\eqref{MacroscopicProblem1}
which admits the following decompositions: 
\begin{itemize}
\item For any $k\in \N$, there exists a function  $r_{0,q,k,+}^+$
belonging to $V_{2,\beta'}^2(\OmegaBottom) \cap V_{2,\beta'}^2(\OmegaTop)$
for $\beta' > 1 -\frac{2(k+1)}{3}$
and real constants $\ell_n^+(u_{0,q-p})$, $1 \leq p \leq q$, $1 \leq n < (k+1) + \frac32p$
such that
\begin{equation}\label{Asymptotics0q}
u_{0,q} = \sum_{p=0}^q \;
\sum_{1 \leq n < (k+1) + \frac{3}{2} p} \ell_n^+(u_{0,q-p}) (r^+)^{\lambda_{n}-p} w_{n,p,+}(\theta^+,
\ln r^+) \; + \; r_{0,q,k,+}^+(r^+, \theta^+)\ .
\end{equation}
\item Analogously, for any $k\in \N$, there exists a function  $r_{0,q,k,-}^+$
belonging to $V_{2,\beta'}^2(\OmegaTop)\cap V_{2,\beta'}^2(\OmegaBottom)$
for any $\beta' > 1 -\frac{2(k+1)}{3}$
and real constants $\ell_n^-(u_{0,q-p})$, $1 \leq p \leq q$, $1 \leq n < (k+1) + \frac32p$
such that
\begin{equation}\label{Asymptotics0qLeftCorner}
u_{0,q} =  \sum_{p=0}^{q}
\, \sum_{1 \leq n \leq (k+1) + \frac{3}{2} p} \ell_n^-(u_{0,q-p}) (r^-)^{\lambda_{n}-p} w_{n,p,-}(\theta^-,
\ln r^-) \; + \; r_{0,q,k,-}^+(r^-, \theta^-)\ .
\end{equation}
\end{itemize}
\end{prop}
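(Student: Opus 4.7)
The plan is to proceed by induction on $q$, mimicking almost verbatim the strategy used for the singularity functions $s_{-m,q}^+$ in Proposition~\ref{PropDefinitionsmq}. The only substantive difference lies in the base case: here we propagate the regular limit field $u_{0,0}\in H^1_0(\Omega)$, whose asymptotic behaviour near $\mathbf{x}_O^\pm$ is already described by~\eqref{Expansionu00}, rather than an \emph{a priori} singular seed $s_{-m,0}^+$. Consequently all exponents $\lambda_n-p$ appearing below start from $\lambda_n = 2n/3 > 0$, which is why the weight index $\beta > \tfrac{1}{3}+q$ is enough (no negative shift by $2m/3$ is needed).

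For the inductive step, assume that $u_{0,p}$ is constructed for all $0\leq p<q$ together with the expansions~\eqref{Asymptotics0q} and~\eqref{Asymptotics0qLeftCorner}. Substituting these expansions into~\eqref{Defgnq}--\eqref{Defhnq} shows that $g_{0,q}^\delta$ and $h_{0,q}^\delta$ are smooth on $\Gamma\setminus\{\mathbf{x}_O^\pm\}$ but behave near $\mathbf{x}_O^\pm$ as finite linear combinations of terms of the form $(r^\pm)^{\lambda_n-p-1}(\ln r^\pm)^s$ (for $g_{0,q}^\delta$) and $(r^\pm)^{\lambda_n-p-2}(\ln r^\pm)^s$ (for $h_{0,q}^\delta$), with coefficients determined by $\ell_n^\pm(u_{0,q-p})$. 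These exponents can be sufficiently negative that the data escapes $\HonehalfzzG$ and $L^2(\Gamma)$, so Proposition~\ref{PropExistenceUniquenessFF} cannot be applied directly.

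I would then construct an explicit corner lift $u_{0,q}^{\mathrm{sing}}$ carrying precisely the singular parts of the jumps. Following Section~6.4.2 of~\cite{MR1469972}, for each monomial $(r^\pm)^\lambda (\ln r^\pm)^s$ appearing in the jump data one can write in closed form a harmonic (on $\OmegaTop\cup\OmegaBottom$) function of the shape $(r^\pm)^{\lambda+1}\,\omega(\theta^\pm,\ln r^\pm)$ with $\omega$ polynomial in $\ln r^\pm$ realising exactly the prescribed jump; the degree in $\ln r^\pm$ must be incremented by one precisely when the exponent $\lambda+1$ belongs to the set of singular exponents $\Lambda$ (resonance), and this is the mechanism producing the functions $w_{n,p,\pm}(\theta^\pm,\ln r^\pm)$ of the statement. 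After multiplication by $\chi_L^\pm$ to localise near the corners, the remainder $u_{0,q}-u_{0,q}^{\mathrm{sing}}$ solves a transmission problem of the type~\eqref{MacroscopicProblemModel} whose data now lie in $L^2(\Omega)$, $\HonehalfzzG$ and $L^2(\Gamma)$ respectively; Proposition~\ref{PropExistenceUniquenessFF} then provides a unique variational solution, and Proposition~\ref{PropositionAsymptoticEspaceAPoids} supplies the asymptotic expansions~\eqref{Asymptotics0q}--\eqref{Asymptotics0qLeftCorner} with the claimed regularity of the remainder in $V^2_{2,\beta'}$, for any $\beta'>1-\tfrac{2(k+1)}{3}$ and any $k\in\IN$.

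Uniqueness within $V^2_{2,\beta}(\OmegaTop)\cap V^2_{2,\beta}(\OmegaBottom)$ with $\beta>\tfrac{1}{3}+q$ follows from Remark~\ref{remarqueUnicitesm0}: any two candidates would differ by a harmonic function, vanishing on $\Gamma_D$, with trivial jumps across $\Gamma$ and lying in the same weighted space, hence in $H^1_0(\Omega)$, and therefore identically zero. The main technical obstacle is the combinatorial book-keeping of how each exponent $\lambda_n-p$ in the jump data is shifted by one through the lift and when exactly a logarithmic factor must be incremented (the resonance $\lambda_n-p\in\Lambda$); once this structure is fixed and the explicit lift is available, all estimates analogous to~\eqref{Estimationsm0} propagate along the induction by the same arguments as in the proof of Proposition~\ref{PropDefinitionsmq}.
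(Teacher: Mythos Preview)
Your approach is essentially the paper's: the proof of Proposition~\ref{PropDefinitions0q} is not written out separately but is meant to be the direct analogue of the proof of Proposition~\ref{PropDefinitionsmq} (Appendix~\ref{AppendixPreuvePropositionsmq}), with the regular seed $u_{0,0}$ and its expansion~\eqref{Expansionu00} replacing $s_{-m,0}^+$ and~\eqref{Asymptoticsm0CoinPositif}. Your induction, the computation of the singular behaviour of $g_{0,q}^\delta,h_{0,q}^\delta$ from the inductive expansions, the lift via the functions $\ttv_{n,p,\pm}$ (this is exactly what the paper does, using Lemma~\ref{LemmeTechniquesmq}), and the reduction to the variational problem plus Proposition~\ref{PropositionAsymptoticEspaceAPoids} all match.

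One genuine imprecision is your uniqueness argument. You write that the difference of two candidates lies ``in the same weighted space, hence in $H^1_0(\Omega)$''. This inference is wrong: for $q\geq 1$ the space $V^2_{2,\beta}(\OmegaTop)\cap V^2_{2,\beta}(\OmegaBottom)$ with $\beta>\tfrac13+q$ is \emph{not} contained in $H^1$. What Remark~\ref{remarqueUnicitesm0} actually gives is that any such harmonic difference is a finite linear combination of the $s_{-k,0}^\pm$. To conclude that this combination is trivial you must invoke the imposed form of the decompositions~\eqref{Asymptotics0q}--\eqref{Asymptotics0qLeftCorner}: by the induction hypothesis the coefficients $\ell_n^\pm(u_{0,q-p})$ for $p\geq 1$ are already fixed, so the singular blocks $(r^\pm)^{\lambda_n-p}w_{n,p,\pm}$ with $p\geq1$ coincide for the two candidates, and for $p=0$ only positive exponents $\lambda_n$ occur. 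Hence the difference contains no term of the shape $(r^\pm)^{-2k/3}w_{-k,0,\pm}$, which forces all coefficients of the $s_{-k,0}^\pm$ to vanish. This is precisely the mechanism the paper describes in the annotation following Proposition~\ref{PropDefinitionsmq} (``imposing additionally the singular behaviour\ldots restores the uniqueness''); you should make this step explicit rather than the incorrect shortcut through $H^1_0(\Omega)$.
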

\noindent Note that the functions $w_{n,p,\pm}$ in~\eqref{Asymptotics0q} and \eqref{Asymptotics0qLeftCorner}
were used already in Proposition~\ref{PropDefinitionsmq} and are defined in~\eqref{Definitionwnpplus},\eqref{Definitionwnpmoins}. 
Similar to the singular functions the constants $\ell_n^\pm(u_{0,p})$ are intrinsic and fixed, when $u_{0,0}$ is fixed. %
From now on we consider the macroscopic terms $u^\delta_{0,q} = u_{0,q}$ to be defined by Proposition~\ref{PropDefinitions0q}.

\subsubsection{The macroscopic terms $u_{n,q}^\delta$, $n \in \IN \setminus \{0\}$, $q \in \N$}

We construct $u_{n,q}^\delta$ as follows: 
\begin{prop}
  \label{PropExplicitMacro}
Let $n >0$. For any $p \in
\N$, let $\ell_{-k}^\pm (u_{n,p}^\delta)$, $1\leq k \leq n$  be $2n$ 
given real constants.  Then, the family of functions
\begin{equation}\label{Definitionunq}
u_{n,q}^\delta =  \sum_{\pm} \sum_{p=0}^q \sum_{k=1}^{n}  \ell_{-k}^\pm
(u_{n,p}^\delta) s_{-k, q-p}^\pm, \quad  q \in \N,
\end{equation}
satisfies the family of macroscopic
problems~\eqref{MacroscopicProblem1}. Moreover, the term $u_{n,q}^\delta$
belongs to ${V}_{2, \beta}^2(\OmegaTop)\cap {V}_{2,
  \beta}^2 (\OmegaBottom)$ for any $\beta > 1 + \frac{2n}{3} +q$.
\end{prop}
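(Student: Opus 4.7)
The strategy is a direct verification: show that the ansatz~\eqref{Definitionunq} satisfies each line of problem~\eqref{MacroscopicProblem1}, exploiting linearity and the recursive structure satisfied by the $s^\pm_{-k,q-p}$ defined in Proposition~\ref{PropDefinitionsmq}.

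First I would handle the volume equation and the Dirichlet condition. Since $n > 0$ we have $f_{n,q} = 0$, and by Proposition~\ref{PropDefinitionsmq} each $s^\pm_{-k,q-p}$ is harmonic in $\OmegaTop \cup \OmegaBottom$ and vanishes on $\Gamma_D$; by linearity the finite combination~\eqref{Definitionunq} is also harmonic and vanishes on $\Gamma_D$. Nothing more is needed here.

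The main step is the verification of the two transmission conditions~\eqref{Defgnq},\eqref{Defhnq}. For each $s^\pm_{-k,r}$ with $r \geq 1$, Proposition~\ref{PropDefinitionsmq} gives
\begin{equation*}
[s^\pm_{-k,r}(x_1,0)]_\Gamma = \sum_{p'=1}^{r} \mathcal{D}^{\mathfrak{t}}_{p'}\,\partial^{p'}_{x_1}\langle s^\pm_{-k,r-p'}\rangle_\Gamma + \sum_{p'=1}^{r}\mathcal{D}^{\mathfrak{n}}_{p'}\,\partial^{p'-1}_{x_1}\langle \partial_{x_2} s^\pm_{-k,r-p'}\rangle_\Gamma,
\end{equation*}
while for $r = 0$ the function $s^\pm_{-k,0}$ lies in $H^1_0(\Omega)$ (Proposition~\ref{PropExistenceUniquenesmMF}) and has zero jump. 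Plugging this into $[u_{n,q}^\delta]_\Gamma$ and interchanging the $p$-sum and the $p'$-sum (with the substitution $r = q-p$), I collect, for each $p'\in\{1,\dots,q\}$, the inner combination
\begin{equation*}
\sum_{\pm}\sum_{p=0}^{q-p'}\sum_{k=1}^{n}\ell^\pm_{-k}(u_{n,p}^\delta)\,s^\pm_{-k,q-p-p'},
\end{equation*}
which by the very definition~\eqref{Definitionunq} equals $u_{n,q-p'}^\delta$. This produces exactly the right-hand side $g_{n,q}^\delta$ of~\eqref{Defgnq}. The normal-jump identity follows by the same rearrangement from the third line of~\eqref{Problemsmq} and the definition of $h_{n,q}^\delta$ in~\eqref{Defhnq}. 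The only book-keeping subtlety is ensuring the range of summation indices is handled correctly at the boundary $p = q$ (where $s^\pm_{-k,0}$ has no jump), which is precisely why the outer sum in $g_{n,q}^\delta, h_{n,q}^\delta$ starts at $p'=1$; this is the step I expect to require the most care.

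Finally, the regularity claim follows term-by-term. By Proposition~\ref{PropDefinitionsmq}, $s^\pm_{-k,q-p} \in V^2_{2,\beta}(\OmegaTop)\cap V^2_{2,\beta}(\OmegaBottom)$ for any $\beta > 1 + \tfrac{2k}{3} + (q-p)$. Because $k \leq n$ and $p \geq 0$, every index satisfies $1 + \tfrac{2k}{3} + (q-p) \leq 1 + \tfrac{2n}{3} + q$, so by the inclusion~\eqref{DefV2beta:inclusion} every summand lies in $V^2_{2,\beta}(\OmegaTop)\cap V^2_{2,\beta}(\OmegaBottom)$ for any $\beta > 1 + \tfrac{2n}{3} + q$. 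Since the sum is finite, the conclusion for $u_{n,q}^\delta$ follows.
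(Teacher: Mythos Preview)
Your proposal is correct and follows essentially the same approach as the paper: direct verification that the Laplace equation and Dirichlet condition hold by linearity, then computing $[u_{n,q}^\delta]_\Gamma$ (and $[\partial_{x_2}u_{n,q}^\delta]_\Gamma$) by inserting the jump formulas for $s^\pm_{-k,r}$ from Proposition~\ref{PropDefinitionsmq}, interchanging sums, and recognizing the inner combination as $u_{n,q-p'}^\delta$ via~\eqref{Definitionunq}. The paper phrases this last recognition as an ``induction hypothesis'' on $q$, but as you observe it is really just the definition~\eqref{Definitionunq} applied at the lower index $q-p'$; your direct formulation and the paper's inductive one are the same argument. Your regularity argument (term-by-term plus the inclusion~\eqref{DefV2beta:inclusion}) is a slightly more detailed version of the paper's identification of the most singular summand $\ell_{-n}^\pm(u_{n,0}^\delta)\,s_{-n,q}^\pm$.
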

\noindent We remind that the functions $s_{-m,q}^+$ are defined in Proposition~\ref{PropExistenceUniquenesmMF} for $q = 0$ and
Proposition~\ref{PropDefinitionsmq} for $q > 0$ and $s_{-m,q}^-$ are defined in
\eqref{Defsmmoins} ($q = 0$) and Remark~\ref{remarqueDefinitionskqmoins} ($q > 0$).

\begin{proof}
The 'most singular' part of $u_{n,q}^\delta$ $n > 0$, defined by~\eqref{Definitionunq}
corresponds to $\sum_{\pm}\ell_{-n}^\pm(u_{n,0}^\delta) s_{-n,q}^\pm$, which, 
in view of Proposition~\ref{PropDefinitionsmq} belongs
to ${V}_{2, \beta}^2(\OmegaTop)\cap {V}_{2, \beta}^2 (\OmegaBottom)$ for any $\beta > {1} + \frac{2n}{3}  + q$. %
As a consequence $u_{n,q}^\delta$ belongs to ${V}_{2, \beta}^2(\OmegaTop)\cap {V}_{2,\beta}^2 (\OmegaBottom)$ for any $\beta > {1} + \frac{2n}{3}  + q$. %
Next, let us show by induction on $q$ that the family $(u_{n,q}^\delta)_{q \in \N}$ is a particular solution
to the family of problems~\eqref{MacroscopicProblem1}. The base step ($q = 0$) is trivial. %
For the induction step, it is clear that $u_{n,q}^\delta$  is harmonic in $\OmegaTop$ and in
$\OmegaBottom$ and fulfills homogeneous Dirichlet boundary conditions on~$\Gamma_D$. %
It remains to show the jump conditions across $\Gamma$. Substituting the definition~\eqref{Definitionunq} into~\eqref{Defgnq} we can assert that
 \begin{eqnarray*}
   [u_{n,q}^\delta]_{\Gamma} & = &  \sum_{\pm} \sum_{p=0}^q \sum_{k=1}^{n}  \ell_{-k}^\pm
(u_{n,p}^\delta) \sum_{r=1}^{q-p} \left( \mathcal{D}_r^{\mathfrak{t}} \, \partial_{x_1}^r \langle
 s_{-k,q-p-r} \rangle_{\Gamma}+ \mathcal{D}_r^{\mathfrak{n}}
 \,  \partial_{x_1}^{r-1} \langle \partial_{x_2} s_{-k,q-p-r} \rangle_{\Gamma} \right)\ .
 \end{eqnarray*}
 Interchanging the sum over $r$ and $p$, using the induction
 hypothesis, we get the expected jump: 
\begin{equation*}
  [u_{n,q}^\delta]_{\Gamma}  = \sum_{r=1}^{q} \left(
    \mathcal{D}_r^{\mathfrak{t}} \, \partial_{x_1}^r +
    \mathcal{D}_r^{\mathfrak{n}} \, \partial_{x_1}^{r-1} \right)  \sum_{\pm} \sum_{p=0}^{q-r} \sum_{k=1}^{n} \ell_{-k}^\pm
(u_{n,p}^\delta) \langle
 s_{-k,q-p-r} \rangle_{\Gamma} 
 = \sum_{r=1}^{q} \left(
    \mathcal{D}_r^{\mathfrak{t}} \, \partial_{x_1}^r +
    \mathcal{D}_r^{\mathfrak{n}} \, \partial_{x_1}^{r-1} \right) \langle u_{n,q-r}^\delta \rangle_\Gamma \ .
 \end{equation*}
The condition for the normal jump follows accordingly, and the proof is complete.
\end{proof}
\noindent Let $n>0$ and $q \in \N$ be fixed. The function
$u_{n,q}^\delta$ (defined by~\eqref{Definitionunq}) is
determined up to the specification of the  $2 n$ constants
$\ell_{-k}^\pm (u_{n,q}^\delta)$, $1\leq k \leq n$ (There are $2 n$
degrees of freedom). The matching procedure will provide a way to choose these
constants in order to ensure the matching of far and near field
expansions in the matching areas.
\subsubsection{Expression for the boundary layers correctors}
\noindent Assume now that $u_{n,q}^\delta$ is defined
by~\eqref{Definitionunq}.Then, inserting this definition into
the formula~\eqref{definitionPiq} defining the boundary layer correctors~$\Pi_{n,q}^\delta$,
we find them to be given by
\begin{equation*}
\Pi_{0,q}^\delta = \Pi_{0,q} =\sum_{p=0}^q \, W_p^\mathfrak{t} \, \partial_{x_1}^p  \langle
  u_{0,q-p}(x_1, 0) \rangle_\Gamma \;+\;  \sum_{p=1}^q W_p^{\mathfrak{n}}\,
\partial_{x_1}^{p-1}  \langle
  \partial_{x_2} \, u_{0,q-p}(x_1, 0) \rangle_\Gamma\ ,
\end{equation*}
and, for $n>0$,
\begin{multline*}
\Pi_{n,q}^\delta =  \sum_{\pm} \sum_{k=1}^n \left(\sum_{p=0}^q  W_p^{\mathfrak{t}} \sum_{r=0}^{q-p}
  \ell_{-k}^\pm(u_{n,r}^\delta) \partial_{x_1}^p \langle
  s_{-k,q-p-r}^\pm \rangle_\Gamma  
 + \sum_{p=1}^q  W_p^{\mathfrak{n}} \sum_{r=0}^{q-p}
  \ell_{-k}^\pm(u_{n,r}^\delta) \partial_{x_1}^{p-1} \langle
\partial_{x_2}  s_{-k,q-p-r}^\pm \rangle_\Gamma \right)\ .
\end{multline*}
\subsubsection{Asymptotic of the far field terms close to the corners}
Thanks to the previous formulas, we have a complete asymptotic
expansion describing the behavior of  both macroscopic and boundary layer correctors terms in
the vicinity of the reentrant corners: 
for any $k \in \N$ there exists
a function $u_{n,q,k,+}$ belonging to $V_{2,\beta}^2(\OmegaTop)\cap V_{2,\beta}^2(\OmegaBottom)$
for any $\beta > 1 -\frac{2 (k+1)}{3}$ such that
\begin{equation}\label{AsymptoticMacroFieldMacthingareas}
  u_{n,q}^{\delta}  =  \sum_{r=0}^q  \sum_{m= -n}^{k + \frac{3}{2}r }
  a_{n,q-r,m,+}^\delta \, \left(
    r^+\right)^{\frac{2m}{3}-r} w_{m,r,+}(\theta^{+}, \ln r^{+}) + u_{n,q,k,+}\ ,
\end{equation}
where, for any $(n,j,m) \in \N^2 \times \Z$, 
\begin{equation}\label{eq:anjm}
a_{0,j,m,+}^\delta= \ell_{m}^+(u_{0, j}),\quad
a_{n,j,m,+}^\delta= \sum_{\pm} \sum_{k=\max(1,-m)}^n
\sum_{p=0}^{j}\ell_{-k}^\pm(u_{n,p}^\delta)\ell_{m}^+(s_{-k, j-p}^\pm),
\quad n >0.
\end{equation}
Here, we have used the convention that $w_{0,r,\pm}=0$ for any $r \in
\N$, that $\ell_{m}^\pm(s_{-k,p}^+) =0$ for any $m<-k$ and $p\in
\N$. Moreover, the notation $\sum_{m= -n}^{k + \frac{3}{2}r }$ denotes the sum over the integers $m \in \Z$ such that $-n
  \leq m \leq k + \frac{3}{2}r $ (the  integer index $m$ does not exceed
$\lfloor k + \frac{3}{2} r \rfloor$, where $\lfloor a \rfloor$ and $\lceil a \rceil$ denote
the largest integer not greater or the smallest integer not less than $a$, respectively).\\

\noindent For $m<0$, the  expression of $a_{n,j,m,+}$ can be simplified
since $\ell_{m}^+(s_{-k, q-p-r}^-)$ vanishes, and $\ell_{m}^+(s_{-k,
  q-p-r}^{+}) =0$  unless $k=-m$ and $q-p-r=0$ (in the latter case,
$\ell_{m}^+(s_{m,0}^+)= 1$):
\begin{equation}\label{ValeurRemarquableanjm}
a_{n,j,m,+}^\delta=  \ell_{m}^+(u_{n,j}^\delta).
\end{equation}
One can also give an asymptotic expansion for $\Pi_{n,q}^\delta$ for
$(\mathbf{x}_O^+)_1-x_1$ sufficiently small (we remind that $\mathbf{x}_O^\pm = ((\mathbf{x}_O^\pm)_1,
(\mathbf{x}_O^\pm)_2)$ denotes the coordinates of the vertice $\mx_O^\pm$):
\begin{equation}\label{AsymptoticBoundaryLayerMacthingareas}
\Pi_{n,q}^\delta = \sum_{r=0}^q \sum_{m=- n}^{k+\frac{3}{2}r}
\left((\mathbf{x}_O^+)_1-x_1\right)^{\frac{2 m}{3} -r} \;
a_{n,q-r,m,+}^\delta  \; p_{m,r,+}\left(\ln ((\mathbf{x}_O^+)_1-x_1), \frac{x_1}{\delta}, \frac{x_2}{\delta}\right) + \Pi_{n,q,k,+},
\end{equation}
where,
\begin{multline} \label{definitionPmr}
p_{m,r,+}(\ln t, \frac{x_1}{\delta}, \frac{x_2}{\delta}) =
\sum_{p=0}^r \, \;
g_{m,r-p,p,+}^{\mathfrak{t}}(\ln t)\;
  W_p^{\mathfrak{t}} \left(\frac{x_1}{\delta}, \frac{x_2}{\delta} \right)
  +\sum_{p=1}^r \;
  g_{m,r-p,p,+}^{\mathfrak{n}}(\ln t) \;
  W_p^{\mathfrak{n}}\left(\frac{x_1}{\delta}, \frac{x_2}{\delta}
  \right) .
\end{multline}
The functions $g_{m,r,q,+}^{\mathfrak{n}}$ and
$g_{m,r,q,+}^{\mathfrak{t}}$ are polynomials in $\ln
t$. Their definitions are given in~(\ref{gnrqplust}),(\ref{gnrqplusn}). The remainder $\Pi_{n,q,k,+}$ can be
  written as 
\begin{equation}
\Pi_{n,q,k,+}(x_1, \mX) = \sum_{p=0}^q \langle w_{n,q,p}^{\mathfrak{t}}(x_1 ,0) \rangle
W_{p}^{\mathfrak{t}}(\mX) +  \sum_{p=0}^q \langle
w_{n,q,p}^{\mathfrak{n}}(x_1 ,0) \rangle W_{p}^{\mathfrak{n}}(\mX),
\end{equation}
where one can verify (using a weighted elliptic regularity argument,
see \cite[Corollary 6.3.3]{MR1469972}) that the functions $w_{n,q,p}^{\mathfrak{t}}$ and $w_{n,q,p}^{\mathfrak{n}}$ belong to $V_{2,\beta}^2(\OmegaTop)\cap V_{2,\beta}^2(\OmegaBottom)$
for any $\beta > 1 -\frac{2 (k+1)}{3}$.\\

\noindent Naturally, similar asymptotic expansions occur in the vicinity of the left
corner.


\section{Analysis of the near field equations and near field singularities} \label{SectionNF}
The near field terms $U_{n,q,\pm}^\delta$ satisfy Laplace problems
(see~\eqref{NearFieldEquation}) posed in
the unbounded domain $\widehat{\Omega}^\pm$ (defined
in~\eqref{definitionOmegaHatpm})
of the form
\begin{equation}\label{ProblemeModeleChampProche}
 \left\lbrace\quad
\begin{aligned}
-\Delta u & = &  f  &\quad \mbox{in} \; \widehat{\Omega}^\pm,\\
u & = &  0  &\quad \mbox{on} \; \partial \mathcal{K}^\pm,\\
\partial_n u &  = &  g  &\quad \mbox{on} \;  \widehat{\Gamma}^\pm_{\hole}=\partial \widehat{\Omega}^\pm \setminus \partial \mathcal{K}^\pm.
\end{aligned}
\right. 
\end{equation}
In this section, we first present a functional framework to solve the model
problem~\eqref{ProblemeModeleChampProche} (Subsection~\ref{SubsectionNearFieldGeneral}). We pay particular attention to the
asymptotic behavior of the solutions at infinity
(Proposition~\ref{propositionAsymptoticNearField}). Based on this result, we
construct two families $S_q^\pm$, $q \in \N^\ast$, of 'near field'
singularities, \ie, solutions to \eqref{ProblemeModeleChampProche} with
$f=0$ but growing at infinity as $(R^\pm)^{\frac{2q}{3}}$ (Subsection~\ref{SubsectionNearFieldSingularities}).  Finally, we
use these singularities to write a quasi-explicit formula
(see~\eqref{FormeExplicitEUnqpm}) for the near fields
terms $U_{n,q,\pm}^\delta$ (Subsection~\ref{SectionQuasiExpliciteNF}).  Here again, most of the results are
explained for the problems posed in $\widehat{\Omega}^+$ but similar results
hold for $ \widehat{\Omega}^-$.
\subsection{General results of existence and asymptotics of the
  solution at infinity} \label{SubsectionNearFieldGeneral}
\subsubsection{Variational framework}
As fully described in Section~3.3 in
\cite{CalozVial}, the standard space to solve
Problem~\eqref{ProblemeModeleChampProche} is 
\begin{equation}
\mathfrak{V}(\widehat{\Omega}^+) = \left\{ v \in H^1_{\text{loc}}(\widehat{\Omega}^+), \;
  \nabla v \in L^2(\widehat{\Omega}^+), \quad \frac{v}{\sqrt{1 + {(R^+)}^2}}
\in  L^2(\widehat{\Omega}^+),  v = 0 \; \mbox{on} \;\partial \mathcal{K}^+\right\},
\end{equation}
which, equipped with the norm
\begin{equation}
\| v\|_{\mathfrak{V} (\widehat{\Omega}^+) } = \left( \left\|  \frac{v}{\sqrt{1 + (R^+)^2}} \right\|_{
    L^2(\widehat{\Omega}^+)}^2 + \|  \nabla v\|_{
    L^2(\widehat{\Omega}^+)}^2 \right)^{1/2}\ , 
\end{equation}
is a Hilbert space. The variational problem associated with
Problem~\eqref{ProblemeModeleChampProche} is the following:
$$
\text{find} \; u \in
\mathfrak{V}(\widehat{\Omega}^+)\; \mbox{ such that}\;\;
\mathfrak{a}(u,v) =
\int_{\widehat{\Omega}^+} f(\mX) \, v(\mX) \, d\mX \; + \; \int_{\widehat{\Gamma}^+_{\hole}} g(\mX)\, v(\mX) d\sigma,\quad \forall \,v \in
\mathfrak{V}(\widehat{\Omega}^+),
$$
where
$
 \mathfrak{a}(u,v)
=\int_{\widehat{\Omega}^+} \nabla u(\mX) \cdot \nabla v(\mX)  d\mX.
$
It is proved in \cite[Proposition 3.6]{CalozVial} (\cf also \cite[Lemma 2.2]{VialThese}),  that
$$
\int_{\widehat{\Gamma}^+_{\hole}} g(\mX)\, v(\mX)\, d\sigma \leq \, C \, \|(1 +
  {(R^+)}^2)^{1/4} g \|_{ L^2(\widehat{\Gamma}^+_{\hole})} \| v\|_{\mathfrak{V}(\widehat{\Omega}^+)},
$$
and that the bilinear form  $\mathfrak{a}$ is coercive on
$\mathfrak{V}(\widehat{\Omega}^+)$ (the seminorm of the
gradient $\| \nabla v \|_{L^2(\widehat{\Omega}^+)}$ is a norm on
$\mathfrak{V}(\widehat{\Omega}^+)$). As a consequence, the following well-posedness
result holds:
\begin{prop}\label{PropositionProblemeChampProche}
Assume that $\sqrt{1 + {(R^+)}^2} f \in L^2(\widehat{\Omega}^+)$ and $(1 +
  {(R^+)}^2)^{1/4} g  \in L^2(\widehat{\Gamma}^+_{\hole})$. Then,
Problem~\eqref{ProblemeModeleChampProche} has a unique solution $u \in \mathfrak{V}(\widehat{\Omega}^+)$.
\end{prop}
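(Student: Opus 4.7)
The plan is to establish existence and uniqueness by applying the Lax--Milgram lemma to the variational formulation displayed just above the proposition, so the work reduces to verifying that the right-hand side defines a continuous linear form on $\mathfrak{V}(\widehat{\Omega}^+)$ and that $\mathfrak{a}$ is continuous and coercive on this space. The continuity of $\mathfrak{a}$ is immediate from Cauchy--Schwarz since $\|\nabla u\|_{L^2(\widehat{\Omega}^+)} \leq \|u\|_{\mathfrak{V}(\widehat{\Omega}^+)}$ by the very definition of the norm.

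For the first term of the right-hand side I would use a weighted Cauchy--Schwarz splitting the weight $\sqrt{1+(R^+)^2}$:
\begin{equation*}
\left| \int_{\widehat{\Omega}^+} f(\bX)\, v(\bX)\, d\bX \right|
\leq \left\| \sqrt{1+(R^+)^2}\, f \right\|_{L^2(\widehat{\Omega}^+)}\,
\left\| \frac{v}{\sqrt{1+(R^+)^2}} \right\|_{L^2(\widehat{\Omega}^+)}
\leq \left\| \sqrt{1+(R^+)^2}\, f \right\|_{L^2(\widehat{\Omega}^+)}\, \|v\|_{\mathfrak{V}(\widehat{\Omega}^+)}\ .
\end{equation*}
For the boundary term, I would invoke the trace-type inequality quoted right after the variational formulation (from~\cite[Prop.~3.6]{CalozVial}, \cite[Lemma~2.2]{VialThese}), which already gives the continuity estimate in terms of $\|(1+(R^+)^2)^{1/4} g\|_{L^2(\widehat{\Gamma}^+_{\hole})}$ and $\|v\|_{\mathfrak{V}(\widehat{\Omega}^+)}$.

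The heart of the argument is coercivity, i.e. showing that $v \mapsto \|\nabla v\|_{L^2(\widehat{\Omega}^+)}$ is a norm equivalent to $\|\cdot\|_{\mathfrak{V}(\widehat{\Omega}^+)}$. This again is the content of the result cited from~\cite[Prop.~3.6]{CalozVial}: a weighted Poincaré (or Hardy-type) inequality of the form
\begin{equation*}
\left\| \frac{v}{\sqrt{1+(R^+)^2}} \right\|_{L^2(\widehat{\Omega}^+)} \leq C\, \|\nabla v\|_{L^2(\widehat{\Omega}^+)}\ , \qquad v \in \mathfrak{V}(\widehat{\Omega}^+)\ ,
\end{equation*}
which exploits crucially that $v$ vanishes on $\partial \mathcal{K}^+$ (the two half-lines of the angular sector); the perforations only remove a bounded set from the sector, so the classical Hardy inequality in the cone transfers to $\widehat{\Omega}^+$ up to a controlled modification near the holes. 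With this inequality in hand, $\mathfrak{a}(v,v) = \|\nabla v\|_{L^2}^2 \geq c\, \|v\|_{\mathfrak{V}}^2$, and Lax--Milgram delivers a unique $u \in \mathfrak{V}(\widehat{\Omega}^+)$.

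The main obstacle, if one wished a fully self-contained proof, would be the Hardy inequality in the perforated angular domain: one has to control the behavior of $v$ at infinity in a two-dimensional conical region where the usual $1/|\bX|^2$ Hardy weight is borderline. However, since the corresponding statement is already proved in~\cite{CalozVial,VialThese} for exactly the same functional setting, my proof plan is simply to cite these results and conclude by Lax--Milgram rather than redo the weighted-inequality analysis.
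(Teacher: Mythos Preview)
Your proposal is correct and matches the paper's own approach: the paper states, just before the proposition, that the trace inequality and the coercivity of $\mathfrak{a}$ on $\mathfrak{V}(\widehat{\Omega}^+)$ are taken from \cite[Prop.~3.6]{CalozVial} and \cite[Lemma~2.2]{VialThese}, and then asserts the well-posedness as a direct consequence (implicitly by Lax--Milgram). Your write-up simply spells out the weighted Cauchy--Schwarz for the volume term, which the paper leaves implicit.
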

\subsubsection{Asymptotic expansion at infinity}
As usual when dealing with matched asymptotic expansions, it is important (for
the matching procedure) to be able to write an asymptotic expansion of the near
fields as $R^\pm$ tends to infinity. In the present case, because of the
presence of the thin layer of periodic holes this is far from being trivial:
there is no separation of variables. However, Theorem 4.1 in
\cite{Nazarov205} helps to answer this
difficult question.\\

\noindent For the statement of the next results, we
need to consider a new family of weighted Sobolev spaces. For $\ell
\in \N$ (in the sequel, we shall only consider $\ell \in \{ 0,1,2\}$), we introduce the space  $\mathfrak{V}_{\beta, \gamma
 }^\ell(\widehat{\Omega}^+)$ defined as the completion of
 $C_{c}^\infty(\overline{\widehat{\Omega}^+})$ with respect to the
 norm
\begin{equation}\label{DefinitionNormeDoublePoids}
\left\| v \right\|_{\mathfrak{V}_{\beta, \gamma
 }^\ell(\widehat{\Omega}^+) }= \sum_{p=0}^\ell \left\|
 (1+R^+)^{\beta-\gamma-\delta_{p,0}} \rho^{\gamma -\ell +p
   +\delta_{p,0} } \nabla^p v \right\|_{L^2(\widehat{\Omega}^+)} \quad
\rho = 1 + (1 + R^+) | \theta^+ -\pi|.
\end{equation}
The norm $\| \cdot \|_{\mathfrak{V}_{\beta, \gamma
 }^2(\widehat{\Omega}^+)}$ is a non-uniform  weighted norm. The weight
varies with the angle $\theta^+$. %
Away from the periodic layer, \ie, for $|\theta^+ - \pi| \geq \varepsilon$ for some $\varepsilon > 0$ and ${R^+}$ sufficiently large, 
we recover
the classical weighted Sobolev norm $V_{\beta}^2 (\mathcal{K}^+)$ (\cf~\eqref{definitionV2betal}) :
\begin{equation}\label{reecritureNormeV2betal}
\| v \|_{V_{\beta}^\ell(\mathcal{K}^+)} =  \left( \sum_{p=0}^{\ell} \|
{(R^+)}^{\beta-\ell +p } \nabla^p v \|_{L^2(\mathcal{K}^+)}^2 \right)^{1/2}.
\end{equation}
Indeed, in this part $\rho \sim 1+{R^+}$ for $R^+ \to \infty$. 
In contrast, close to the layer, \ie, for $\theta^+ \to \pi$ for $R^+$ fixed, we have $\rho \to 1$, and the global weight in
\eqref{DefinitionNormeDoublePoids} becomes
$(1+{R^+})^{\beta-\gamma-\delta_{p,0}}$. \\

\noindent In the classical weighted Sobolev norm~\eqref{reecritureNormeV2betal}, the weight
${(R^+)}^{\beta-\ell +p }$ depends on the derivative ($p = 0$ or $p = 1$) under consideration. It
increases by one at each derivative. This is
linked to the fact that the gradient of a function of the form
${(R^+)}^\lambda g(\theta)$, which is given by ${(R^+)}^{\lambda-1} \left( \lambda
  g(\theta) {\be}_r +  g'(\theta) {\be}_\theta \right)$, decays more rapidly
than the function itself as $R^+$ tends to $+\infty$ (comparing ${(R^+)}^{\lambda-1}$ and ${(R^+)}^{\lambda}$). %
This property does not hold anymore for a
function of the form $(X_1^+)^\lambda g(X_1^+, X_2^+)$ where 
$\mathbf{X}^+ = (X_1^+, X_2^+) = R^+ (\cos \theta^+, \sin \theta^+)$ and
$g \in
\mathcal{V}^+(\mathcal{B})$ ($g$ is periodic with respect to $X_1^+$ and
exponentially decaying with respect to $X_2^+$). Indeed, in this case 
$$
\nabla \left( (X_1^+)^\lambda g\right) = \left( \lambda (X_1^+)^{\lambda-1} g
+ (X_1^+)^\lambda \partial_{X_1^+} g  \right) \eOne + 
(X_1^+)^\lambda  \partial_{X_2^+} g\; \eTwo,  
$$ 
which does not decrease as $(X_1^+)^{\lambda-1}$. %
This remark gives a first intuition of the necessity to introduce a weighted space with a weight
adjusted in the vicinity of the periodic layer, \ie, for $\theta^+ \to \pi$. %
Note that in the case of Dirichlet boundary conditions on the holes, the appropriate weighted
space to consider is slightly different (see~\cite{Nazarov143}).   \\

\noindent To be used later we prove the following properties of these new function spaces.%
\begin{prop}\label{propositionTechniqueSobolevPoids}
Let $\gamma \in \left( \frac{1}{2},1 \right)$, $p \in \{1, 2\}$,
$\lambda \in \R$, $q \in \N$. Let $\chi$ and $\chi_\pm$ be the cut-off
functions defined in~\eqref{defchi} and \eqref{DefinitionChiplusmoins}.
\begin{itemize}
\item[-] The function $v_1 = \chi^{(p)}(X_2^+) \, \chi_-(X_1^+)
  \, (R^+)^\lambda \,(\ln R^+)^q$ belongs to
  $\mathfrak{V}_{\beta, \gamma}^0(\widehat{\Omega}^+)$ providing that
$$
\beta < \gamma - \lambda + 1/2.
$$
\item[-] The function $v_2 = \chi(R^+) (R^+)^\lambda (\ln R^+)^q $
  belongs to $\mathfrak{V}_{\beta, \gamma}^2(\widehat{\Omega}^+) $ providing that
$$
\beta < 1 - \lambda.
$$
\item[-] Let $w=w(X_1^+,X_2^+)$ be a $1$-periodic function with respect to
  $X_1^+$ such that  $\| w \, e^{|X_2^+|/2}\|_{L^2(\mathcal{B})} < +\infty$.  Then, the function $v_3 =
  \chi_-(X_1^+)  |X_1^+|^{\lambda-1} (\ln |X_1^+|)^q w(X_1^+,X_2^+)$ belongs to
  $\mathfrak{V}_{\beta, \gamma}^0(\widehat{\Omega}^+) $ providing that
$$
\beta <  \gamma - \lambda + 3/2.
$$
\item[-] Let $w \in \mathcal{V}^+(\mathcal{B})\cap
  H^2_{\text{loc}}(\mathcal{B})$ such that
$$
\int_{\mathcal{B}} \left( | \partial_{X_1^+}^2 w|^2 + | \partial_{X_2^+}^2
  w|^2    + |\partial_{X_1^+} \partial_{X_2^+} w|^2 \right) e^{|X_2^+|} d\mX^+
< + \infty.
$$ 

 Then, the function $v_4 =
  \chi_-(X_1^+)  |X_1^+|^{\lambda-1} (\ln R^+)^q w(X_1^+,X_2^+)$ belongs to
  $\mathfrak{V}_{\beta, \gamma}^2(\widehat{\Omega}^+) $ providing that
$$
\beta < \gamma - \lambda + 1/2.
$$
\end{itemize}
\end{prop}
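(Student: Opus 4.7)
The plan is to verify each of the four assertions by direct computation of the weighted $L^2$ integrals defining the norm~\eqref{DefinitionNormeDoublePoids}. The crucial preliminary observation concerns the behaviour of $\rho$ on the supports at hand: whenever $X_1^+ < 0$ and $|X_2^+|$ is not much larger than $|X_1^+|$, one has $\theta^+$ close to $\pi$, $R^+ \sim |X_1^+|$, and $|\theta^+ - \pi| \sim |X_2^+|/|X_1^+|$, so that
\begin{equation*}
\rho = 1 + (1+R^+)|\theta^+ - \pi| \; \sim \; 1 + |X_2^+|\ .
\end{equation*}
This reduction applies on the supports of $v_1$, $v_3$, $v_4$, where either a cut-off in $X_2^+$ or the exponential decay of $w$ in $X_2^+$ confines the mass to the neighbourhood of the periodic layer $\{\theta^+ = \pi\}$. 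The structure of the norm then decouples the $X_1^+$- and $X_2^+$-integrations, the latter being uniformly controlled by an $L^2$-estimate over one period of $w$ (or a bounded interval for $v_1$), and the former reducing to a one-dimensional radial series.

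For $v_1$, since $\chi^{(p)}$ is compactly supported in $1 \leq |X_2^+| \leq 2$ for $p \in \{1,2\}$, both $\rho$ and the $X_2^+$-integral are bounded, and $\|v_1\|_{\mathfrak V^0_{\beta,\gamma}}^2$ reduces to $C \int_{-\infty}^{-1} |X_1^+|^{2(\beta-\gamma-1)+2\lambda}(\ln|X_1^+|)^{2q}\, dX_1^+$, which converges exactly when $\beta < \gamma - \lambda + 1/2$. The treatment of $v_3$ is the same in spirit: the exponential decay $\|w\, e^{|X_2^+|/2}\|_{L^2(\mathcal B)} < \infty$ absorbs the polynomial weight $(1+|X_2^+|)^{2(\gamma+1)}$ (using $\gamma < 1$), and summing over the $X_1^+$-periods of $w$ yields the radial integrand $|X_1^+|^{2(\beta-\gamma-1)+2(\lambda-1)}$, from which the threshold $\beta < \gamma - \lambda + 3/2$ follows.

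For $v_2$, which is radial, $|\nabla^p v_2| \sim (R^+)^{\lambda-p}(\ln R^+)^q$ and I would compute the angular integrals $I_\mu(R^+) = \int_{I^+} \rho^{2\mu}\, d\theta^+$ via the substitution $s = (1+R^+)|\theta^+ - \pi|$. Because $\gamma \in (\tfrac12, 1)$, the exponents $\mu \in \{\gamma-1, \gamma\}$ relevant for $\ell = 2$ all satisfy $2\mu > -1$, so the substitution gives $I_\mu(R^+) \sim (1+R^+)^{2\mu}$. Combining with the prefactors $(1+R^+)^{2(\beta-\gamma-\delta_{p,0})}$ and the radial volume element $R^+\, dR^+$, the three terms $p \in \{0,1,2\}$ all collapse to the same radial integrand $(R^+)^{2\beta+2\lambda-3}(\ln R^+)^{2q}$, which is finite iff $\beta < 1 - \lambda$.

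The main obstacle is $v_4$, where two derivatives must be taken of a product of a power of $|X_1^+|$ with a non-trivial cell function $w$. Writing $v_4 = \chi_-(X_1^+) f(X_1^+) w$ with $f(X_1^+) = |X_1^+|^{\lambda-1}(\ln R^+)^q$ and applying Leibniz, each $\nabla^p v_4$ ($p \leq 2$) splits into a bounded-support contribution from $\nabla \chi_-$ (harmless) plus terms of the schematic form $|X_1^+|^{\lambda-1-j}\,\nabla^k w$ with $j + k = p$ (up to logarithmic factors). For large $|X_1^+|$ the slowest-decaying, hence most constraining, contribution is the one where all derivatives fall on $w$, namely $|X_1^+|^{\lambda-1}\nabla^p w$. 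The hypothesis of $L^2$-integrability of $\nabla^2 w$ against $e^{|X_2^+|}$ (together with membership $w \in \mathcal V^+(\mathcal B)$ for the first-derivative case) guarantees uniform control of the $X_2^+$-integrals against the polynomial weights $(1+|X_2^+|)^{2(\gamma-\ell+p+\delta_{p,0})}$, since these are dominated by $e^{|X_2^+|}$. Summing over the $X_1^+$-periods of $w$, the $p = 1$ and $p = 2$ contributions both produce the series $\sum_k k^{2(\beta-\gamma)+2(\lambda-1)}(\ln k)^{2q}$, whose convergence yields the sharp threshold $\beta < \gamma - \lambda + 1/2$; the $p = 0$ contribution gives only the weaker condition $\beta < \gamma - \lambda + 3/2$, so the claimed bound is established.
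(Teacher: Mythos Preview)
Your argument is correct. For $v_1$, $v_3$, $v_4$ it follows the paper's route: the equivalence $\rho \sim 1 + |X_2^+|$ on $\{X_1^+<0\}$ is exactly the paper's preliminary Lemma~\ref{lmm:equivalence_rho}, and the reduction to a sum over $X_1^+$-periods of $w$ is the same cell decomposition the paper uses. One point you elide but which the paper makes explicit for $v_3,v_4$: in cell $n$ the prefactor $(1+R^+)^{2(\beta-\gamma-\delta_{p,0})}$ still depends on $X_2^+$ through $R^+$, and the paper handles this by the two-sided bound $n \lesssim R^+ \lesssim n(1+|X_2^+|)$ together with a sign distinction on the exponent; in your sketch this is implicitly absorbed into ``the exponential decay of $w$ controls all polynomial weights in $X_2^+$'', which is correct but worth stating. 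The genuine difference is your treatment of $v_2$. The paper splits $\widehat\Omega^+$ into three regions (a band $|X_2^+|\le 2$ near the layer, an intermediate wedge $2<|X_2^+|<|X_1^+|$, and a far angular sector $|\theta^+-\pi|>\pi/4$) and estimates each separately, exploiting respectively that $\rho$ is bounded, that $\rho\sim 1+|X_2^+|$ with explicit $X_2^+$-integration, and that $\rho\sim R^+$. Your substitution $s=(1+R^+)|\theta^+-\pi|$ computes the full angular integral $\int_{I^+}\rho^{2\mu}\,d\theta^+\sim (1+R^+)^{2\mu}$ in one stroke (valid precisely because $2\mu\in\{2\gamma-2,2\gamma\}\subset(-1,\infty)$ for $\gamma\in(\tfrac12,1)$), and the observation that all three $p$-terms then collapse to the same radial integrand $(R^+)^{2\beta+2\lambda-3}$ is cleaner than the paper's region-by-region bookkeeping. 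The paper's decomposition has the advantage of being more transparent about where each behaviour of $\rho$ is realised geometrically, but your approach is shorter and makes the role of the constraint $\gamma>\tfrac12$ (which guarantees $2(\gamma-1)>-1$) more visible.
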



\noindent In absence of the periodic layer the solutions of the near field equations 
might be written as linear combination of harmonic functions $(R^+)^{\lambda_m} w_{m,0,+}(\ln R^+, \theta^+)$, $m \in \N^\ast$ for $R^+ \to \infty$
where $w_{m,r,+}$ have been defined~\eqref{Definitionwnpplus}. %
With the periodic layer the behavior far above the layer remains the same, but has to be corrected by
$|X_1^+|^{\lambda_m} p_{m,0,+}(\ln |X_1^+|, X_1^+, X_2^+)$ (with $p_{m,r,+}$ defined in~\eqref{definitionPmr})
to fulfill the homogeneous boundary conditions on $\widehat{\Gamma}_\hole$. %
This correction is not harmonic and has a particular decay rate for $R^+ \to \infty$.
It can be (macroscopically) corrected by $(R^+)^{\lambda_{m}-1} w_{m,1,+}$
and in the neighbourhood of the layer by $|X_1^+|^{\lambda_m-1} p_{m,1,+}(\ln |X_1^+|, X_1^+, X_2^+)$.
Then, through a consecutive correction in the form (the cut-off function $\chi$ has been defined in~\eqref{defchi}
\begin{equation}
 \sum_{r = 0}^{p} \left(  (R^+)^{\lambda_m-r} w_{m,r,+}(\ln R^+, \theta^+)
  \chi(X_2^+) + \chi(X_1^+) |X_1^+|^{\lambda_m -r} p_{m,r,+}(\ln |X_1^+|, X_1^+, X_2^+)  \right)
  \label{eq:explicationLMX1negatif:formal}
\end{equation}
the Laplacian becomes more and more decaying for $R^+ \to \infty$, where
any decay rate can be achieved, which becomes, at least formally, zero for $p \to \infty$. %
The previous observation will be justified in a more rigorous form in the following lemma
 which turn out to be very useful in the sequel.\\
 
\noindent
For this let us introduce a smooth cut-off function $\chi_{\text{macro},+}$ (see Fig.~\ref{DessinTroncature}) that satisfies
\begin{equation}
  \label{eq:chi_macro_+}
  \chi_{\text{macro},+}(X_1^+,X_2^+) = \begin{cases} \chi(X_2^+) & \text{for} \; X_1^+ < -1,\\
    1 & \text{for} \; X_1^+>-\frac{1}{4}\ ,\\
    1 & \text{for} \; X_1^+>-1, |X_2^+| > 3\ ,
  \end{cases}
\end{equation}
and for $m \in \Z \setminus \{0\}$ the asymptotic block (we adopt this notion from~\cite{Nazarov205})
  \begin{multline}\label{DefAsymptoticBlocs}
    \mathcal{U}_{m,p,+}(X_1^+, X_2^+) = \chi(R^+) \sum_{r=0}^p \Big(
     \chi_{\text{macro},+}(X_1^+,X_2^+)  \, (R^+)^{\lambda_{{m}} -r} \, w_{m,r,+}(R^+, \theta^+) 
      + \\[-0.8em] 
     \chi_-(X_1^+) \, |X_1^+|^{\lambda_{{m}} -r} p_{m,r,+}(\ln |X_1^+|,X_1^+,X_2^+)
       \Big)\ ,
  \end{multline}
  where the cut-off function $\chi_-$ has been defined in~\eqref{DefinitionChiplusmoins}.
\begin{lema}\label{LemmeSuperDecroissance}
Under the condition $\gamma \in (1/2,1)$ the Laplacian $\Delta
\mathcal{U}_{m,p,+}$ of the asymptotic block $\mathcal{U}_{m,p,+}$ belongs to $\mathfrak{V}_{\beta, \gamma
 }^0 (\widehat{\Omega}^+)$ for any $\beta$ that satisfies
\begin{equation}
\beta < 2 - \lambda_m + p.
\end{equation}
\end{lema}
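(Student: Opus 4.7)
The plan is to compute $\Delta \mathcal{U}_{m,p,+}$ directly and to recognize that, by the very construction of the expansion~\eqref{Asymptoticsmq} and the boundary-layer correctors~\eqref{definitionPiq}, its bulk contributions \emph{telescope} through the recursion in $r$. Writing $\mathcal{U}_{m,p,+}$ as the sum in~\eqref{DefAsymptoticBlocs} of macroscopic blocks $\mathcal{M}_r$ and layer blocks $\mathcal{L}_r$, I would split $\Delta \mathcal{U}_{m,p,+}$ into three families of contributions: (a) \emph{bulk} terms where $\Delta$ acts on the polynomial/logarithmic factors $(R^+)^{\lambda_m - r} w_{m,r,+}$ or $|X_1^+|^{\lambda_m - r} p_{m,r,+}$ themselves; (b) \emph{commutator} terms where at least one derivative falls on one of the cut-offs $\chi(R^+)$, $\chi_{\mathrm{macro},+}$, or $\chi_-(X_1^+)$; and (c) the \emph{tail} that survives after the telescopic cancellation of the bulk terms across all levels $r = 0, \dots, p$.

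For the bulk terms, the central cancellation proceeds as follows. By~\eqref{definitionPmr} the profiles $p_{m,r,+}$ are linear combinations (with coefficients polynomial in $\ln|X_1^+|$) of $W_p^{\mathfrak{t}}$ and $W_p^{\mathfrak{n}}$, which by \eqref{ProblemW0t}, \eqref{ProblemWpt}, \eqref{ProblemWpn} solve elliptic equations in $\mathcal{B}$ whose right-hand sides encode precisely the jump distributions on $\Gamma$ produced by the macroscopic terms. When $-\Delta_{\mathbf{X}^+}$ is applied to $\mathcal{L}_r$, three types of sources appear: (i) the "vertical" source from $-\Delta_{\mathbf{X}^+} W_p^{\mathfrak{t/n}}$, which is compactly supported in $X_2^+$ and matches---through~\eqref{SautFFq+NormalFFq}---the jump of $\mathcal{M}_r$; (ii) cross terms coupling the slow dependence on $\ln|X_1^+|$ with $\partial_{X_1^+} W_p^{\mathfrak{t/n}}$, which are absorbed by $\mathcal{M}_{r+1} + \mathcal{L}_{r+1}$; and (iii) a leftover of order $|X_1^+|^{\lambda_m - r - 2}$ times an exponentially decaying periodic profile. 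A parallel phenomenon takes place in the macroscopic region, where the Laplacian of $(R^+)^{\lambda_m - r} w_{m,r,+}$ is compensated at the next level (modulo commutators with $\chi_{\mathrm{macro},+}$). At the terminal level $r = p$ the only surviving contribution behaves like $(R^+)^{\lambda_m - p - 2} \cdot (\text{polynomial in }\ln R^+)$ in the conical region $|\theta^+ - \pi| \geq \varepsilon$ and like $\chi_-(X_1^+) |X_1^+|^{\lambda_m - p - 2}$ times a $\mathcal{V}^+(\mathcal{B})$-profile in $X_2^+$ near the layer. The two items~2 and~3 of Proposition~\ref{propositionTechniqueSobolevPoids} then place this tail in $\mathfrak{V}_{\beta,\gamma}^0(\widehat{\Omega}^+)$ under exactly the threshold $\beta < 2 - \lambda_m + p$ stated in the lemma (note the shift of one order due to the telescoping: the tail is one level better than the bulk term at $r = p$).

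For the commutator terms, I would treat each cut-off separately. The commutator $[\Delta, \chi(R^+)]$ produces terms supported in a compact annulus around the origin of $\widehat{\Omega}^+$, so they are smooth and compactly supported, hence in $\mathfrak{V}_{\beta,\gamma}^0$ for every $\beta, \gamma$. The commutators $[\Delta, \chi_{\mathrm{macro},+}]$ are supported in a strip where both $X_1^+$ lies in $[-1,-1/4]$ and $|X_2^+| \leq 3$ (together with the analogous strip near $|X_2^+| \simeq 1$ when $X_1^+ > -1$), in which the macroscopic blocks are smooth and $R^+$ is bounded; the same conclusion applies. The commutators $[\Delta, \chi_-(X_1^+)]$ act on the layer blocks in a region where $X_1^+$ is bounded and $X_2^+$ ranges over $\mathbb{R}$; they produce contributions of the form covered by the third item of Proposition~\ref{propositionTechniqueSobolevPoids}, applied to the $\mathcal{V}^+(\mathcal{B})$-profiles embedded in $p_{m,r,+}$, and the resulting constraint on $\beta$ is strictly weaker than $\beta < 2 - \lambda_m + p$ as soon as $\gamma > 1/2$.

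The main obstacle will be making the telescoping step of the second paragraph fully rigorous: at each level $r$, one has to identify which pieces of $-\Delta(\mathcal{M}_r + \mathcal{L}_r)$ are absorbed into level $r+1$ and which feed into the final tail, while keeping track of the polynomial degree in $\ln R^+$ (respectively $\ln |X_1^+|$), which grows at most linearly in $r$. This bookkeeping is the near-field counterpart of the transmission analysis already carried out in Section~\ref{SubSectionTransmissionGeneral}, and once tabulated, the proof reduces to an inventory check using Proposition~\ref{propositionTechniqueSobolevPoids}.
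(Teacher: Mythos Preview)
Your overall strategy---split the Laplacian into pieces, identify a cancellation between the macroscopic and boundary-layer blocks, and estimate the remainder via Proposition~\ref{propositionTechniqueSobolevPoids}---is exactly the paper's strategy. But the cancellation is not where you put it, and your treatment of the commutator with $\chi_{\macro,+}$ contains the real gap.

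You claim that $[\Delta,\chi_{\macro,+}]$ is supported in a region where $R^+$ is bounded. This is false: for $X_1^+<-1$ the cut-off $\chi_{\macro,+}$ coincides with $\chi(X_2^+)$ (see~\eqref{eq:chi_macro_+}), so $\nabla\chi_{\macro,+}$ is supported in the \emph{infinite} strip $\{X_1^+<-1,\ 1\le|X_2^+|\le2\}$ in addition to the compact transition region near $X_1^+\in[-1,-1/4]$. The commutator contribution $[\Delta,\chi(X_2^+)]\,\ttv_{m,r,+}$ on that infinite strip behaves like $|X_1^+|^{\lambda_m-r}$ and is certainly not negligible; it is in fact the very term that must cancel against the bulk Laplacian of the boundary-layer block $\chi_-(X_1^+)\,\mathcal{U}_{m,p,\mathrm{BL}}$. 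Your description of the cancellation in~(a), where ``$-\Delta_{\mX^+}W_p^{\mathfrak{t/n}}$ matches the jump of $\mathcal{M}_r$'', cannot be right as stated: since $\chi_{\macro,+}$ vanishes on the interface $\{X_2^+=0\}$ for $X_1^+<-1$, there is no $\delta$-type contribution from the jump of $\mathcal{M}_r$ in $\Delta\mathcal{U}_{m,p,+}$ at all.

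The correct bookkeeping is the one carried out in the appendix: expand $[\Delta,\chi(X_2^+)]\,\ttv_{m,r,+}$ by Taylor in $X_2^+$ around the interface (this is where the jump data enter, via the functions $g^{\mathfrak t}_{m,r,k,+},g^{\mathfrak n}_{m,r,k,+}$), sum over $r$, and recognise that the resulting coefficients $\mathcal A_k^{\mathfrak t},\mathcal A_k^{\mathfrak n}$ are exactly $-\Delta W_k-2\partial_{X_1}W_{k-1}-W_{k-2}$ by the very definition~\eqref{ProblemWpt}--\eqref{ProblemWpn} of the profile functions. This is the identity that makes the infinite-strip commutator of the macroscopic part and the bulk Laplacian of the boundary-layer part cancel. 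What survives are (i) the compactly supported commutators you correctly identified, (ii) a Taylor remainder $r_{m,r,p}$ of size $|X_1^+|^{\lambda_m-p-1}(\ln|X_1^+|)^r$ supported in the strip $1\le|X_2^+|\le 2$ (handled by item~1 of Proposition~\ref{propositionTechniqueSobolevPoids}, not items~2--3), and (iii) the uncompensated tail of the boundary-layer Laplacian at levels $p+1,p+2$ (handled by item~3). Once you relocate the cancellation to the right place, your outline becomes the paper's proof.
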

\begin{figure}[tbh]
   \centering
       \includegraphics[width=0.35\textwidth]{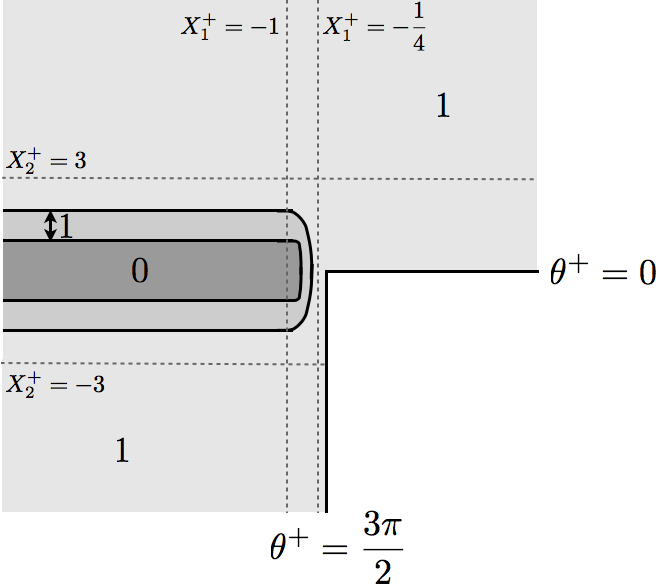}
   \caption{Schematic representation of the cut-off function $\chi_{\macro,+}$
       defined in (\ref{eq:chi_macro_+})}
 \label{DessinTroncature}
 \end{figure}
\noindent 
The functions $p_{m,q,+}$ are polynomials in $\ln |X_1^+|$. They are periodic with
respect to $X_1^+$ and exponentially decaying as $X_2^+$ tends to $\pm
\infty$. The functions $w_{m,q,+}$ are polynomials in
$\ln R^+$. Note that, for $R^+$ large enough, {if $X_1^+<-1$,}
\begin{equation}\label{explicationLMX1negatif}
    \mathcal{U}_{m,p,+}  =  \sum_{r=0}^p \Big(
     (R^+)^{\lambda_{m} -r} w_{m,r,+}(\ln R^+, \theta^+) \chi(X_2^+)   +
     |X_1^+|^{\lambda_{{m}} -r} p_{m,r,+}(\ln
      |X_1^+|,X_1^+,X_2^+) \Big) \quad
\end{equation}
and, if $X_1^+>-1$,
\begin{equation}\label{explicationLMX1positif}
\mathcal{U}_{m,p,+}  =  \sum_{r=0}^p 
(R^+)^{\lambda_m -r} w_{m,r,+}\ .
\end{equation}
The usage of the cut-off functions $\chi_{\text{macro},+}$, 
{$\chi_-(X_1^+)$}, and $\chi(R^+)$ in \eqref{DefAsymptoticBlocs} is simply a technical way to construct
function defined on the whole domain $\widehat{\Omega}^+$ (and more precisely
belonging to $H^2_{\text{loc}}(\widehat{\Omega}^+)$) that coincides
with $\mathcal{U}_{m,p,+}$ for large $R^+$ as given in~\eqref{explicationLMX1negatif} and~\eqref{explicationLMX1positif}.\\

\begin{rem}
In view of Proposition~\ref{propositionTechniqueSobolevPoids},
reminding that $p_{m,0}(\ln |X_1^+|, X_1^+, X_2^+)$ is proportional to $(1 -\chi(X_2^+))$, for any $\gamma \in (1/2,1)$, the asymptotic block
$\mathcal{U}_{m,p,+} $ belongs to $\mathfrak{V}_{\beta, \gamma
 }^2 (\widehat{\Omega}^+)$ for any $\beta < 1 - \lambda_m$.
\end{rem} 
\noindent Defining $\chi_{\macro,-}(X_1^-,X_2^-) =\chi_{\macro,+}(-X_1^-,X_2^-)$, we
can also define the asymptotic blocks associated with
$\widehat{\Omega}^-$ as follows:
 \begin{multline}\label{DefAsymptoticBlocs_moins}
\mathcal{U}_{m,p,-} = \chi(R^-) \sum_{r=0}^p \Big(
  \chi_{\text{macro},-}(X_1^-,X_2^-)  \, (R^-)^{\lambda_m -r} \, w_{m,r,-}(\ln R^-, \theta^-)  + \\[-0.8em] 
 \chi_{+}(X_1^-) \, |X_1^-|^{\lambda_m -r} p_{m,r,-}(\ln |X_1^- |,X_1^-,X_2^-)
\Big)  .
\end{multline}

\noindent We are now in a position to write the main result of this subsection, which proves that for $R^+$ large and
for sufficiently decaying right-hand sides, the
solutions of Problem~\eqref{ProblemeModeleChampProche} can be decomposed
into a sum of radial contributions corrected by periodic
exponentially decaying correctors in the vicinity of the layer of equispaced holes. In the following, a real number $\beta$
is said to be admissible if $\beta - 1 \notin \Lambda$. 

\begin{prop}\label{propositionAsymptoticNearField}
  Let $k \in \N$ and $\gamma \in (1/2, 1)$.  Assume that $f \in
  \mathfrak{V}_{\beta, \gamma }^0(\widehat{\Omega}^+)$ for some admissible
  $\beta >\max(3, 1 + \frac{2(k+1)}{3})$ (and so $\sqrt{1 + (R^+)^2}f \in L^2(\widehat{\Omega}^+)$) and that $g \in H^{1/2}(\widehat{\Gamma}_\hole)$
  is compactly supported.  Then, the unique solution $u$ of
  Problem~\eqref{ProblemeModeleChampProche} belongs to $
  \mathfrak{V}(\widehat{\Omega}^+)\cap  \mathfrak{V}_{\beta', \gamma
  }^2(\widehat{\Omega}^+)$ for any $\beta'< \frac{1}{2}$. Moreover,  
  $u$ admits, for $\gamma-1$ sufficiently small, the decomposition
\begin{equation}\label{AsymptoticExpansionSolutionCP}
u =  \sum_{n =1}^k  \mathscr{L}_{-n}(u) \,
{\mathcal{U}_{-n, p(n),+}}
+ \;\tilde{u}, \quad p(n) = \max(1, \lceil \frac{2}{3} (k-n) \rceil),
\end{equation} 
where the asymptotic blocks $\mathcal{U}_{-n, p(n), +}$ are defined
in~\eqref{DefAsymptoticBlocs}, $\mathscr{L}_{-n}(u)$,
$1\leq n \leq k$, denote $k$ constants, and, the remainder $\tilde{u}
\in \mathfrak{V}_{\beta^0, \gamma }^2(\widehat{\Omega}^+)$ for any
$\beta^0$ such that $\beta^0<\beta$. 
In addition, there exists a constant $C>0$ such that
\begin{equation}\label{EstimationAsymptoticExpansionSolutionCP}
\| \tilde{u} \|_{\mathfrak{V}_{\beta^0, \gamma
  }^2(\widehat{\Omega}^+)} + \sum_{n =1}^k \left| \mathscr{L}_{-n}(u)  \right| \quad \leq
\quad C  \left( \| f \|_{\mathfrak{V}_{\beta, \gamma
  }^0(\widehat{\Omega}^+)} + \| g \|_{H^{1/2}(\widehat{\Gamma}_\hole)} \right).
\end{equation}

\end{prop}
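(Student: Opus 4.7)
The plan is to combine the variational existence from Proposition~\ref{PropositionProblemeChampProche} with a Kondrat'ev-type shift of the weighted index, relying crucially on the super-decay of the Laplacian of the asymptotic blocks (Lemma~\ref{LemmeSuperDecroissance}) and on the Fredholm isomorphism theorem for the Laplacian in the periodic-layer-in-cone geometry (Theorem~4.1 of~\cite{Nazarov205}).

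I would first secure existence and baseline regularity. The hypothesis $f\in\mathfrak{V}_{\beta,\gamma}^0(\widehat{\Omega}^+)$ with $\beta>3$ gives $\sqrt{1+(R^+)^2}\,f\in L^2(\widehat{\Omega}^+)$ after comparing the two weights in~\eqref{DefinitionNormeDoublePoids}, and the compactly supported $g\in H^{1/2}(\widehat{\Gamma}_{\hole})$ trivially satisfies $(1+(R^+)^2)^{1/4}g\in L^2(\widehat{\Gamma}^+_\hole)$. Proposition~\ref{PropositionProblemeChampProche} then produces a unique $u\in\mathfrak{V}(\widehat{\Omega}^+)$, and a Hardy inequality combined with interior elliptic regularity away from $\mx_O^+$ embeds $\mathfrak{V}(\widehat{\Omega}^+)\subset\mathfrak{V}_{\beta',\gamma}^2(\widehat{\Omega}^+)$ for every $\beta'<1/2$, which will serve as the starting index for the asymptotic analysis.

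The heart of the argument is a shift-of-index procedure that removes, one by one, the singular exponents $\lambda_{-n}=-\tfrac{2n}{3}$ for $1\le n\le k$ lying between the baseline index and the target $\beta$. For each such $n$ I introduce a constant $\mathscr{L}_{-n}(u)\in\R$ and form
\begin{equation*}
\tilde u \;=\; u \;-\; \sum_{n=1}^k \mathscr{L}_{-n}(u)\,\mathcal{U}_{-n,p(n),+}\ ,
\end{equation*}
which satisfies $-\Delta\tilde u = f + \sum_{n=1}^k \mathscr{L}_{-n}(u)\,\Delta\mathcal{U}_{-n,p(n),+}$ together with homogeneous Dirichlet data on $\partial\mathcal{K}^+$ and the same Neumann trace on $\widehat{\Gamma}^+_\hole$ as $u$, since each $\mathcal{U}_{-n,p(n),+}$ respects the boundary conditions through the periodic correctors $p_{-n,r,+}$ built into~\eqref{DefAsymptoticBlocs}. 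The choice $p(n)=\max(1,\lceil 2(k-n)/3\rceil)$ is precisely calibrated so that $2+\tfrac{2n}{3}+p(n)$ dominates the lower bound $1+\tfrac{2(k+1)}{3}$ imposed on $\beta$, and Lemma~\ref{LemmeSuperDecroissance} then guarantees that each $\Delta\mathcal{U}_{-n,p(n),+}$ lies in $\mathfrak{V}_{\beta,\gamma}^0(\widehat{\Omega}^+)$, so the new right-hand side stays in the same source space as $f$.

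The constants $\mathscr{L}_{-n}(u)$ are then uniquely fixed by Theorem~4.1 of~\cite{Nazarov205}: the Laplacian with our mixed boundary conditions is Fredholm of index zero between $\mathfrak{V}_{\beta^0,\gamma}^2$ and $\mathfrak{V}_{\beta^0,\gamma}^0$ for every admissible $\beta^0$, and as $\beta^0$ decreases across each $\lambda_{-n}$ the cokernel gains one dimension dual to a distinguished asymptotic block. Requiring the resulting $k$ orthogonality conditions to hold for the source of $\tilde u$ pins down the $\mathscr{L}_{-n}(u)$ and forces $\tilde u\in\mathfrak{V}_{\beta^0,\gamma}^2(\widehat{\Omega}^+)$ for any admissible $\beta^0<\beta$, which is exactly the decomposition~\eqref{AsymptoticExpansionSolutionCP}; the estimate~\eqref{EstimationAsymptoticExpansionSolutionCP} then follows from the closed graph theorem applied to $(f,g)\mapsto(\tilde u,\mathscr{L}_{-1}(u),\dots,\mathscr{L}_{-k}(u))$. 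The main obstacle I expect is precisely this appeal to Nazarov's isomorphism theorem in the anisotropic space $\mathfrak{V}_{\beta,\gamma}^\ell$: classical Kondrat'ev theory does not apply directly because the weight $\rho$ in~\eqref{DefinitionNormeDoublePoids} degenerates along the ray $\theta^+=\pi$ where the periodic holes accumulate, so the standard Mellin-transform argument must be replaced by a careful matching between a conical radial expansion $(R^+)^{\lambda_{-n}-r}w_{-n,r,+}$ and a boundary-layer expansion $|X_1^+|^{\lambda_{-n}-r}p_{-n,r,+}$, which is precisely the two-scale structure already encoded in the definition of each asymptotic block.
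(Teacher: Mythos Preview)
Your first part, obtaining $u\in\mathfrak{V}(\widehat{\Omega}^+)\cap\mathfrak{V}_{\beta',\gamma}^2(\widehat{\Omega}^+)$ for $\beta'<1/2$, matches the paper's argument closely (the paper makes the embedding explicit by first landing in $\mathfrak{V}_{\beta'-1,\gamma-1}^1$ and then invoking weighted elliptic regularity from~\cite{Nazarov205}).

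For the asymptotic decomposition, however, you take a genuinely different route. You subtract \emph{all} the asymptotic blocks at once, fix the $k$ constants $\mathscr{L}_{-n}(u)$ abstractly via cokernel/orthogonality conditions coming from a Fredholm picture, and recover the estimate by closed graph. The paper instead proceeds \emph{iteratively}: it repeatedly applies Lemma~\ref{LemmeAsymptoticAppliNazarov} (the adaptation of Nazarov's Theorem~4.1), each application shifting the admissible index by strictly less than $1$ and extracting at most one or two constants $\mathscr{L}_{-n}(u)$ together with their estimates. After each extraction the paper replaces $\mathcal{U}_{-n,3,+}$ by $\mathcal{U}_{-n,p,+}$ with $p$ large enough (via Lemma~\ref{LemmeSuperDecroissance}) so that the new remainder has a right-hand side still in $\mathfrak{V}_{\beta,\gamma}^0$, and then iterates on that remainder.

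What each approach buys: the paper's stepwise procedure is fully constructive, the constants and estimates drop out directly from Lemma~\ref{LemmeAsymptoticAppliNazarov} at each stage, and the step-size restriction $\beta^2-\beta^1<1$ built into that lemma is respected by design. Your global argument is conceptually cleaner but, as written, is vague at the two crucial points: how exactly the orthogonality conditions single out the $\mathscr{L}_{-n}(u)$ in the anisotropic spaces $\mathfrak{V}_{\beta,\gamma}^\ell$ (the standard Mellin/Kondrat'ev duality does not carry over verbatim, as you yourself note), and why the one-shot jump from $\beta'<1/2$ to $\beta^0$ near $\beta$ is permitted when the available tool (Lemma~\ref{LemmeAsymptoticAppliNazarov}) only allows unit increments. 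A minor correction: the asymptotic blocks $\mathcal{U}_{-n,p(n),+}$ do not satisfy the homogeneous Neumann condition on $\widehat{\Gamma}^+_\hole$ exactly, because of the cut-off $\chi(R^+)$ in~\eqref{DefAsymptoticBlocs}; the Neumann trace of $\tilde u$ picks up an extra compactly supported $H^{1/2}$ term, which the paper tracks explicitly at each iteration.
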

\noindent We remind that $\lceil a \rceil$ (used in~\eqref{AsymptoticExpansionSolutionCP})
stands for the smallest integer not less than $a$.\\

\noindent The proof of
Proposition~\ref{propositionAsymptoticNearField}, postponed in Appendix~\ref{AppendixNFPreuvePropAsymptotic}, deeply relies on 
successive applications 
of the following lemma, which is a direct adaptation of Theorem 4.1
in~\cite{Nazarov205}. To long to be presented in this paper, its proof requires the use of
involved tools of complex analysis 
that are fully described in~\cite{Nazarov205}. 

\begin{lema}\label{LemmeAsymptoticAppliNazarov} Let $\gamma \in
  (\frac{1}{2},1)$. Let $\beta^1$ and $\beta^2$ be two admissible
  exponents such that $\beta^1< \beta^2$ and \mbox{$\beta^2 - \beta^1
  <1$}. Assume that $u \in \mathfrak{V}_{\beta^1, \gamma
 }^2(\widehat{\Omega}^+)$ satisfies
 Problem~\eqref{ProblemeModeleChampProche} with $f \in \mathfrak{V}_{\beta^2, \gamma
 }^0(\widehat{\Omega}^+)$ and a compactly supported $g \in H^{1/2}(\widehat{\Gamma}_\hole)$. %
 Then, for $\gamma-1<0$ sufficiently small $u$ admits the decomposition 
\begin{equation}\label{sommeUx}
u = \sum_{\frac32(1- \beta^2) < k < \frac32(1- \beta^1)} c_k
\, \mathcal{U}_{k,3,+} \,
 + \tilde{u},
\end{equation}
where the asymptotic blocks $\mathcal{U}_{k,3,+}$ are defined in~\eqref{DefAsymptoticBlocs} and 
for any admissible $\beta^0 \in (\beta^1, \beta^2)$ the remainder $\tilde{u} \in \mathfrak{V}_{\beta^0, \gamma
 }^2(\widehat{\Omega}^+)$. In this case, there exists a positive constant $C>0$ such that
\begin{equation}
\| \tilde{u} \|_{\mathfrak{V}_{\beta^0, \gamma
  }^2(\widehat{\Omega}^+)} + \sum_{\frac32(1- \beta^2) < k < \frac32(1- \beta^1)} |c_k| \quad \leq \quad C \left( \| {u} \|_{\mathfrak{V}_{\beta^1, \gamma
  }^2(\widehat{\Omega}^+)} + \| f \|_{\mathfrak{V}_{\beta^2, \gamma
  }^0(\widehat{\Omega}^+)} + \| g \|_{H^{1/2}(\widehat{\Gamma}_\hole)} \right).
\end{equation}
\end{lema}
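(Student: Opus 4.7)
The proof follows the Kondrat'ev-type procedure of extracting asymptotic terms at infinity, in the variant for periodically perforated conical domains developed in~\cite{Nazarov205}. First I would localize at infinity: writing $u = (1-\chi(R^+))u + \chi(R^+) u$ with the cut-off from~\eqref{defchi}, the compactly supported piece belongs to every weighted space, so only $v := \chi(R^+) u$ needs to be analysed. It satisfies $-\Delta v = \chi(R^+) f + [\Delta, \chi(R^+)] u =: F$ on $\widehat{\Omega}^+$ with $F \in \mathfrak{V}_{\beta^2,\gamma}^0(\widehat{\Omega}^+)$ (the commutator term is compactly supported, and interior elliptic regularity applied to $u$ there yields the required local $H^2$ bound), and the boundary data for $v$ are also compactly supported.

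At infinity, $\widehat{\Omega}^+$ looks like the angular sector $\mathcal{K}^+$ minus a semi-infinite equispaced array of holes along the ray $\theta^+=\pi$. The key analytical device of~\cite{Nazarov205} is an operator-valued Mellin transform in the radial variable $R^+$ whose symbol is a limit problem posed on an infinite half-strip perforated by the periodic holes, with Dirichlet conditions on the lateral boundary of the sector and Neumann conditions on the holes. The spectrum of this Mellin pencil coincides with the classical singular exponents $\lambda_k = 2k/3$, $k\in\Z\setminus\{0\}$, associated to the interior angle $\frac{3\pi}{2}$; the corresponding generalized eigenfunctions carry exactly the two-scale structure displayed by the asymptotic block $\mathcal{U}_{k,3,+}$, namely the macroscopic radial pieces $(R^+)^{\lambda_k-r}w_{k,r,+}$ together with the periodic microscopic correctors $|X_1^+|^{\lambda_k-r}p_{k,r,+}$ needed to cancel the residual Laplacian generated by the macroscopic pieces near the perforated layer. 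The weight index $\gamma\in(\tfrac12,1)$ selects the functional setting in which this pencil is Fredholm, and taking $\gamma-1$ sufficiently close to $0$ guarantees that no extra spectrum falls on the integration lines used below.

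I would then express $v$ through the inverse operator-valued Mellin transform and shift the integration contour from $\mathrm{Im}\,\zeta = \tfrac{3}{2}(1-\beta^1)$ to $\mathrm{Im}\,\zeta = \tfrac{3}{2}(1-\beta^2)$. Because $\beta^2-\beta^1<1$ and both exponents are admissible, the contour crosses only the simple poles of the pencil at those integers $k$ satisfying $\tfrac{3}{2}(1-\beta^2)<k<\tfrac{3}{2}(1-\beta^1)$; each such pole contributes a residue of the form $c_k\,\mathcal{U}_{k,3,+}$, while the residual integral on the new contour produces a remainder $\tilde{u}\in\mathfrak{V}_{\beta^0,\gamma}^2(\widehat{\Omega}^+)$ for every admissible $\beta^0\in(\beta^1,\beta^2)$. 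The truncation at $p=3$ inside the blocks is sufficient because Lemma~\ref{LemmeSuperDecroissance} gives $\Delta\mathcal{U}_{k,3,+}\in\mathfrak{V}_{\beta,\gamma}^0$ for every $\beta<5-\lambda_k$, which comfortably covers the range $\beta<\beta^2$ imposed by the shift. The announced inequality then follows from the continuity of the operator-valued Mellin transform between the weighted spaces and from the fact that each residue $c_k$ is a bounded linear functional of $v$ and $F$, combined with elliptic regularity on the bounded part of $\widehat{\Omega}^+$ absorbed in the localization step. The main obstacle is not the contour shift itself, but the construction and Fredholm analysis of the Mellin pencil in the perturbed geometry and the identification of its spectrum with the classical corner exponents; this is the difficult content of Theorem~4.1 in~\cite{Nazarov205}, which the present paper invokes rather than reproves.
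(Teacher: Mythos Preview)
Your sketch is a faithful outline of the Kondrat'ev--Nazarov contour-shift argument, and in particular you correctly identify that the heart of the matter is the Fredholm analysis of the operator pencil in the perforated geometry, which is Theorem~4.1 of~\cite{Nazarov205}. The paper itself does not prove this lemma at all: immediately after stating it, the authors write that it ``is a direct adaptation of Theorem~4.1 in~\cite{Nazarov205}'' and that its proof is ``[t]o long to be presented in this paper'' and ``requires the use of involved tools of complex analysis that are fully described in~\cite{Nazarov205}.'' So there is nothing to compare your approach against; you have simply supplied the sketch that the paper deliberately omits, and your concluding sentence that the paper ``invokes rather than reproves'' this result is exactly right.

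One small comment on the sketch itself: describing the transform as a straightforward Mellin transform in $R^+$ is a slight oversimplification, since the domain is not scale-invariant near the perforated ray $\theta^+=\pi$; Nazarov's analysis in~\cite{Nazarov205} handles precisely this by working in the anisotropic scale $\mathfrak{V}^\ell_{\beta,\gamma}$ and by building the asymptotic blocks with their periodic correctors (your $p_{k,r,+}$ terms) into the residue structure. You allude to this when you say the generalized eigenfunctions ``carry exactly the two-scale structure'' of $\mathcal{U}_{k,3,+}$, which is the right intuition, but the actual mechanism in~\cite{Nazarov205} is somewhat more delicate than a pure Mellin contour shift.
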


\noindent We emphasize that the powers of $R^+$ (or $X_1^+$) appearing
in~\eqref{AsymptoticExpansionSolutionCP}
(see~\eqref{DefAsymptoticBlocs}) are of the form $-\frac{2}{3}
n -q$, $n \in \N^\ast$, $q \in \N$. Thus, they coincide with the ones
obtained for the far field part (see for instance
Proposition~\ref{PropDefinitionsmq}). Moreover, as can be expected, the 'leading' singular
exponents ${\frac{2n}{3}}$ correspond to those of the problem
without periodic layer (here again, as for the macroscopic terms). \\

\begin{rem}
The assumption on $g$ in
Proposition~\ref{propositionAsymptoticNearField} and Lemma~\ref{LemmeAsymptoticAppliNazarov}
could be weakened by using the trace spaces associated with the weighted Sobolev spaces
$\mathfrak{V}_{\beta,\gamma}^\ell(\widehat{\Omega}^+)$ (see~\cite{Nazarov205}). 
\end{rem}
\subsection{Two families of near field singularities}\label{SubsectionNearFieldSingularities}
This subsection is dedicated to the construction of two families of functions  $S_m^\pm$, $m \in
\N^\ast$, hereinafter referred to as the near field singularities for the
right and left corner, satisfying the homogeneous Poisson problems
\begin{equation}\label{ProblemeS_q}
\left \lbrace
\begin{aligned}
-\Delta S_m^\pm & \; = & 0 &\quad \mbox{in} \; \widehat{\Omega}^\pm,\\
S_m^\pm  &\;= & 0 &\quad \mbox{on} \; \partial \mathcal{K}^\pm, \\
\partial_n  S_m^\pm  & \; = & 0 &\quad \mbox{on} \; \partial \widehat{\Omega}^\pm \setminus \partial \mathcal{K}^\pm\ ,
\end{aligned}
\right. 
\end{equation}
and behaving like
$ (R^{\pm})^{\lambda_m} w_{m,0,\pm}(\theta^{{^\pm}})$
 for $R^{\pm}$ large, where $w_{m,0,+} = \sin( \frac{2 n}{3} {\theta^+})$ and $w_{m,0,-} = \sin(
\frac{2 n}{3} (\theta^- -\frac{\pi}{2}))$ (defined in Proposition~\ref{PropositionAsymptoticEspaceAPoids}).
\begin{prop}\label{PropositionExistenceSmplus}
 There exists a unique function $S_m^+ \in
 \mathfrak{V}_{\beta,\gamma}^2(\widehat{\Omega}^+)$ for any $\beta < 1 -
 \lambda_m$ and $\gamma \in (1/2,1)$, satisfying  the homogeneous
 equation~\eqref{ProblemeS_q}  such that the function
\begin{equation}\label{Ceiling}
\tilde{S}_m^+ = S_m^+ - \mathcal{U}_{m,\lceil 1 +
\lambda_m \rceil,+ } 
\end{equation}
belongs to $\mathfrak{V}(\widehat{\Omega}^+)$. Moreover, for any $k \in \N^\ast$, choosing
$1-\gamma$ sufficiently small, there exists a function
$\mathcal{R}_{m,k} \in \mathfrak{V}_{\beta^0,\gamma}^2(\widehat{\Omega}^+)$ for any admissible $\beta^0 <
1 + \frac{2(k+1)}{3}$ and $k$ constants $\mathscr{L}_{-n}(S_m^+)$ ($1\leq k \leq
n$) such that $S_m^+$ admits the decomposition 
\begin{equation}\label{conditionAsymptoticSmplus}
S_m^+ = \mathcal{U}_{m,\lceil \frac{2(k+m)}{3} \rceil ,+ } + \sum_{n=1}^k \mathscr{L}_{-n}(S_m^+)
\mathcal{U}_{-n, \lceil \frac{2(k-n)}{3} \rceil,+} + \mathcal{R}_{m,k}.
\end{equation}
In addition, for any $\beta < 1 -\lambda_m$,  there is a constant
$C>0$ such that
\begin{equation}
\sum_{n=1}^k \left| \mathscr{L}_{-n}(S_m^+) \right| + \left\|
  \mathcal{R}_{m,k}\right\|_{\mathfrak{V}_{2,\beta^0}^{2}(\widehat{\Omega}^+)} \leq C \,\left\|
   S_m^+ \right\|_{\mathfrak{V}_{2,\beta}^2(\widehat{\Omega}^+)}.
\end{equation}
\end{prop}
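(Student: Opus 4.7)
My plan is to construct $S_m^+$ as $\mathcal{U}_{m,p_0,+} + \tilde S_m^+$, where the block carries the prescribed growth at infinity and $\tilde S_m^+$ is a variational correction, and then to obtain the refined expansion \eqref{conditionAsymptoticSmplus} by iterating Lemma~\ref{LemmeAsymptoticAppliNazarov}.

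First, for existence, I would take $p_0 = \lceil 1 + \lambda_m \rceil$. By construction the asymptotic block $\mathcal{U}_{m,p_0,+}$ vanishes on $\partial\mathcal{K}^+$ (the angular profiles $w_{m,r,+}$ do so by design) and satisfies $\partial_n \mathcal{U}_{m,p_0,+}=0$ on $\widehat{\Gamma}^+_\hole$ outside a compact set, since the correctors $p_{m,r,+}$ are assembled from the profile functions $W_p^{\mathfrak{t}}, W_p^{\mathfrak{n}}$ which enforce homogeneous Neumann conditions. Lemma~\ref{LemmeSuperDecroissance} yields $\Delta\mathcal{U}_{m,p_0,+}\in\mathfrak{V}_{\beta,\gamma}^0(\widehat{\Omega}^+)$ for some admissible $\beta>3$; in particular $\sqrt{1+(R^+)^2}\,\Delta\mathcal{U}_{m,p_0,+}\in L^2(\widehat{\Omega}^+)$, and Proposition~\ref{PropositionProblemeChampProche} produces a unique $\tilde S_m^+\in\mathfrak{V}(\widehat{\Omega}^+)$ solving the residual problem. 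I would then set $S_m^+ := \mathcal{U}_{m,p_0,+}+\tilde S_m^+$; uniqueness under the constraint $\tilde S_m^+\in\mathfrak{V}(\widehat{\Omega}^+)$ is immediate, since the difference of two candidates would be a harmonic element of $\mathfrak{V}(\widehat{\Omega}^+)$ with homogeneous boundary conditions, hence zero by coercivity of $\mathfrak{a}$. The claimed membership $S_m^+\in \mathfrak{V}_{\beta,\gamma}^2$ for $\beta < 1 - \lambda_m$ follows by combining the remark after Lemma~\ref{LemmeSuperDecroissance} applied to the block, with Proposition~\ref{propositionAsymptoticNearField} applied to $\tilde S_m^+$, which gives $\tilde S_m^+\in\mathfrak{V}_{\beta',\gamma}^2$ for any $\beta'<1/2$; since $\lambda_m\geq 2/3$ for $m\geq 1$ we have $1-\lambda_m\leq 1/3<1/2$, so both inclusions are consistent.

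For the asymptotic expansion at a fixed $k$, I would set $P=\lceil\tfrac{2(k+m)}{3}\rceil\geq p_0$ and analyse
\begin{equation*}
v_P := S_m^+ - \mathcal{U}_{m,P,+} \;=\; \tilde S_m^+ \;-\; \bigl(\mathcal{U}_{m,P,+} - \mathcal{U}_{m,p_0,+}\bigr).
\end{equation*}
From the previous step, $v_P \in \mathfrak{V}_{\beta^1,\gamma}^2$ for any $\beta^1<1-\lambda_m$, while Lemma~\ref{LemmeSuperDecroissance} gives $-\Delta v_P=\Delta\mathcal{U}_{m,P,+} \in \mathfrak{V}_{\beta^\sharp,\gamma}^0$ with $\beta^\sharp$ admissible and up to $2-\lambda_m+P\geq 2+\tfrac{2k}{3}$; in particular $\beta^\sharp$ can be chosen strictly larger than $1+\tfrac{2(k+1)}{3}$. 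Because $\beta^\sharp-\beta^1$ may well exceed~$1$, I would apply Lemma~\ref{LemmeAsymptoticAppliNazarov} iteratively, at each pass raising $\beta$ by less than one across an admissible range, extracting the finite combination of asymptotic blocks $\mathcal{U}_{-n,\cdot,+}$ whose singular exponents $-\tfrac{2n}{3}$ fall in the band just crossed, and pushing the remainder into a strictly better weighted class. After finitely many steps $\beta$ reaches any target $\beta^0<1+\tfrac{2(k+1)}{3}$; assembling the blocks picked up along the way yields \eqref{conditionAsymptoticSmplus}, and telescoping the stability bounds delivered by Lemma~\ref{LemmeAsymptoticAppliNazarov} gives the stated control on $\sum_n|\mathscr{L}_{-n}(S_m^+)|+\|\mathcal{R}_{m,k}\|_{\mathfrak{V}_{\beta^0,\gamma}^2}$ by $\|S_m^+\|_{\mathfrak{V}_{\beta,\gamma}^2}$.

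The hard part will be precisely this last iteration. Since $v_P$ does not belong to $\mathfrak{V}(\widehat{\Omega}^+)$, Proposition~\ref{propositionAsymptoticNearField} cannot be applied off the shelf with $p_0$ replaced by $P$; one has to work directly with Lemma~\ref{LemmeAsymptoticAppliNazarov} while monitoring the admissibility condition $\beta-1\notin\Lambda$ at every step to avoid resonant crossings, and while checking that the orders $q$ of the extracted blocks $\mathcal{U}_{-n,q,+}$ match the prescribed $\lceil\tfrac{2(k-n)}{3}\rceil$ (these orders are dictated by how much additional decay has been accumulated by the moment the exponent $-\tfrac{2n}{3}$ is crossed). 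Once this bookkeeping is in place, the constants $\mathscr{L}_{-n}(S_m^+)$ produced are intrinsic to $S_m^+$ and independent of the auxiliary index $P$, which closes the argument.
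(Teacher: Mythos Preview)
Your construction and uniqueness argument coincide with the paper's: lift the prescribed growth by $\mathcal{U}_{m,p_0,+}$, solve for the correction $\tilde S_m^+$ variationally via Proposition~\ref{PropositionProblemeChampProche}, and invoke coercivity of $\mathfrak{a}$ for uniqueness.

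The asymptotic part has a gap, rooted in a mistaken premise. You assert $v_P=S_m^+-\mathcal{U}_{m,P,+}\notin\mathfrak{V}(\widehat{\Omega}^+)$, but in fact $v_P=\tilde S_m^+-(\mathcal{U}_{m,P,+}-\mathcal{U}_{m,p_0,+})$ \emph{does} lie in $\mathfrak{V}(\widehat{\Omega}^+)$: the block difference consists only of terms with radial exponent $\lambda_m-r$ for $r>\min(P,p_0)$, all strictly negative, and these (together with their exponentially decaying boundary-layer companions) are readily checked to belong to $\mathfrak{V}(\widehat{\Omega}^+)$. (Your side claim $P\geq p_0$ also fails, e.g.\ at $k=1,m=2$, though this is harmless.) The cost of missing $v_P\in\mathfrak{V}(\widehat{\Omega}^+)$ is real: iterating Lemma~\ref{LemmeAsymptoticAppliNazarov} from $\beta^1<1-\lambda_m$ as you propose means, for $m\geq2$, that the first passes cross the \emph{positive} singular exponents $\lambda_1,\dots,\lambda_{m-1}$ and therefore extract growing blocks $\mathcal{U}_{j,\cdot,+}$ with a~priori nonzero coefficients; these are absent from \eqref{conditionAsymptoticSmplus} and you never argue them away. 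The paper sidesteps this entirely: it applies Proposition~\ref{propositionAsymptoticNearField} directly to $S_m^+-\mathcal{U}_{m,p,+}$ (with $p$ chosen large enough that $\Delta\mathcal{U}_{m,p,+}$ has the required decay), and since that function lies in $\mathfrak{V}(\widehat{\Omega}^+)$ the proposition delivers an expansion containing only the decaying blocks $\mathcal{U}_{-n,\cdot,+}$ together with the stated estimate. Your manual iteration is then superfluous --- it is precisely the content of the proof of Proposition~\ref{propositionAsymptoticNearField}.
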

\noindent In the same way as the near field singularities for the
right corner $S_m^+$, the near field singularities for the
left corner $S_m^-$ can be defined. 
Note that for $m \geq 2$ there are several functions satisfying 
 the homogeneous
 equations~\eqref{ProblemeS_q} and behaving like $(R^+)^{\lambda_m}
 w_{m,0,+}$ (leading term) for large $R^+$. Indeed, admitting the
 existence of the functions $S_m^+$, any function of the
 form $S_m^+ + \sum_{k=1}^{m-1} a_k S_{k}^+$ would also fulfill these
 requirements. Nevertheless, the \eqref{conditionAsymptoticSmplus}
 restores the uniqueness by fixing (arbitrary) $a_k$, $k=1,\ldots,m-1$ to $0$.
\begin{proof}
The proof is classical and is very similar to the proof of
Propositions~\ref{PropExistenceUniquenesmMF},
\ref{PropExistenceUniquenesmMF2} and \ref{PropDefinitionsmq}.  We
first prove the existence of $S_m^+$. The function $\tilde{S}_m^+$
satisfies 
\begin{equation*}
\left\lbrace
\begin{aligned}
-\Delta \tilde{S}_m^+ & \;= &  \tilde{f}_m &\quad \mbox{in} \; \widehat{\Omega}^+,\\
 \tilde{S}_m^+  &\;= & 0 &\quad\mbox{on} \; \partial \mathcal{K}^+,\\
\partial_n   \tilde{S}_m^+ &\;= &0 & \quad\mbox{on} \; \partial \widehat{\Omega}^+ \setminus \partial \mathcal{K}^+,
\end{aligned}\right. \quad \tilde{f}_m = - \Delta \mathcal{U}_{m,\lceil 1 +
\lambda_m \rceil,+ }.
\end{equation*}  
In view of Lemma~\ref{LemmeSuperDecroissance}, $\tilde{f}_m$ belongs
to $\mathfrak{V}_{2,\beta}^0(\widehat{\Omega}^+)$ for $\beta< 2 - \lambda_m
+\lceil 1 +
\lambda_m \rceil$. Noting that $2 - \lambda_m
+ \lceil 1 +
\lambda_m \rceil {\geqslant} 3$,
Proposition~\ref{PropositionProblemeChampProche} ensures the
existence and uniqueness of $\tilde{S}_m^+ \in \mathfrak{V}(\widehat{\Omega}^+)$,
and, hence, the existence of $S_m^+$.
Uniqueness of $S_m^+$ follows directly from the fact that difference of two possible solutions
is in the variational space $\mathfrak{V}(\widehat{\Omega}^+)$ and satisfies~\eqref{ProblemeS_q}.
Finally, the asymptotic behavior for large $R^+$ results from a direct
application of Proposition~\ref{propositionAsymptoticNearField} to the function 
$\tilde{S}_m^+  - \mathcal{U}_{m,p,+}$
choosing $p$ sufficiently large so that $\Delta (\tilde{S}_m^+  -
\mathcal{U}_{m,p,+})$ belongs to $
\mathfrak{V}_{\beta,\gamma}^2(\widehat{\Omega}^+)$ for a real number $\beta > \max(3, 1 +
\frac{2(k+1)}{3})$ (which is, thanks to Lemma~\ref{LemmeSuperDecroissance}, always possible). \end{proof}

\subsection{An explicit expression for the near field terms}\label{SectionQuasiExpliciteNF}
As done for the macroscopic terms in Section~\ref{SubsectionExpliciteMacro}, we can write a quasi-explicit formula
for the near field terms $U_{n,q,\pm}^\delta$. We shall impose that the functions
$U_{n,q,\pm}^\delta$ do not blow up faster than $(R^+)^{\lambda_n}$ for $R^+ \to \infty$. Since
$U_{n,q,\pm}^\delta$ satisfies the near field equations~\eqref{NearFieldEquation},
it is natural to construct $U_{n,q,\pm}^\delta$ as a linear
combination of the near field singularities $S_k^\pm$, $1 \leq k \leq
n$, namely  
\begin{equation}\label{FormeExplicitEUnqpm}
U_{n,q,\pm}^\delta= \sum_{k=1}^n \mathscr{L}_k(U_{n,q,\pm}^\delta) S_k^\pm,
\end{equation}
where  $\mathscr{L}_k(U_{n,q,\pm}^\delta) $ are constants that will be
determined by the matching procedure and that might depend on
$\delta$. Naturally, the functions $U_{n,q,\pm}$ (defined by~\eqref{FormeExplicitEUnqpm}) satisfy the near field
equations~\eqref{NearFieldEquation} and belong to
$\mathfrak{V}_{\beta,\gamma}^2(\widehat{\Omega}^\pm)$ for any $\beta < 1 -
 \lambda_n$ and $\gamma \in (1/2,1)$. It is worth noting that the
 definition~\eqref{FormeExplicitEUnqpm} implies that 
\begin{equation}
U_{0,q,\pm}^\delta = 0  \quad \forall q \in \N.
\label{eq:U0q=0}
\end{equation}
\paragraph{Asymptotic behavior for large $R^+$}
\noindent To conclude this section, we slightly anticipate the upcoming matching procedure by
writing the behavior
of $U_{n,q,+}^\delta$ at infinity. Thanks to the asymptotic behavior of
$S_m^+$ for $R^+ \to \infty$ (Proposition~\ref{PropositionExistenceSmplus}), we see that,
for any $K \in \N^\ast$, 
\begin{equation}
U_{n,q,+}^\delta = \sum_{k=1}^n \sum_{l=-k}^K \mathscr{L}_k(U_{n,q,+}^\delta) \,
\mathscr{L}_{-l}(S_k^+) \,
{\mathcal{U}_{-l,\lceil \frac{2(K-l)}{3}\rceil,+}} \;+ \;\mathcal{R}_{n,q,K,+},
\end{equation}
where $\mathcal{R}_{n,q,K,+} \in \mathfrak{V}_{2,\beta^0}^2(\widehat{\Omega}^+)$ for any $\beta^0 <
1 + \frac{2(K+1)}{3}$. Here, for the sake of concision, we have posed
$$
\mathscr{L}_r(S_q^+) = 0   \quad \mbox{for any } r \in \N \; \text{such that} \; 0 \leq r \leq q-1,
$$
and $\mathcal{U}_{0,j,+}=0$ for any $j \in \N$.  Then, substituting the
asymptotic blocks $\mathcal{U}_{-l,k+2-l,+}$ for their explicit
expression~\eqref{DefAsymptoticBlocs}, we obtain the decomposition 
\begin{align}
\begin{aligned}
U_{n,q,+}^\delta  &= \chi_{\macro,+}(\mX) \sum_{l=-n}^K \sum_{r=0}^{{\lceil
    \frac{2(K-l)}{3}\rceil} } A_{n,q,-l,+}^\delta 
  \left( R^+ \right)^{-\frac{2 l}{3} - r} 
   w_{-l,r,+}(R^+, \theta^+) \\
& \hspace{1em} + 
\chi_-(X_1^+)  \sum_{l=-n}^K \sum_{r=0}^{\lceil
    \frac{2(K-l)}{3}\rceil } A_{n,q,-l,+}^\delta  |X_1^+|^{-\frac{2 l}{3}
    - r} p_{-l,r,+}(\ln |X_1^+|, X_1^+,X_2^+) 
+ \mathcal{R}_{n,q,K,+}\ ,  
\end{aligned}
\label{AsymptoticUnqX1negatif}
\end{align} 
where we have used the convention $w_{0,r,+}=0$ and $p_{0,r,+}=0$ for any $r \in
\Z$. Here, 
\begin{equation}\label{definitionAnql}
\forall (n,q,l) \in \N^2 \times \Z,  \quad A_{n,q,l,+}^\delta = \sum_{k=\max(1, l)}^n \mathscr{L}_k(U_{n,q,+}^\delta) \mathscr{L}_l(S_k^+).
\end{equation}
Note that for $n=0$, $A_{n,q,l,+}^\delta=0$. Moreover, for $l \geq 0$, $\mathscr{L}_l(S_k) = \delta_{l,k}$,
and consequently 
\begin{equation}\label{Anqllpositif}
A_{n,q,l,+}^\delta = \mathscr{L}_l(U_{n,q,+}^\delta).
\end{equation} 
Finally, with the change of index $-l \to m$ and summing up over $n$ and
$q$, we can \textbf{formally} obtain an asymptotic series of the near
field: For  $X_1^+<-1$,
\begin{multline}\label{FormuleNFPretePourlesraccordsX1negatif}
\sum_{(n,q) \in \N^2} \delta^{\frac{2n}{3} +q } U_{n,q,+}^\delta =\\
   \sum_{(n,q) \in \N^2} \delta^{\frac{2n}{3} +q }
\sum_{m=-\infty}^{n}  A_{n,q,m,+}^\delta  \sum_{r\in \N} \left(
  (R^+)^{\frac{2 m}{3} - r} w_{m,r,+}(\theta^+, {\ln R^+}) \chi(X_2^+)
\right. + \left. |X_1^+|^{\frac{2 m}{3}
    - r} p_{m,r,+}(\ln |X_1^+|,\mX^+) \right)
\end{multline}
and, for $X_1^+>-1$, 
\begin{equation}\label{FormuleNFPretePourlesraccordsX1positif}
\sum_{(n,q) \in \N^2} \delta^{\frac{2n}{3} +q } U_{n,q,+}^\delta = \sum_{(n,q) \in \N^2} \delta^{\frac{2n}{3} +q }
\sum_{m=-\infty}^{n}  A_{n,q,m,+}^\delta  \sum_{r\in \N} 
  (R^+)^{\frac{2 m}{3} - r} w_{m,r,+}(\theta^+, \ln R^+)\ .
\end{equation}

\noindent The near field terms $U_{n,q,-}^\delta$ can be decomposed in strictly similar way by substituting
formally the superscript plus into a superscript minus in~\eqref{AsymptoticUnqX1negatif} and \eqref{definitionAnql}.
Here, it should be noted that the way to compute $\mathscr{L}_l(U_{n,q,-}^\delta)$ is different how $\mathscr{L}_l(U_{n,q,+}^\delta)$ are computed.
\section{Matching procedure and construction of the far and near field
terms}\label{SectionMatchingProcedure}
We are now in the position to write the matching conditions that account for the asymptotic coincidence 
of the far field expansion with the near field expansion 
in the matching areas. Based on the matching conditions, we provide an iterative
algorithm to define all the terms of the far and near field expansion (to any order), which have not been fixed yet.

\subsection{Far field expansion expressed in the microscopic
  variables} 
We start with writing the formal expansion of the far field $\sum_{(n,q)\in \N^2}
\delta^{\frac{2n}{3} + q} (u_{n,q}^\delta(\mathbf{x}) \chi(x_2/\delta)  + \Pi_{n,q}^\delta(x_1,\frac{\mathbf{x}}{\delta}))$  (\cf~\eqref{def_uFFnq}) in the
 matching area located in the vicinity of the right corner (\ie for small $r^+$). %
 Collecting \eqref{AsymptoticMacroFieldMacthingareas} and
\eqref{AsymptoticBoundaryLayerMacthingareas}, summing over the pair
of indices $(n,q) \in \N^2$ and applying the change of
scale $\mx^+/\delta = (\mx - \mx_O^+)/\delta \to \mX^+$ and so $r^+/\delta \to R^+$ we \textbf{formally} obtain
for $X_1<-1$,
\begin{multline}\label{FormuleFFPretePourlesraccordsX1negatif}
\sum_{(n,q)\in \N}
\delta^{\frac{2n}{3} + q} \left(u_{n,q}^\delta(\mathbf{x})  \chi(\frac{x_2}{\delta} )+ \Pi_{n,q}^\delta(x_1,\frac{\mathbf{x}}{\delta})\right) =  \\[-0.8em]
\sum_{(n,q)\in \N^2} \delta^{\frac{2}{3} n + q} \sum_{m=-\infty}^n
a_{n-m,q,m,+}^\delta 
\sum_{r\in \N} \Big( (R^+)^{\frac{2m}{3} -r}
  w_{m,r,+}(\theta^+, \ln R^+ + \ln \delta) \chi(X_2^+)  \\[-0.8em]
+  |X_1^+|^{\frac{2m}{3} -r}
  p_{m,r,+}(\ln|X_1^+| + \ln \delta, X_1^+, X_2^+) \Big)\ ,
\end{multline}
and for $X_1 > -1$
\begin{equation}\label{FormuleFFPretePourlesraccordsX1positif}
\sum_{(n,q)\in \N}
\delta^{\frac{2n}{3} + q} u_{n,q}^\delta(\mathbf{x})  =
\sum_{(n,q)\in \N^2} \delta^{\frac{2}{3} n + q} \sum_{m=-\infty}^n
a_{n-m,q,m,+}^\delta 
\sum_{r\in \N} (R^+)^{\frac{2m}{3} -r}
  w_{m,r,+}(\theta^+, \ln R^+ + \ln \delta) \ .
\end{equation}
\noindent Note, that the coefficients $a_{n,q,m,+}^\delta$, defined in~\eqref{eq:anjm},
depend for $n > 0$ on $\delta$ only through the constants $\ell^\pm_{-k}(u^\delta_{n,p})$, which we are
going to fix in the matching process. %
In the equations~\eqref{FormuleFFPretePourlesraccordsX1negatif} and~\eqref{FormuleFFPretePourlesraccordsX1positif}
the terms $w_{m,r,+}$ and $p_{m,r,+}$ appear with a second shifted argument, \ie, $\ln R^+ + \ln \delta$ instead of $\ln R^+$ and $\ln |X_1^+| + \ln \delta$ instead of $\ln|X_1^+|$.
The following lemma is a reformulation of these terms as linear combinations of non-shifted ones and will prove very useful in the matching procedure.
It is based essentially on the fact that the terms $w_{m,r,+}$ are polynomials in the second argument and $p_{m,r,+}$ in the first. 
The proof of the lemma finds itself in Appendix~\ref{sec:ProofLemmeTechniqueChangementEchelle}.
\begin{lema}\label{LemmeTechniqueChangementEchelle}The equalities
\begin{equation}\label{FormuleMagiqueChangementEchelle}
w_{m,r,\pm}(\theta^{{\pm}}, \ln R^{{\pm}}+\ln \delta ) = \sum_{k=0}^{\lfloor r/2 \rfloor}
w_{m-3k,r-2k,\pm}(\theta^{{\pm}}, \ln R^{{\pm}}) \;  \sum_{i=0}^k C_{m,2k,i,\pm} \;
  (\ln \delta)^i\ ,
\end{equation}
and
\begin{equation}\label{FormuleMagiqueChangementEchelleBL}
p_{m,r,\pm}(\ln |X_1^{{\pm}}| +\ln \delta,\mX^{{\pm}}) =  \sum_{k=0}^{\lfloor r/2 \rfloor}
p_{m-3k,r-2k,\pm}(\ln |X_1^{{\pm}}| ,\mX^{{\pm}}) \; \sum_{i=0}^k C_{m,2k,i,\pm} \;
  (\ln \delta)^i 
\end{equation}
hold true, where for any integer $i, k$ such that $0 \leq i \leq k$ and 
using the notation $w_{m, 2k,\pm}(\theta^{{\pm}}, \ln R^{{\pm}})=\sum_{i=0}^{k} (\ln R^{{\pm}})^i w_{m,
  2k,i,\pm}(\theta^+)$, the constants $C_{m,2k,i\pm}$ are given by
\begin{equation}\label{definitionCm2ki}
C_{m,2k,i,\pm} = \frac{4}{3 \pi} \int_{I^\pm} w_{m, 2k,i,\pm}(\theta^{{\pm}})
  w_{m-3k, 0,\pm}(\theta^{{\pm}}) d\theta^{{\pm}}, \; \; I^+ = (0,
  \frac{3\pi}{2}),  \, I^- = (-\frac{\pi}{2}, 
  \pi).
\end{equation}
\end{lema}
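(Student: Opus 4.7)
The plan is to establish \eqref{FormuleMagiqueChangementEchelle} by combining the polynomial structure of $w_{m,r,\pm}$ in its second argument with a binomial expansion of the shift and an eigenfunction decomposition on the angular sector $I^\pm$; then \eqref{FormuleMagiqueChangementEchelleBL} follows from the tensorial representation \eqref{definitionPmr}. First, from the definitions \eqref{Definitionwnpplus}--\eqref{Definitionwnpmoins} and the recursive construction in Proposition~\ref{PropDefinitionsmq}, each function $w_{m,r,\pm}(\theta^\pm,s)$ is a polynomial in $s$ of degree $\lfloor r/2 \rfloor$, so I write
$$
w_{m,r,\pm}(\theta^\pm,s) = \sum_{j=0}^{\lfloor r/2 \rfloor} s^j\, w_{m,r,j,\pm}(\theta^\pm)\ ,
$$
with smooth angular coefficients on $I^\pm$ (allowing a jump across the interface at $\theta^\pm = \pi$ or $\pi/2$). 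Applying the binomial formula to $s = \ln R^\pm + \ln\delta$ and regrouping by powers of $\ln\delta$ yields
$$
w_{m,r,\pm}(\theta^\pm,\ln R^\pm + \ln\delta) = \sum_{i=0}^{\lfloor r/2 \rfloor} (\ln\delta)^i \, \widetilde{W}_{m,r,i,\pm}(\theta^\pm,\ln R^\pm),
$$
where $\widetilde{W}_{m,r,i,\pm}(\theta^\pm,s) = \sum_{j \geq i} \binom{j}{i} s^{j-i} w_{m,r,j,\pm}(\theta^\pm)$.

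The key algebraic input, proved by induction on $r$ using the explicit construction in the appendix, is that the angular coefficient $w_{m,r,j,\pm}(\theta^\pm)$ lies in the finite span of the eigenfunctions $\{w_{m-3k,0,\pm}\}_{j \leq k \leq \lfloor r/2 \rfloor}$ of $-\partial_\theta^2$ on $I^\pm$ with Dirichlet conditions. This is because $w_{m,r,\pm}$ arises by successive inversions of $-\partial_\theta^2$ in the construction, with resonances at $\lambda=\lambda_{m-3k}$ producing the extra $\ln r^\pm$ factors and forcing the shift of indices by $(3k,2k)$ (consistent with $\lambda_{m-3k} - (r-2k) = \lambda_m - r$). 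Projecting $\widetilde{W}_{m,r,i,\pm}$ onto each eigenmode $w_{m-3k,0,\pm}$ via the $L^2(I^\pm)$-inner product — and using $\|w_{n,0,\pm}\|_{L^2(I^\pm)}^2 = 3\pi/4$, which accounts for the normalization $4/(3\pi)$ in \eqref{definitionCm2ki} — reassembles the result into $\sum_{k \geq i} C_{m,2k,i,\pm}\, w_{m-3k,r-2k,\pm}(\theta^\pm,\ln R^\pm)$, matching \eqref{FormuleMagiqueChangementEchelle}.

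The main obstacle is the inductive proof of the angular span property, i.e.\ that $w_{m,r,j,\pm}(\theta^\pm)$ decomposes in terms of $\{w_{m-3k,0,\pm}\}$ with exactly the shift relations predicted by the combinatorics of the binomial expansion. This requires keeping careful track of how $\Delta$ acts on ans\"atze of the form $R^{\lambda - p}(\ln R)^k w_{n,0,\pm}(\theta)$ at each iteration of the lifting procedure in Proposition~\ref{PropDefinitionsmq}, distinguishing resonant from non-resonant cases. Once this is in place, \eqref{FormuleMagiqueChangementEchelleBL} follows by an entirely parallel argument: the tensorial representation \eqref{definitionPmr} expresses $p_{m,r,\pm}$ as a sum of products of $\ln$-polynomials $g_{m,r-p,p,\pm}^{\mathfrak{t}/\mathfrak{n}}(\ln t)$ with the $\ln$-independent profile functions $W_p^{\mathfrak{t}/\mathfrak{n}}$, so the shift $\ln|X_1^\pm| \to \ln|X_1^\pm| + \ln\delta$ only affects the scalar polynomials $g_{m,r,p,\pm}^{\mathfrak{t}/\mathfrak{n}}$, which inherit the same $(m,r) \to (m-3k, r-2k)$ shift structure from the construction in Section~\ref{SectionTransmissionConditionsBoundaryLayer}, yielding the same constants $C_{m,2k,i,\pm}$.
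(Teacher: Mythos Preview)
Your approach has a genuine gap: the ``angular span property'' you rely on is false as stated. You claim that for $j\geq 1$ the coefficient $w_{m,r,j,\pm}(\theta^\pm)$ lies in the span of the smooth Dirichlet eigenfunctions $\{w_{m-3k,0,\pm}\}_{j\le k\le \lfloor r/2\rfloor}$. But the functions $w_{m,r,\pm}$ are constructed (see \eqref{Problemunqplus}--\eqref{Problemunqmoins}) as solutions of a \emph{transmission} problem in the cone, with nontrivial jumps in value and normal derivative across $\theta^\pm=\pi$ (resp.\ $0$); they are not obtained by inverting $-\partial_\theta^2$ on the Dirichlet domain. Concretely, take $r=3$: matching the $t=\ln\delta$ coefficient in \eqref{FormuleMagiqueChangementEchelle} (using $C_{m,0,0,\pm}=1$, $C_{m,2,0,\pm}=0$) forces
\[
w_{m,3,1,\pm}(\theta^\pm)=C_{m,2,1,\pm}\,w_{m-3,1,\pm}(\theta^\pm),
\]
and $w_{m-3,1,\pm}$ solves \eqref{Problemunqplus} with generically nonzero jump data $\tta_{m-3,1,\pm},\ttb_{m-3,1,\pm}$; it is discontinuous across the interface and therefore not a combination of the smooth sines $w_{n,0,\pm}$. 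The same phenomenon persists for higher $r$ (e.g.\ $w_{m,4,1,\pm}$ involves $w_{m-3,2,0,\pm}$, which again has jumps). As a result, projecting $\widetilde W_{m,r,i,\pm}$ onto the $L^2(I^\pm)$-orthonormal family $\{w_{n,0,\pm}\}$ cannot recover the right-hand side of \eqref{FormuleMagiqueChangementEchelle}, whose building blocks are the full transmission solutions $w_{m-3k,r-2k,\pm}$, not eigenmodes.

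The paper's proof avoids this obstruction by working at the level of the PDE rather than the angular spectrum: one sets $\ttv_{LHS}^\delta=(R^+)^{\lambda_m-r}w_{m,r,+}(\theta^+,\ln R^++\ln\delta)$ and $\ttv_{RHS}^\delta$ equal to $(R^+)^{\lambda_m-r}$ times the right-hand side, checks that both are harmonic in $\mathcal{K}^{+,1}\cup\mathcal{K}^{+,2}$, and then uses the \emph{inductive hypothesis on $w_{m,p,+}$ for $p\le r-1$} to show that their jump and normal-jump across $\theta^+=\pi$ coincide (this is where the identities \eqref{gtChangementEchelle}--\eqref{gnChangementEchelle} for $g^{\mathfrak t},g^{\mathfrak n}$ under the shift come in). Uniqueness from Lemma~\ref{LemmeTechniquesmq} then gives $\ttv_{LHS}^\delta=\ttv_{RHS}^\delta$, with an extra orthogonality check in the resonant case $\lambda_m-r\in\Lambda$. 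Your treatment of \eqref{FormuleMagiqueChangementEchelleBL} via \eqref{definitionPmr} is essentially right and matches the paper, but it relies on \eqref{gtChangementEchelle}--\eqref{gnChangementEchelle}, which in turn rest on \eqref{FormuleMagiqueChangementEchelle} --- so the first identity has to be established by the PDE-uniqueness route, not by the eigenfunction decomposition you propose.
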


\begin{rem}\label{RemCoeffCm2ki}
With the convention that $w_{0,0,\pm}=0$  it follows that $C_{m,2k,i,\pm}=0$ for $k=m/3$. Moreover, in view of the orthogonality
conditions~\eqref{ConditionOrthogonalitePlus},\eqref{ConditionOrthogonaliteMoins},
$C_{m,2k,0,\pm}$ always vanishes if $k\neq 0$ and $C_{m,0,0,\pm}=1$. 
\end{rem}
\noindent Inserting \eqref{FormuleMagiqueChangementEchelle} into
\eqref{FormuleFFPretePourlesraccordsX1positif} and noting that
$\frac{2}{3}( m-3k) - (r-2k) = \frac{2}{3} m -r$, we obtain
\begin{multline*}
\sum_{(n,q)\in \N^2}
\delta^{\frac{2n}{3} + q} u_{n,q}^\delta  =
\sum_{(n,q)\in \N^2} \delta^{\frac{2}{3} n + q} \sum_{m=-\infty}^n 
a_{n-m,q,m,+}^\delta \\
\sum_{r\in \N} \sum_{ k=0}^{\lfloor r/2 \rfloor} (R^+)^{\frac{2(m-3k)}{3} -(r-2k)}
  w_{m-3k,r-2k,+}(\theta^+, \ln R^+)  \sum_{i=0}^k C_{m,2k,i,+} (\ln \delta)^i .
\end{multline*}
 Then, the changes of indices $r-2k \rightarrow r$ and $m -3k \rightarrow
 m$ give
\begin{equation}\label{FormuleFFPretePourlesraccordsX1positif2}
\sum_{(n,q)\in \N^2}
\delta^{\frac{2n}{3} + q} u_{n,q}^\delta  = \sum_{(n,q)\in \N^2}
\delta^{\frac{2}{3} n + q} \sum_{m=-\infty}^n
\tilde{a}_{n-m,q,m,+}^\delta  \sum_{r\in \N} (R^+)^{\frac{2m}{3} -r}
  w_{m,r}(\theta^+, \ln R^+),  \quad {X_1^{+}>0}, 
\end{equation}
where
\begin{equation}\label{definitionTildeAnqm}
\forall (n,q,l) \in \N^2 \times \Z, \quad \tilde{a}_{n,q,m,+}^\delta = \sum_{k=0}^{\lfloor n/3 \rfloor }
 a_{n-3k,q,m+3k,+}^\delta \sum_{i=0}^k C_{m+3k,2k,i,+} (\ln \delta)^i\ .
\end{equation} 
In particular, for $m<0$, thanks to \eqref{ValeurRemarquableanjm} (and using Remark~\ref{RemCoeffCm2ki}), we have 
\begin{equation}\label{FormuleRemarquableanqtilde}
\tilde{a}_{n,q,m,+}^\delta = \ell_{m}^+(u_{n,q}^\delta) + \sum_{k=1}^{\lfloor n/3 \rfloor }
 a_{n-3k,q,m+3k,+}^\delta \sum_{i=0}^k C_{m+3k,2k,i,+} (\ln \delta)^i.
\end{equation}
Analogously, for $X_1^+<-1$, we obtain,
\begin{multline}\label{FormuleFFPretePourlesraccordsX1negatif2}
\sum_{(n,q)\in \N}
\delta^{\frac{2n}{3} + q} (u_{n,q}^\delta(\mathbf{x})  \chi(x_2{/\delta})+ \Pi_{n,q}^\delta(x_1, \mathbf{x}/\delta) =  \\
\sum_{(n,q)\in \N^2} \delta^{\frac{2}{3} n + q} \hspace{-0.3em}\sum_{m=-\infty}^n
\tilde{a}_{n-m,q,m,+}^\delta 
\sum_{r\in \N} \left( (R^+)^{\frac{2m}{3} -r}
  w_{m,r,+}(\theta^+, \ln R^+) \chi(X_2) \right. 
+ \left. |X_1^+|^{\frac{2m}{3} -r}
  p_{m,r,+}(\ln|X_1^+|,\mX^+) \right).
\end{multline}
The previous two expressions have to be compared with
formula~\eqref{FormuleNFPretePourlesraccordsX1negatif} and\eqref{FormuleNFPretePourlesraccordsX1positif}, 
in which the coefficients $A_{n,q,m,+}^\delta$ are still not determined, since the constants $\mathscr{L}_{m}(U_{n,q,+}^\delta)$, $m = 1,\ldots,n$ are not fixed yet.
We aim to match the expansions in the matching zone and, hence, define these constants uniquely.


\subsection{Derivation of the matching conditions} 

Arrived at this point, the derivation of the matching conditions is
straight-forward. It suffices to identify \textbf{formally} all terms of the expansions~\eqref{FormuleFFPretePourlesraccordsX1negatif2} and
\eqref{FormuleFFPretePourlesraccordsX1positif2}  of the far field with all terms of
the expansions~\eqref{FormuleNFPretePourlesraccordsX1negatif} and
~\eqref{FormuleNFPretePourlesraccordsX1positif} for the near field. We
end up with the following set of conditions:
\begin{equation}\label{matchingCondition1}
A_{n,q,m,+}^\delta = \tilde{a}_{n-m,q,m,+}^\delta, \quad \forall (n,q) \in
\N^2, \;\text{ and }\;m \in \Z, \; m \leq n,
\end{equation}
where $A_{n,q,m,+}^\delta$ and $\tilde{a}_{n-m,q,m,+}^\delta$ were defined 
in \eqref{definitionAnql}  and \eqref{definitionTildeAnqm}. 
As the coefficients $A_{n,q,m,+}^\delta$ are linear combinations of the constants $\mathscr{L}_{m}(U_{n,q,+}^\delta)$, $m = 1,\ldots,n$
and the coefficients $\tilde{a}_{n,q,m,+}^\delta$ are linear combinations of the constants $\ell^+_{-m}(u_{n,p}^\delta)$, $m = 1,\ldots,n$ for some $p \in \IN$
we aim now to obtain conditions between those constants. Here, we will proceed separately for the
the cases $m>0$, $m=0$ and $m <0$.\\


\noindent For $0<m\leq n$, using the equality~\eqref{Anqllpositif}, we have for any
$(n,q) \in \N^\ast \times \N$, 
\begin{equation}\label{MactchingChampProche}
\mathscr{L}_{m}(U_{n,q,+}^\delta) =  \tilde{a}_{n-m,q,m,+}^\delta \ .
\end{equation}

\noindent For $m=0$, for any $(n,q) \in \N$, there is nothing to be matched. Indeed,  both left and
right-hand sides of \eqref{matchingCondition1} vanish
($\tilde{a}_{n,q,0,+} = 0$ because $C_{3k,2k,i,+}=0$, see Remark~\ref{RemCoeffCm2ki}).\\ 

\noindent For $m<0$, in view of~\eqref{FormuleRemarquableanqtilde} and
substituting $A_{n,q,m,+}$ for its definition~\eqref{definitionAnql},
we have for any $(n,q) \in \N^2$, 
 \begin{equation}
  \ell_{m}^+(u_{n-m,q}^\delta) = -\sum_{k=1}^{\lfloor (n-m)/3 \rfloor }
 a_{n-m-3k,q,m+3k,+}^\delta \sum_{i=0}^k C_{m+3k,2k,i,+} (\ln \delta)^i
 + \sum_{k=1}^n \mathscr{L}_k(U_{n,q,+}^\delta) \mathscr{L}_m(S_k^+),
 \end{equation} 
 which may also be red as follows: for any $(n,q,m) \in \N^3$ such
 that, $n \geq 1$, and $1\leq m \leq n$, 
\begin{equation}\label{matchingChampLointain}
 \ell_{-m}^+(u_{n,q}^\delta) = -\sum_{k=1}^{\lfloor n/3 \rfloor }
 a_{n-3k,q,-m+3k,+}^\delta \sum_{i=0}^k C_{-m+3k,2k,i,+} (\ln \delta)^i
 + \sum_{k=1}^{n-m} \mathscr{L}_k(U_{n-m,q,+}^\delta) \mathscr{L}_{{-m}}(S_k^+).
 \end{equation} 

\noindent Here again, we can write similar matching conditions for the
matching area located close to the left corner. These conditions link
the macroscopic terms $u_{n,q}^\delta$ to the near field terms
$U_{n,q,-}^\delta$: for $1 \leq m \leq n$ and for any $(n,q) \in \N^\ast
\times \N$,
\begin{equation}\label{MactchingChampProcheMoins}
\mathscr{L}_{m}(U_{n,q,-}^\delta) =  \tilde{a}_{n-m,q,m,-}^\delta,
\end{equation}
and, for any $(n,q,m) \in \N^3$ such
 that, $n \geq 1$, and $1\leq m \leq n$, 
\begin{equation}\label{matchingChampLointainMoins}
 \ell_{-m}^-(u_{n,q}^\delta) = -\sum_{k=1}^{\lfloor n/3 \rfloor }
 a_{n-3k,q,-m+3k,-}^\delta \sum_{i=0}^k C_{-m+3k,2k,i,-} (\ln \delta)^i
 + \sum_{k=1}^{n-m} \mathscr{L}_k(U_{n-m,q,-}^\delta) \mathscr{L}_{{-m}}(S_k^-).
 \end{equation} 
Here, $\tilde{a}_{n,q,m}$ are defined by
$$
\forall \,(n,q,l) \in \N^2 \times \Z, \quad \tilde{a}_{n,q,m,-}^\delta   = \sum_{k=0}^{\lfloor n/3 \rfloor }
 a_{n-3k,q,m+3k,-}^\delta \sum_{i=0}^k C_{m+3k,2k,i,-} (\ln \delta)^i\ ,$$
 with
\begin{equation*}
\forall \, (j,m) \in \N \times \Z, \; a_{0,j,m,-}^\delta=
\ell_{m}^-(s_{0, j}), \; \mbox{and} \;
a_{n,j,m,-}^\delta= \sum_{\pm} \sum_{k=\max(1,-m)}^n
\sum_{p=0}^{j}\ell_{-k}^\pm(u_{n,p}^\delta)\ell_{m}^-(s_{-k, j-p}^\pm)
\;\mbox{if }  n >0.
\end{equation*}

\subsection{Construction of the terms of the asymptotic expansions}
The matching conditions then allow us to construct the far field terms
$u_{n,q}^\delta$ and $\Pi_{n,q}^\delta$, 
and the near field terms $U_{n,q,\pm}^\delta$ by induction on $n$. The base case is obvious
since  we have seen that the macroscopic terms 
$u_{0,q}^\delta$ are entirely determined by Proposition~\eqref{PropDefinitions0q}, the
boundary layer correctors $\Pi_{0,q}$ are defined by~\eqref{definitionPiq},  and the near field
terms $U_{0,q, \pm}^\delta=0$, $q \in \IN$ by~\eqref{eq:U0q=0}.\\
 
\noindent Then, assuming that $u_{m,q}^\delta$ and $U_{m,q,\pm}^\delta$ are
constructed for any $m \leq n-1$, we will see that
\eqref{MactchingChampProche},\eqref{matchingChampLointain} and  \eqref{MactchingChampProcheMoins},\eqref{matchingChampLointainMoins} permit
to define  both $u_{n,q}^\delta$ (and consequently $\Pi_{n,q}^\delta$)
and $U_{n,q,\pm}^\delta$ for any $q \in \N$.  

\paragraph{Far field terms} We remind that,
for a given $q \in \N$, the complete definition of the macroscopic terms $u_{n,q}^\delta$
only requires the knowledge of the $\ell_{-m}^\pm(u_{n,q}^\delta)$ for any
integer $m$ between $1$ and $n$. In fact, the
conditions~\eqref{matchingChampLointain} define exactly $\ell_{-m}^+(u_{n,q}^\delta)$: in the right-hand side
of~\eqref{matchingChampLointain}, the quantities
$\mathscr{L}_{-m}(S_k^+)$ are known ($S_k^+$ is uniquely
defined) and $\mathscr{L}_k(U_{n-m,q,+}^\delta)$ are known since
$U_{n-m,q,+}^\delta$ are already defined (induction hypothesis). In
addition, since
\begin{equation*}
a_{n-3k,q,-m+3k,+}^\delta = \begin{cases}
\dsp \sum_{\pm} \sum_{r=\max(1, m-3k)}^{n-3k} \sum_{p=0}^q
\ell_{-r}^\pm(u^\delta_{n-3k,p}) \,
\ell_{-m+3k}^+(s_{-r,q-p}^\pm) & \mbox{if} \; n -  3k \neq 0,\\
\dsp \ell_{-m+3k}^+(s_{0,q})  &  \mbox{if} \; n -  3k = 0,
\end{cases}
\end{equation*}
the coefficient $a_{n-3k,q,-m+3k,+}^\delta$ is well defined for any $k$ such that $1 \leq k\leq
\lfloor n/3\rfloor$ ($\ell_{-j}^+(u_{n-3k,q})$ is known by
the induction hypothesis). Naturally the
conditions~\eqref{matchingChampLointainMoins} define
$\ell_{-m}^-(u_{n,q}^\delta)$ is the same way. Finally,
the definition of the boundary layer terms $\Pi_{n,q}^\delta$ follows from~\eqref{definitionPiq}.

\paragraph{Near field terms} Similarly, the definition
of the near field terms $U_{n,q,\pm}^\delta$ requires the specification
of the quantities  $\mathscr{L}_m(U_{n,q,\pm}^\delta) $ for any integer
$m$ between $1$ and $n$. The condition~\eqref{MactchingChampProche}
exactly provides this missing information for $U_{n,q,+}^\delta$. Indeed, in the right-hand
side of~\eqref{MactchingChampProche}, the computation of
$\tilde{a}_{n-m,q,m,+}^\delta$ requires the knowledge of  
\begin{equation*}
a_{n-m - 3k,q,m+3k,+}^\delta = \begin{cases}
\dsp \sum_{\pm} \sum_{r=0}^{n-m-3k} \sum_{p=0}^q
\ell_{-r}^\pm(u_{n-m-3k,p}^\delta) \,
\ell_{m+3k}^+(s_{-r,q-p}^\pm) & \mbox{if} \; n-m-3k\neq 0,\\
\dsp \ell_{m+3k}^+(s_{0,q})   &  \mbox{if} \; n - m - 3k = 0.
\end{cases}
\end{equation*}
for $k$ between $0$ and $\lfloor (n-m)/3 \rfloor$. But, since $m>0$, $\ell_{-r}^+(u_{n-m-3k,p}^\delta)$ are well defined  thanks to the
induction hypothesis. Then, $U_{n,q,+}^\delta$ is entirely determined.
In the same way, the condition~\eqref{MactchingChampProcheMoins} allows us
to define $U_{n,q,-}^\delta$ as well, replacing 
all occurences of $\ell^+_m(s^\pm_{-r,q-p})$ by $\ell^-_m(s^\pm_{-r,q-p})$, $m \in \IZ$
and all occurences $\ell^+_m(s_{0,q})$ by $\ell^-_m(s_{0,q})$, $m \in \IZ$
in the previous formulas.

\begin{rem} We point out that the variables $n$ and $q$ play a very
  different roles in the recursive construction of the terms of the
  asymptotic expansion. Indeed, the construction is by induction only in
  $n$. At the step $n$, we construct $u_{n,q}^\delta$,
  $\Pi_{n,q}^\delta$ and $U_{n,q,\pm}^\delta$ for any $q \in \N$.
\end{rem}



\section{Justification of the asymptotic expansion}\label{SectionErrorAnalysis}

To finish this paper, we shall prove the convergence of the asymptotic
expansion. Our main result deals with the convergence of the
truncated macroscopic series in a domain that excludes the two
corners and the periodic thin layer:

\begin{theo}\label{TheoremConvergenceMacro}
Let $N_0 > 0$ such that $3N_0$ is an integer and let $D_{N_0}$ denote the set of couples
$(n,q) \in \N^ 2$ such that $\frac{2}{3} n + q \leq N_0$. Furthermore, for a given
number $\alpha>0$, let $$\Omega_{\alpha} =   \Omega^\delta \setminus  (-L -\alpha, L+\alpha) \times (-\alpha, \alpha).$$
\noindent  Then, there exist a constant $\delta_0>0$, a constant $C = C(\alpha,\delta_0)>0$, and a constant $k = k(N_0) \geq 0$ such that
for any $\delta \in (0,\delta_0)$ 
\begin{equation}
  \| u^\delta - \sum_{(n,q)\in D_{N_0}} \delta^{\frac{2}{3} n +q}
  u_{n,q}^\delta \|_{H^1(\Omega_{\alpha})} \leq \; C \, \delta^{N_0+\frac13}(\ln\delta)^k.
\end{equation}
\end{theo}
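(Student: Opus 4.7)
The plan is to build a global approximation $u^{\delta,\star}_{N_0}$ on $\Omega^\delta$ by glueing the truncated far and near field expansions via a partition of unity adapted to the matching zones, to estimate its residual for problem~\eqref{eq:perturbed_laplace}, to apply the uniform stability estimate~\eqref{eq:prop_norm_u_norm_f} to the error $u^\delta - u^{\delta,\star}_{N_0}$, and finally to show that on $\Omega_\alpha$ the global approximation reduces to the truncated macroscopic series up to super-algebraically small terms. More precisely, I would pick smooth cut-offs $\eta^\delta_\pm$ that equal one in a $\sqrt{\delta}$-neighborhood of $\mathbf{x}_O^\pm$ and vanish outside a $2\sqrt{\delta}$-neighborhood, set $\eta^\delta_0 = 1 - \eta^\delta_+ - \eta^\delta_-$, and define
\begin{equation*}
u^{\delta,\star}_{N_0}(\mathbf{x}) = \eta^\delta_0(\mathbf{x})\sum_{(n,q)\in D_{N_0}}\delta^{\frac{2n}{3}+q} u^\delta_{FF,n,q}(\mathbf{x}) + \sum_{\pm}\eta^\delta_\pm(\mathbf{x})\sum_{(n,q)\in D_{N_0}}\delta^{\frac{2n}{3}+q} U^\delta_{n,q,\pm}\!\left(\frac{\mathbf{x}-\mathbf{x}_O^\pm}{\delta}\right).
\end{equation*}
By construction $u^{\delta,\star}_{N_0} \in \HoneGammaD(\Omega^\delta)$ and satisfies $\partial_n u^{\delta,\star}_{N_0}=0$ on $\Gamma^\delta$ exactly: inside $\mathrm{supp}(\eta^\delta_\pm)$ this is the Neumann condition from~\eqref{NearFieldEquation}, and outside the corner zones the cut-off $\chi(x_2/\delta)$ in~\eqref{def_uFFnq} vanishes on the holes (which lie in $|x_2|<\delta$) while $\partial_n\Pi^\delta_{n,q}=0$ on $\partial\widehat{\Omega}_\hole$ by~\eqref{PeriodicCorrectorEquations}.

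The heart of the argument is controlling the residual $r^\delta := -\Delta u^{\delta,\star}_{N_0} - f$. The formal scale separation, combined with the far field equations~\eqref{FFVolum}--\eqref{PeriodicCorrectorEquations} and the near field equations~\eqref{NearFieldEquation}, ensures that outside the matching zones only truncation contributions of order $\delta^{\frac{2n}{3}+q}$ with $\frac{2n}{3}+q > N_0$ remain. In each matching annulus $\sqrt{\delta}\leq d(\mathbf{x},\mathbf{x}_O^\pm)\leq 2\sqrt{\delta}$, the commutators $[\Delta,\eta^\delta_\pm]$ produce a discrepancy between the two truncated expansions which, thanks to the matching conditions~\eqref{MactchingChampProche}--\eqref{matchingChampLointainMoins}, reduces to the remainders in the corner asymptotics~\eqref{AsymptoticMacroFieldMacthingareas} of the macroscopic terms and in the far field expansion~\eqref{AsymptoticUnqX1negatif} of the near field terms. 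Using $r^\pm\asymp\sqrt{\delta}$ on the annulus, a change of variables $\mathbf{X}^\pm=(\mathbf{x}-\mathbf{x}_O^\pm)/\delta$ in the $L^2$ integral, and Lemma~\ref{LemmeTechniqueChangementEchelle} to keep track of the polynomials in $\ln\delta$, I expect each omitted monomial to contribute a factor $\delta^{1/3}$ beyond $\delta^{N_0}$, and a finite power of $\ln\delta$. Summing these contributions yields $\|r^\delta\|_{L^2(\Omega^\delta)}\leq C\delta^{N_0+\frac{1}{3}}(\ln\delta)^{k(N_0)}$ for some $k(N_0)\in\N$ depending only on $N_0$. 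Since $u^\delta-u^{\delta,\star}_{N_0}$ solves~\eqref{eq:perturbed_laplace} with source $-r^\delta$ and vanishing boundary data, Proposition~\ref{prop:existence_uniqueness_u_delta} yields the global bound $\|u^\delta-u^{\delta,\star}_{N_0}\|_{H^1(\Omega^\delta)}\leq C\delta^{N_0+\frac{1}{3}}(\ln\delta)^{k(N_0)}$.

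To conclude, fix $\delta_0>0$ small enough so that $2\sqrt{\delta_0}<\alpha$ and $\delta_0<\alpha/2$. For $\delta<\delta_0$ and $\mathbf{x}\in\Omega_\alpha$, both $\eta^\delta_\pm(\mathbf{x})$ vanish and $\chi(x_2/\delta)=1$, so
\begin{equation*}
u^{\delta,\star}_{N_0}(\mathbf{x}) = \sum_{(n,q)\in D_{N_0}}\delta^{\frac{2n}{3}+q} u^\delta_{n,q}(\mathbf{x}) + \sum_{(n,q)\in D_{N_0}}\delta^{\frac{2n}{3}+q} \Pi^\delta_{n,q}(x_1,\mathbf{x}/\delta),
\end{equation*}
where $\Pi^\delta_{n,q}$ is extended by zero for $|x_1|>L$. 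The super-algebraic decay~\eqref{eq:exponential_decaying}, combined with $|x_2|/\delta\geq\alpha/\delta$ on $\Omega_\alpha$, then gives $\|\Pi^\delta_{n,q}(x_1,\cdot/\delta)\|_{H^1(\Omega_\alpha)}=O(\delta^M)$ for every $M\in\N$, which is absorbed into the bound, completing the proof. The main obstacle is precisely the matching-zone residual estimate: accurately tracking every truncated remainder in both~\eqref{AsymptoticMacroFieldMacthingareas} and~\eqref{AsymptoticUnqX1negatif}, exploiting the scaling $\sqrt{\delta}$ of the matching annulus to convert powers of $r^\pm$ into powers of $\delta^{1/3}$, and counting the logarithmic factors produced by Lemma~\ref{LemmeTechniqueChangementEchelle} to determine $k(N_0)$, is technically the most demanding step and is exactly where the exponent $N_0+\frac{1}{3}$ — the smallest power $\frac{2n}{3}+q$ strictly exceeding $N_0$ in $\N^2$ — arises.
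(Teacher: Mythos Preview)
Your overall architecture --- global approximation via partition of unity, residual estimate, uniform stability --- matches the paper's Section~\ref{SectionErrorAnalysis}. But the central quantitative claim $\|r^\delta\|_{L^2(\Omega^\delta)}\leq C\delta^{N_0+\frac13}(\ln\delta)^k$ is where the proposal fails. The residual splits into a modeling error (from truncating the boundary-layer cascade~\eqref{PeriodicCorrectorEquations}) and a matching error, and \emph{both} are polluted by the singular corner behavior of the far-field terms: $u_{n,q}^\delta$ and $\Pi_{n,q}^\delta$ blow up like $(r^\pm)^{-(\frac{2n}{3}+q)}$ as $r^\pm\to0$ (cf.~\eqref{AsymptoticMacroFieldMacthingareas}--\eqref{AsymptoticBoundaryLayerMacthingareas}), so in and near the annulus where $r^\pm\sim\sqrt{\delta}$ the prefactor $\delta^{\frac{2n}{3}+q}$ is only half-compensated. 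A careful count (Propositions~\ref{lema:global_modeling_error} and~\ref{lema:global_matching_error}) yields at best $\|r^\delta\|_{L^2}+\|\partial_n e^\delta\|_{L^2(\Gamma^\delta)}\leq C\delta^{N_0/2-5/2}$, not $\delta^{N_0+\frac13}$; no choice of matching scale $\eta(\delta)=\delta^s$ does better than $\delta^{N_0/2-\text{const}}$. Your heuristic ``each omitted monomial contributes $\delta^{1/3}$ beyond $\delta^{N_0}$'' ignores this corner blow-up. (A smaller but related slip: $\partial_n u^{\delta,\star}_{N_0}$ is \emph{not} zero on $\Gamma^\delta$ in the matching annulus, since $(\partial_n\eta_\pm^\delta)(\text{NF}-\text{FF})$ does not vanish there; see~\eqref{erreurDeriveNormale}.)

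The idea you are missing is the standard bootstrap by triangular inequality: apply the \emph{crude} global estimate~\eqref{errorGlobale} at a larger order $N_0'$, obtaining $\|u^\delta-u_{N_0'}^\delta\|_{H^1(\Omega^\delta)}\leq C\delta^{N_0'/2-5/2}$. On $\Omega_\alpha$ the global approximation $u_{N_0'}^\delta$ reduces (for small $\delta$) to the truncated macroscopic series $\sum_{(n,q)\in D_{N_0'}}\delta^{\frac{2n}{3}+q}u_{n,q}^\delta$, and then
\[
\Big\|\sum_{(n,q)\in D_{N_0'}\setminus D_{N_0}}\delta^{\frac{2n}{3}+q}u_{n,q}^\delta\Big\|_{H^1(\Omega_\alpha)}\leq C\,\delta^{N_0+\frac13}(\ln\delta)^{k},
\]
because the smallest value of $\frac{2n}{3}+q$ strictly exceeding $N_0$ is $N_0+\frac13$ and each $u_{n,q}^\delta$ is polynomial in $\ln\delta$ and regular on $\Omega_\alpha$. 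Choosing $N_0'$ large enough (e.g.\ $N_0'\geq 2N_0+6$) makes the first piece subdominant and yields the theorem. The optimal local rate therefore does not come from a sharp residual bound, but from this a~posteriori comparison between two truncation levels.
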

\begin{rem}
  With more sophisticated techniques than applied in this article it is possible to prove that 
  the power of $\ln\delta$ in the previous theorem is $k = k(N_0) = \lfloor \frac12(N_0 + \frac13) \rfloor$.
  The first logarithmic term appears for $N_0 = \frac53$.
\end{rem}

\subsection{The overall procedure}
\noindent As usual for this kind of work (See \eg~\cite{fente2}
(Sect. 3), \cite{CouchesMinces} (Sect. 5.1), \cite{MR2935369} (Sect. 4)), the proof of the previous result is based on the
construction of an approximation $u_{N_0}^\delta$ of the solution $u^\delta$ on the
whole domain $\Omega^\delta$ obtained from 
the four truncated series (at order $N_0$) of the macroscopic terms, the boundary layer terms and the near field terms:
\begin{enumerate}
\item[-] The truncated macroscopic series $u_{\macro,N_0}^\delta$: we
  introduce the macroscopic cut-off function
\begin{multline}
  \label{eq:chi_macro_total}
  \chi_{\text{macro}, \text{total}}^\delta(\bx)  = \chi_+\left( \frac{x_1 - L}{\delta}\right) \chi_-\left(
      \frac{x_1+L}{\delta}\right) \chi \left(
      \frac{x_2}{\delta}\right)  \\
+  \sum_{\pm }\chi_{\text{macro},
      \pm} \left(\frac{x_1\mp L}{\delta},\frac{x_2}{\delta}\right)
    \left( 1 - \chi_\mp\left( \frac{x_1\mp L}{\delta}\right) \right),
  \end{multline}
which is equal to $1$ for $|x_1|>L$, and which coincides with
$\chi(\frac{x_2}{\delta})$ in the region $|x_1| < L-\delta$ (see
Fig.~\ref{DessinTroncatureTotal}). The cut-off functions $\chi$,
$\chi_\pm$ and $\chi_{\text{macro},
      \pm}$ are defined in
    \eqref{defchi}-\eqref{DefinitionChiplusmoins} and \eqref{eq:chi_macro_+}.  We then define the macroscopic approximation as follows:
\begin{equation}
u_{\macro,N_0}^\delta = \chi_{\text{macro}, \text{total}}^\delta(\bx) \sum_{(n,q) \in D_0} \delta^{\frac{2}{3}n+q} u_{n,q}^\delta(\mx).
\end{equation}
\item[-] The truncated periodic corrector series $\Pi_{N_0}^\delta$: it is given by
\begin{equation}
\Pi_{N_0}^\delta(\mx) = (1-\chi(x_2)) \chi_+(\frac{x_1 -L}{\delta}) \chi_{-} (\frac{x_1 + L}{\delta}) \sum_{(n,q) \in D_0} \delta^{\frac{2}{3}n+q} \Pi_{n,q}^\delta(x_1, \frac{\mx}{\delta}).
\end{equation}
The function $\chi_+(\frac{x_1 -L}{\delta}) \chi_{-} (\frac{x_1 +
  L}{\delta})$ permits us to localize the function $\Pi_{N_0}^\delta(\mx)
$ in the domain $|x_1|<L$ while the introduction of the function $(1-\chi(x_2)) $ ensures
that $\Pi_{N_0}^\delta(\mx)$ vanishes on $\Gamma_{D}$.
\item[-] The truncated near field series $U_{N_0,\pm}^\delta(\mx)$: 
\begin{equation}
U_{N_0,\pm}^\delta(\mx) =  \sum_{(n,q) \in D_0} \delta^{\frac{2}{3}n+q} U_{n,q}^\delta(\frac{\mx}{\delta}).
\end{equation}
\end{enumerate}

 \begin{figure}[tb]
   \centering
       \includegraphics[width=0.4\textwidth]{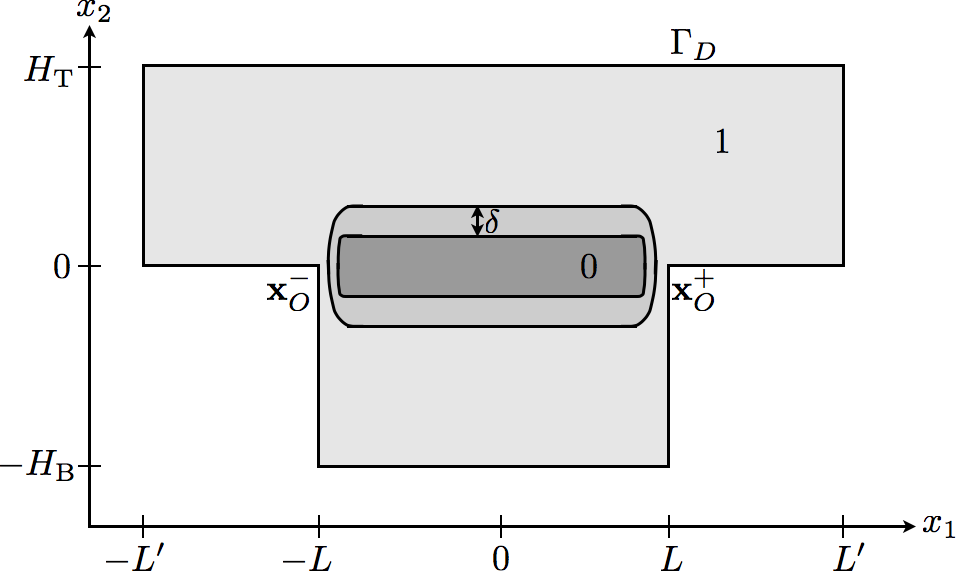}
  \caption{Schematic representation of the cut-off function $\chi_{\text{macro}, \text{total}}^\delta$.}
 \label{DessinTroncatureTotal}
 \end{figure}

\noindent We shall construct a global approximation  $u_{N_0}^\delta$ that
coincides with $U_{N_0,\pm}^\delta$ in the vicinity of the two
corners, with $\Pi_{N_0}^\delta$ in the vicinity of the periodic layer
and with $u_{\macro,N_0}^\delta$ away from the corners and the periodic
layer.  To do so, we introduce
the cut off functions 
\begin{equation}
\chi_+^\delta(\mx) =\chi\left(\frac{r^+}{\eta(\delta)}\right)\quad \mbox{and}
\quad \chi_-^\delta(\mx) =\chi\left(\frac{r^-}{\eta(\delta)}\right),
\end{equation}
where  $\eta(\delta) : \R^+ \rightarrow \R^+$ is a smooth function such that
\begin{equation}
\lim_{\delta \rightarrow 0} \eta(\delta) = 0 \; \mbox{and} \;
\lim_{\delta \rightarrow 0} \frac{\eta(\delta)}{\delta} = +\infty.
\end{equation}
For instance for $s \in (0,1)$, $\eta(\delta) = \delta^s$ satisfies these
conditions.  Finally the global approximation $u_{N_0}^\delta$
of $u^\delta$ is defined by
\begin{equation}
u_{N_0}^\delta(\mx) =\chi_+^\delta(\mx)\, U_{N_0,+} \; +\;
\chi_-^\delta(\mx) \, U_{N_0,-} \; + \; (1 -  \chi_+^\delta(\mx) -
\chi_-^\delta(\mx)) \, (u_{\macro,N_0}^\delta(x)  +\Pi_{N_0}^\delta(x) ).
\end{equation}
Note that $u_{N_0}^\delta$ belongs to $H^1_{\Gamma_D}(\Omega^\delta)$
but does not satisfy homogeneous Neumann boundary conditions on
$\Gamma^\delta$.  \\

\noindent The aim of this part is to estimate the $H^1$-norm
of the error $e_{N_0}^\delta = u^\delta - u_{N_0}^\delta$ in
$\Omega^\delta$ (We remind that $u^\delta \in \HoneGammaD(\Omega^\delta) $ is the 'exact' solution, \ie the solution of Problem~\eqref{eq:perturbed_laplace}). It is in fact
sufficient to estimate the residue
$\Delta  e_{N_0}^\delta$ and the Neumann trace $\partial_n  e_{N_0}^\delta$. Then, the estimation of $\| e_{N_0}^\delta
\|_{H^1(\Omega^\delta)}$ directly results from a straightforward
modification of the uniform stability
estimate~\eqref{eq:prop_norm_u_norm_f} (Proposition~\ref{prop:existence_uniqueness_u_delta}): there exists a constant $C>0$ such
that, for $\delta$ small enough,
\begin{equation}\label{estimationResidu}
\| e_{N_0}^\delta \|_{H^1(\Omega^\delta)} \leq C \left( \| \Delta
e_{N_0}^\delta \|_{L^2(\Omega^\delta)} + \| \partial_n e_{N_0}^\delta \|_{L^2(\Gamma^\delta)} \right) . 
\end{equation}
The main work of this part consists in proving the following proposition:
\begin{prop}\label{propositionConvergenceResidu}
There exist a constant $C>0$ and a constant $\delta_0>0$ such that,
for any $\varepsilon>0$,
for any $\delta \in (0,\delta_0)$,
\begin{equation}
  \| \Delta e_{N_0}^\delta \|_{L^2(\Omega^\delta)} + \| \partial_n e_{N_0}^\delta \|_{L^2(\Gamma^\delta)}  \leq C \left( \delta^{-2-\varepsilon}
\left( \frac{\delta}{\eta(\delta)}\right)^{N_0} +
\delta^{-1} \eta(\delta)^{N_0-\frac{1}{6} -\varepsilon}\right).
\end{equation}
\end{prop}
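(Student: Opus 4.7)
The plan is to decompose the residue $R^\delta_{N_0} := -\Delta e^\delta_{N_0}$ according to where the cut-off functions $\chi^\delta_\pm$ and $\chi^\delta_{\macro,\mrm{total}}$ are active, and estimate each contribution separately. Since $u^\delta$ exactly solves the problem and $f$ is supported away from $\Gamma$ and the corners, the residue comes entirely from the failure of $u^\delta_{N_0}$ to solve the equation. I would first treat the three ``pure'' zones:
\begin{itemize}
\item \emph{Deep interior} ($r^\pm \geq 2\eta(\delta)$, $|x_2|\geq 2\delta$): only the truncated macroscopic series is present and it solves $-\Delta u_{n,q}^\delta = f_{n,q}$ termwise, so the residue equals $-\Delta$ of the $(N_0\mathord{+}1)$-th truncated tail; using the weighted Sobolev bounds of Propositions~\ref{PropDefinitionsmq}--\ref{PropExplicitMacro} this is $O(\delta^{N_0+2/3})$ in $L^2$.
\item \emph{Layer zone} ($r^\pm \geq 2\eta(\delta)$, $|x_2|\leq 2\delta$): here both $u^\delta_{\macro,N_0}$ and $\Pi^\delta_{N_0}$ are active; the boundary layer equations~(\ref{PeriodicCorrectorEquations}) with right-hand side~(\ref{eq:def_Gnq_delta}) were constructed precisely to cancel the commutator terms and the macroscopic traces, so the remaining residue is the $\partial_{x_1}^2\Pi^\delta_{N_0-1}$ and $2\partial_{x_1}\partial_{X_1}\Pi^\delta_{N_0}$ tail. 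Using the exponential decay of $\Pi^\delta_{n,q}(x_1,\cdot)\in\mathcal{V}^+(\mathcal{B})$ in $X_2$ and integrating over a band of thickness $O(\delta)$ yields an $L^2$ bound of order $\delta^{N_0+1/2}|\ln\delta|^{k}$.
\item \emph{Corner zone} ($r^\pm \leq \eta(\delta)$): only $U^\delta_{N_0,\pm}$ is active; since each $U^\delta_{n,q,\pm}$ is harmonic in $\widehat{\Omega}^\pm$, the residue is zero inside (and $\partial_n U^\delta_{n,q,\pm} = 0$ on $\Gamma^\delta$), giving no contribution.
\end{itemize}

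The delicate step is the \emph{matching annulus} $\eta(\delta)\leq r^\pm \leq 2\eta(\delta)$, where the cut-offs $\chi^\delta_\pm$ transition. On this annulus
\begin{equation*}
-\Delta e^\delta_{N_0} \;=\; [\Delta,\chi^\delta_\pm]\bigl(U^\delta_{N_0,\pm} - u^\delta_{\macro,N_0} - \Pi^\delta_{N_0}\bigr) \;+\; \bigl(\text{pure truncation terms}\bigr),
\end{equation*}
so the key is to control $\mathcal{E}^\delta_\pm := U^\delta_{N_0,\pm} - u^\delta_{\macro,N_0} - \Pi^\delta_{N_0}$ in the annulus. The matching conditions~(\ref{MactchingChampProche})--(\ref{matchingChampLointainMoins}) imply that $\mathcal{E}^\delta_\pm$ consists of asymptotic blocks of index $m \geq N_0+1$ (equivalently, $\frac{2n}{3}+q>N_0$). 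Rewriting them in either variable gives, by Proposition~\ref{propositionAsymptoticNearField} and the representations~(\ref{FormuleNFPretePourlesraccordsX1negatif})--(\ref{FormuleFFPretePourlesraccordsX1positif2}), a bound of the form
\begin{equation*}
\|\mathcal{E}^\delta_\pm\|_{L^\infty(\text{annulus})} \;\lesssim\; \delta^{N_0+2/3}\,(\eta/\delta)^{N_0+2/3}|\ln\delta|^{k} \;=\; \eta(\delta)^{N_0+2/3}|\ln\delta|^{k}.
\end{equation*}
Because $|\nabla\chi^\delta_\pm|\lesssim\eta^{-1}$ and $|\Delta\chi^\delta_\pm|\lesssim\eta^{-2}$, and the annulus has area $O(\eta^2)$, multiplying gives an $L^2$ bound of order $\delta^{-1}\eta(\delta)^{N_0-1/6-\varepsilon}$, which is the second term in the stated estimate.

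For the first term $\delta^{-2-\varepsilon}(\delta/\eta)^{N_0}$, I would estimate the truncated near field tail inside the corner zone $r^\pm\leq 2\eta$: applying $\Delta$ in the microscopic variable costs $\delta^{-2}$, and the tail $\sum_{2n/3+q>N_0}\delta^{2n/3+q}\Delta_{\mathbf{X}}U^\delta_{n,q,\pm}$ grows as $(R^\pm)^{2n/3}$, whose supremum over $R^\pm\leq 2\eta/\delta$ yields the factor $(\eta/\delta)^{\text{something}}$ and, once optimised and integrated over a region of area $O(\eta^2)$, the announced $\delta^{-2-\varepsilon}(\delta/\eta)^{N_0}$. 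The Neumann trace $\partial_n e^\delta_{N_0}$ on $\Gamma^\delta$ contributes analogously since $\partial_n U^\delta_{n,q,\pm}=0$ exactly and $\partial_n \Pi^\delta_{n,q}=0$ on $\partial\widehat{\Omega}_\hole$; the only contribution comes from cut-off-crossing terms in the matching annulus, controlled by the same $\mathcal{E}^\delta_\pm$-estimate together with a trace inequality losing a factor $\delta^{-1/2}$, absorbed into the $\varepsilon$.

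The main obstacle is establishing the sharp bound on $\mathcal{E}^\delta_\pm$ in the matching annulus. This requires rewriting the formal expansions~(\ref{FormuleNFPretePourlesraccordsX1negatif2}) and~(\ref{FormuleFFPretePourlesraccordsX1positif2}) truncated at order $N_0$, showing via the matching conditions that all common terms cancel, and then estimating the first non-cancelling block using Proposition~\ref{propositionAsymptoticNearField} with a carefully chosen weight $\beta$. Once this is done, plugging into~(\ref{estimationResidu}) with $\eta(\delta)=\delta^{s}$ for $s=\frac{N_0+2}{N_0+5/6}$ balances the two terms and, combined with Proposition~\ref{prop:existence_uniqueness_u_delta}, yields Theorem~\ref{TheoremConvergenceMacro}.
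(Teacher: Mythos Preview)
Your decomposition into zones and your treatment of the matching annulus are broadly in the spirit of the paper's proof, but there is a genuine gap in your identification of where the first term $\delta^{-2-\varepsilon}(\delta/\eta)^{N_0}$ comes from.

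You attribute this term to a ``truncated near field tail'' $\sum_{2n/3+q>N_0}\delta^{2n/3+q}\Delta_{\mathbf{X}}U^\delta_{n,q,\pm}$ inside the corner zone. But no such tail exists: the truncated near field series $U^\delta_{N_0,\pm}=\sum_{(n,q)\in D_{N_0}}\delta^{2n/3+q}U^\delta_{n,q,\pm}$ is \emph{exactly} harmonic, since each $U^\delta_{n,q,\pm}$ is a finite linear combination of the singularities $S_k^\pm$, which satisfy~\eqref{ProblemeS_q}. The paper uses this fact explicitly in the decomposition~\eqref{DecompositionErreur} (``using additionally that $\Delta U_{N_0,\pm}=0$''). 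So the corner zone contributes nothing to the residue, as you correctly say in your third bullet---but then you cannot resurrect a contribution from it later.

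The term $\delta^{-2-\varepsilon}(\delta/\eta)^{N_0}$ actually arises from the \emph{modeling error} of the far-field construction (macroscopic plus boundary layer) in the part of the layer zone that approaches the corners, i.e.\ where $\eta(\delta)\lesssim r^\pm \lesssim L$. Your layer-zone estimate $O(\delta^{N_0+1/2}|\ln\delta|^k)$ is only valid away from the corners: the Taylor remainders of the $u_{n,q}^\delta$ and the tangential derivatives appearing in $\Pi^\delta_{n,q}$ blow up like negative powers of $r^\pm$ (since $u_{n,q}^\delta\in V_{2,\beta}^2$ only for $\beta>1+\tfrac{2n}{3}+q$). When $r^\pm$ is allowed to go down to $\eta(\delta)$, these singularities produce factors $\eta(\delta)^{-(\frac{2n}{3}+\alpha_n+\varepsilon)}$, and after summation one obtains precisely $(\delta/\eta)^{N_0}\delta^{-2-\varepsilon}$; this is the content of Lemmas~\ref{LemmeErreurModelFF} and~\ref{LemmeErreurModelBL} in the paper. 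Your proposal misses this mechanism entirely.

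A secondary point: the matching error also carries a $(\delta/\eta)^{N_0-\cdots}$ contribution (from the near-field remainder $\mathcal{R}_{NF,N_0}^\delta$, Lemma~\ref{LemmeMatchingResiduNF}), not only the $\eta^{N_0-\cdots}$ term you identify. Both types of remainders---macroscopic/boundary-layer and near-field---live in the annulus and must be estimated separately, as in Lemmas~\ref{LemmeMatchingResiduMacro}--\ref{LemmeMatchingResiduNF}.
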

\noindent As a direct corollary, choosing $\eta(\delta) = \sqrt{\delta}$,
$\varepsilon= \frac{1}{2}$, we
obtain the following global error estimate:
there exist a constant $C>0$ and a constant $\delta_0>0$ such that,
for any $\delta \in (0,\delta_0)$,
\begin{equation}\label{errorGlobale}
  \| e_{N_0}^\delta \|_{H^1(\Omega^\delta)} \leq \;C \,\delta^{\frac{N_0}{2}-\frac{5}{2}}.
\end{equation}
Since $e_{N_0}^\delta $ coincides with $u^\delta-
\sum_{(n,q)\in D_{N_0}} \delta^{\frac{2}{3} n +q} u_{n,q}^\delta$ in $\Omega_\alpha$ for $\delta$ small enough,
Theorem~\ref{TheoremConvergenceMacro} follows from
\eqref{errorGlobale} and the triangular inequality. \\

\noindent The remainder of this section is dedicated to the proof of
Proposition~\ref{propositionConvergenceResidu}. Although long and
technical, the proof is rather standard. 

\begin{rem}
We emphasize that $u_{N_0}^\delta$ is certainly not the best choice to minimize
the global error. As shown in~\cite{DaugeTordeuxVialVersionLongue}, a global estimate based on the truncated far and
near field terms obtained by the compound method might provide a
better global error. Nevertheless, for the sake of simplicity and since we are mainly interested
in the macroscopic error estimate (that can always be made optimal
thanks to the triangular inequality), we prefer using here 
$u_{N_0}^\delta$.
\end{rem}
\subsection{Decomposition of the residue into a modeling error and a matching error}
Remarking that the supports of the derivatives of $\chi_+^\delta$ and
$\chi_-^\delta$ are disjoint (for $\delta$ small enough), using additionaly that $\Delta U_{N_0,\pm}^\pm=0$, we can see that
\begin{equation}\label{DecompositionErreur}
-\Delta e_{N_0}^\delta  = \mathcal{E}_{\mod} + \mathcal{E}_{\match}, 
\end{equation}
where,
\begin{equation}\label{definitionErreurRaccord}
\mathcal{E}_{\match} = - \sum_{\pm} [ \Delta , \chi_\pm^\delta(\mx)] (U_{N_0,\pm} -u_{\macro,N_0}^\delta(x)  -\Pi_{N_0}^\delta(x)),
\end{equation}
and
\begin{equation}
 \mathcal{E}_{\mod}   = f  - (1 -  \chi_+^\delta(\mx) - \chi_-^\delta(\mx))   \Delta \left( u_{\macro,N_0}^\delta(x)  +\Pi_{N_0}^\delta(x)  \right).
\end{equation}

\noindent Here, $\mathcal{E}_{\match}$ represents the matching error. Its support, which 
coincides with the union of the supports of $\nabla   \chi_+^\delta$
and  $\nabla \chi_-^\delta$, is
included in the union of the rings $\eta(\delta)<|r^\pm|<2 \eta(\delta)$. It
measures the mismatch between the far and near field truncated
expansions in the matching zones.  $\mathcal{E}_{\mod}$, representing the
modeling error (or consistency error), measures how the expansion fails to satisfies the
original Laplace problem.\\

\noindent Similarly,  is it easily seen that
\begin{equation}\label{erreurDeriveNormale}
\partial_{n} e_{N_0}^\delta  = \sum_{\pm}  \partial_n
\chi_\pm^\delta(\mx) \left( U_{N_0,\pm} - \Pi_{N_0}^\delta(x)\right),
\end{equation}  
so that the error on the boundary data in supported in the matching
areas. Therefore its treatment will be similar to the one of the matching error.\\

\noindent In the next two sections, we shall estimate in turn
$\mathcal{E}_{\mod}$ (Section~\ref{SectionModelingError}, Proposition~\ref{lema:global_modeling_error})
and $\mathcal{E}_{\match}$
(Section~\ref{SectionMatchingError} Proposition~\ref{lema:global_matching_error}).  The proof of
Proposition~\ref{propositionConvergenceResidu} results from~\eqref{DecompositionErreur}-\eqref{erreurDeriveNormale} and a direct application of these
two propositions.
\subsection{Estimation of the modeling error}\label{SectionModelingError}
The present section is dedicated to the proof of the following error estimate:
\begin{prop}
  \label{lema:global_modeling_error}
  There exist a positive constant $\mathcal{C}_{\mod}>0$ and 
  a positive number $\delta_0 >0$, such that, for any $\delta \in (0,
  \delta_0)$, 
  \begin{equation}
    \label{eq:global_modeling_error}
  \| \mathcal{E}_{\mod} \|_{L^2(\Omega^\delta)}  \; \leq \; \mathcal{C}_{\mod}  \left(
  \frac{\delta}{\eta(\delta)}\right)^{N_0} \,\delta^{-\varepsilon -2}.
  \end{equation}
\end{prop}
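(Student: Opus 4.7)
The plan is to split $\mathcal{E}_{\mod}$ according to the structure of the cut-off functions $\chi^\delta_\pm$, $\chi^\delta_{\macro,\text{total}}$ and $(1-\chi(x_2))\chi_+(\cdot)\chi_-(\cdot)$ appearing in~\eqref{eq:chi_macro_total}, and to exploit the fact that the far-field ansatz~\eqref{def_uFFnq} was designed so that the macroscopic equations~\eqref{FFVolum}--\eqref{FFDirichlet} together with the boundary layer equations~\eqref{PeriodicCorrectorEquations} annihilate the formal series. By hypothesis $\operatorname{supp} f$ lies at distance at least $\delta_0>0$ from $\Gamma$ and from the two corners, so for $\delta$ sufficiently small the cut-offs are constant on $\operatorname{supp} f$ ($\chi^\delta_{\macro,\text{total}}\equiv 1$, $\Pi_{N_0}^\delta\equiv 0$), and the macroscopic equation $-\Delta u^\delta_{n,q} = f_{n,q}$ produces an exact cancellation of $f$ against $-\Delta u^\delta_{\macro,N_0}$ in this region. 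Hence $\mathcal{E}_{\mod}$ is effectively concentrated outside $\operatorname{supp} f$, where $f$ itself drops out of the analysis.

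The heart of the proof is a telescoping identity. Using the chain rule
\begin{equation*}
\Delta_x\bigl[\Pi(x_1,\bx/\delta)\bigr]=\delta^{-2}\Delta_X\Pi+2\delta^{-1}\partial_{x_1}\partial_{X_1}\Pi+\partial_{x_1}^2\Pi,
\end{equation*}
the boundary layer equation $-\Delta_X\Pi^\delta_{n,q}=G^\delta_{n,q}$ with the explicit form~\eqref{eq:def_Gnq_delta} of $G^\delta_{n,q}$, and the Taylor expansion $u^\delta_{n,q-p}(x_1,\delta X_2)=\sum_p\delta^p\frac{X_2^p}{p!}\partial_{x_2}^p u^\delta_{n,q-p}(x_1,0^\pm)$ coupled with $-\Delta u^\delta_{n,q-p}=f_{n,q-p}$, one verifies that the contribution of each $(n,q)\in D_{N_0}$ to the Laplacian of $\chi(x_2/\delta)u^\delta_{n,q}+\Pi^\delta_{n,q}$ is cancelled by the commutator pieces $[\Delta,\chi_\pm](X_2^p/p!)$ appearing in $G^\delta_{n,q+2}$, by $2\partial_{x_1}\partial_{X_1}\Pi^\delta_{n,q+1}$, and by $\partial_{x_1}^2\Pi^\delta_{n,q+2}$, provided all partner indices also lie in $D_{N_0}$. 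What survives are only those ``boundary'' contributions whose partner sits just outside $D_{N_0}$, \ie whose index satisfies $\frac{2n}{3}+q$ within distance~$1$ of $N_0$; these truncation residuals carry a prefactor of at worst $\delta^{N_0+\frac13}\delta^{-2}$ once the chain-rule factors are accounted for. To these one must add the genuine cut-off commutators coming from $\chi^\delta_{\macro,\text{total}}$ and from the localisation $(1-\chi(x_2))\chi_+(\cdot)\chi_-(\cdot)$ of $\Pi^\delta_{N_0}$; by the super-algebraic decay~\eqref{eq:exponential_decaying} of the $\Pi^\delta_{n,q}$, these live on strips of width $O(\delta)$ and contribute at most at the same order.

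To conclude, each surviving piece must be estimated in $L^2$. In the region $\{1-\chi^\delta_+-\chi^\delta_-\neq 0\}$ one has $r^\pm\gtrsim\eta(\delta)$; invoking the singular expansions~\eqref{AsymptoticMacroFieldMacthingareas} and~\eqref{AsymptoticBoundaryLayerMacthingareas}, the derivatives of $u^\delta_{n,q}$ and $\Pi^\delta_{n,q}$ appearing in the residuals are controlled pointwise by $(r^\pm)^{-\frac{2n}{3}-q-k}|\ln r^\pm|^{N(n,q)}$. Integration in polar coordinates of the square of the worst case $(r^\pm)^{-2N_0-3}$ over the annulus $\eta(\delta)<r^\pm<L$ yields precisely the scale $\eta(\delta)^{-N_0}$, while the elementary bound $|\ln r|^{k}\le C_\varepsilon\, r^{-\varepsilon}$ absorbs the logarithms at the price of an arbitrary $\delta^{-\varepsilon}$ loss, producing $(\delta/\eta(\delta))^{N_0}\delta^{-\varepsilon-2}$ as claimed. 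The main obstacle is the careful bookkeeping of the telescoping step: one has to verify that every index pair inside $D_{N_0}$ is genuinely paired and that the unmatched terms are exactly those identified above, while simultaneously tracking the logarithmic powers $N(n,q)$ generated by differentiating the $r^\lambda(\ln r)^k$-type singular parts---it is precisely these logarithms that force the $\delta^{-\varepsilon}$ in the final estimate.
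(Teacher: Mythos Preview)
Your strategy is the same as the paper's: decompose $\mathcal{E}_{\mod}$ into a piece coming from cut-off commutators (which is super-algebraically small by the exponential decay of the $\Pi_{n,q}^\delta$; this is the paper's $\mathcal{E}_{\mod,1}$, Lemma~\ref{LemmeErreurMod1}) and a ``telescoping residual'' piece (the paper's $\mathcal{E}_{\mod,2}$) obtained by combining the chain rule for $\Delta_x\Pi(x_1,\bx/\delta)$, the Taylor expansion of $u_{n,q}^\delta$ in $x_2$, and the boundary layer equations~\eqref{PeriodicCorrectorEquations}, leaving only the terms whose partner index falls just outside $D_{N_0}$.

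There is one inaccuracy in your final $L^2$ step. The surviving residuals are \emph{not} supported on a two-dimensional annulus $\eta(\delta)<r^\pm<L$: the commutator $[\Delta,\chi(x_2/\delta)]$ confines the Taylor piece to the bands $\delta<|x_2|<2\delta$, and the boundary-layer piece $\partial_{x_1}\partial_{X_1}\Pi,\ \partial_{x_1}^2\Pi$ is exponentially localised near $x_2=0$. Your ``integration in polar coordinates'' would integrate over a region of area $O(1)$, whereas the true support has area $O(\delta)$; this is why your exponent arithmetic (the claim that $\int_\eta^L r^{-2N_0-3}\,r\,dr$ produces $\eta^{-N_0}$) does not close as written. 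The paper instead estimates on the bands $B_1^\pm=(\,\cdot\,)\times(\delta,2\delta)$ and uses the weighted-Sobolev membership $\partial_{x_2}^{\alpha}u_{n,q}^\delta\in V^0_{2,\beta_{n,q}-2+\alpha}$ together with $r^\pm\gtrsim\eta(\delta)$ on the support of $1-\chi_+^\delta-\chi_-^\delta$; see Lemmas~\ref{LemmeErreurModelFF} and~\ref{LemmeErreurModelBL}. Once you restrict your integration to the thin band and track the extra factor of $\delta$ from its width, your pointwise-bound argument recovers the stated estimate.
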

\noindent We first note that  the intersection of the supports of  $\nabla \chi_{\text{macro},
  \text{total}}^\delta(\bx)$  (and $\Delta \chi_{\text{macro},
  \text{total}}^\delta(\bx)$) and $1 -  \chi_+^\delta(\mx)
- \chi_-^\delta(\mx)$ is
included in the set $$\Omega_{\mod}^\delta = \{(x_1, x_2) \in
\Omega^\delta, |x_1| \leq L -\sqrt{ \eta(\delta) - 4
    \delta^2}\}.$$
 Moreover, $\chi_{\text{macro},
  \text{total}}^\delta(\bx) = \chi(\frac{x_2}{\delta})$ on this set. As a result,
\begin{equation}\label{ErreurModel1}
  f - (1 -  \chi_+^\delta(\mx) - \chi_-^\delta(\mx)) \Delta
  u_{\macro,N_0}^\delta(x)  = - (1 -  \chi_+^\delta(\mx) -
  \chi_-^\delta(\mx)) 1_{\Omega_{\mod}^\delta} [\Delta, \chi(\frac{x_2}{\delta}) ] \left( \sum_{(n,q) \in D_{N_0}} \delta^{\frac{2}{3}n+q} u_{n,q}^\delta(\mx)\right),
\end{equation}
where $1_{\Omega_{\mod}^\delta}$ denotes the
indicator function of $\Omega_{\mod}^\delta$.  In the previous formula, we used the macroscopic
equations~\eqref{FFVolum} (the functions $u_{n,q}^\delta$ are harmonic in $\OmegaTop \cup \OmegaBottom$ unless
for $n=q=0$ where $-\Delta u_{0,0}=0$). 
On the other hand,
\begin{equation}\label{ErreurModel2}
  -(1 -  \chi_+^\delta(\mx) - \chi_-^\delta(\mx)) \Delta
  \Pi_{N_0}^\delta(x)  =  -(1 -  \chi_+^\delta(\mx) -
  \chi_-^\delta(\mx))   1_{\Omega_{\mod}^\delta} \sum_{(n,q)\in D_{N_0}} \delta^{\frac{2}{3} n +q}
  \Delta \Pi_{n,q}^\delta  + \mathcal{E}_{\mod, 1}
\end{equation}
where,
\begin{multline}
 \mathcal{E}_{\mod, 1}= - (1 -  \chi_+^\delta(\mx) -
\chi_-^\delta(\mx)) \, \left(\chi_+(\frac{x_1 -L}{\delta}) \chi_{-} (\frac{x_1 + L}{\delta}) (1 -\chi(x_2)) -  1_{\Omega_{\mod}^\delta}   \right)  \sum_{(n,q)\in D_{N_0}} \delta^{\frac{2}{3} n +q} \Delta \Pi_{n,q}^\delta \\
  + (1 -  \chi_+^\delta(\mx) - \chi_-^\delta(\mx)) [ \Delta, \chi_+(\frac{x_1 -L}{\delta}) \chi_{-} (\frac{x_1 + L}{\delta}) (1 -\chi(x_2))] \sum_{(n,q)\in D_{N_0}} \delta^{\frac{2}{3} n +q} \Pi_{n,q}^\delta.
\end{multline}
Collecting~\eqref{ErreurModel1} and \eqref{ErreurModel2}, we end
up with
\begin{equation}\label{decompositionErreurModel}
\mathcal{E}_{\mod} = \mathcal{E}_{\mod, 1} +  \mathcal{E}_{\mod, 2},
\end{equation}
with
\begin{equation}
 \mathcal{E}_{\mod, 2} = -(1 -  \chi_+^\delta(\mx) -
  \chi_-^\delta(\mx))   1_{\Omega_{\mod}^\delta} \sum_{(n,q)\in D_{N_0}}
  \delta^{\frac{2}{3} n +q} \left ( \Delta  \Pi_{n,q}^\delta(x_1, \frac{\mx}{\delta})  +
    [\Delta, \chi(\frac{x_2}{\delta}) ] u_{n,q}^\delta(\mx)\right).
\end{equation}
$\mathcal{E}_{\mod, 1}$ is supported in a domain where
$|x_2|> C \eta(\delta)$ wherein the periodic
correctors $\Pi_{n,q}^\delta$ are exponentially decaying. As a result, $\mathcal{E}_{\mod, 1}$ converges super-algebraically
fast to $0$. More precisely, we can prove
the following lemma, whose proof is left to the reader:
\begin{lema}\label{LemmeErreurMod1}
For any $N \in \N$, there exists a positive
constant $C_{N}>0$ and a positive number $\delta_0>0$, such that, for any $\delta<\delta_0$,
\begin{equation}
 \left\| \mathcal{E}_{\mod, 1} \right\|_{L^2(\Omega^\delta)} \leq C_{N}\delta^N. 
\end{equation} 
\end{lema}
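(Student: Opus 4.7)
The key idea is to exploit the super-algebraic decay~\eqref{eq:super-algebraically} of $\Pi_{n,q}^\delta$ in the variable $X_2$: the effective support of $\mathcal{E}_{\mod,1}$ only meets the thin layer in regions where $|X_2|=|x_2|/\delta$ tends to infinity as $\delta\to 0$. I would split $\mathcal{E}_{\mod,1}=A_\delta+B_\delta$ into the two summands displayed in the definition, with $A_\delta$ the ``difference'' term on the first line and $B_\delta$ the commutator term on the second line, and treat the two by the same mechanism.

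Setting $\varphi(\mx):=\chi_+(\tfrac{x_1-L}{\delta})\chi_-(\tfrac{x_1+L}{\delta})(1-\chi(x_2))$, a direct inspection gives
\begin{align*}
\mathrm{supp}(\varphi-\mathbf{1}_{\Omega_{\mod}^\delta}) &\subset \{|x_2|\ge 1\}\cup\{L-\sqrt{\eta(\delta)-4\delta^2}\le|x_1|\le L,\;|x_2|\le 2\},\\
\mathrm{supp}([\Delta,\varphi]) &\subset \{1\le|x_2|\le 2\}\cup\{|x_1\mp L|\sim\delta\}.
\end{align*}
The extra prefactor $(1-\chi_+^\delta-\chi_-^\delta)$ vanishes on the two discs $\{r^\pm<\eta(\delta)\}$. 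Hence on the effective support of either $A_\delta$ or $B_\delta$, either $|x_2|\ge 1$ (whence $|X_2|\ge\delta^{-1}$), or else $|x_1\mp L|$ is small but the constraint $r^\pm\ge\eta(\delta)$ forces $|x_2|\gtrsim\eta(\delta)$ (whence $|X_2|\gtrsim\eta(\delta)/\delta\to+\infty$).

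Next I would invoke~\eqref{eq:super-algebraically}, extended by interior elliptic regularity on the periodicity cell $\mathcal{B}$ to all derivatives $\partial_{\mX}^{\alpha}\Pi_{n,q}^\delta$ with $|\alpha|\le 2$: for every $M\in\N$, uniformly in $x_1\in(-L,L)$,
\begin{equation*}
|\partial_{\mX}^{\alpha}\Pi_{n,q}^\delta(x_1,\mX)|\;\le\;C_{M,n,q}\,(1+|X_2|)^{-M}.
\end{equation*}
Each derivative of a rescaled cut-off of argument $\mx/\delta$ or $(\mx\mp L\be_1)/\delta$ produces a factor $\delta^{-1}$ and $\Delta$ produces $\delta^{-2}$. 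Combining with the lower bound on $|X_2|$ from the support analysis yields the pointwise estimate
\begin{equation*}
|\mathcal{E}_{\mod,1}(\mx)|\;\le\;C_{M,N_0}\,\delta^{-2}|\ln\delta|^{N'}\left(\frac{\delta}{\eta(\delta)}\right)^{\!M},
\end{equation*}
valid on $\mathrm{supp}(\mathcal{E}_{\mod,1})$ for every $M\in\N$, the factor $|\ln\delta|^{N'}$ absorbing the polynomial dependence on $\ln\delta$ of the expansion coefficients.

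Finally, integrating over $\mathrm{supp}(\mathcal{E}_{\mod,1})$, whose area is bounded independently of $\delta$, and choosing $M$ large enough (using $\eta(\delta)/\delta\to+\infty$), yields $\|\mathcal{E}_{\mod,1}\|_{L^2(\Omega^\delta)}\le C_N\delta^N$ for every $N\in\N$. The main obstacle is the bookkeeping in the pointwise bound: one must carefully track the negative powers of $\delta$ arising from differentiating the rescaled cut-offs and from the $\Delta$-scaling, and identify on each piece of the support the effective lower bound for $|X_2|$ that keeps \eqref{eq:super-algebraically} effective. Once this is set up, the super-algebraic decay of the boundary-layer correctors does the rest.
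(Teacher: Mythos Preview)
Your approach is the same as the paper's: the authors give no formal proof, only the one-line observation that $\mathcal{E}_{\mod,1}$ is supported where $|x_2|>C\,\eta(\delta)$ and the periodic correctors are (exponentially) decaying there, and leave the rest to the reader. Your write-up is essentially the detailed version of that sentence.

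There is, however, one inaccuracy you should fix. The claim
\[
|\partial_{\mX}^{\alpha}\Pi_{n,q}^\delta(x_1,\mX)|\le C_{M,n,q}(1+|X_2|)^{-M}\quad\text{uniformly in }x_1\in(-L,L)
\]
is not true as stated. From the tensor formula~\eqref{definitionPiq}, the $x_1$-dependence of $\Pi_{n,q}^\delta$ is through $\partial_{x_1}^p\langle u_{n,q-p}^\delta(x_1,0)\rangle_\Gamma$ and $\partial_{x_1}^{p-1}\langle\partial_{x_2}u_{n,q-p}^\delta(x_1,0)\rangle_\Gamma$, and these blow up like $|x_1\mp L|^{-\frac{2n}{3}-q}$ near the corners (cf.~the asymptotics in Section~\ref{SubsectionExpliciteMacro}). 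On the part of $\mathrm{supp}\,\mathcal{E}_{\mod,1}$ where $|x_1\mp L|\sim\delta$, this gives an extra factor $\delta^{-\frac{2n}{3}-q}$ in front of the profile functions, and the commutator also produces genuine $\partial_{x_1}$-derivatives of $\Pi_{n,q}^\delta$, which carry the same type of blow-up. Your pointwise bound is therefore missing a factor bounded by $\delta^{-N_0-c}$ for some fixed $c$ depending only on $N_0$.

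This does not affect the conclusion: the missing factor is polynomial in $\delta^{-1}$ with a degree controlled by $N_0$, while $(\delta/\eta(\delta))^{M}$ is available for arbitrarily large $M$. So after replacing ``uniformly in $x_1$'' by an honest bound of the form $C_{M,n,q}\,|x_1\mp L|^{-\frac{2n}{3}-q}(1+|X_2|)^{-M}$ near the corners (or simply $C_{M,N_0}\,\delta^{-N_0-c}(1+|X_2|)^{-M}$ on the relevant support), the rest of your argument goes through unchanged.
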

\noindent It remains to estimate $\mathcal{E}_{\mod, 2}$. Analogously to the case of an
infinite thin periodic layer, we naturally use the 
periodic correctors
equations~\eqref{PeriodicCorrectorEquations}. Nevertheless, the
estimation requires a careful analysis because the fields
$\Pi_{n,q}^\delta$ and $u_{n,q}^\delta$ are singular. 
We prove the
following lemma, whose proof is postponed in Appendix~\ref{AppendixProofModellingError}:
\begin{lema}\label{LemmeErreurMod2}
For any $\varepsilon>0$  sufficiently small, there exists a positive
constant $C>0$ and a positive number $\delta_0>0$, such that, for any $\delta<\delta_0$,
\begin{equation}
\| \mathcal{E}_{\mod, 2} \|_{L^2(\Omega^\delta)} \leq  C \left(
  \frac{\delta}{\eta(\delta)}\right)^{N_0} \,\delta^{-\varepsilon -2}.
\end{equation} 
\end{lema}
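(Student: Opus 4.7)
The plan is to insert the periodic corrector equation $-\Delta_\bX\Pi_{n,q}^\delta=G_{n,q}^\delta$ of \eqref{PeriodicCorrectorEquations}--\eqref{eq:def_Gnq_delta} into the sum defining $\mathcal{E}_{\mod,2}$ to produce a telescoping cancellation in the indices $(n,q)\in D_{N_0}$, and then to estimate in $L^2$ the few surviving boundary terms using the singular decompositions of $u_{n,q}^\delta$ and $\Pi_{n,q}^\delta$ near the two reentrant corners. First, for each $(n,q)\in D_{N_0}$, I would expand, via the chain rule,
\begin{equation*}
\Delta\bigl[\Pi_{n,q}^\delta(x_1,\tfrac{\mx}{\delta})\bigr]=\tfrac{1}{\delta^{2}}\Delta_\bX\Pi_{n,q}^\delta+\tfrac{2}{\delta}\partial_{x_1}\partial_{X_1}\Pi_{n,q}^\delta+\partial_{x_1}^{2}\Pi_{n,q}^\delta,
\end{equation*}
and replace $\Delta_\bX\Pi_{n,q}^\delta$ by $-G_{n,q}^\delta$. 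Simultaneously I Taylor-expand $u_{n,q}^\delta(x_1,\cdot)$ at $x_2=0^\pm$ up to some order $P\geq N_0$ and invoke the algebraic identity $[\Delta,\chi_\pm(x_2/\delta)]\bigl(\tfrac{x_2^{p}}{p!}v(x_1)\bigr)=\delta^{p-2}v(x_1)[\Delta_\bX,\chi_\pm(X_2)]\bigl(\tfrac{X_2^{p}}{p!}\bigr)$ in order to rewrite $\delta^{\frac{2n}{3}+q}[\Delta,\chi(x_2/\delta)]u_{n,q}^\delta$ in the same microscopic form as the first family of terms in $G_{n,q}^\delta$.

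After these substitutions, the three internal contributions of $G_{n,q}^\delta$, namely $\partial_{x_1}^{2}\Pi_{n,q-2}^\delta$, $2\partial_{x_1}\partial_{X_1}\Pi_{n,q-1}^\delta$, and $\sum_{p=0}^{q}\partial_{x_2}^{p}u_{n,q-p}^\delta(x_1,0^\pm)[\Delta_\bX,\chi_\pm](X_2^{p}/p!)$, cancel exactly the direct terms $\partial_{x_1}^{2}\Pi_{n,q}^\delta$, $(2/\delta)\partial_{x_1}\partial_{X_1}\Pi_{n,q}^\delta$, and the Taylor-polynomial portion of $[\Delta,\chi(x_2/\delta)]u_{n,q}^\delta$ after the index shifts $q\mapsto q+2$, $q\mapsto q+1$, $q\mapsto q+p$, provided that the shifted index remains in $D_{N_0}$. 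The residue $\mathcal{E}_{\mod,2}$ therefore reduces to a finite sum of three families of boundary terms, all multiplied by $(1-\chi_+^\delta-\chi_-^\delta)\mathbf{1}_{\Omega_{\mod}^\delta}$: (i) $\delta^{\frac{2n}{3}+q}\partial_{x_1}^{2}\Pi_{n,q}^\delta$ for $(n,q)$ in the narrow band $N_0-2<\tfrac{2n}{3}+q\leq N_0$; (ii) $\delta^{\frac{2n}{3}+q-1}\partial_{x_1}\partial_{X_1}\Pi_{n,q}^\delta$ with $N_0-1<\tfrac{2n}{3}+q\leq N_0$, together with the boundary Taylor monomials $\delta^{\frac{2n}{3}+q+p-2}\partial_{x_2}^{p}u_{n,q}^\delta(x_1,0^\pm)[\Delta_\bX,\chi_\pm](X_2^{p}/p!)$ for $\tfrac{2n}{3}+q+p>N_0$, $p\leq P$; and (iii) the Taylor remainder $\sum\delta^{\frac{2n}{3}+q}[\Delta,\chi_\pm(x_2/\delta)]r_{n,q,P}^\pm$ of Lagrange form.

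Each family is then estimated on the support of $\mathcal{E}_{\mod,2}$, where $r^\pm\geq\eta(\delta)$, by invoking the asymptotic expansions \eqref{AsymptoticMacroFieldMacthingareas} and \eqref{AsymptoticBoundaryLayerMacthingareas}: near the right corner one has $|\partial_{x_1}^{k}\partial_{x_2}^{l}u_{n,q}^\delta(x_1,0^+)|\lesssim (r^+)^{-\frac{2n}{3}-q-k-l}|\ln r^+|^{k(N_0)}$ and $|\partial_{x_1}^{k}\Pi_{n,q}^\delta(x_1,\mX)|\lesssim (r^+)^{-\frac{2n}{3}-q-k}|\ln r^+|^{k(N_0)}\,\Psi(\mX)$, with $\Psi$ super-algebraically decaying in $|X_2|$ by \eqref{eq:exponential_decaying}. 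Squaring, integrating in $x_1$ across $r^\pm\in(\eta(\delta),L)$ via $\int_{\eta(\delta)}^{L} s^{-\alpha}\,ds\lesssim \eta(\delta)^{1-\alpha}$ for $\alpha>1$, integrating in $x_2$ over the $O(\delta)$-thick strip for commutator-supported contributions or using the super-algebraic $X_2$-decay for $\Pi$-contributions (each yielding a factor $\delta^{1/2}$ from the change of scale), and using the algebraic relation $\tfrac{n}{3}+\tfrac{q}{2}=\tfrac{1}{2}(\tfrac{2n}{3}+q)$ at the saturated boundary $\tfrac{2n}{3}+q=N_0$, the $L^2$-norm of each family is controlled by $C\,\delta^{\alpha_j}\eta(\delta)^{-\beta_j}|\ln\delta|^{k(N_0)}$ with exponents satisfying $\alpha_j-\beta_j\geq N_0-C_0$ and $\beta_j\leq N_0+C_0$ for a fixed $C_0\leq 2$; the Taylor remainder gains an extra factor $\delta^{P(1-s)}$ under $\eta(\delta)=\delta^{s}$ and so becomes arbitrarily small for $P$ large. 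Summing, $\|\mathcal{E}_{\mod,2}\|_{L^2(\Omega^\delta)}\leq C\,(\delta/\eta(\delta))^{N_0}\delta^{-\varepsilon-2}$ for any $\varepsilon>0$, the polylog factor $|\ln\delta|^{k(N_0)}$ being absorbed in $\delta^{-\varepsilon}$. The main obstacle is precisely this bookkeeping of exponents under the telescoping cancellation: identifying which terms survive in each band, tracking how every tangential derivative $\partial_{x_1}$ and every jump of the normal trace across $\Gamma$ increments the corner-singularity exponent, and controlling the $(\ln\delta)^{s}$-dependence inherited from the macroscopic terms; a secondary technical point is that the smoothness of $\widehat{\Omega}_\hole$ is needed to guarantee the higher-order $x_2$-regularity of $u_{n,q}^\delta$ that underlies the Taylor expansion of order $P$.
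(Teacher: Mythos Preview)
Your proposal is essentially the paper's own proof: expand $\Delta\Pi_{n,q}^\delta$ by the chain rule, substitute the corrector equation $-\Delta_{\bX}\Pi_{n,q}^\delta=G_{n,q}^\delta$, Taylor-expand $u_{n,q}^\delta$ at $x_2=0^\pm$, let the resulting index shifts telescope across $D_{N_0}$, and estimate the surviving boundary terms using the corner-singular behaviour of $u_{n,q}^\delta$ and $\Pi_{n,q}^\delta$. The paper organises this as $\mathcal{E}_{\mod,2}=-(1-\chi_+^\delta-\chi_-^\delta)\mathbf{1}_{\Omega_{\mod}^\delta}(\mathcal{R}_{N_0,\text{taylor}}^\delta+\mathcal{R}_{N_0,BL}^\delta)$ and then proves two sub-lemmas, one for each remainder, splitting the domain into a central part (where everything is smooth) and two corner neighbourhoods.

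Two technical differences are worth noting. First, the paper uses a \emph{variable} Taylor order $\alpha_n-q=\lfloor N_0-\tfrac{2n}{3}\rfloor-q$ adapted to each $(n,q)$, so that the Taylor monomials with $\tfrac{2n}{3}+q+p>N_0$ that you place in family~(ii) never appear: they are absorbed directly into the Lagrange remainder. This streamlines the bookkeeping and avoids having to let $P\to\infty$ (your remark that the Taylor remainder becomes arbitrarily small ``under $\eta(\delta)=\delta^s$'' is unnecessary --- with the adapted order the estimate holds for general $\eta$). Second, the paper bounds the corner contributions through the weighted Sobolev norms $V_{2,\beta}^\ell$ and the weighted elliptic regularity $u_{n,q}^\delta\in V_{2,\beta_{n,q}+\ell}^{2+\ell}(\OmegaTop)$, rather than the pointwise asymptotic bounds you invoke; this is more robust (no need to isolate the logarithmic factors by hand) and is what makes the $\delta^{-\varepsilon}$ absorption rigorous. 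Finally, your closing remark is slightly off: the smoothness of $\widehat{\Omega}_\hole$ is used for the near-field regularity, not for the $x_2$-regularity of the macroscopic terms $u_{n,q}^\delta$, which are harmonic near $\Gamma$ independently of the holes.
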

\noindent Obviously, Proposition~\ref{lema:global_modeling_error} is a straightforward consequence of \eqref{decompositionErreurModel},
Lemma~\ref{LemmeErreurMod1}, and Lemma~\ref{LemmeErreurMod2}.

\subsection{Estimation of the matching
  error}\label{SectionMatchingError}
We now turn to the estimation of the matching error:
\begin{prop}
  \label{lema:global_matching_error}
  There exist a positive constant $\mathcal{C}_{\text{match}}>0$ and a
  positive number $\delta_0 >0$, such that, for any $\delta <
  \delta_0$, 
  \begin{equation}
    \label{eq:global_modeling_error}
  \| \mathcal{E}_{\match} \|_{L^2(\Omega^\delta)}  \leq \mathcal{C}_{\text{match}} \left(
  \eta(\delta)^{-1} \, \left( \frac{\delta}{\eta(\delta)} \right)^{N_0
  -1- \varepsilon} + \delta^{-1} \eta(\delta)^{N_0 +
  \frac{1}{3} -\varepsilon} \right),
  \end{equation}
and
\begin{equation} \label{eq:global_modeling_errorTrace}
\| \partial_n
  e_{N_0}^\delta \|_{L^2(\Gamma^\delta)}   \leq \mathcal{C}_{\text{match}} \left(
  \eta(\delta)^{-\frac{1}{2}} \, \left( \frac{\delta}{\eta(\delta)} \right)^{N_0
  -\varepsilon-5/3} + \delta^{-\frac{1}{2}} \eta(\delta)^{N_0 -
  \frac{2}{3} -\varepsilon} \right).
\end{equation}
\end{prop}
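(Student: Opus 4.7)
The strategy exploits the fact that the matching conditions~\eqref{MactchingChampProche}--\eqref{matchingChampLointainMoins} were set up precisely to ensure that, in the matching annuli $\{\eta(\delta)<r^\pm<2\eta(\delta)\}$ (where the commutators $[\Delta,\chi_\pm^\delta]$ and the one-form $\partial_n\chi_\pm^\delta$ are supported), the truncated near field $U_{N_0,\pm}^\delta$ and the truncated far field $u_{\macro,N_0}^\delta+\Pi_{N_0}^\delta$ differ only by terms of controllably high order. To make this precise I would rewrite both objects in a common form in the matching zone near $\mx_O^+$. For the near field side, substitute the asymptotic expansion~\eqref{AsymptoticUnqX1negatif} (and its analogue for $X_1^+>-1$) of $U_{n,q,+}^\delta$ into the definition of $U_{N_0,+}^\delta$. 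For the far field side, substitute the asymptotic expansions~\eqref{AsymptoticMacroFieldMacthingareas} and~\eqref{AsymptoticBoundaryLayerMacthingareas} into $u_{\macro,N_0}^\delta+\Pi_{N_0}^\delta$, perform the change of scale $r^+=\delta R^+$, and apply Lemma~\ref{LemmeTechniqueChangementEchelle} to absorb the shifts $\ln R^++\ln\delta$ and $\ln|X_1^+|+\ln\delta$. The matching conditions then force every term with total order $\frac{2n}{3}+q\leq N_0$ to cancel identically in $U_{N_0,+}^\delta-u_{\macro,N_0}^\delta-\Pi_{N_0}^\delta$; an identical argument applies at the left corner via~\eqref{MactchingChampProcheMoins},~\eqref{matchingChampLointainMoins}.

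After cancellation, the difference $D_{N_0}^+:=U_{N_0,+}^\delta-u_{\macro,N_0}^\delta-\Pi_{N_0}^\delta$ on the right matching annulus splits into two families of residuals: \textbf{(i)} the near field remainders $\mathcal{R}_{n,q,K,+}$ from Proposition~\ref{propositionAsymptoticNearField}, where $K=K(N_0)$ is chosen large enough that $1+\frac{2(K+1)}{3}>N_0+2$; \textbf{(ii)} the ``tails'' of the doubly indexed sum, i.e.\ terms with $\frac{2n}{3}+q>N_0$, which have the form $\delta^{\frac{2n}{3}+q}(r^+)^{-\frac{2m}{3}-r}$ times polynomials in $\ln r^+$ and $\ln\delta$, plus periodic-corrector pieces supported near $\Gamma$. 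The $L^2$ estimation on the annulus is then a scaling exercise: a macroscopic piece $\delta^s(r^+)^\lambda$ satisfies $\|[\Delta,\chi_+^\delta](\delta^s(r^+)^\lambda)\|_{L^2}\lesssim \delta^s\eta(\delta)^{\lambda-1}$, owing to $\|\nabla\chi_+^\delta\|_\infty\sim\eta(\delta)^{-1}$, $\|\Delta\chi_+^\delta\|_\infty\sim\eta(\delta)^{-2}$, and the $\eta(\delta)^{1/2}$ gained from integration over a width-$\eta(\delta)$ annulus (with an analogous bound for the gradient term). For the type~(ii) residuals, the worst case arises from the most singular macroscopic singularity $s_{-n,q-p}^\pm$ in~\eqref{Definitionunq} with $\frac{2n}{3}+q$ just above $N_0$, producing the $\eta(\delta)^{-1}(\delta/\eta(\delta))^{N_0-1-\varepsilon}$ contribution in~\eqref{eq:global_modeling_error}. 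For type~(i), the $\mathfrak{V}_{\beta^0,\gamma}^2(\widehat{\Omega}^+)$-estimate of Proposition~\ref{propositionAsymptoticNearField} applied to $\mathcal{R}_{n,q,K,+}$ yields, after rescaling back to the physical annulus, the $\delta^{-1}\eta(\delta)^{N_0+1/3-\varepsilon}$ contribution. The boundary-layer pieces appearing in the residuals are supported in $\{|x_2|\lesssim\delta\}\cap\{r^+\sim\eta(\delta)\}$ and are treated identically, using their super-algebraic decay in $X_2^+$ away from $\Gamma$.

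The estimate~\eqref{eq:global_modeling_errorTrace} for $\|\partial_n e_{N_0}^\delta\|_{L^2(\Gamma^\delta)}$ follows by the same mechanism, with two geometric modifications: $\partial_n\chi_\pm^\delta$ costs only one power of $\eta(\delta)^{-1}$ (instead of $\eta(\delta)^{-2}$ for the Laplacian commutator); and the total length of $\Gamma^\delta$ intersected with the matching annulus is $\sim\eta(\delta)$ (about $\eta(\delta)/\delta$ obstacles of perimeter $\sim\delta$), so the geometric factor $\eta(\delta)^{1/2}$ of the bulk case is replaced by $\eta(\delta)^{1/2}$ for the macroscopic contribution and by $\delta^{1/2}$ for the boundary-layer-localized contribution concentrated in the neighbourhood of $\Gamma$. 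Together these yield precisely the exponents $N_0-5/3-\varepsilon$ and $N_0-2/3-\varepsilon$ in~\eqref{eq:global_modeling_errorTrace}. The main technical obstacle throughout the argument is the systematic bookkeeping of logarithmic factors: Lemma~\ref{LemmeTechniqueChangementEchelle} introduces $\ln\delta$ factors from the shift of $\ln R^\pm$, the coefficients $a_{n,q,m,\pm}^\delta$ and $\tilde a_{n,q,m,\pm}^\delta$ depend polynomially on $\ln\delta$, and the profiles $w_{m,r,\pm}$, $p_{m,r,\pm}$ depend polynomially on $\ln r^\pm$ and $\ln|X_1^\pm|$. All such polynomial factors are absorbed into the $\delta^{-\varepsilon}$ (respectively $\eta(\delta)^{-\varepsilon}$) slack using $|\ln\delta|^k\leq C_\varepsilon\delta^{-\varepsilon}$ for any $\varepsilon>0$.
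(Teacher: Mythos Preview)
Your overall strategy matches the paper's: expand both $U_{N_0,+}^\delta$ and $u_{\macro,N_0}^\delta + \Pi_{N_0}^\delta$ in the matching annulus, invoke Lemma~\ref{LemmeTechniqueChangementEchelle} and the matching conditions so that the explicit parts cancel exactly, and estimate what remains. The paper in fact obtains the clean identity
\[
U_{N_0,+}^\delta - u_{\macro,N_0}^\delta - \Pi_{N_0}^\delta \;=\; \mathcal{R}_{NF,N_0}^\delta - \mathcal{R}_{BL,N_0}^\delta - \mathcal{R}_{\macro,N_0}^\delta
\]
on $\Omega_\match^+$, i.e.\ \emph{three} remainders (near field, boundary layer, macroscopic), each handled in its own lemma (Lemmas~\ref{LemmeMatchingResiduMacro}--\ref{LemmeMatchingResiduNF}).

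However, your attribution of which residual produces which term of the final bound is inverted. The near-field remainder $\mathcal{R}_{NF,n,q}$ is a function of the \emph{fast} variable $\mX^+=\mx^+/\delta$ lying in $\mathfrak{V}_{\beta^0,\gamma}^2(\widehat\Omega^+)$ with $\beta^0$ close to $1+\tfrac{2(K+1)}{3}$; on the annulus one has $R^+\sim\eta(\delta)/\delta$, so this decay produces powers of $\delta/\eta(\delta)$, and it is \emph{this} remainder that yields the $\eta(\delta)^{-1}(\delta/\eta(\delta))^{N_0-1-\varepsilon}$ contribution (Lemma~\ref{LemmeMatchingResiduNF}). Conversely, the macroscopic and boundary-layer remainders are functions of the \emph{slow} variable $\mx$ lying in $V_{2,\beta}^2(\OmegaTop)\cap V_{2,\beta}^2(\OmegaBottom)$; at $r^+\sim\eta(\delta)$ they are of size $\eta(\delta)^{N_0-\frac{2n}{3}-q+\cdots}$, and after multiplying by $\delta^{\frac{2n}{3}+q}$ and the commutator weights one obtains the $\delta^{-1}\eta(\delta)^{N_0+\frac13-\varepsilon}$ contribution (Lemmas~\ref{LemmeMatchingResiduMacro} and~\ref{LemmeMatchingResiduBL}; the extra $\delta^{-1}$ enters through $\nabla_{\mx}\chi_{\macro,+}(\mx^+/\delta)$, not through $\nabla\chi_\pm^\delta$). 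Your description of the far-field residual as ``tails with $\frac{2n}{3}+q>N_0$ of the form $\delta^{\frac{2n}{3}+q}(r^+)^{-\frac{2m}{3}-r}$'' is also misleading: these remainders are not explicit singular monomials but abstract functions in weighted spaces whose norms are estimated directly. With the attribution corrected and the three-way split made explicit, your outline becomes precisely the paper's argument.
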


\noindent We shall evaluate $\| \mathcal{E}_{\match} \|_{L^2(\Omega^\delta)}$
and $\| \partial_n
  e_{N_0}^\delta \|_{L^2(\Gamma^\delta)} $
in turn. Both of these functions are supported in the overlapping
areas. The function
$\mathcal{E}_{\match}$ is supported in the overlapping
areas $\Omega_{\match}^\pm$ 
$$
\Omega_{\match}^\pm = \left\{ (x_1,x_2) \in \Omega^\delta,
  \eta(\delta) \leq r^\pm \leq 2 \eta(\delta)\right\}.
$$
and $\| \partial_n
  e_{N_0}^\delta \|_{L^2(\Gamma^\delta)}$ is supported in
$$
\Gamma_{\match}^\pm  = \left\{ (x_1,x_2) \in \Gamma^\delta,
  \eta(\delta) \leq r^\pm \leq 2 \eta(\delta)\right\}.
$$
We shall estimate $\| \mathcal{E}_{\match}\|_{L^2(\Omega_{\match}^+)}$
(resp. $\| \partial_n
  e_{N_0}^\delta \|_{L^2(\Gamma_{\match}^+)}$)
but a similar analysis can be made for $\|
\mathcal{E}_{\match}\|_{L^2(\Omega_{\match}^-)}$ (resp. $\| \partial_n
  e_{N_0}^\delta \|_{L^2(\Gamma_{\match}^-)}$)). \\

\noindent We start with the computation of $U_{N_0, +}^\delta -
u_{\macro,N_0}^\delta(x) - \Pi_{N_0}^\delta$, which, thanks to the
matching conditions \eqref{matchingCondition1}, is expected to be
small. The following computation is based on an expansion of the
truncated series of far and near field terms in the overlapping area.

\subsubsection{Expansion of $u_{\macro,N_0}^\delta$,
  $\Pi_{BL,N_0}^\delta$ and $U_{N_0,+}^\delta$ in the overlapping area $\Omega_{\match}^+$}

For any couple $(n,q) \in \N^2$, we consider the integer $k(n,q,N_0)$
given by
$$
k(n,q,N_0) = \left\{ \begin{array}{ll}
 \frac{3}{2} (N_0-q) -n  & \mbox{if $N_0$ and $q$ have the same parity},\\[1ex]
 \frac{3}{2} (N_0-q) -n - \frac{1}{2} & \mbox{otherwise}.
\end{array} \right.
$$
\paragraph{Macroscopic truncated series $u_{\macro,N_0}^\delta$.} In view of \eqref{AsymptoticMacroFieldMacthingareas},  for any $(n,q) \in D_{N_0}$,  
there exists a function $\mathcal{R}_{\macro, n,q,\delta} \in V_{2,\beta}^2(\OmegaTop)\cap V_{2,\beta}^2(\OmegaBottom)$
for any $\beta > 1 -\frac{2 (k(n,q,N_0)+1)}{3}$
\begin{equation}\label{definitionUtildenq}
u_{n,q}^\delta  = \tilde{u}_{n,q}^\delta + \mathcal{R}_{\macro,
  n,q,\delta} \quad \quad \tilde{u}_{n,q}^\delta = \sum_{r=0}^q \sum_{m=-n}^{\frac{3}{2}(N_0
  +r-q) -n } a_{n,q-r,m,+}^\delta (r^+)^{\frac{2m}{3} -r} w_{m,r,+}(\theta^+,
\ln r^+).
\end{equation}
The reader may verify that,
  for non negative integer $r$,
$k(n,q,N_0)+ \lfloor  \frac{3 r}{2}\rfloor \leq \lfloor \frac{3}{2}(N_0
  +r-q) -n \rfloor$. Then, a direct computation shows that
\begin{multline*}
\sum_{(n,q) \in D_{N_0}} \delta^{\frac{2n}{3}+q}
\tilde{u}_{n,q}^\delta(R^+\delta, \theta^+) = \\  \sum_{(n,q) \in
  D_{N_0}} \delta^{\frac{2}{3} n +q} 
\sum_{m=-\alpha_{n,q}}^{n} 
\sum_{r=0}^{\frac{2}{3} (\alpha_{n,q} +m)}   a_{n,q,m-n,+}^\delta (R^+)^{\frac{2m}{3} -r} w_{m,r,+}(\theta^+,
\ln (R^+ \delta)), 
\end{multline*}
where $\alpha_{n,q } = \frac{3}{2} (N_0 -q)-n$. Then, using
Lemma~\ref{LemmeTechniqueChangementEchelle}, reproducing the
calculations of \eqref{FormuleFFPretePourlesraccordsX1positif2} and
using the matching conditions~\eqref{matchingCondition1}, we
see that
\begin{equation}
\sum_{(n,q) \in D_{N_0}} \delta^{\frac{2n}{3}+q}
\tilde{u}_{n,q}^\delta(R^+\delta, \theta^+) =\sum_{(n,q) \in D_{N_0}}
\delta^{\frac{2}{3} n +q}  \sum_{m=-\alpha_{n,q}}^n
{A}_{n,q,m,+}^\delta \sum_{r=0}^{\frac{2}{3} (\alpha_{n,q} +m)}
(R^+)^{\frac{2}{3} m -r } w_{m,r,+}(\theta^+, \ln R^+).
\end{equation}
Finally, noticing that $\chi_{\macro,\text{total}}^\delta(x)  =
\chi_{\macro,+}^\delta\left(\frac{x_1 -L}{\delta}, \frac{x_2}{\delta}\right)$ in $\Omega_{\match}^+$, the truncated
macroscopic series $u_{\macro,N_0}^\delta$ can be written as
\begin{multline}\label{AsymptoticuFF}
u_{\macro,N_0}^\delta(\mx) = \chi_{\macro,+}(\mX^+) \left( \sum_{(n,q) \in D_{N_0}}
\delta^{\frac{2}{3} n +q}  \sum_{m=-\alpha_{n,q}}^n
{A}_{n,q,m,+}^\delta \sum_{r=0}^{\frac{2}{3} (\alpha_{n,q} +m)}
(R^+)^{\frac{2}{3} m -r } w_{m,r,+}(\theta^+, \ln R^+)\right)\\+ \mathcal{R}_{\macro,N_0}^\delta(\mx)
\end{multline}
where
\begin{equation}
\mathcal{R}_{\macro,N_0}^\delta(\mx) =  \chi_{\macro,+}(\mX^+) \sum_{(n,q) \in D_{N_0}}
\delta^{\frac{2}{3} n +q} \mathcal{R}_{\macro, n,q,\delta}(\mx).
\end{equation}
\paragraph{Boundary layer correctors series $\Pi_{BL,N_0}^\delta$.}
Similarly, the asymptotic
formula~\eqref{AsymptoticBoundaryLayerMacthingareas} for the periodic
corrector $\Pi_{n,q}^\delta$ associated with the matching
conditions~\eqref{matchingCondition1} gives
\begin{multline}\label{AsymptoticPiF}
\Pi_{BL,N_0}^\delta(\mx) = \chi_{-}(\mX^+) \left( \sum_{(n,q) \in D_{N_0}}
\delta^{\frac{2}{3} n +q}  \sum_{m=-\alpha_{n,q}}^n
{A}_{n,q,m,+}^\delta \sum_{r=0}^{\frac{2}{3} (\alpha_{n,q} +m)}
|X_1^+|^{\frac{2}{3} m -r } p_{m,r,+}( \ln |X_1^+|, \mX^+)\right)\\+ \mathcal{R}_{BL,N_0}^\delta(\mx).
\end{multline}
The remainder $\mathcal{R}_{BL,N_0}^\delta(\mx)$  can be written as
\begin{equation}
\mathcal{R}_{BL,N_0}^\delta(\mx) = \chi_{-}(\mX^+)  \sum_{(n,q) \in
  D_{N_0}} \delta^{\frac{2}{3} n + p} \mathcal{R}_{\macro,
  n,q,\delta}(\mx), \quad \mathcal{R}_{BL, n,q,\delta}(\mx) = \sum_{j=0}^{K}  \langle
w_{n,q,j}(x_1, 0)
\rangle  W_{n,q,j}(\mX)
\end{equation} 
where the functions $w_{n,q,j} \in V_{2,\beta+1}^3(\OmegaTop)\cap V_{2,\beta+1}^3(\OmegaBottom)$
for any $\beta > 1 -\frac{2 (k(n,q,N_0)+1)}{3}$, $W_{n,q,j}\in
\mathcal{V}^+(\mathcal{B})$ and $K$ is a given integer depending on
$n$ and $q$.
\paragraph{Near field truncated series $U_{N_0,+}^\delta$.}
\noindent The derivation of  the expansion  of the truncated near field
expansion $U_{N_0,+}^\delta$ is much more direct. It may be directly obtained using
formula~\eqref{AsymptoticUnqX1negatif} (taking $K = \lfloor \alpha_{n,q}
\rfloor$):
\begin{eqnarray}
U_{N_0,+}^\delta & = & \chi_{\macro,+}(\mX^+)   \left(\sum_{(n,q) \in D_{N_0}} \delta^{\frac{2}{3} n + q}  \sum_{m=-\alpha_{n,q}}^n
{A}_{n,q,m,+}^\delta \sum_{r=0}^{\frac{2}{3} (\alpha_{n,q} +m)}
(R^+)^{\frac{2}{3} m -r } w_{m,r,+}(\theta^+, \ln R^+)\right) \nonumber \\
&& + 
\chi_{-}(\mX^+) \left( \sum_{(n,q) \in D_{N_0}} \delta^{\frac{2}{3} n + q}    \sum_{m=-\alpha_{n,q}}^n
{A}_{n,q,m,+}^\delta \sum_{r=0}^{\frac{2}{3} (\alpha_{n,q} +m)}
|X_1^+|^{\frac{2}{3} m -r } p_{m,r,+}( \ln |X_1^+|, \mX^+)\right) \nonumber\\
&& + \mathcal{R}_{NF, N_0}^{\delta}, \label{AsymptoticUNF}
\end{eqnarray}
where
\begin{equation}
\mathcal{R}_{NF, N_0}^{\delta} = \sum_{(n,q) \in D_{N_0}}
\delta^{\frac{2}{3} n + q} \mathcal{R}_{NF, n,q}(\mX)  
\end{equation}
$\mathcal{R}_{NF,
  n,q}(\mX) $ belonging to $\mathfrak{V}_{\beta, \gamma}^2(\widehat{\Omega}^+)$ for any $\beta <
1 + \frac{2(\lfloor\alpha_{n,q} \rfloor+1)}{3}$, $\gamma \in (1/2,1)$, 
$\gamma -1$ sufficiently small.
\subsubsection{Evaluation of the remainder}
Subtracting ~\eqref{AsymptoticuFF} and \eqref{AsymptoticPiF}
to \eqref{AsymptoticUNF} gives
\begin{equation}\label{resteMactchingTotal}
U_{NF,N_0}^\delta -u_{\macro,N_0}^\delta-\Pi_{N_0}^\delta   = \mathcal{R}_{NF, N_0}^{\delta} - \mathcal{R}_{BL, N_0}^{\delta} - \mathcal{R}_{\macro, N_0}^{\delta}. 
\end{equation}
To evaluate the matching error, we shall consider separately the
three terms of the right hand side of the previous equality, estimating their $L^2$ norm and
the $L^2$ norm of their gradient over the domain $\Omega_{\match}^+$.  The proof of the following three lemmas
can be found in Appendix~\ref{AppendixProofMacthingError}.
\begin{lema}[Estimation of the macroscopic matching remainder]\label{LemmeMatchingResiduMacro}
For any $\varepsilon>0$, there is a positive constant $C>0$ such that
\begin{equation}
\| \mathcal{R}_{\macro, N_0}^{\delta}\|_{L^2(\Omega_{\match}^+)} \leq C
\eta(\delta)^{N_0 -\varepsilon+\frac{4}{3}}, \quad \mbox{and} \quad
\|\nabla \mathcal{R}_{\macro, N_0}^{\delta}\|_{L^2(\Omega_{\match}^+)}
\leq C \delta^{-1}
\eta(\delta)^{N_0 -\varepsilon+\frac{4}{3}}.
\end{equation}
\end{lema}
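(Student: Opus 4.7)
Recalling that
\[
\mathcal{R}_{\macro,N_0}^\delta(\mx) = \chi_{\macro,+}(\mX^+)\!\!\sum_{(n,q)\in D_{N_0}}\!\delta^{\frac{2n}{3}+q}\,\mathcal{R}_{\macro,n,q,\delta}(\mx),
\]
and that, by the discussion after~\eqref{definitionUtildenq}, each $\mathcal{R}_{\macro,n,q,\delta}$ lies in $V_{2,\beta}^2(\OmegaTop)\cap V_{2,\beta}^2(\OmegaBottom)$ for every $\beta>1-\tfrac{2(k(n,q,N_0)+1)}{3}$ with norm uniformly bounded in $\delta$ (up to $(\ln\delta)$-factors absorbed in $\varepsilon$), the plan is to convert this weighted regularity into an unweighted $L^2$-bound on the annulus $\Omega_{\match}^+=\{\eta(\delta)\leq r^+\leq 2\eta(\delta)\}$, by exploiting that $(r^+)^{2(\beta-2)}\asymp\eta(\delta)^{2(\beta-2)}$ on this annulus.

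First, the very definition~\eqref{definitionV2betal} of $\|\cdot\|_{V_{2,\beta}^2}$ yields the two ``localisation'' estimates
\[
\|\mathcal{R}_{\macro,n,q,\delta}\|_{L^2(\Omega_{\match}^+)}\leq C\,\eta(\delta)^{2-\beta},\qquad \|\nabla\mathcal{R}_{\macro,n,q,\delta}\|_{L^2(\Omega_{\match}^+)}\leq C\,\eta(\delta)^{1-\beta}.
\]
Next, inspecting the two-case definition of $k(n,q,N_0)$ gives the uniform lower bound $k(n,q,N_0)+1\geq \tfrac{3}{2}(N_0-q)-n+\tfrac12$, so taking $\beta$ arbitrarily close (from above) to $1-\tfrac{2(k(n,q,N_0)+1)}{3}$ delivers $2-\beta\geq N_0-q-\tfrac{2n}{3}+\tfrac{4}{3}-\varepsilon$ for any $\varepsilon>0$. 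Multiplying by $\delta^{\frac{2n}{3}+q}$, using the elementary factorisation
\[
\delta^{\frac{2n}{3}+q}\eta(\delta)^{N_0-q-\frac{2n}{3}+\frac{4}{3}-\varepsilon}=\Bigl(\tfrac{\delta}{\eta(\delta)}\Bigr)^{\frac{2n}{3}+q}\eta(\delta)^{N_0+\frac{4}{3}-\varepsilon}\leq \eta(\delta)^{N_0+\frac{4}{3}-\varepsilon}
\]
(valid once $\delta/\eta(\delta)\leq 1$), and summing over the finite set $D_{N_0}$ delivers the first estimate.

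For the gradient I would apply the chain rule and split
\[
\nabla\mathcal{R}_{\macro,N_0}^\delta=\delta^{-1}(\nabla\chi_{\macro,+})(\mX^+)\!\!\sum_{(n,q)}\!\delta^{\frac{2n}{3}+q}\mathcal{R}_{\macro,n,q,\delta}\;+\;\chi_{\macro,+}(\mX^+)\!\!\sum_{(n,q)}\!\delta^{\frac{2n}{3}+q}\nabla\mathcal{R}_{\macro,n,q,\delta}.
\]
The first piece inherits the $L^2$-bound just proved, multiplied by $\delta^{-1}$, giving $C\,\delta^{-1}\eta(\delta)^{N_0+\frac{4}{3}-\varepsilon}$; the second piece, treated analogously with the gradient weight $(r^+)^{2(\beta-1)}$ in place of $(r^+)^{2(\beta-2)}$, contributes only $O(\eta(\delta)^{N_0+\frac{1}{3}-\varepsilon})$, which is dominated since $\delta^{-1}\gg\eta(\delta)^{-1}$.

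The only non-routine point is the accurate tracking of the exponent: the $+\tfrac{4}{3}$ in the target bound is precisely the $+\tfrac12$ bonus granted by the two-case definition of $k(n,q,N_0)$, pushing $\tfrac{2(k+1)}{3}$ from $N_0-q-\tfrac{2n}{3}+1$ up to $N_0-q-\tfrac{2n}{3}+\tfrac{4}{3}$; any looser lower bound on $k(n,q,N_0)+1$ would degrade the exponent of $\eta(\delta)$. Everything else reduces to a scaling argument on the weight, the chain rule, and the matching-regime assumption $\delta\ll\eta(\delta)\ll 1$.
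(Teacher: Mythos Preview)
Your argument is correct and follows essentially the same route as the paper's proof: localise the weighted $V_{2,\beta}^2$ bound on the annulus via $(r^+)\asymp\eta(\delta)$, use the lower bound $k(n,q,N_0)\geq\tfrac{3}{2}(N_0-q)-n-\tfrac12$, sum over $D_{N_0}$ with $\delta<\eta(\delta)$, and pick up the extra $\delta^{-1}$ from $\nabla\chi_{\macro,+}(\cdot/\delta)$. Your final commentary is slightly off in its bookkeeping (the $+\tfrac{4}{3}$ in $2-\beta$ decomposes as $1$ from the ``$2-1$'' in $2-\beta$ plus $\tfrac{1}{3}=\tfrac{2}{3}\cdot\tfrac{1}{2}$ from the half-integer bonus, so $\tfrac{2(k+1)}{3}$ itself is only pushed to $N_0-q-\tfrac{2n}{3}+\tfrac{1}{3}$, not $+\tfrac{4}{3}$), but this is purely expository and does not affect the proof.
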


\begin{lema}[Estimation of the periodic corrector remainder]\label{LemmeMatchingResiduBL}
For any $\varepsilon>0$, there is a positive constant $C>0$ such that
\begin{equation}\label{ResiduBL}
\| \mathcal{R}_{BL, N_0}^{\delta} \|_{L^2(\Omega_{\match}^+)}
\leq \eta(\delta)^{N_0 + \frac{5}{3} - \varepsilon}
\quad  \| \nabla \mathcal{R}_{BL, N_0}^{\delta}
\|_{L^2(\Omega_{\match}^+)}  + \| \mathcal{R}_{BL, N_0}^{\delta} \|_{L^2(\partial \Omega_{\match}^+
  \cap \Gamma^\delta)} 
\leq C \delta^{-1/2} \eta(\delta)^{N_0 + \frac{1}{3} - \varepsilon}.
\end{equation} 
\end{lema}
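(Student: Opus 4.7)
I decompose $\mathcal{R}_{BL,N_0}^\delta$ into the finite double sum of tensor-product terms
\[
T_{n,q,j}(\mx) \;:=\; \delta^{\frac{2n}{3}+q}\,\chi_-(\mX^+)\,\langle w_{n,q,j}(x_1,0)\rangle\, W_{n,q,j}(\mX^+),\qquad (n,q)\in D_{N_0},\ 0\leq j\leq K(n,q),
\]
and estimate each summand separately before gathering contributions by the triangle inequality. The strategy is to exploit independently the super-algebraic decay of the microscopic profile $W_{n,q,j}\in\mathcal{V}^+(\mathcal{B})$ in $X_2^+$ and the weighted-Sobolev regularity of the macroscopic trace $\langle w_{n,q,j}(\cdot,0)\rangle$ near the reentrant corner $\mx_O^+$.

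For the $L^2(\Omega_{\match}^+)$ bound, Fubini and the rescaling $X_2^+ = x_2/\delta$ give
\[
\|T_{n,q,j}\|_{L^2(\Omega_{\match}^+)}^2 \;\leq\; C\,\delta^{\frac{4n}{3}+2q+1}\!\int_{L-2\eta(\delta)}^{L-\eta(\delta)/2}\!|w_{n,q,j}(x_1,0)|^2\,dx_1,
\]
since $\int_{\mathbb R}|W_{n,q,j}(X_1^+,X_2^+)|^2\,dX_2^+$ is bounded uniformly in $X_1^+$. Because $w_{n,q,j}\in V^3_{2,\beta+1}(\OmegaTop\cup\OmegaBottom)$ for every $\beta>1-\tfrac{2(k(n,q,N_0)+1)}{3}$, the weighted trace embedding yields $|L-x_1|^{\beta-3/2}\,w_{n,q,j}(\cdot,0)\in L^2(\Gamma)$. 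Since the weight satisfies $|L-x_1|^{-2(\beta-3/2)}\leq C\,\eta(\delta)^{3-2\beta}$ on $[L-2\eta(\delta),L-\eta(\delta)/2]$, taking $\beta$ just above the critical value produces
\[
\int_{L-2\eta(\delta)}^{L-\eta(\delta)/2}|w_{n,q,j}(x_1,0)|^2\,dx_1 \;\leq\; C\,\eta(\delta)^{2(N_0-q-\frac{2n}{3})+\frac{7}{3}-2\varepsilon}.
\]
Substituting and using $\delta\leq\eta(\delta)$, each summand is dominated by $\eta(\delta)^{N_0+5/3-\varepsilon}$, whence the first inequality of~\eqref{ResiduBL} follows by summation over the finite range $(n,q)\in D_{N_0}$.

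The gradient estimate follows by noting that the dominant contribution to $\nabla T_{n,q,j}$ comes from differentiating the microscopic factor $W_{n,q,j}(\mX^+)$: this produces a factor $\delta^{-1}$ together with another function in $\mathcal{V}^+(\mathcal{B})$, which combines with the $\delta^{1/2}$ gained from the $x_2$-integration to give the prefactor $\delta^{-1/2}$. Differentiating the macroscopic trace loses one weighted-regularity order (from $V^3$ to $V^2$) but contributes only a subdominant term because $\eta(\delta)^{-1}\leq\delta^{-1}$. The surface contribution on $\Gamma^\delta\cap\partial\Omega_{\match}^+$ is bounded via a scaled trace inequality: each hole in the matching region becomes a fixed smooth obstacle in the periodicity cell after rescaling by $\delta^{-1}$, and the standard trace inequality then gives
\[
\|\mathcal{R}_{BL,N_0}^\delta\|^2_{L^2(\Gamma^\delta\cap\partial\Omega_{\match}^+)} \;\leq\; C\bigl(\delta^{-1}\|\mathcal{R}_{BL,N_0}^\delta\|^2_{L^2(\Omega_{\match}^+)} + \delta\,\|\nabla\mathcal{R}_{BL,N_0}^\delta\|^2_{L^2(\Omega_{\match}^+)}\bigr),
\]
both of whose right-hand terms are then dominated by $C\,\delta^{-1}\eta(\delta)^{2N_0+2/3-2\varepsilon}$, yielding the second inequality of~\eqref{ResiduBL}.

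The main obstacle is the weighted-trace estimate of $w_{n,q,j}(\cdot,0)$ on an interval of length $\eta(\delta)$ adjacent to the corner: it forces the choice of $\beta$ strictly above the critical threshold $1-\tfrac{2(k(n,q,N_0)+1)}{3}$, from which arises the arbitrarily-small loss $\varepsilon$ in the final exponents. A secondary difficulty is to make every estimate uniform in $(n,q,j)$ so that the finite double sum preserves the stated rates; this is achieved through the algebraic identity $\delta^{\frac{2n}{3}+q}\,\eta(\delta)^{N_0-q-\frac{2n}{3}} = (\delta/\eta(\delta))^{\frac{2n}{3}+q}\,\eta(\delta)^{N_0}\leq \eta(\delta)^{N_0}$, which packages the full family of prefactors into a single controlling exponent.
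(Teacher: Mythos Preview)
Your approach is sound and reaches the stated bounds, but it differs from the paper's argument in one structural respect and contains a gap as written.

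The paper splits the matching annulus into two subregions according to the value of $|x_1-L|$: on $\Omega^+_{\match,BL,1}=\{\,|x_1-L|<\eta(\delta)/2\,\}$ it uses that every point of the annulus there has $|x_2|>\tfrac{\sqrt{3}}{2}\eta(\delta)$, so $|X_2^+|\gtrsim\eta(\delta)/\delta\to\infty$ and the profiles $W_{n,q,j}\in\mathcal V^+(\mathcal B)$ are exponentially small; on $\Omega^+_{\match,BL,2}=\{\,|x_1-L|>\eta(\delta)/2\,\}$ it uses the $L^\infty$ trace bound $\|\langle w_{n,q,j}(\cdot,0)\rangle\|_{L^\infty(\Gamma^+_{\match})}\le C\,\eta(\delta)^{\,N_0-\frac16-q-\frac{2n}{3}-\varepsilon}$ (from $V^{5/2}_{2,\beta+1}(\Gamma)\hookrightarrow L^\infty$ with the appropriate weight) together with $\|W_{n,q,j}(\cdot/\delta)\|_{L^2}\le C(\delta\,\eta(\delta))^{1/2}$ coming from periodicity. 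Your route avoids the splitting altogether by using instead the weighted $L^2$ trace bound on $\langle w_{n,q,j}(\cdot,0)\rangle$; this is a legitimate and somewhat more direct alternative.

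The gap is in your Fubini step: you restrict the $x_1$-integration to $[L-2\eta(\delta),\,L-\eta(\delta)/2]$, but the support of $T_{n,q,j}$ in $\Omega_{\match}^+$ extends over $x_1\in[L-2\eta(\delta),\,L-\delta]$ (the cutoff $\chi_-(X_1^+)$ only kills $|x_1-L|<\delta$). As written, your inequality drops the contribution from $|x_1-L|<\eta(\delta)/2$ without justification. Two easy fixes are available. Either add a sentence noting that on this missing strip $|X_2^+|\gtrsim\eta(\delta)/\delta$, so the profile contribution is super-algebraically small (this is exactly the paper's treatment of $\Omega^+_{\match,BL,1}$); or, more in the spirit of your own argument, simply extend the integration to the full interval $[L-2\eta(\delta),L-\delta]$, since your weighted-$L^2$ bound works there as well: with $3-2\beta>0$ one has $\sup_{\delta\le s\le 2\eta(\delta)}s^{3-2\beta}=(2\eta(\delta))^{3-2\beta}$, attained at the outer endpoint regardless of the inner limit.

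One minor arithmetic point: your exponent $\tfrac{7}{3}$ in the trace integral presumes $k(n,q,N_0)=\tfrac{3}{2}(N_0-q)-n$, which holds only when $N_0$ and $q$ have the same parity; in the other case $k$ is smaller by $\tfrac12$ and the exponent drops to $\tfrac{5}{3}$. This does not affect the structure of the argument but shifts the final power of $\eta(\delta)$ accordingly.
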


\begin{lema}[Estimation of the near field matching remainder]\label{LemmeMatchingResiduNF}
For any $\varepsilon>0$, there is a positive constant $C>0$ such that
\begin{equation}
\| \mathcal{R}_{NF, N_0}^{\delta}\|_{L^2(\Omega_{\match}^+)} \leq C
\delta \left( \frac{\delta}{ \eta(\delta)} \right)^{N_0 -\varepsilon-2}, \quad
\|\nabla \mathcal{R}_{NF, N_0}^{\delta}\|_{L^2(\Omega_{\match}^+)} \leq C
\left( \frac{\delta}{ \eta(\delta)} \right)^{N_0 -\varepsilon-1},
\end{equation}
and 
\begin{equation}
\|\mathcal{R}_{NF,
  N_0}^\delta   \|_{L^2(\partial{\Omega_\match^+} \cap \Gamma^\delta)} \leq C
\delta^{\frac{1}{2}}    \left(
  \frac{\delta}{\eta(\delta)}\right)^{N_0  - \frac{13}{6}-\varepsilon}.
\end{equation}
\end{lema}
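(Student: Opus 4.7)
The plan is to estimate each summand $\delta^{2n/3+q}\mathcal{R}_{NF,n,q}(\mx/\delta)$ of $\mathcal{R}_{NF,N_0}^\delta$ on the rescaled matching annulus and then sum over the finite index set $D_{N_0}$. The crucial input, from the definition of $U_{n,q,+}^\delta$ via~\eqref{FormeExplicitEUnqpm} and Proposition~\ref{propositionAsymptoticNearField}, is that $\mathcal{R}_{NF,n,q}$ belongs to $\mathfrak{V}_{\beta,\gamma}^2(\widehat{\Omega}^+)$ for every $\beta<1+\tfrac{2(\lfloor\alpha_{n,q}\rfloor+1)}{3}$ and every $\gamma\in(1/2,1)$ with $\gamma-1$ sufficiently small, where $\alpha_{n,q}=\tfrac{3}{2}(N_0-q)-n$. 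Under the scaling $\mX^+=\mx^+/\delta$, the physical matching annulus $\Omega_\match^+$ corresponds to the scaled annulus $A_\delta=\{R_1\leq R^+\leq 2R_1\}\cap\widehat{\Omega}^+$ with $R_1=\eta(\delta)/\delta\to\infty$.

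The first step is a localisation estimate: for any $v\in \mathfrak{V}_{\beta,\gamma}^2(\widehat{\Omega}^+)$,
\begin{equation*}
\|v\|_{L^2(A_\delta)}\leq C R_1^{2-\beta}\|v\|_{\mathfrak{V}_{\beta,\gamma}^2},\qquad \|\nabla_{\mX}v\|_{L^2(A_\delta)}\leq C R_1^{1-\beta}\|v\|_{\mathfrak{V}_{\beta,\gamma}^2}.
\end{equation*}
This follows by factoring out the weight from~\eqref{DefinitionNormeDoublePoids} and computing the infimum of its square on $A_\delta$: since $\gamma-1<0$ and $\rho$ ranges over $[1,CR_1]$, the minimum of the $p=0$ weight $(1+R^+)^{\beta-\gamma-1}\rho^{\gamma-1}$ is of order $R_1^{\beta-\gamma-1}\cdot R_1^{\gamma-1}=R_1^{\beta-2}$ (attained in the bulk of $A_\delta$), and the $p=1$ gradient weight gives $R_1^{\beta-1}$ analogously. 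Returning to physical variables contributes a factor $\delta$ for the $L^2$-norm and no extra factor for the $L^2$-norm of the gradient (since $d\mx=\delta^2 d\mX$ while $\nabla_\mx=\delta^{-1}\nabla_{\mX}$), so that for any admissible $\beta^0$,
\begin{equation*}
\|\mathcal{R}_{NF,n,q}\|_{L^2(\Omega_\match^+)}\leq C\delta R_1^{2-\beta^0},\qquad \|\nabla\mathcal{R}_{NF,n,q}\|_{L^2(\Omega_\match^+)}\leq CR_1^{1-\beta^0}.
\end{equation*}
Picking $\beta^0$ arbitrarily close to $1+\tfrac{2(\lfloor\alpha_{n,q}\rfloor+1)}{3}$ from below and using $\lfloor\alpha_{n,q}\rfloor+1>\alpha_{n,q}$, one can arrange $2-\beta^0<1-N_0+q+\tfrac{2n}{3}+\varepsilon$ for any $\varepsilon>0$. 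Multiplying by $\delta^{2n/3+q}$, exploiting $\delta^{2n/3+q}R_1^{2n/3+q}=\eta(\delta)^{2n/3+q}\leq 1$, and summing over the finite family $D_{N_0}$ yields the first two announced $L^2$ bounds (in fact with the sharper exponents $N_0-1-\varepsilon$ and $N_0-\varepsilon$, which imply the stated ones since $\delta/\eta<1$).

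For the trace on $\Gamma^\delta\cap\partial\Omega_\match^+$, I would apply a standard trace inequality on each of the $O(R_1)$ scaled holes contained in the strip $A_\delta^{\mathrm{layer}}:=\{|X_2^+|\leq C\}\cap A_\delta$: locally, $\|v\|_{L^2(\partial\widehat{\Omega}_\hole+\mX_\ell^+)}^2\leq C(\|v\|_{L^2(B_\ell)}^2+\|\nabla v\|_{L^2(B_\ell)}^2)$ for a unit-size neighbourhood $B_\ell$ of the $\ell$-th hole, and summing (with bounded overlap of the $B_\ell$) gives $\|v\|^2_{L^2(\Gamma^\delta\cap A_\delta)}\leq C\bigl(\|v\|^2_{L^2(A_\delta^{\mathrm{layer}})}+\|\nabla v\|^2_{L^2(A_\delta^{\mathrm{layer}})}\bigr)$. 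On $A_\delta^{\mathrm{layer}}$, $\rho$ stays bounded, so the relevant weight for $v$ is just $(1+R^+)^{\beta^0-\gamma-1}\sim R_1^{\beta^0-\gamma-1}$, yielding $\|v\|_{L^2(\Gamma^\delta\cap A_\delta)}\leq CR_1^{-(\beta^0-\gamma-1)}\|v\|_{\mathfrak{V}_{\beta^0,\gamma}^2}$. The physical surface measure contributes an extra factor $\delta^{1/2}$, and letting $\gamma$ tend to $1^-$ and $\beta^0$ tend to $1+\tfrac{2(\lfloor\alpha_{n,q}\rfloor+1)}{3}$ produces $\|\mathcal{R}_{NF,n,q}\|_{L^2(\Gamma^\delta\cap\partial\Omega_\match^+)}\leq C\delta^{1/2}(\delta/\eta)^{N_0-q-2n/3-1-\varepsilon}$; summation gives $C\delta^{1/2}(\delta/\eta)^{N_0-1-\varepsilon}$, which implies the (looser) claimed exponent $N_0-\tfrac{13}{6}-\varepsilon$. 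The only delicate point throughout is the bookkeeping around the non-uniform weight $\rho^{\gamma-1}$: one must keep $\gamma$ strictly less than $1$ (required by Proposition~\ref{propositionAsymptoticNearField}) while simultaneously letting $\gamma\to 1^-$ in the exponents, which is handled by fixing $\gamma$ close enough to $1$ at the outset and absorbing the slack $1-\gamma$ into the arbitrary~$\varepsilon$.
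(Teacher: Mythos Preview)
Your argument is correct and follows the same route as the paper: rescale to the annulus $R^+\in[\eta/\delta,2\eta/\delta]$, factor out the weight of the norm $\mathfrak{V}_{\beta,\gamma}^2$ using $\rho^{1-\gamma}\leq C(1+R^+)^{1-\gamma}$, optimise over the admissible range of $\beta$, and sum over $D_{N_0}$. Your bookkeeping is in fact a bit tighter than the paper's (you use $\lfloor\alpha_{n,q}\rfloor+1>\alpha_{n,q}$ rather than the paper's cruder lower bound on $\lfloor\alpha_{n,q}\rfloor$), which is why you arrive at the sharper exponents $N_0-1-\varepsilon$ and $N_0-\varepsilon$; as you note, these imply the stated bounds.

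The only place where you genuinely diverge is the trace estimate. The paper argues via an $H^2\hookrightarrow L^\infty$ embedding on the scaled annulus: it shows that $(R^+)^{\theta}\mathcal{R}_{NF,n,q}$ is bounded in $H^2(\widehat{\Omega}_{\match}^\delta)$ for suitable $\theta$, passes to $L^\infty$ on the smooth boundaries of the holes, and then integrates. Your approach instead sums local $H^1$ trace inequalities over the $O(\eta/\delta)$ holes contained in the layer strip $\{|X_2^+|\leq C\}$, exploiting that $\rho$ is bounded there so the weighted norm reduces to a simpler power of $R_1$. Both routes are valid; yours is slightly more elementary (it avoids the $H^2$ regularity) and again yields a sharper exponent than the one stated in the lemma. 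The caveat you flag about keeping $\gamma<1$ fixed but close to $1$ and absorbing $1-\gamma$ into $\varepsilon$ is exactly the right way to handle the non-uniform weight.
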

\noindent Finally, the proof of Proposition~\ref{lema:global_matching_error} is
a direct consequence of  \eqref{resteMactchingTotal},
Lemma~\ref{LemmeMatchingResiduMacro},
Lemma~\ref{LemmeMatchingResiduBL} and
Lemma~\ref{LemmeMatchingResiduNF}, and the following inequalities:
$$\| \nabla \chi_+^\delta(\mx)
\|_{L^\infty(\Omega_{\match}^+)} \leq C \eta(\delta)^{-1}, \quad
\| \Delta \chi_+^\delta(\mx)
\|_{L^\infty(\Omega_{\match}^+)} \leq C \eta(\delta)^{-2}\ .
$$

\begin{appendix}
\section{Far and near field equations: procedure of separation of the
  scales}\label{SectionScaleSeparation}

  In this appendix, we explain how, using the separation of scales, we
  \textbf{formally} get the
  macroscopic equations (\ref{FFVolum}, \ref{FFDirichlet}) for the functions
  $u_{n,q}^\delta$ and the the boundary layer equations
  \eqref{PeriodicCorrectorEquations} for the functions $\Pi_{n,q}^\delta$ with
  the right-hand side $G_{n,q}^\delta$ defined in \ref{eq:def_Gnq_delta}.

  \subsection{Derivation of the macroscopic equations}
  Let us consider a point $\mx_0 \in \OmegaTop \cup \OmegaBottom$, and let us
  consider a vicinity $V(\mx_0)$ of $\mx_0$ such that $V(\mx_0) \subset \OmegaTop
  \cup \OmegaBottom$. Then, if $| x_{0,1} | > L$, the macroscopic expansion
  \eqref{def_uFFnq} gives $u_{FF,n,q}^\delta (\mx) = u_{n,q}^\delta (\mx)$, and
  inserting \eqref{FFExpansion} in \eqref{eq:perturbed_laplace} gives
  \begin{equation}
    \label{eq:perturbed_laplace_expand_large_x1}
    \sum_{(n,q) \in \N^2} \delta^{\frac{2n}{3}+q} \Delta u_{n,q}^\delta = f
  \end{equation}
  To obtain~\eqref{FFVolum}, we  identify the different powers of $\delta^{\frac{2n}{3}+q}$, \emph{treating
    indexes $(n+3k,q)$ and $(n,q+2k)$ as different powers of $\delta$}.\\

  \noindent If $| x_{0,1} | < L$, the macroscopic expansion \eqref{def_uFFnq} gives
  $u_{FF,n,q}^\delta (\mx) = \chi\left( \frac{x_2}{\delta} \right)
  u_{n,q}^\delta (\mx) + \Pi_{n,q}^\delta(x_1, \mx / \delta)$, and
  inserting \eqref{FFExpansion} in \eqref{eq:perturbed_laplace} gives
  \begin{equation}
    \label{eq:perturbed_laplace_expand_small_x1}
    \sum_{(n,q) \in \N^2} \delta^{\frac{2n}{3}+q} \Delta \left( \chi\left(
        \frac{x_2}{\delta} \right) u_{n,q}^\delta + \Pi_{n,q}^\delta \left(x_1,
        \frac{\mx}{\delta} \right) \right) = f
  \end{equation}
  Then, we consider $\delta$ such that for any $\mx \in V(\mx_0)$, $|x_2| > 2
  \delta$; then the cut-off function $\chi\left( \frac{x_2}{\delta} \right)$ is
  equal to $1$, and thanks to \eqref{eq:exponential_decaying}, we can neglect
  the terms $\Delta\Pi_{n,q}^\delta(x_1, \mx / \delta)$. Finally, we
  extract then the same powers of $\delta^{\frac{2n}{3}+q}$, and we get
  equations \eqref{FFVolum}.
  \subsection{Derivation of the boundary layer equations}
  We consider again the equation \eqref{eq:perturbed_laplace_expand_small_x1}
  for any $|x_1| < 1$ (this time, we put no restriction on $x_2$). Using
  \eqref{FFVolum}, we
  \begin{equation}
    \label{eq:perturbed_laplace_expand_small_x1_r}
    \sum_{(n,q) \in \N^2} \delta^{\frac{2n}{3}+q}  \left( \frac{2}{\delta} \chi'\left(
        \frac{x_2}{\delta} \right) \partial_{x_2} u_{n,q}^\delta +
      \frac{1}{\delta^2} \chi''\left( 
        \frac{x_2}{\delta} \right) u_{n,q}^\delta + \Delta \Pi_{n,q}^\delta \left(x_1,
        \frac{\mx}{\delta} \right) \right) = 0
  \end{equation}
On the one hand, on the support of $\chi'(x_2 / \delta)$, we see that $x_2$ is small; then we
  can use an infinite Taylor expansion for both $u_{n,q}^\delta$ and its
  $x_2$-derivative:
  \begin{align}
    \label{eq:perturbed_laplace_expansion}
    u_{n,q}^\delta & = \sum_{p=0}^\infty \frac{x_2^p}{p!} \partial_{x_2}^p
    u_{n,q}(x_1, 0^\pm), \quad \pm x_2 > 0, \\
    \label{eq:perturbed_laplace_expansion_d}
    \partial_{x_2} u_{n,q}^\delta & = \sum_{p=0}^\infty
    \frac{x_2^p}{p!} \partial_{x_2}^{p+1}
    u_{n,q}(x_1, 0^\pm), \quad \pm x_2 > 0
  \end{align}
  We insert \eqref{eq:perturbed_laplace_expansion} and
  \eqref{eq:perturbed_laplace_expansion_d} in
  \eqref{eq:perturbed_laplace_expand_small_x1_r}. Since the functions
  $\Pi_{n,q}^\delta$ depends on the fast variable $X_2 = x_2 / \delta$, we make
  the variable change in the Taylor expansions as well. Then, reordering with
  the same powers of $\delta^{\frac{2n}{3}+q}$, we get
  \begin{multline}
    \label{eq:perturbed_laplace_expand_small_x1_r2}
      \sum_{(n,q) \in \N^2} \delta^{\frac{2n}{3}+q-2} \Big( \sum_{\pm}
      \sum_{p=0}^q \chi_\pm''(X_2) \frac{(X_2)^p}{p!} \partial_{x_2}^p
      u_{n,q-p}(x_1,0^\pm) \\ + \sum_{\pm} \sum_{p=0}^{q-1} \chi_\pm''(X_2)
      \frac{(X_2)^p}{p!} \partial_{x_2}^{p+1} u_{n,q-1-p}(x_1,0^\pm) \Big) +
      \sum_{(n,q) \in \N^2} \delta^{\frac{2n}{3}+q} \Delta \Pi_{n,q}^\delta
      \left(x_1, \frac{\mx}{\delta} \right)  = 0.
  \end{multline}
{On the other hand,}  expanding the Laplacian of each function $\Pi_{n,q}^\delta$, separating the
  slow scale $x_1$ from the fast scale $\mX = \mx / \delta$, we get
  \begin{equation*}
    \Delta \Pi_{n,q}^\delta \left(x_1, \frac{\mx}{\delta} \right)
    = \partial_{x_1}^2 \Pi_{n,q}^\delta(x_1,\mX) +
    \frac{2}{\delta} \partial_{x_1} \partial_{X_1} \Pi_{n,q}^\delta(x_1,\mX) +
    \frac{1}{\delta^2} \Delta_{\mX} \Pi_{n,q}^\delta(x_1,\mX)
  \end{equation*}
  We use this last relation in \eqref{eq:perturbed_laplace_expand_small_x1_r2}
  and we identify with the same powers of $\delta^{\frac{2n}{3}+q}$, then we
  get \eqref{PeriodicCorrectorEquations} with the right-hand
  side defined in \eqref{eq:def_Gnq_delta}.

\section{Technical results associated with the analysis of the boundary layer problems: transmission
  conditions} 
\subsection{Proof of Proposition \ref{PropFormeGeneraleTransmission}}
\label{ProofPropositionGeneralTransmission}
To prove Proposition~\ref{PropFormeGeneraleTransmission}, it is
sufficient
to prove by induction that, any sequence $(u_{k},\Pi_k)$ solution to
Problem~(\ref{ProblemeSectionTransmission}) satisfies the following three
properties:
 \begin{align}
\Pi_q(x_1, \mX) & \;\;  = &  & \sum_{p=0}^q \partial_{x_1}^p \langle
u_{q-p}(x_1,0)
\rangle_{\Gamma} W_{p}^{\mathfrak{t}}(\mX) +\sum_{p=1}^{q} 
\partial_{x_1}^{p-1} \langle \partial_{x_2}
u_{q-p}(x_1,0) \rangle_{\Gamma} W_{p}^{\mathfrak{n}}(\mX), \label{DefPiqAnnexe}\\ 
[u_q(x_1,\mX)]_{\Gamma} & \;\; = &  & 
\sum_{p=1}^q \mathcal{D}_p^{\mathfrak{t}} \, \partial_{x_1}^p \langle
u_{q-p}(x_1,0)
\rangle_{\Gamma}+\sum_{p=1}^{q} 
\mathcal{D}_p^{\mathfrak{n}} \,  \partial_{x_1}^{p-1} \langle \partial_{x_2}
u_{q-p}(x_1,0) \rangle_{\Gamma},\label{DefSautuqAnnexe}
\\
[\partial_{x_2}u_{q-1}(x_1,\mX)]_{\Gamma} &\;\;  = & &  
\sum_{p=2}^q \mathcal{N}_p^{\mathfrak{t}} \, \partial_{x_1}^p \langle
u_{q-p}(x_1,0)
\rangle_{\Gamma}+\sum_{p=2}^{q} 
\mathcal{N}_p^{\mathfrak{n}} \,  \partial_{x_1}^{p-1} \langle \partial_{x_2}
u_{q-p}(x_1,0) \rangle_{\Gamma}. \label{DefSautDuqm1Annexe}
\end{align}
Here, we posed $W_p^{\mathfrak{t}} =0$ for any negative integer $p$, and, for $p\geq0$, $W_{p}^{\mathfrak{t}} \in \mathcal{V}^+(\mathcal{B})$ is the unique decaying solution to
\begin{equation}\label{ProblemWpt}
\left \lbrace
\begin{aligned}
\dsp - \Delta_{\mX} W_{p}^{\mathfrak{t}}(\mX) & \; = &&  F_{p}^{\mathfrak{t}}(\mX) + \frac{\mathcal{D}_{p}^{\mathfrak{t}}  }{2}
\dsp [g_0(\mX)] +  \frac{\mathcal{N}_{p}^{\mathfrak{t}}}{2}
\dsp [g_1(\mX)]  \quad \mbox{in} \; \mathcal{B}, \\
\partial_n W_{p}^{\mathfrak{t}} &\;  = &&  0  \; \mbox{on} \;  \partial
\widehat{\Omega}_\hole,\\[1ex]
\partial_{X_1} W_{p}^{\mathfrak{t}} (0,X_2) & \; = &&  \partial_{X_1} W_{p}^{\mathfrak{t}}(1,X_2), \quad X_2 \in
\R,
\end{aligned} 
\right.
\end{equation}
where
\begin{multline}\label{DefFpt}
F_{p}^{\mathfrak{t}}(\mX) = 2 \partial_{X_1} W_{p-1}^{\mathfrak{t}}(\mX) \,+\, W_{p-2}^{\mathfrak{t}}(\mX)
\,+\, (-1)^{\lfloor p/2 \rfloor} \left(
  2\, \langle g_p(\mX)\rangle \, \delta_{p}^{\text{even}} \right)  \\+ \sum_{k=2}^{p-1}
(-1)^{\lfloor k/2 \rfloor}  \frac{[g_k(\mX)]}{2}
\delta_{k}^{\text{even}}  \mathcal{D}_{p-k}^{\mathfrak{t}} + \sum_{k=2}^{p-1}
(-1)^{\lfloor k/2 \rfloor}  \,
\frac{[g_k(\mX)]}{2} \, \delta_{k}^{\text{odd}} \, \mathcal{N}_{p-k+1}^{\mathfrak{t}},
\end{multline}
and the constants $\mathcal{D}_p^{\mathfrak{t}}$ and $\mathcal{N}_p^{\mathfrak{t}}$ are given by
\begin{equation}\label{DefDptNpt}
\mathcal{D}_p^{\mathfrak{t}} = \int_{\mathcal{B}} F_{p}^{\mathfrak{t}}(\mX)
\mathcal{D}(\mX) d\mX, \quad \mathcal{N}_p^{\mathfrak{t}} = -\int_{\mathcal{B}} F_{p}^{\mathfrak{t}}(\mX)
\mathcal{N}(\mX) d\mX.
\end{equation}
In formula~\eqref{DefFpt}, $\delta_p^{\text{odd}}$ is equal to the remainder of the euclidian
division of $p$ by $2$  (\ie $\delta_p^{\text{odd}} $ is equal to $1$ if $p$ is odd
and equal to $0$ if $p$ is even)  and $\delta_p^{\text{even}} = 1 -
\delta_p^{\text{odd}}$ ($\delta_p^{\text{even}} $ is equal to $1$ if $p$ is even
and equal to $0$ otherwise). Moreover, $\lfloor r \rfloor$ denotes the floor of a
real number $r$. \\

\noindent Similarly, $W_p^{\mathfrak{n}} = 0$, for $p\leq 0$, and, for $p\geq 1$, $W_{p}^{\mathfrak{n}} \in \mathcal{V}^+(\mathcal{B})$ is the unique decaying solution 
\begin{equation}\label{ProblemWpn}
\left\lbrace
\begin{aligned}
\dsp - \Delta_{\mX} W_{p}^{\mathfrak{n}}(\mX) & \; = &&  F_{p}^{\mathfrak{n}}(\mX) + \frac{\mathcal{D}_{p}^{\mathfrak{n}}  }{2}
\dsp [g_0(\mX)] +  \frac{\mathcal{N}_{p}^{\mathfrak{n}}}{2}
\dsp [g_1(\mX)]  \quad \mbox{in} \; \mathcal{B}, \\
\partial_n W_{p}^{\mathfrak{n}} & \;= &&  0  \; \mbox{on} \;  \partial
\widehat{\Omega}_\hole,\\
\partial_{X_1} W_{p}^{\mathfrak{n}} (0,X_2) & \;= &&  \partial_{X_1} W_{p}^{\mathfrak{n}}(1,X_2), \quad X_2 \in
\R,
\end{aligned} 
\right.
\end{equation}
where
\begin{multline}\label{DefFpn}
F_{p}^{\mathfrak{n}}(\mX) = 2 \partial_{X_1} W_{p-1}^{\mathfrak{n}}(\mX) \,+\, W_{p-2}^{\mathfrak{n}}(\mX)
\,+\, (-1)^{\lfloor p/2 \rfloor} \left(
  2 \, \langle g_p(\mX)\rangle \, \delta_{p}^{\text{odd}} \right)  \\+ \sum_{k=2}^{p-1}
(-1)^{\lfloor k/2 \rfloor}  \frac{[g_k(\mX)]}{2}
\delta_{k}^{\text{even}} \mathcal{D}_{p-k}^{\mathfrak{n}} + \sum_{k=2}^{p-1}
(-1)^{\lfloor k/2 \rfloor}  \, \frac{[g_k(\mX)]}{2}
\delta_{k}^{\text{odd}}  \, \mathcal{N}_{p-k+1}^{\mathfrak{n}},
\end{multline}
and the constants $\mathcal{D}_p^{\mathfrak{n}}$ and $\mathcal{N}_p^{\mathfrak{n}}$ are given by
\begin{equation}\label{DefDpnNpn}
\mathcal{D}_p^{\mathfrak{n}}= \int_{\mathcal{B}} F_{p}^{\mathfrak{n}}(\mX)
\mathcal{D}(\mX) d\mX, \quad \mathcal{N}_p^{\mathfrak{n}} = -\int_{\mathcal{B}} F_{p}^{\mathfrak{n}}(\mX)
\mathcal{N}(\mX) d\mX.
\end{equation}
\begin{rem} The well posedness of Problem~\eqref{ProblemWpn} and
  Problem~\eqref{ProblemWpt} results from the application of
  Proposition~\ref{prop:layer_existence_uniqueness_problem_strip}. By
  construction, the right-hand sides of Problem~\eqref{ProblemWpn} and
  Problem~\eqref{ProblemWpt} satisfy the compatibility
  conditions~\eqref{eq:prop_layer_existence_uniqueness_compatibility_D} and
  ~\eqref{eq:prop_layer_existence_uniqueness_compatibility_N}, ensuring that
  $W_{p}^{\mathfrak{t}}$ and $W_{p}^{\mathfrak{n}}$ belong to
  $\mathcal{V}^+(\mathcal{B})$.
\end{rem}
\subsection{Base cases: $p =0$ and $p=1$.}
The base case $p=0$ has been done in
Subsection~\ref{SubsubStep0}. It is easily verified that, for $p=0$,
Problem~\eqref{ProblemWpt} and Problem~\eqref{ProblemW0t} coincide since
$\mathcal{D}_0^p = \mathcal{N}_0^p=0$. Besides, formula~\eqref{decompositionPi0}
(resp. formula~\eqref{Sautu0}) exactly
corresponds to~\eqref{DefPiqAnnexe}
(resp. \eqref{DefSautuqAnnexe}).\\

\noindent The case $p=1$ has also be treated (Subsection~\ref{SubsubStep1}). We remark that the profile
function $W_1^{\mathfrak{t}}$ 
vanishes, and that Problems~\eqref{ProblemWpn} and~\eqref{ProblemW1n}, which both define $W_1^{\mathfrak{n}}$,
are similar.  As a result, formula~\eqref{decompositionPi1}
(resp. the jump conditions~\eqref{Sautu1} and ~\eqref{Sautdu0}), exactly
corresponds to~\eqref{DefPiqAnnexe}
(resp. \eqref{DefSautuqAnnexe} and \eqref{DefSautDuqm1Annexe}).
\subsection{Inductive step}
Let $q\in \N$. We now assume that
formulas~\eqref{DefPiqAnnexe}, \eqref{DefSautuqAnnexe} and
\eqref{DefSautDuqm1Annexe} hold for any non negative integer $p$ such that
$p\leq q-1$. We prove that
they are still valid for $q$. We shall follow the procedure described
in Subsection~\ref{SubSectionTransmissionGeneral}.
\subsubsection{Computation of $G_q(x_1, \mX)$}
This is by far the most technical step of the proof. We remind that $G_q(x_1, \mX)$,
defined in~\eqref{SecondMembreGq}, is given by
\begin{multline}\label{GqAppendix}
G_q(x_1, \mX) = 2 \langle g_0(\mX) \rangle \langle u_q(x_1,0)
\rangle_{\Gamma} + \frac{1}{2} [ g_0(\mX) ] [u_q(x_1,0) ]_{\Gamma}  \\
+ 2
\langle g_1(\mX) \rangle  \langle \partial_{x_2} u_{q-1}(x_1,0)
\rangle_{\Gamma} + \frac{1}{2} [ g_1(\mX) ] [\partial_{x_2}u_{q-1}(x_1,0)
]_{\Gamma} \\
+ 2 \partial_{x_1} \partial_{X_1} \Pi_{q-1} +   2 \partial_{x_1}^2
\Pi_{q-2} + \mathcal{A}_q(x_1, \mX) +  \mathcal{B}_q(x_1, \mX), 
\end{multline}
where,
\begin{equation}\label{DefAq}
\mathcal{A}_q(x_1, \mX)  =   \sum_{p=2}^q 2 \langle g_p(\mX) \rangle \langle \partial^p_{x_2} u_{q-p}(x_1,0) \rangle_{\Gamma},
\end{equation}
\begin{equation}\label{DefBq}
\mathcal{B}_q(x_1,\mX) =\sum_{p=2}^q 
 \frac{1}{2} \left[ g_p(\mX) \right] \left[  \partial^p_{x_2}
   u_{q-p}(x_1,0) \right]_{\Gamma}.
\end{equation}
We shall rewrite the different terms of the third line
of~\eqref{GqAppendix} using the following substitution rules:
\begin{enumerate}
\item[-] We replace the normal derivatives
  of the macroscopic terms
  with their corresponding tangential
  derivatives using the following two formulas (we remind that for any $p\in \N$,
  $\Delta u_{p} =0$ in a lower (and upper) vicinity of the interface $\Gamma$): for any non negative integers $k$,
  $\ell$ and $p$, 
\begin{align}
& \partial_{x_1}^\ell \partial_{x_2}^{2k}  u_{p}(x_1,0^\pm) =
(-1)^k  \partial_{x_1}^{\ell+2k} u_{p}(x_1,0^\pm),  \label{FormulePaire} \\
& \partial_{x_1}^\ell \partial_{x_2}^{2k+1}   u_{p}(x_1,0^\pm) =
(-1)^k  \partial_{x_1}^{\ell+2k} \partial_{x_2} u_{p}(x_1,0^\pm) \label{FormuleImpaire}.
\end{align}
\item[-] For any $p < q$, we substitute the jump of the traces $[u_p(x_1,
  0)]_{\Gamma}$ (or their tangential derivatives $\partial_{x_1}^k [u_p(x_1,
  0)]_{\Gamma}$) for their explicit expression~\eqref{DefSautuqAnnexe}.
\item[-] For any $p<q-1$, we replace the jumps of the normal traces
 $[\partial_{x_2} u_p(x_1,
  0)]_{\Gamma}$ (or their tangential derivatives $\partial_{x_1}^k [\partial_{x_2} u_p(x_1,
  0)]_{\Gamma}$) with their explicit formula~\eqref{DefSautDuqm1Annexe}.
\item[-] We replace $\Pi_{q-1}$ and $\Pi_{q-2}$ with their tensorial
  representation~\eqref{DefPiqAnnexe}.
 \end{enumerate}
\paragraph{Computation of $\mathcal{A}_q(x_1, \mX)$}
We divide  the sum \eqref{DefAq} into its even ($p$ even) and odd ($p$
odd) components, and we use
formulas~\eqref{FormulePaire}-\eqref{FormuleImpaire} to obtain
\begin{equation}\label{CalculAq}
\mathcal{A}_q(x_1, \mX) = \sum_{p=2}^q  A_{p}^{\mathfrak{t}}(\mX) \partial_{x_1}^p \langle u_{q-p}(x_1,
  0) \rangle_{\Gamma}\\
+ \sum_{p=2}^q A_p^{{\mathfrak{n}}}(\mX) \partial_{x_1}^{p-1} \langle \partial_{x_2} u_{q-p}(x_1,
  0) \rangle_{\Gamma}.
\end{equation}
where
\begin{equation}\label{AptApn}
A_p^{\mathfrak{t}}(\mX) = 2 \langle g_p(\mX)
\rangle (-1)^{\lfloor p/2 \rfloor } \delta_{p}^{\text{even}} \quad
\mbox{and} \quad A_p^{\mathfrak{n}}(\mX) =  2 \langle g_p(\mX)
\rangle (-1)^{\lfloor p/2 \rfloor} 
\delta_{p}^{\text{odd}}.
\end{equation}
\paragraph{Computation of $\mathcal{B}_q(x_1, \mX)$}
As previously, we first divide the sum
\eqref{DefBq} into its odd and even components $\mathcal{B}_{q,1}$ $\mathcal{B}_{q,2}$, 
\begin{equation}
\mathcal{B}_q(x_1, \mX) =  \mathcal{B}_{q,1}(x_1, \mX) +  \mathcal{B}_{q,2}(x_1, \mX), 
\end{equation}
where, using 
formulas~\eqref{FormulePaire}-\eqref{FormuleImpaire},
\begin{equation}
\mathcal{B}_{q,1}(x_1, \mX)  = \sum_{k=2}^q \left(  (-1)^{\lfloor k/2\rfloor }\,\frac{[g_k(\mX)]}{2}\,
  \delta_{k}^{\text{even}} \right) \partial_{x_1}^k [u_{q-k}(x_1, 0)]_\Gamma,
\end{equation}
and
\begin{equation}
\mathcal{B}_{q,2}(x_1, \mX)  = \sum_{k=2}^q \left(  (-1)^{\lfloor
    k/2\rfloor } \, \frac{[g_k(\mX)]}{2} 
  \delta_{k}^{\text{odd}} \right) \partial_{x_1}^{k-1} [\partial_{x_2}u_{q-k}(x_1, 0)]_\Gamma.
\end{equation}
We shall evaluate $\mathcal{B}_{q,1}$ and $\mathcal{B}_{q,2}$ in turn, using
the induction hypotheses \eqref{DefSautuqAnnexe}-\eqref{DefSautDuqm1Annexe}.
In view of \eqref{DefSautuqAnnexe}, we have,
\begin{multline*}
\mathcal{B}_{q,1}(x_1, \mX)  = \sum_{k=2}^q  \sum_{p=1}^{q-k}
(-1)^{\lfloor k/2\rfloor } \, \frac{  [g_k(\mX)] }{2} \,
  \delta_{k}^{\text{even}} \,   \mathcal{D}_{p}^{\mathfrak{t}} \, \partial_{x_1}^{p+k}
  \langle u_{q-k-p}(x_1, 0) \rangle_\Gamma \;+ \\
\; \sum_{k=2}^q  \sum_{p=1}^{q-k}  (-1)^{\lfloor k/2\rfloor }\, \frac{ [g_k(\mX)] }{2}\,
  \delta_{k}^{\text{even}} \,    \mathcal{D}_{p}^{\mathfrak{n}} \, \partial_{x_1}^{p+k-1}
  \langle \partial_{x_2} u_{q-k-p}(x_1, 0) \rangle_\Gamma.
\end{multline*}
We remark that the last term of each sum over $k$, corresponding to $k = q$, 
can be removed (since the inner sum is empty in this case). Then, the
change of index $p \leftarrow  p+k$ gives
\begin{equation}\label{B1qfinal}
\mathcal{B}_{q,1}(x_1, \mX)  = \sum_{p=3}^q  B_{p,1}^{\mathfrak{t}}(\mX) \,  \partial_{x_1}^{p}
  \langle u_{q-p}(x_1, 0) \rangle_{\Gamma}+ \sum_{p=3}^q B_{p,1}^{\mathfrak{n}}(\mX)
  \, \partial_{x_1}^{p-1}
  \langle \partial_{x_2} u_{q-p}(x_1, 0) \rangle_\Gamma,
\end{equation}
with
\begin{equation}\label{DefB1ptDefB1pn}
{B}_{p,1}^{\mathfrak{t}}(\mX)  = \sum_{k=2}^{p-1} (-1)^{\lfloor k/2\rfloor } \, \frac{ [g_k(\mX)] }{2}\,
 \delta_{k}^{\text{even}} \,    \mathcal{D}_{p-k}^{\mathfrak{t}}
\quad\mbox{and} \quad  B_{p,1}^{\mathfrak{n}}(\mX)  = \sum_{k=2}^{p-1}  (-1)^{\lfloor k/2\rfloor
  }\frac{[g_k(\mX)]}{2} \,
 \delta_{k}^{\text{even}} \,   \mathcal{D}_{p-k}^{\mathfrak{n}}.
\end{equation}
Similarly, thanks to formula~\eqref{DefSautDuqm1Annexe}, 
\begin{multline*}
\mathcal{B}_{q,2}(x_1, \mX)  = \sum_{k=2}^q  \sum_{p=1}^{q-k}  \frac{ (-1)^{\lfloor k/2\rfloor }}{2}
  \delta_{k}^{\text{odd}} \,  [g_k(\mX)] \,   \mathcal{N}_{p+1}^{\mathfrak{t}} \, \partial_{x_1}^{p+k}
  \langle u_{q-k-p}(x_1, 0) \rangle_\Gamma \;+ \\
\; \sum_{k=2}^q  \sum_{p=1}^{q-k}  \frac{ (-1)^{\lfloor k/2\rfloor }}{2}
  \delta_{k}^{\text{odd}} \,    [g_k(\mX)] \, \mathcal{N}_{p+1}^{\mathfrak{n}} \, \partial_{x_1}^{p+k-1}
  \langle \partial_{x_2} u_{q-k-p}(x_1, 0) \rangle_\Gamma,
\end{multline*}
which, using the change of index $p\leftarrow p+k$, yields
\begin{equation}\label{B2qfinal}
\mathcal{B}_{q,2}(x_1, \mX)  = \sum_{p=3}^q  B_{p,2}^{\mathfrak{t}}(\mX) \,  \partial_{x_1}^{p}
  \langle u_{q-p}(x_1, 0) \rangle_{\Gamma}+ \sum_{p=3}^q B_{p,2}^{\mathfrak{n}}(\mX)
  \, \partial_{x_1}^{p-1}
  \langle \partial_{x_2} u_{q-p}(x_1, 0) \rangle_\Gamma,
\end{equation}
Here,
\begin{equation}\label{DefB2ptDefB2pn}
 {B}_{p,2}^{\mathfrak{t}}(\mX)  = \sum_{k=2}^{p-1} (-1)^{\lfloor k/2\rfloor }\frac{  [g_k(\mX)]}{2}\,
 \delta_{k}^{\text{odd}} \,  \mathcal{N}_{p-k+1}^{\mathfrak{t}}
\quad\mbox{and} \quad  B_{p,2}^{\mathfrak{n}}(\mX)  = \sum_{k=2}^{p-1} (-1)^{\lfloor k/2\rfloor
  } \,\frac{ [g_k(\mX)]  }{2} \,
 \delta_{k}^{\text{odd}} \, \mathcal{N}_{p-k+1}^{\mathfrak{n}}.
\end{equation}
Finally the sum of \eqref{B1qfinal} and \eqref{B2qfinal} leads to
\begin{multline}\label{formule2}
B_q(x_1, \mX)  = \sum_{p=2}^q  (B_{p,1}^{\mathfrak{t}}(\mX) + B_{p,2}^{\mathfrak{t}}(\mX) ) \,  \partial_{x_1}^{p}
  \langle u_{q-p}(x_1, 0) \rangle_{\Gamma} \\+ \; \sum_{p=2}^q (B_{p,1}^{\mathfrak{n}}(\mX)  + B_{p,2}^{\mathfrak{n}}(\mX) )
  \, \partial_{x_1}^{p-1}
  \langle \partial_{x_2} u_{q-p}(x_1, 0) \rangle_\Gamma,
\end{multline}
Here, we have artificially added the term
corresponding to $p=2$ in the two summations, using the convention that the constants
$B_{2,1}^{\mathfrak{t}}$ and $B_{2,1}^{\mathfrak{n}}$, $B_{2,2}^{\mathfrak{t}}$ and $B_{2,2}^{\mathfrak{n}}$ vanish (in
the definitions~\eqref{DefB1ptDefB1pn} and~\eqref{DefB2ptDefB2pn}, the
sums are empty). 
\paragraph{Computation of $2 \partial_{x_1} \partial_{X_1} \Pi_{q-1}$
  and $\partial_{x_1}^2  \Pi_{q-2}$}
These computations are less technical. The differentiation of 
formula~\eqref{DefPiqAnnexe} (recursive hypothesis on the tensorial
representation of $\Pi_q$) with respect to both $x_1$ and $X_1$ gives
\begin{equation*}
\partial_{x_1} \partial_{X_1} \Pi_{q-1}(x_1, \mX) = \sum_{p=0}^{q-1} \partial_{x_1}^{p+1} \langle
u_{q-1- p}(x_1,0)
\rangle_{\Gamma} \partial_{X_1} W_{p}^{\mathfrak{t}}(\mX) +\sum_{p=1}^{q-1} 
\partial_{x_1}^{p} \langle \partial_{x_2}
u_{q-1-p}(x_1,0) \rangle_{\Gamma} \partial_{X_1} W_{p}^{\mathfrak{n}}(\mX).
\end{equation*}
Then, making the change of index $p \mapsto p+1$ and using the fact
  that $\partial_{X_1} W_{0}^{\mathfrak{t}} =0$ (see formula
  \eqref{ProprieteW0t}), we get
\begin{multline}\label{formule3}
2 \partial_{x_1} \partial_{X_1} \Pi_{q-1}(x_1, \mX) = \sum_{p=2}^{q} \partial_{x_1}^{p} \langle
u_{q-p}(x_1,0)
\rangle_{\Gamma}  \left( 2 \partial_{X_1} W_{p-1}^{\mathfrak{t}}(\mX) \right)  \\ +
\,\sum_{p=2}^{q} 
\partial_{x_1}^{p-1} \langle \partial_{x_2}
u_{q-p}(x_1,0) \rangle_{\Gamma}\left( 2  \partial_{X_1} W_{p-1}^{\mathfrak{n}}(\mX)
  \right).
\end{multline}

\noindent Analogously (differentiating formula~\eqref{DefPiqAnnexe} twice with
respect to $x_1$, then making the change of index $p \leftarrow p+2$),
\begin{equation}\label{formule4}
\partial_{x_1}^2 \Pi_{q-2}(x_1, \mX) = \sum_{p=2}^{q} \partial_{x_1}^{p} \langle
u_{q-p}(x_1,0)
\rangle_{\Gamma} W_{p-2}^{\mathfrak{t}}(\mX) +\sum_{p=2}^{q} 
\partial_{x_1}^{p-1} \langle \partial_{x_2}
u_{q-p}(x_1,0) \rangle_{\Gamma} W_{p-2}^{\mathfrak{n}}(\mX).
\end{equation}
Here, we have use the fact that $W_{0}^{\mathfrak{n}}$ vanishes.
\paragraph{Summary}

Collecting formulas~\eqref{GqAppendix}, \eqref{CalculAq}, \eqref{formule2},
\eqref{formule3}, \eqref{formule4}, we end up with
\begin{multline}\label{GqFinal}
G_q(x_1, \mX) = 2 \langle g_0(\mX) \rangle \langle u_q(x_1,0)
\rangle_{\Gamma} + \frac{1}{2} [ g_0(\mX) ] [u_q(x_1,0) ]_{\Gamma} + 2
\langle g_1(\mX) \rangle  \langle \partial_{x_2} u_{q-1}
\rangle_{\Gamma}  \\+ \frac{1}{2} [ g_1(\mX) ] [\partial_{x_2}u_{q-1}(x_1,0)
]_{\Gamma} 
+ \sum_{p=2}^q F_p^{\mathfrak{t}}(\mX) \partial_{x_1}^{p} \langle
u_{q-p}(x_1,0) 
\rangle_{\Gamma} 
 + \sum_{p=2}^q F_p^{\mathfrak{n}}(\mX) \partial_{x_1}^{p-1} \langle \partial_{x_2}
u_{q-p}(x_1,0) \rangle_{\Gamma},
\end{multline}
where,
\begin{equation*}
F_p^{\mathfrak{t}}(\mX) = A_{p}^{\mathfrak{t}}(\mX) + B_{p,1}^{\mathfrak{t}}(\mX)
  +B_{p,2}^{\mathfrak{t}}(\mX) + 2 \partial_{X_1} W_{p-1}^{\mathfrak{t}}(\mX)  + W_{p-2}^{\mathfrak{t}}(\mX),
\end{equation*}
and
\begin{equation*}
F_{p}^{\mathfrak{n}}(\mX) =   A_{p}^{\mathfrak{n}}(\mX) + B_{p,1}^{\mathfrak{n}}(\mX)
  +B_{p,2}^{\mathfrak{n}}(\mX) + 2 \partial_{X_1} W_{p-1}^{\mathfrak{n}}(\mX)  + W_{p-2}^{\mathfrak{n}}(\mX).
\end{equation*}
Here, the functions $A_p^{\mathfrak{t}}$, $A_p^{\mathfrak{n}}$ are defined in~\eqref{AptApn}, the
functions $B_{p,1}^{\mathfrak{t}}$, $B_{p,1}^{\mathfrak{n}}$ in~\eqref{DefB1ptDefB1pn} and the
functions $B_{p,1}^{\mathfrak{t}}$, $B_{p,2}^{\mathfrak{t}}$
in~\eqref{DefB2ptDefB2pn}. Of course, it is
easily verified that the preceding definitions of $F_{p}^{\mathfrak{t}}$ and
$F_{p}^{\mathfrak{n}}$ coincide with the ones given in~\eqref{DefFpt} and ~\eqref{DefFpn}.\\

\noindent It is important to note that, for any fixed $x_1\in
(-\LBottom,\LBottom)$, ${G}_q(x_1, \cdot)$ belongs to $\left(\mathcal{V}^-(\mathcal{B})\right)'$ because
it is the combination of exponentially decaying terms (more precisely,
functions or first
derivative of functions belonging to $\mathcal{V}^+(\mathcal{B})$ and compactly
supported ones).
\subsubsection{Computation of the normal jump $[\partial_{x_2}
  u_{q-1}(x_1, 0)]_{\Gamma}$}
As in Subsections~\ref{SubsubStep0}, \ref{SubsubStep1} and
\ref{SubsubStep2} (base steps), for any fixed $x_1 \in (-\LBottom, \LBottom)$,
Proposition~\ref{prop:layer_existence_uniqueness_problem_strip}
ensures the existence of a periodic corrector $\Pi_q(x_1, \cdot) \in
\mathcal{V}^+(\mathcal{B})$ satisfying~(\ref{ProblemeSectionTransmission}-right) if and only if 
${G}_q(x_1, \cdot)$ is orthogonal to both $\mathcal{N}$ and $\mathcal{D}$
(that is to say ${G}_q$ satisfies the two compatibility
conditions~\eqref{eq:prop_layer_existence_uniqueness_compatibility_D}-\eqref{eq:prop_layer_existence_uniqueness_compatibility_N}). In view of the second and fourth lines of
Lemma~\ref{CalculConditionCompatibilite},  we have
\begin{multline}
\int_{\mathcal{B}} {G}_q(x_1, \mX) \mathcal{N}(\mX) d\mX =  [\partial_{x_2}u_{q-1}
]_{\Gamma} +  \sum_{p=2}^q \partial_{x_1}^{p} \langle
u_{q-p}(x_1,0) \rangle_{\Gamma}  \int_{\mathcal{B}}  F_p^{\mathfrak{t}}(\mX) \mathcal{N}(\mX) d\mX ,
\\+ \sum_{p=2}^q \partial_{x_1}^{p-1} \langle \partial_{x_2}
u_{q-p}(x_1,0) \rangle_{\Gamma}\int_{\mathcal{B}} F_p^{\mathfrak{n}}(\mX) \mathcal{N}(\mX) d\mX ,
\end{multline}
In the previous formula, we recognize the constants $\mathcal{N}_p^{\mathfrak{t}}$
and  $\mathcal{N}_p^{\mathfrak{n}}$ defined in
\eqref{DefDptNpt}-\eqref{DefDpnNpn}. Finally, the compatibility condition
\eqref{eq:prop_layer_existence_uniqueness_compatibility_N} is
fulfilled if and only if
\begin{equation}
  [\partial_{x_2}u_{q-1}
]_{\Gamma}  = \sum_{p=2}^q \partial_{x_1}^{p} \langle
u_{q-p}(x_1,0) \rangle_{\Gamma}  \mathcal{N}_p^{\mathfrak{t}}+ \sum_{p=2}^q \partial_{x_1}^{p-1} \langle \partial_{x_2}
u_{q-p}(x_1,0) \rangle_{\Gamma} \mathcal{N}_p^{\mathfrak{n}}.
\end{equation}
and formula~\eqref{DefSautDuqm1Annexe} is proved.
\subsubsection{Computation of the jump $[
  u_{q}(x_1, 0)]_{\Gamma}$}
Similarly, the first and third lines of
Lemma~\ref{CalculConditionCompatibilite} give
\begin{multline*}
\int_{\mathcal{B}} {G}_q(x_1, \mX) \mathcal{D}(\mX) d\mX =
- [u_{q}(x_1,0)
]_{\Gamma} + \langle
\partial_{x_2} u_{q-1}(x_1,0) \rangle_{\Gamma}  \int_{\mathcal{B}}  2
\langle g_1(\mX) \rangle
\mathcal{D}(\mX) d\mX +\\  \sum_{p=2}^q \partial_{x_1}^{p} \langle
u_{q-p}(x_1,0) \rangle_{\Gamma}  \int_{\mathcal{B}}  F_p^{\mathfrak{t}}(\mX) \mathcal{D}(\mX) d\mX ,
+ \sum_{p=2}^q \partial_{x_1}^{p-1} \langle \partial_{x_2}
u_{q-p}(x_1,0) \rangle_{\Gamma}\int_{\mathcal{B}} F_p^{\mathfrak{n}}(\mX) \mathcal{D}(\mX) d\mX ,
\end{multline*}
\noindent Here again, we recognize the constants $\mathcal{D}_p^{\mathfrak{t}}$
and $\mathcal{D}_p^{\mathfrak{n}}$ defined in
\eqref{DefDptNpt}-\eqref{DefDpnNpn}. If we additionaly remind that $
 F_{1}^{\mathfrak{n}}(\mX) = 2 \langle g_1(\mX)\rangle$ (cf. \eqref{DefFpn}) and that
$W_1^{\mathfrak{t}}=0$ (see ~\ref{DefFpt}), the compatibility condition
\eqref{eq:prop_layer_existence_uniqueness_compatibility_D} is
fulfilled if and only if
\begin{equation*}
[u_{q}(x_1,0)
]_{\Gamma}  = \sum_{p=1}^q \mathcal{D}_p^{\mathfrak{t}} \, \partial_{x_1}^{p} \langle
u_{q-p}(x_1,0) \rangle_{\Gamma}  + \sum_{p=1}^q \mathcal{D}_p^{\mathfrak{n}} \partial_{x_1}^{p-1} \langle \partial_{x_2}
u_{q-p}(x_1,0) \rangle_{\Gamma}
\end{equation*}
 which ends the proof of formula~\eqref{DefSautuqAnnexe}.
\subsubsection{Tensorial representation of $\Pi_{q}$}
Under the conditions~\eqref{DefSautDuqm1Annexe} and \eqref{DefSautuqAnnexe},
Problem~(\ref{ProblemeSectionTransmission}-right) has a unique
solution in $\mathcal{V}^+(\mathcal{B})$.  Moreover, substituting
$[u_q(x_1,0)]_\Gamma$ and $[\partial_{x_1} u_{q-1}(x_1,0)]_\Gamma$ 
for~\eqref{DefSautuqAnnexe} and~\eqref{DefSautDuqm1Annexe} in formula
\eqref{GqFinal}, using the linearity of
Problem~(\ref{ProblemeSectionTransmission}-right), and reminding the definitions of $W_p^{\mathfrak{n}}$ and
$W_p^{\mathfrak{t}}$~\eqref{ProblemWpt}-\eqref{ProblemWpn}, we see that 
\begin{equation*}
\Pi_q(x_1, \mX) = \sum_{p=0}^q \partial_{x_1}^p \langle
u_{q-p}(x_1,0)
\rangle_{\Gamma} W_{p}^{\mathfrak{t}}(\mX) +\sum_{p=1}^{q} 
\partial_{x_1}^{p-1} \langle \partial_{x_2}
u_{q-p}(x_1,0) \rangle_{\Gamma} W_{p}^{\mathfrak{n}}(\mX).
\end{equation*} 
which is nothing but~formula~\eqref{DefPiqAnnexe}.

\section{Technical results associated with Analysis of the macroscopic problems (macroscopic singularities): proof of Proposition~\ref{PropDefinitionsmq}}\label{AppendixPreuvePropositionsmq}
The proof of the Proposition~\ref{PropDefinitionsmq} is by
induction. Before starting the proof, we first need to present a
technical Lemma which will be usefull to define the functions $w_{n,p,\pm}$.
\subsection{A prelimininary technical Lemma}
We consider the infinite cone $\mathcal{K}$ of angle $3
\pi/2$ (\ie in 2D, $\mathcal{K}$ is an infinite angular sector of angle $3 \pi/2$).
\begin{equation} 
\mathcal{K} = \left\{ (r \cos \theta, r \sin \theta) \in \R^2 ,\; r > 0, \;
  \theta \in (0, \frac{3 \pi}{2}) \right\},
\end{equation}
that we divide into two disjoint cones $\mathcal{K}_{{\gamma}}^1$ and $\mathcal{K}_{{\gamma}}^2$ of openings $\gamma$ and
$\frac{3 \pi}{2} - \gamma$, $ 0 < \gamma < \frac{3 \pi}{2}$ :
\begin{equation} \label{DefinitonK1}
\mathcal{K}_{{\gamma}}^1 = \left\{ (r \cos \theta, r \sin \theta)  \in \R^2,\; r > 0, \;
  \theta \in \mathcal{I}_{{\gamma}}^1  \right\} \;
\mathcal{K}_{{\gamma}}^2 = \left\{ (r \cos \theta, r \sin \theta)  \in \R^2,\; r > 0, \;
  \theta \in  \mathcal{I}_{{\gamma}}^2\right\},
\end{equation}
where $\mathcal{I}_{{\gamma}}^1 = (0, \gamma)$ and
$\mathcal{I}_{{\gamma}}^2 = (\gamma, \frac{3\pi}{2})$.  We denote by
$\Gamma_{\gamma}$ the interface between $\mathcal{K}_{{\gamma}}^1$ and
$\mathcal{K}_{{\gamma}}^2$, \ie, $\Gamma_{\gamma} = \partial \mathcal{K}_{{\gamma}}^1
\cap \partial \mathcal{K}_{{\gamma}}^2$. In the cases we are interested in the cone
$\mathcal{K}$ is given by $\mathcal{K}_{\mx_O^+}$ or $\mathcal{K}_{\mx_O^-}$ (defined
by~\eqref{DefinitionKpmW}), so that $\gamma=\frac{\pi}{2}$ (for
$\mathcal{K}^-$) or $\gamma =\pi$ (for $\mathcal{K}^+$).  In the upcoming
proof, we shall use the following technical Lemma, which provides an
explicit formula for a transmission problem in a cone for a particular
right-hand side (we remind that $\Lambda$ defined
in~\eqref{EnsembleDesCoefficientDeSingularite} denotes the set of singular
exponents).
\begin{lema}\label{LemmeTechniquesmq}
Let $q\in \N$, $\lambda \in \R$, $(\tta,\ttb) \in \R^2$ and 
$$
N = \begin{cases} 
q & \mbox{if}  \; \lambda \notin \Lambda \\
q+1  & \mbox{if} \; \lambda \in \Lambda.
\end{cases} 
$$
There exist $N+1$ functions $g_p \in  \mathcal{C}^\infty(\overline{\mathcal{I}^1}) \cap \mathcal{C}^\infty(\overline{\mathcal{I}^2}
)$, ($0\leq p \leq N$), such that the function 
\begin{equation}\label{ProblemeLemmeTechniqueCone}
\ttv(r, \theta) = r^\lambda \left(  \sum_{p=0}^{N} \ln r^p g_p(\theta) \right)
  \quad \mbox{satisfies}\;\;
\left \lbrace
\quad \begin{aligned}
\Delta \ttv &\; = &&  0 \; \mbox{in}\; \mathcal{K}^1 \cap \mathcal{K}^2  , \\
\ttv(r,0) & \; = &&  0, \\ 
\ttv(r,\frac{3 \pi}{2}) & \; = &&  0, \\
[\ttv]_{\Gamma_\gamma} & \; = &&  \tta \, r^\lambda \, \ln r^q,  \\
[\partial_\theta \ttv]_{\Gamma_\gamma}  & \; =&&   \ttb  \, r^\lambda \, \ln r^q. 
\end{aligned} 
\right.
\end{equation}
\end{lema}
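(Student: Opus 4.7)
The plan is to inject the ansatz $\tilde{v}(r,\theta) = r^\lambda \sum_{p=0}^N (\ln r)^p g_p(\theta)$ into the Laplace equation written in polar coordinates and separate powers of $\ln r$ to reduce the problem to a family of one-dimensional transmission problems on $(0,3\pi/2)$ for the angular profiles $g_p$. These ODE problems will be solved recursively, starting from the largest index $p$, the resonant case $\lambda \in \Lambda$ being handled via the Fredholm alternative by adding one extra power of $\ln r$.

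\textbf{Step 1: Reduction to a family of ODEs.} Using $\partial_r\bigl(r^\lambda (\ln r)^p\bigr) = r^{\lambda-1}\bigl[\lambda (\ln r)^p + p (\ln r)^{p-1}\bigr]$ and the expression of $\Delta$ in polar coordinates, a direct computation with the convention $g_p \equiv 0$ for $p>N$ gives
\begin{equation*}
\Delta \tilde{v} = r^{\lambda-2} \sum_{p=0}^{N} (\ln r)^p \Bigl[ g_p'' + \lambda^2 g_p + 2\lambda(p+1)\, g_{p+1} + (p+1)(p+2)\, g_{p+2} \Bigr].
\end{equation*}
Since the monomials $(\ln r)^p$ are linearly independent, $\Delta \tilde{v} = 0$ is equivalent to the recurrence
\begin{equation*}
g_p'' + \lambda^2 g_p = -2\lambda(p+1)\, g_{p+1} - (p+1)(p+2)\, g_{p+2} \quad \text{in } \mathcal{I}_\gamma^1 \cup \mathcal{I}_\gamma^2,
\end{equation*}
supplemented by the Dirichlet conditions $g_p(0) = g_p(3\pi/2) = 0$ and the jump conditions $[g_p]_\gamma = \mathtt{a}\,\delta_{p,q}$, $[g_p']_\gamma = \mathtt{b}\,\delta_{p,q}$.

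\textbf{Step 2: Analysis of the one-dimensional transmission operator.} For fixed $\lambda$, let $T_\lambda$ denote the transmission operator that maps $g$, smooth on each subinterval and vanishing at the endpoints $0$ and $3\pi/2$, to $(g'' + \lambda^2 g,\; [g]_\gamma,\; [g']_\gamma)$. Since the eigenvalues of $-\partial_\theta^2$ on $H^1_0(0,3\pi/2)$ are exactly the numbers $\lambda_m^2$ with $\lambda_m \in \Lambda$ (\cf~\eqref{DefintionEigenvectorwm}), the kernel of $T_\lambda$ is trivial when $\lambda \notin \Lambda$ and one-dimensional when $\lambda \in \Lambda$, spanned by the eigenfunction $\sin(\lambda\theta)$ (which fulfils the homogeneous transmission conditions). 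By Fredholm alternative, in the latter case the equation $T_\lambda g = (f,\alpha,\beta)$ is solvable iff the data is orthogonal to this kernel in the appropriate $L^2$-pairing, and the solution is then unique up to an element of the kernel. Smoothness of $g$ on each closed subinterval follows from standard ODE regularity once the data is smooth.

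\textbf{Step 3: Recursive resolution.} I proceed by downward induction on $p$. If $\lambda \notin \Lambda$, take $N=q$: at $p=q$ the problem $g_q''+\lambda^2 g_q = 0$ with the prescribed jumps is uniquely solvable, and for $p<q$ the problems have zero jumps and right-hand side built from the previously computed $g_{p+1}, g_{p+2}$, each again uniquely solvable. If $\lambda \in \Lambda$, set $N=q+1$ and start at $p=q+1$ by taking $g_{q+1}$ as an a priori undetermined multiple of the kernel generator $\sin(\lambda\theta)$. At level $p=q$, the equation $g_q''+\lambda^2 g_q = -2\lambda(q+1) g_{q+1}$ with jumps $(\mathtt{a},\mathtt{b})$ carries a single compatibility condition; this condition is a non-degenerate linear equation in the free coefficient of $g_{q+1}$, which is thereby fixed. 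The function $g_q$ is then determined up to a multiple of $\sin(\lambda\theta)$; this residual freedom is in turn used to ensure the compatibility condition arising at the next level $p=q-1$, and so on down to $p=0$.

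\textbf{Main obstacle.} The delicate point is the resonant case $\lambda \in \Lambda$: one must check that exactly one scalar of freedom remains at each level, in order to absorb the single compatibility condition appearing at the next level, so that the induction closes and produces exactly $N+1 = q+2$ well-defined smooth angular profiles. The specific choice $N = q+1$ (vs.\ $N=q$ in the non-resonant case) is dictated precisely by the need to introduce this extra degree of freedom at the very top of the recursion.
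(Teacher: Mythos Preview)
Your argument is correct. The paper does not supply its own proof of this lemma, referring instead to Chapter~3 of \cite{TheseAbdelkader} and Section~6.4.2 of \cite{MR1469972}; the method recorded there is precisely the separation-of-variables reduction and downward recursion you carry out, so there is nothing substantive to compare.

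The one point you leave implicit and which deserves to be written out is the non-degeneracy check in the resonant case $\lambda\in\Lambda$. At level $p$, testing the equation against $\sin(\lambda\theta)$ and integrating by parts shows that the compatibility condition is an affine equation in the free coefficient carried by $g_{p+1}$, with leading coefficient
\[
-2\lambda(p+1)\int_0^{3\pi/2}\sin^2(\lambda\theta)\,d\theta \;=\; -2\lambda(p+1)\cdot\tfrac{3\pi}{4},
\]
which is nonzero because $0\notin\Lambda$ and $p\ge 0$. This confirms that the recursion closes, and that at the end $g_0$ is determined only up to a multiple of $\sin(\lambda\theta)$ --- exactly the residual non-uniqueness recorded in the remark following the lemma.
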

\noindent We refer the reader to the Chapter 3 in~\cite{TheseAbdelkader} and the
Section 6.4.2 in \cite{MR1469972} for the proof.
\begin{rem}
It is possible to show that the functions $g_p$ are uniquely determined if $\lambda \notin
\Lambda$. If $\lambda \in \Lambda$, the functions $g_p$ are uniquely
determined except for $p=0$. However, the uniqueness may be restored by imposing the following orthogonality condition:
$$
\int_{0}^{\frac{3 \pi}{2}} g_0(\theta) w_{\lambda}(\theta) =0  \quad
w_\lambda(\theta) = \sin( \lambda \theta).
$$
\end{rem}
\begin{rem}
 Note that the set of singular exponents $\lambda$ (for which we have to take $N
    = q + 1$) coincide with the singular exponents of the problem
    without transmission condition on $\Gamma_\gamma$. In particular,
    these exponents
   do not depend on the location
    of the interface $\Gamma_\gamma$.
 \end{rem}

\subsection{Recursive definition of the families $w_{n,p, \pm}$}

Based on Lemma~\ref{LemmeTechniquesmq}, we shall be able to define the set of functions
$\left\{ w_{n,p,\pm}, n \in \Z^\ast, p \in \N \right\}$ of the
Proposition~\ref{PropDefinitionsmq}. We begin with the construction of $w_{n,p,+}$. We remind that the functions $w_{n,p,+}$ are polynomial
with respect to $\ln r^+$, \ie,
\begin{equation}\label{Definitionwnpplus}
 w_{n,p, +}(\theta^+, \ln r^+) = \sum_{q=0}^p w_{n,p,q,+}(\theta^+) (\ln
r^+)^q,  \quad n \in \Z^\ast, p \in \N, \quad    w_{n,p,q,+} \in
\mathcal{C}^\infty([0, \pi])\cap \mathcal{C}^\infty([ \pi, \frac{3 \pi}{2}]),
\end{equation} 
Their construction is by
induction on $p$. The function $w_{n,0,+}$ has already been defined in Proposition~\ref{PropositionAsymptoticEspaceAPoids}:
\begin{equation}
 w_{n,0,+}(\theta^+, \ln r^+) = \sin (\lambda_n \theta^+). \\
\end{equation}
 For $p \geq 1$,  we construct
 $w_{n,p,+}$ of the form~\eqref{Definitionwnpplus}, such that the function $$\ttv_{n,p, +}(r^+, \theta^+)= (r^+)^{\lambda_n -p}
w_{n,p, +}(\theta^+, \ln r^+)$$ satisfies
\begin{equation}
\left \lbrace
 \begin{aligned}\label{Problemunqplus}
\Delta \ttv_{n,p,+} &  \; = &&  0 \; \mbox{in}\; \mathcal{K}^{+,1} \cap \mathcal{K}^{+,2}      , \\
\ttv_{n,p,+}(0) & \; = && 0, \\
\ttv_{n,p,+}(\frac{3 \pi}{2}) &  \; = &&  0, \\
\dsp [\ttv_{n,p,+}(r^+, \pi)]_{\partial \mathcal{K}^{+,1}
  \cap \partial \mathcal{K}^{+,2} } & \;  = &&  (r^+)^{\lambda_n -p}  \, \tta_{n,p,+}(\ln r^+)  ,  \\
\dsp [\partial_{\theta^{+}} \ttv_{n,p,+}(r^+, \pi)]_{\partial
  \mathcal{K}^{+,1} \cap \partial \mathcal{K}^{+,2} } & \; =  &&  (r^+)^{\lambda_n -p} \, \ttb_{n,p,+}(\ln r^+), 
\end{aligned} \right. \quad \; \forall n \in \Z^\ast,\;  \forall p \in \N^\ast,
\end{equation}
where
\begin{equation}\label{definitionCones1et2}
\mathcal{K}^{+,1}  = \left\{ (r^+ \cos \theta^+,  r^+ \sin
  \theta^+) \in \mathcal{K}^+, 0 < \theta^+ < \pi \right\} \;
\mathcal{K}^{+,2}  = \left\{ (r^+ \cos \theta^+,  r^+ \sin
  \theta^+) \in \mathcal{K}^+, \pi < \theta^+ < \frac{3 \pi}{2} \right\},
\end{equation}
and
\begin{align}
& \tta_{n,p,+}(\ln r^+)= \sum_{r=0}^{p-1} \left( \mathcal{D}_{p-r}^{\mathfrak{t}}
  g_{n,r,p-r,+}^{\mathfrak{t}}(\ln r^+) +  \mathcal{D}_{p-r}^{\mathfrak{n}}
  g_{n,r,p-r,+}^{\mathfrak{n}}(\ln r^+) \right), \label{Defanpplus} \\
&  \ttb_{n,p,+}(\ln r^+)= \sum_{r=0}^{p-1} \left( \mathcal{N}_{p+1 - r}^{\mathfrak{t}}
  h_{n,r,p-r,+}^{\mathfrak{t}}(\ln r^+) +  \mathcal{N}_{p+ 1 -r}^{\mathfrak{n}}
  h_{n,r,p-r,+}^{\mathfrak{n}}(\ln r^+) \right) \label{Defbnpplus}.
\end{align}
The functions $g_{n,r,q,+}^{\mathfrak{t}}$,
$g_{n,r,q,+}^{\mathfrak{n}}$, $h_{n,r,q,+}^{\mathfrak{t}}$,
$h_{n,r,q,+}^{\mathfrak{n}}$ are defined by the following relations:
for $n \in \Z^\ast$, $r\in \N$, $q \in \N$, 
\begin{align}
& (r^+)^{\lambda_n -r -q}  \; g_{n,r,q,+}^{\mathfrak{t}}(\ln r^+) = (-1)^q
\frac{\partial^q}{(\partial r^+)^q}  \left[ (r^+)^{\lambda_n -r} \, \langle w_{n,r,+}(\pi, \ln r^+) \rangle
_{\partial \mathcal{K}^{+,1} \cap \partial \mathcal{K}^{+,2}} \right],
\label{gnrqplust}\\
& (r^+)^{\lambda_n -r -q} \; g_{n,r,q,+}^{\mathfrak{n}}(\ln r^+) = (-1)^q
\frac{\partial^{q-1}}{( \partial r^+)^{q-1}}  \left[ (r^+)^{\lambda_n -r-1} \,
  \langle \partial_{\theta^+} w_{n,r,+}(\pi, \ln r^+) \rangle
_{\partial \mathcal{K}^{+,1} \cap \partial \mathcal{K}^{+,2}}
\right], (q\geq 1)\label{gnrqplusn}\\
&  (r^+)^{\lambda_n -r -q} \; h_{n,r,q,+}^{\mathfrak{t}}(\ln r^+) = r^+ (-1)^q 
\frac{\partial^{q+1}}{(\partial r^+)^{q+1}}  \left[ (r^+)^{\lambda_n -r} \, \langle w_{n,r,+}(\pi, \ln r^+) \rangle
_{\partial \mathcal{K}^{+,1} \cap \partial \mathcal{K}^{+,2}} \right] \\
&  (r^+)^{\lambda_n -r -q} \; h_{n,r,q,+}^{\mathfrak{n}}(\ln r^+)= r^+
(-1)^q 
\frac{\partial^{q}}{(\partial r^+)^q}  \left[ (r^+)^{\lambda_n -r-1} \, \langle \partial_{\theta^+} w_{n,r,+}(\pi, \ln r^+) \rangle
_{\partial \mathcal{K}^{+,1} \cap \partial \mathcal{K}^{+,2}} \right],
 \end{align} 
and $g_{n,r,0,+}^{\mathfrak{n}} =0$.\\

\noindent The recursive procedure to construct the terms $w_{n,p,+}$ is the
following: assume that the terms $w_{n,q,+}$ are known for any $n \in
\Z^\ast$ and $q \leq p-1$. To construct $w_{n,p,+}$ we need to know the source terms
$\tta_{n,p,+}$ and $\ttb_{n,p,+}$, which require the 
computation of $g_{n,r,p-r,+}^\mathfrak{t}$ and
$g_{n,r,p-r,+}^\mathfrak{n}$ for $r \leq p-1$. But,
$g_{n,r,p-r,+}^\mathfrak{t}$ and $g_{n,r,p-r,+}^\mathfrak{n}$ only  depend
on the function $w_{n,r,+}$, which, thanks to the induction hypothesis is
known. Similarly, to construct $\ttb_{n,p,+}$, we
need to define $h_{n,r,p-r,+}^\mathfrak{t}$ and
$h_{n,r,p-r,+}^\mathfrak{n}$ for $r\leq p-1$. These terms only depend on $w_{n,r,+}$, and, as
a consequence, are known.  Lemma~\ref{LemmeTechniquesmq} then ensures
the existence of $w_{n,p,+}$.\\

\noindent Of course, we could have written a recursive formula to
obtain $g_{n,r,q,+}^{\mathfrak{t}}$ (resp.  $g_{n,r,q,
  +}^{\mathfrak{n}}$,$h_{n,r,q,
  +}^{\mathfrak{t}}$,$h_{n,r,q,
  +}^{\mathfrak{n}}$) starting
form $g_{n,r,0, +}^{\mathfrak{t}}$ (resp.  $g_{n,r,0,
  +}^{\mathfrak{n}}$,$h_{n,r,0,
  +}^{\mathfrak{t}}$,$h_{n,r,0,
  +}^{\mathfrak{n}}$) but we shall not need to explicit this
formula. However, we can prove the following useful relation:
\begin{equation}\label{relationMagique1}
h_{n,r,q,+}^{\mathfrak{t}} = - g_{n,r,q+1,+}^{\mathfrak{t}}
\quad\mbox{and} \quad h_{n,r,q,+}^{\mathfrak{n}} = - g_{n,r,q+1,+}^{\mathfrak{n}}.
\end{equation}

\begin{rem} If $\lambda_n -p \in \Lambda$ (which is the case as
soon as $p$ is even, except if $\lambda_n -p =0$), the functions $w_{n,p,+}$ is not
uniquely defined by \eqref{Problemunqplus}. In that
case, we add the somehow arbitrary condition
\begin{equation}\label{ConditionOrthogonalitePlus}
\int_{0}^{\frac{3 \pi}{2}} w_{n, p, 0,+ }(\theta^+,\ln r^+) w_{n,0,+} (\theta^+) d \theta^+ = 0
\end{equation}
to restore the uniqueness (see \eqref{Definitionwnpplus} for the
definition of $w_{n, p, 0,+ }$). As a consequence, we can see that the sum over $q$
in~\eqref{Definitionwnpplus} goes from $0$ to $\lfloor p/2\rfloor$
(and not $p$): in other words,
$w_{n,p,+}$ is a polynomial of degree at most $\lfloor p/2\rfloor$.  \\
\end{rem}
\noindent Similarly,  we can define by induction the functions $w_{n,p,-}$ of
the form
\begin{equation}\label{Definitionwnpmoins}
w_{n,p, -}(\theta^-, \ln r^-) = \sum_{q=0}^{\lfloor p/2 \rfloor} w_{n,p,q, -}(\theta^-)
(\ln r^-)^q \quad n \in \Z^\ast, p \in \N, \quad    w_{n,p,q,-} \in
\mathcal{C}^\infty([-\frac{\pi}{2}, 0])\cap \mathcal{C}^\infty([ 0, \pi]).
\end{equation} 
Again, the function $w_{n,0,-}$ has already been defined in Proposition~\ref{PropositionAsymptoticEspaceAPoids}:
\begin{equation}
w_{n,0,-}(\theta^-, \ln r^-) = \sin (\lambda_n (\theta^-- \pi/2)),
\end{equation}
For $p\geq 1$, we construct $w_{n,p,-}$ such that $\ttv_{n,p, -}= (r^-)^{\lambda_n -p}
w_{n,p, -}(\theta^-, \ln r^-)$ satisfies
\begin{equation}\label{Problemunqmoins}
\left \lbrace
\begin{aligned}
\Delta \ttv_{n,p,-} &  \; = &&  0 \; \mbox{in}\; \mathcal{K}^{-,1} \cap \mathcal{K}^{-,2}  , \\
 \ttv_{n,p,-}(-\frac{\pi}{2}) &  \;  =  &&  0,  \\
  \ttv_{n,p,-}(\pi) & \; = &&  0, \\
[\ttv_{n,p,-}(r^-, 0)]_{\partial \mathcal{K}^{-,1} \cap \partial
  \mathcal{K}^{-,2}} &  \;  = &&  (r^-)^{\lambda_n -p}  \; \tta_{n,p,-}(\ln r^-)  ,  \\
[\partial_{\theta^-} \ttv_{n,p,-}(r^-, 0)]_{\partial \mathcal{K}^{-,1}
  \cap \partial \mathcal{K}^{-,2}} &  \;  = &&   (r^-)^{\lambda_n -p}\; \ttb_{n,p,-}(\ln r^-), 
\end{aligned} \right. \quad \;  \forall n \in \Z^\ast, \;\forall  p \in \N^\ast.
\end{equation} 
Here,
\begin{equation}
\mathcal{K}^{-,1}  = \left\{ (r^- \cos \theta^-,  r^- \sin
  \theta^-) \in \mathcal{K}^-, -\pi < \theta^- < 0 \right\} \;
\mathcal{K}^{-,2}  = \left\{ (r^- \cos \theta^-,  r^- \sin
  \theta^-) \in \mathcal{K}^-, 0 < \theta^- < \pi \right\},
\end{equation}
and 
\begin{align}
& \tta_{n,p,-}(\ln r^-)= \sum_{r=0}^{p-1} \left( \mathcal{D}_{p-r}^{\mathfrak{t}}
  g_{n,r,p-r,-}^{\mathfrak{t}}(\ln r^-) +  \mathcal{D}_{p-r}^{\mathfrak{n}}
  g_{n,r,p-r,-}^{\mathfrak{n}}(\ln r^-) \right) \label{Defanpmoins}\\
& \ttb_{n,p,-}(\ln r^-)= \sum_{r=0}^{p-1} \left( \mathcal{N}_{p+1 - r}^{\mathfrak{t}}
  h_{n,r,p-r,-}^{\mathfrak{t}}(\ln r^-) +  \mathcal{N}_{p+ 1 -r}^{\mathfrak{n}}
  h_{n,r,p-r,-}^{\mathfrak{n}}(\ln r^-) \right) \label{Defbnpmoins}
\end{align}
The functions $g_{n,r,q,-}^{\mathfrak{t}}$,
$g_{n,r,q,-}^{\mathfrak{n}}$, $h_{n,r,q,-}^{\mathfrak{t}}$,
$h_{n,r,q,-}^{\mathfrak{n}}$ are defined by the following relations
for $n \in \Z \setminus\{0\}$, $r\in \N$, $q \in \N$,
\begin{align*}
&\dsp  (r^-)^{\lambda_n -r -q}   \;g_{n,r,q,-}^{\mathfrak{t}}(\ln r^-)= 
\frac{\partial^q}{(\partial r^-)^q}  \left[ (r^-)^{\lambda_n -r} \, \langle w_{n,r,-}(0, \ln r^-) \rangle
_{\partial \mathcal{K}^{+,1} \cap \partial \mathcal{K}^{+,2}} \right]  \\
&\dsp  (r^-)^{\lambda_n -r -q}  \; g_{n,r,q,-}^{\mathfrak{n}}(\ln r^-) = 
\frac{\partial^{q-1}}{(\partial r^-)^{q-1}}  \left[ (r^-)^{\lambda_n -r-1} \,
  \langle \partial_{\theta^-} w_{n,r,-}(0, \ln r^-) \rangle
_{\partial \mathcal{K}^{-,1} \cap \partial \mathcal{K}^{-,2}} \right]
\; (q\geq 1)\\
& \dsp (r^-)^{\lambda_n -r -q}  \; h_{n,r,q,-}^{\mathfrak{t}}(\ln r^-)= r^-
\frac{\partial^{q+1}}{(\partial r^-)^{q+1}}  \left[ (r^-)^{\lambda_n -r} \, \langle w_{n,r,-}(0, \ln r^-) \rangle
_{\partial \mathcal{K}^{-,1} \cap \partial \mathcal{K}^{-,2}} \right] \\
& \dsp (r^-)^{\lambda_n -r -q} \; h_{n,r,q,-}^{\mathfrak{n}}(\ln r^-)=r^-  
\frac{\partial^{q}}{(\partial r^-)^q}  \left[ (r^-)^{\lambda_n -r-1} \, \langle \partial_{\theta^-} w_{n,r,-}(0, \ln r^-) \rangle
_{\partial \mathcal{K}^{-,1} \cap \partial \mathcal{K}^{-,2}} \right] 
 \end{align*} 
and $g_{n,r,0,-}^{\mathfrak{n}} =0$. Again, we have:
\begin{equation}
h_{n,r,q,-}^{\mathfrak{t}} =  g_{n,r,q+1,-}^{\mathfrak{t}}
\quad\mbox{and} \quad h_{n,r,q,-}^{\mathfrak{n}} =  g_{n,r,q+1,-}^{\mathfrak{n}}.
\end{equation}

\noindent To restore the uniqueness of $w_{n, p, -}$ if $\lambda_n -p \in
\Lambda$ (\ie $p$ even), we impose the orthogonality condition (see \eqref{Definitionwnpmoins} for the
definition of $w_{n, p, 0,-}$))
\begin{equation}\label{ConditionOrthogonaliteMoins}
\int_{-\frac{\pi}{2}}^{\pi} w_{n, p,0, - }(\theta^-) w_{n,0,-}(\theta^-) d \theta^- = 0.
\end{equation}

\subsection{Construction of the family $s_{-m,q}^+$ of Proposition~\ref{PropDefinitionsmq}}
\noindent We are now in a position to start the heart of the induction proof of
Proposition~\ref{PropDefinitionsmq}. The proof of the base case and
the proof of the inductive case essentially rely on the same arguments, the inductive step
being as usual more technical.

\subsubsection{{Base case}}
$s_{-m,1}^+$ satisfies the following problem
 \begin{equation}\label{DefProblemeSmmm1}
\left\{
 \begin{aligned}
 \dsp - \Delta s^+_{-m,1}  & \; = &&  0 \quad \mbox{in} \; \OmegaTop \cap
 \OmegaBottom\\
s^+_{-m,1}  & \;= && 0 \quad \mbox{on} \; \Gamma_D,\\
 \dsp [s^+_{-m,1} (x_1,0)]_{\Gamma} & \;= &&  
 \mathcal{D}_1^{\mathfrak{t}} \, \partial_{x_1} \langle
 s^+_{-m,0}(x_1,0)
 \rangle_{\Gamma}+ 
 \mathcal{D}_1^{\mathfrak{n}} \,  \langle \partial_{x_2}
 s^+_{-m,0} (x_1,0) \rangle_{\Gamma}, \\
 \dsp [\partial_{x_2}s^+_{-m,1} (x_1,0)]_{\Gamma} &\; = &&
  \mathcal{N}_{2}^{\mathfrak {t}} \, \partial_{x_1}^{2} \langle
 s^+_{-m,0} (x_1,0)
 \rangle_{\Gamma}+ 
 \mathcal{N}_{2}^{\mathfrak{n}} \,  \partial_{x_1} \langle \partial_{x_2}
 s^+_{-m,0} (x_1,0) \rangle_{\Gamma}.
 \end{aligned}
\right.
 \end{equation}
 We first observe that the right-hand side of~\eqref{DefProblemeSmmm1} is singular
 (see the asymptotic \eqref{jumpPlus1}-\eqref{djumpPlus1} below). As a
 consequence, Proposition~\ref{PropExistenceUniquenessFF} does
 not apply, at least directly. However, we can write an explicit asymptotic
 representation of the jump values of $s_{m,q}^+$ in the vicinity of the two
 corners. Then, we shall see that we are able to make an appropriate lift of
 the singular part of the jumps to reduce the problem to a variational one,
 which can of course be solved using
 Proposition~\ref{PropExistenceUniquenessFF} (or Proposition~\ref{PropositionAsymptoticEspaceAPoids}).\\

\noindent Using the asymptotic
expansion~\eqref{Asymptoticsm0CoinPositif} of $s_{-m,0}^+$, in the
vicinity of $\mx_O^+$, and thanks to the two formulas 
$\partial_{x_1} v = -
 \partial_{r^+} v$, $\partial_{x_2} = -
(r^+)^{-1} \partial_\theta^+ v$ valid on $\Gamma$,  
we can write an explicit expansion of
$[s^+_{-m,1} (x_1,0)]_{\Gamma}$ in the vicinity of  $\mx_O^+$: for any
$k\in \N^\ast$, for
$r^+$ sufficiently small
\begin{multline}\label{jumpPlus1}
\left[ s^+_{-m,1} (r^+,\pi)\right]_{\Gamma} = \sum_{n=-m}^{k+1}
\ell_{n}^+(s_{-m,0}^+) \, (r^+)^{\lambda_n -1} \;
\; \left( - \lambda_n \, \mathcal{D}_1^{\mathfrak{t}} \, w_{n,0,+}(\pi) \; -\;
\mathcal{D}_1^{\mathfrak{n}} \, \partial_{\theta^+} w_{n,0,+}(\pi) \right) + \;
r_{1,k,+} (r^+) \\
=  \sum_{n=-m}^{k+1} \, \tta_{n,1,+} \;
\ell_{n}^+(s_{-m,0}^+) \; (r^+)^{\lambda_n -1}  \; + 
r_{1,k,+} (r^+), 
\end{multline}
where, $ r_{1,k,+} \in V_{2,
  \beta}^{3/2}(\Gamma)$ for any $\beta > 1 -
\frac{2(k+1)}{3}$. We used the convention $\ell_{-m}^+(s_{-m,0}^+) =  1$ and
$\ell_{n}^+(s_{-m,0}^+) =  0$ for the integers $n$ such that $-m+1 \leq
n \leq 0$. $\tta_{n,1,+}$ is defined in~\eqref{Defanpplus}. \\

\noindent Based on the same technical ingredients, we can prove that there exists
a function $ r_{2,k,+}$ in $V_{2, \beta}^{1/2}(\Gamma)$ for any $\beta > 1-
\frac{2(k+1)}{3}$ such that
\begin{equation}\label{djumpPlus1}
\left[ \frac{1}{r^+}\partial_{\theta^+} s^+_{-m,1} (r^+,\pi)\right]_{\Gamma} = \sum_{n=-m}^{k+1}
 \ttb_{n,1,+} \;  \ell_{n}^+(s_{-m,0}^+) \; (r^+)^{\lambda_n -2} \;+ \; r_{2,k,+} (r^+), \\
\end{equation}
$\ttb_{n,1,+}$ being defined in~\eqref{Defanpplus}. Of course, a similar analysis provides an explicit expression for the jump
values of $s^+_{-m,1}$ close to the left corner:
\begin{align}
& \left[ s^+_{-m,1} (r^-,0)\right]_{\Gamma} = \sum_{n=1}^{k+1}
\tta_{n,1,-} \;\ell_{n}^-(s_{-m,0}^+)  \; (r^-)^{\lambda_n -1}
\; + \; r_{1,k,-} (r^-), \\
& \left[\frac{1}{r^-} \partial_{\theta^-} s^+_{-m,1} (r^-,0)\right]_{\Gamma} = \sum_{n=1}^{k+1}
 \ttb_{n,1,-} \; \ell_{n}^-(s_{-m,0}^+) (r^-)^{\lambda_n -2} \;+ \; r_{2,k,-} (r^+),
\end{align}
where, for any $\beta > 1 -
\frac{2(k+1)}{3}$, $r_{1,k,-} \in V_{2, \beta}^{3/2}(\Gamma)$ and $r_{2,k,+}V_{2,
  \beta}^{1/2}(\Gamma)$. The constants $\tta_{n,1,+}$ and $\ttb_{n,1,-}$ are
defined in~\eqref{Defanpmoins} and~\eqref{Defbnpmoins}.\\

Now, let us consider the problem satisfied by a function 
\begin{equation}\label{DefSm1kTilde}
\widetilde{s}_{-m,1,k}^+ = s_{-m,1}^+ - \sum_{n=-m}^{k+1} \ell_n^+(s_{-m,0}^+)
\chi_{\LBottom}^+  \ttv_{n,1,+}(r^+, \theta^+) - \sum_{n=1}^{k+1} \ell_n^-(s_{-m,0}^+)
\chi_{\LBottom}^- \ttv_{n,1,-}(r^-, \theta^-),
\end{equation}
defined on $\OmegaTop \cap  \OmegaBottom$. In this definition, we have lifted the
most singular part of the jumps of ${s}_{-m,1}^+$ across $\Gamma$ (up to a given
order $k$) using the functions $\ttv_{n,1,\pm}$ defined in
\eqref{Problemunqplus}-\eqref{Problemunqmoins}. The cut-off functions
$\chi_{\LBottom}^+$ are defined in the sentence following the definition~\eqref{definitionV2betal}.
$\widetilde{s}_{-m,1,k}^+$ satisfies
\begin{equation}\label{DefProblemeSmmm1Tilde}
\left \lbrace
 \begin{aligned}
 \dsp - \Delta \widetilde{s}_{-m,1,k}^+ & \;  =  && f_{k} \quad \mbox{in} \;
 \OmegaTop \cap \OmegaBottom,\\
\widetilde{s}_{-m,1,k}^+   & \; = &&  0 \quad \mbox{on} \,\Gamma_D,\\
 \dsp [\widetilde{s}_{-m,1,k}^+(x_1,0)]_{\Gamma} & \; = &&  g_{k}, \\
 \dsp [\partial_{x_2} \widetilde{s}_{-m,1,k}^+(x_1,0)]_{\Gamma} & \; =
 &&h_{k}.
\end{aligned}
\right.
 \end{equation}
A direct computation shows that $f_{k} \in
V_{2,\beta}^{3/2}(\OmegaTop)\cap V_{2,\beta}^{3/2}(\OmegaBottom)$ ($f$ is compactly supported away from
the two corners), $g_{k} \in
V_{2,\beta}^{3/2}(\Gamma)$ and $h_{k} \in
V_{2,\beta}^{1/2}(\Gamma)$ for any $\beta > 1 -
\frac{2(k+1)}{3}$.  Thus, for $k\geq 1$, and choosing $\beta -1 \notin
\Lambda$, Proposition~\eqref{PropositionAsymptoticEspaceAPoids} guarantees
that Problem~\eqref{DefProblemeSmmm1} has a unique solution belonging to
$H^1_{0, \Gamma_D}(\OmegaTop)\cap H^1_{0, \Gamma_D}(\OmegaBottom)$ that
satisfies the following asymptotic close to the two corners: there
exists  $r_{n,1,k}^\pm \in V_{2,\beta}^2(\OmegaTop)\cap
V_{2,\beta}^2(\OmegaBottom)$ for any $\beta >  1 -
\frac{2(k+1)}{3}$, $\beta - 1 \notin \Lambda$, such that, for $r^\pm$
sufficiently small,
\begin{equation}\label{formuleAsymptoticTilde}
\widetilde{s}_{-m,1,k}^+ = \sum_{n=1}^k
\ell_n^\pm({s}_{-m,1,k}^+) (r^\pm)^{\lambda_n} w_{n}^\pm(\theta^\pm) +   r_{n,1,k}^\pm.
\end{equation}
The existence of $s_{-m,1}^+$ is then a 
direct consequence of
formula~\eqref{DefSm1kTilde}. Finally, the asymptotic formula for
$s_{-m,1}^+$ directly follows from \eqref{formuleAsymptoticTilde} and
the asymptotic formulas for $\ttv_{n,1,\pm}$ (See
\eqref{Problemunqplus}-\eqref{Problemunqmoins}). The uniqueness of
$s_{-m,1}^+$ follows form Remark~\ref{remarqueUnicitesm0}.
\subsubsection{Inductive step}
We assume that the function $s_{-m, p}^+$ are constructed up to $p=q-1$
and that the asymptotic formulas \eqref{Asymptoticsmq}-\eqref{AsymptoticsmqLeftCorner} hold for $p\leq q-1$. We
shall prove that we can construct $s_{-m, q}^+ \in V^2_{2,
  \beta}(\OmegaTop)\cap  V^2_{2, \beta}(\OmegaBottom)$,
$\beta > 1 + \frac{2m}{3} +q$, satisfying
\eqref{Problemsmq} and admitting the asymptotic expansions
\eqref{Asymptoticsmq}-\eqref{AsymptoticsmqLeftCorner} in the
vicinity of the two corners. The proof is almost entirely similar to the one of the
base case. First, using the asymptotic formula~\eqref{Asymptoticsmq} for $s_{-m,
  q-p}^+$ and the 
definitions~\eqref{Defanpplus} of $\tta_{n,p,+}$, we obtain the following
formula for the jump value of $s_{-m,q}^+$ across $\Gamma$:  for $r^+$ sufficiently small, 
$$
[s_{-m,q}^+]_{\Gamma} =\sum_{p=1}^{q} \; \sum_{-m \leq n <
 (k+1) + \frac{3}{2}}  \ell_{n}^+(s_{-m, q-p}^+)  \tta_{n, p,+} (r^+)^{\lambda_n
 -p} + r_{1,k,+}(r^+)
$$
where $r_{1,k,+} \in V_{2,\beta}^{3/2}(\Gamma)$ for any $\beta > 1 -
\frac{2(k+1)}{3}$. Here, we have used the convention that for $p>0$, $\ell_{n}^+(s_{m,
  p}^+) =0$ if $n \geq 0$.  Similarly, using the
definition~\eqref{Defbnpplus} of $\ttb_{n,p,+}$, we see that there exists $r_{2,k,+} \in V_{2,\beta}^{1/2}(\Gamma)$ for any $\beta > 1 -
\frac{2(k+1)}{3}$ such that, for $r^+$ small enough
$$
[\frac{1}{r^+}\partial_{\theta^+} s_{-m,q}^+]_{\Gamma} =\sum_{p=1}^{q} \; \sum_{-m \leq n <
 (k+1) + \frac{3}{2}}  \ell_{n}^+(s_{-m, q-p}^+)  \ttb_{n, p,+} (r^+)^{\lambda_n
 -p-1} + r_{2,k,+}(r^+).
$$
Analogously, we obtain asymptotic representations of the jump values of
$s_{-m,q}^+$ in the vicinity of the left corner: there exist
$r_{1,k,-}$ belonging to $V_{2,\beta}^{3/2}(\Gamma)$ for any $\beta > 1 -
\frac{2(k+1)}{3}$ and $r_{2,k,-}$ in $V_{2,\beta}^{1/2}(\Gamma)$ for any $\beta > 1 -
\frac{2(k+1)}{3}$, such that, for $r^-$ small enough
\begin{align*}
& [s_{-m,q}^+]_{\Gamma} =\sum_{p=1}^{q} \; \sum_{1 \leq n <
 (k+1) + \frac{3}{2}}  \ell_{n}(s_{-m, q-p})  \tta_{n, p,-} (r^-)^{\lambda_n
 -p} + r_{1,k,-} \\
& [\frac{1}{r^-}\partial_{\theta^-} s_{-m,q}^+]_{\Gamma} =\sum_{p=1}^{q} \; \sum_{1 \leq n <
 (k+1) + \frac{3}{2}}  \ell_{n}^+(s_{-m, q-p})  \ttb_{n, p,-} (r^-)^{\lambda_n
 -p-1} + r_{2,k,-}.
\end{align*}
$\tta_{n,q,-}$ and $\ttb_{n,q,-}$ are defined by \eqref{Defanpmoins} and
\eqref{Defbnpmoins}. Then, we consider the problem satisfied by the function
\begin{multline}
\tilde{s}_{-m,q,k}^+  = {s}_{-m,q}^+ \; - \; \sum_{p=1}^{q} \; \sum_{-m \leq n <
 (k+1)+ \frac{3}{2}} \ell_n^+(s_{-m,q-p}^ +) \chi_{\LBottom}^+  \ttv_{n,p,+}(r^+,
\theta^+)   \\ - \sum_{p=1}^{q} \; \sum_{1 \leq n <
 (k+1) + \frac{3}{2}} \ell_n^-(s_{-m,q-p}^ +) \chi_{\LBottom}^- \, \ttv_{n,p,-}(r^- , \theta^-).
\end{multline}
defined on $\OmegaTop \cap \OmegaBottom$, where we have lifted (up to order
$k$) the most singular part of the jumps values of $s_{-m,q}^+$. The
functions $\ttv_{n,p,\pm}$ are defined in \eqref{Problemunqplus}-\eqref{Problemunqmoins}.  The
remainder of the proof is entirely similar to the one of the base case.

\section{Technical results associated with the analysis of the near field problems (near field
  singularities)}

\subsection{Proof of Proposition~\ref{propositionTechniqueSobolevPoids}}
Before proving the Proposition~\ref{propositionTechniqueSobolevPoids}, let us first give a preliminary technical
result (whose proof essentially follows from the fact that $\lvert t \rvert \geqslant \lvert
  \sin t \rvert$ and, for $t
  \in (-\pi/2, \pi/2)$, $ | \sin t | \geq \frac{2}{\pi} |t|$):
\begin{lema}
  \label{lmm:equivalence_rho}
  For any $\theta^+ \in (\frac{\pi}{2}, \frac{3\pi}{2})$ (\ie $X_1^+<0$), the
  following estimate holds:
  \begin{equation}
    \label{eq:minoration_rho}
    1 + \lvert X_2^+ \rvert  \; \leqslant \rho(R^+,\theta^+) \; \leqslant \; \left( \frac{\pi}{2} + 1 \right) \left( 1 + \lvert X_2^+
      \rvert \right).
  \end{equation}
\end{lema}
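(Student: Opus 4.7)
The plan is to reduce the claim to the elementary inequalities already cited, namely $|\sin t| \leq |t|$ on $\mathbb{R}$ and $|\sin t| \geq \tfrac{2}{\pi}|t|$ on $(-\pi/2, \pi/2)$, via the substitution $t := \theta^+ - \pi$. Since $\theta^+ \in (\pi/2, 3\pi/2)$ corresponds exactly to $t \in (-\pi/2, \pi/2)$, both inequalities are available, and the identity
\begin{equation*}
|X_2^+| = R^+ |\sin \theta^+| = R^+ |\sin(\pi + t)| = R^+ |\sin t|
\end{equation*}
converts the transverse coordinate into the angular deviation from the periodic layer.

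For the lower bound, I would write
\begin{equation*}
\rho(R^+,\theta^+) = 1 + (1+R^+)|t| \geq 1 + R^+ |t| \geq 1 + R^+ |\sin t| = 1 + |X_2^+|,
\end{equation*}
which follows immediately from $|t| \geq |\sin t|$. For the upper bound, I would split $\rho = 1 + |t| + R^+|t|$, bound the constant piece by $|t| \leq \pi/2$, and bound the growing piece using $|t| \leq (\pi/2)|\sin t|$, giving
\begin{equation*}
\rho \leq 1 + \tfrac{\pi}{2} + \tfrac{\pi}{2} R^+ |\sin t| = \tfrac{\pi}{2} + 1 + \tfrac{\pi}{2}|X_2^+| \leq \left(\tfrac{\pi}{2}+1\right)(1 + |X_2^+|),
\end{equation*}
where the last step simply absorbs the factor $\pi/2 \leq \pi/2 + 1$ on the $|X_2^+|$ term.

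There is no real obstacle here; the only thing to keep in mind is that the substitution $t = \theta^+ - \pi$ is legitimate precisely because the hypothesis restricts $\theta^+$ to an open interval of total length $\pi$ centered on $\pi$, so that $t$ stays in the range where the two-sided sine bound $\tfrac{2}{\pi}|t| \leq |\sin t| \leq |t|$ holds. No additional input from the geometry of $\widehat{\Omega}^+$ or from the weighted spaces is required, so the result is purely trigonometric.
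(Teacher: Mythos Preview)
Your proof is correct and follows exactly the approach sketched in the paper, which merely states that the result follows from $|t| \geq |\sin t|$ and $|\sin t| \geq \tfrac{2}{\pi}|t|$ for $t \in (-\pi/2,\pi/2)$. You have filled in precisely the computation the paper omits, using the same substitution $t = \theta^+ - \pi$ and the same two inequalities.
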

\noindent We can now start the proof of Proposition~\ref{propositionTechniqueSobolevPoids} Let
$\gamma \in \left( \frac{1}{2},1 \right)$, $p \in \{1, 2\}$, $\lambda \in \R$
and $q \in \N$.
\begin{itemize}
\item Let $v_1 = \chi^{(p)}(X_2^+) \chi_-(X_1^+) (R^+)^\lambda (\ln (R^+))^q$ and let us
  determine on which condition on $\beta$, $v_1$ belongs to
  $\mathfrak{V}_{\beta, \gamma}^0(\widehat{\Omega}^+)$. 
  We first note that $\chi^{(p)}(X_2^+)$ is compactly supported in the bands $1
  \leqslant \lvert X_2^+ \rvert \leqslant 2$. On this domain, we can bound
  $\chi^{(p)}$ by its $\mathrm{L}^{\infty}$-norm. Then, in view of
  Lemma~\ref{lmm:equivalence_rho} (note that due to the cut-off function
  $\chi_-(X_1^+)$, we only consider $X_1^+<0$), $\rho$ defined in
  (\ref{DefinitionNormeDoublePoids}) is bounded from below and from above.  As
  a consequence, we have to estimate
  \begin{equation}
    \label{eq:proof_62_integral_1}
    \int_{-\infty}^{-1} \int_{1}^2 (R^+)^{2\lambda} (\ln (R^+))^{2q}
    (R^+)^{2\beta-2\gamma-2} dX_1^+ dX_2^+.
  \end{equation}
  In addition, since $X_2^+$ is bounded from above and
  from below, $R^+$ is equivalent to $ \lvert X_1^+ \rvert$. Moreover, $(\ln
  R^+)^{2q}$ can be bounded  by $\lvert X_1^+ \rvert^\varepsilon$, for any $\varepsilon > 0$;
  then estimating the integral~\eqref{eq:proof_62_integral_1} is equivalent to the
  estimate 
  \begin{equation}
    \label{eq:proof_62_integral_1_2}
    \int_{-\infty}^{-1} \lvert X_1^+
    \rvert^{2\lambda+2\beta-2\gamma-2+\varepsilon} dX_1^+,
  \end{equation}
  and integral (\ref{eq:proof_62_integral_1_2}) is bounded if and only if
  \begin{equation*}
    2\lambda+2\beta-2\gamma-2+\varepsilon < -1, \forall \varepsilon > 0, 
  \end{equation*}
  Finally  $v_1$ belongs to $\mathfrak{V}_{\beta, \gamma}^0(\widehat{\Omega}^+)$ as soon as $\beta < \gamma - \lambda + \frac{1}{2}$.
\item We now consider the function $v_2 =\chi(R^+) (R^+)^\lambda (\ln R^+)^q$. Since the derivatives of $
  \chi(R^+)$ are compactly supported, the only part to estimate is 
  \begin{equation}
    \label{eq:proof_62_integral_2}
    \int_{\widehat{\Omega}^+}  \chi(R^+) (R^+)^{2\beta-2\gamma-2\delta_{p,0}}
    \rho^{2\gamma-4+2p+2\delta_{p,0}} \left\lbrace \nabla^p \left(
        (R^+)^{\lambda} (\ln R^+)^{q} \right) \right\rbrace^2 dX_1^+ dX_2^+,
  \end{equation}
  for $0 \leqslant p \leqslant 2$. First, we can exhibit a function
  $\phi_p(R^+)$ such that
  \begin{equation*}
    \nabla^p \left( (R^+)^{\lambda} (\ln R^+)^{q} \right) =
    (R^+)^{\lambda-p} \phi_p(R^+), \quad \text{with} \quad \phi_p(R^+) =
    O((R^+)^\varepsilon), \quad \forall \varepsilon > 0,
  \end{equation*}
  so that we have to study the convergence of
  \begin{equation}
    \label{eq:proof_62_integral_2_2}
    \int_{\widehat{\Omega}^+}
    \chi(R^+) (R^+)^{2\lambda+2\beta-2\gamma-2p-2\delta_{p,0}+\varepsilon}
    \rho^{2\gamma-4+2p+2\delta_{p,0}} dX_1^+ dX_2^+.
  \end{equation}
  Note that estimation of (\ref{eq:proof_62_integral_2_2}) for $p=0$ and $p=1$
  is the same. Moreover, since the origin $(0,0)$ does not belong to the
  support of $\chi$, we only have to consider the behviour for large $R^+$. To
  estimate this integral, we split the domain into three parts:
  \begin{itemize}
  \item the domain $\Omega_1 = \widehat{\Omega}^+ \cap  \left\{ 0
      \leqslant |X_2^+| \leqslant 2,X_1^+<0 \right\}$ 
    (the intersection of $\widehat{\Omega}^+$ (which has holes) with a fixed width band located on both sides of the interface $\{ (X_1^+, X_2^+) \in \R^2, X_1^+ <0
    \,\mbox{and}\, X_2^+ =0\}$): since $\Omega_1$ is
    included in the band $\left\{ 0
      \leqslant |X_2^+| \leqslant 2,X_1^+<0 \right\}$, the integral
    \eqref{eq:proof_62_integral_2_2} restricted to $\Omega_1$ is
    smaller than the same integral in the whole band. Then, again, we can use that
    $\rho$ is bounded in this part  (from above and from below), so that the integral converges if and only if
    \begin{equation*}
      2\lambda+2\beta-2\gamma-2p-2\delta_{p,0}+\varepsilon < -1, \;\varepsilon
      > 0 \quad \Rightarrow \quad \beta < \gamma - \frac{1}{2} + p +
      \delta_{p,0} - \lambda,
    \end{equation*}
    and, since $\gamma > \frac{1}{2}$ and $p + \delta_{p,0} \geqslant 1$, we
    can bound $\beta$ by $1 - \lambda$.
  \item the domain $\Omega_2 = \{ 2 < |X_2^+| < |X_1^+| , X_1^+<0\}$ (two angular
    domains). We first note that $R^+$ is equivalent to $|X_1^+|$ in $\Omega_2$.
    Indeed, $R^+ \leq ( |X_1^+| + |X_2^+|) \leq 2 |X_1^+|$. Then, thanks to lemma
    \ref{lmm:equivalence_rho}, $\rho$ is equivalent to $1
    + \lvert X_2^+ \rvert$
    To evaluate the integral~\eqref{eq:proof_62_integral_2_2} restricted to
    $\Omega_2$, we make first an explicit integration with respect to $X_2^+$
    (note that $2 \gamma -4 + 2p + 2 \delta_{p,0}\neq -1$), then we use the
    equivalence of $R^+$ and $|X_1^+|$ to obtain
    \begin{multline}
      \int_{-\infty}^{-1} \int_{1}^{|X_1^+|} (R^+)^{ 2 \lambda + 
        2 \beta - 2\gamma -2 \delta_{p,0} -2p+ \varepsilon} \rho^{2 \gamma -4
        + 2p + 2 \delta_{p,0}}
    dX_2^+   dX_1^+ \\ \leq C  \int_{-\infty}^{-1} |X_1^+|^{ 2 \lambda + 
        2 \beta - 2\gamma -2 \delta_{p,0} -2p + \varepsilon } \left( |X_1^+|^{2
          \gamma -3 + 2p + 2\delta_{p,0}} +1 \right) dX_1^+. 
    \end{multline}
    Since $2
    \gamma -3 + 2p + 2\delta_{p,0} >0$, the most singular term in the right-hand side of the previous equality is $ |X_1^+|^{2
      \gamma -3 + 2p + 2\delta_{p,0}}$. As a consequence,
    $$
    \int_{-\infty}^{-1} \int_{1}^{|X_1^+|} (R^+)^{ 2 \lambda + 
      2 \beta - 2\gamma -2 \delta_{p,0} -2p+ \varepsilon} \rho^{2 \gamma -4
      + 2p + 2 \delta_{p,0}}
    dX_1^+ dX_2^+ \leq C \int_{-\infty}^{-1} |X_1^+|^{ 2 \lambda + 
      2 \beta  - 3 + \varepsilon } dX_1^+.
    $$
    This integral converges as soon as $\beta < 1 - \lambda - \varepsilon/2$.
    Finally,
    \begin{equation}
      \int_{\Omega_{2} } (R^+)^{ 2 \lambda + 
        2 \beta - 2\gamma -2 \delta_{p,0} -2p+ \varepsilon} \rho^{2 (\gamma -1)}
      dX_1^+ dX_2^+ \\ \leq C \int_{-\infty}^{-1} |X_1^+|^{ 2 \lambda + 
        2 \beta -2 \delta_{p,0}  -1 -2p} dX_1^+ 
    \end{equation}
    converges as soon as $\beta < -1 + \lambda$.

    \item the domain $\Omega_3 = \widehat{\Omega}^+ \cap
      \{ (R^+ \cos \theta^+, R^+
      \sin \theta^+) \in \R^2,   |\theta^+-\pi| > \frac{\pi}{4} \}$: in
      this domain, $\rho$ is equivalent to
      $R^+$. Using the polar coordinates, we see that we have to estimate
      \begin{equation}
        \label{eq:proof_62_integral_2_6}
        \int_1^{+\infty} \int_0^{\frac{3\pi}{4}}
        (R^+)^{2\lambda+2\beta-2\gamma-2p-2\delta_{p,0}+\varepsilon+2\gamma-4+2p+2\delta_{p,0}}
        R^+ dR^+ d\theta^+,
      \end{equation}
      and this integral converges if and only if
      \begin{equation*}
        2\lambda+2\beta+\varepsilon-3 < -1, \varepsilon > 0 \quad \Rightarrow
        \quad \beta < 1 - \lambda.
      \end{equation*}
This ends the proof of the second point.
    \end{itemize}
  \item Let $w=w(X_1^+,X_2^+)$ be a $1$ periodic function with respect to
  $X_1^+$ such that  $\| w \, e^{|X_2^+|/2}\|_{L^2(\mathcal{B})} < +\infty$, and let $v_3 = \chi_-(X_1^+)
    |X_1^+|^{\lambda-1} (\ln (|X_1^+|))^q w(X_1^+,X_2^+)$. We have to study
    the convergence of the integral
    \begin{equation}
      \label{eq:proof_62_integral_3}
    \mathcal{I}_3 =  \int_{\widehat{\Omega}^+} (R^+)^{2\beta-2\gamma-2}
      \rho^{2\gamma+2}\, \chi_-(X_1^+) \, |X_1^+|^{2(\lambda-1)}\, (\ln |X_1^+|)^{2q}
      w^2(X_1^+,X_2^+) d\mX.
    \end{equation}
In order to use the periodicity of $w$, we split the domain
$\widehat{\Omega}^+$ into an infinite sequence of similar non overlapping domains
$\mathcal{B}_{n} = \widehat{\Omega}^+\cap \left\{ -(n+1) < X_1^+ < -n
\right\}$ ($\mathcal{B}_{n}$ consists of the shifted periodicity cell
$\mathcal{B}$, see \eqref{PeriodicityCellDef}). Then, the previous
integral becomes
$$
 \mathcal{I}_3 = \sum_{n\in \N}  \int_{\mathcal{B}_{-n}} (R^+)^{2\beta-2\gamma-2} 
      \rho^{2\gamma+2} e^{-|X_2^+|} |X_1^+|^{2(\lambda-1)} (\ln |X_1^+|)^{2q}
      \left( w^2(X_1^+,X_2^+)  e^{|X_2^+|} \right) d\mX.
$$
Note first that the inequalities~\eqref{eq:minoration_rho} are valid in
$\mathcal{B}_{-n}$ ($\rho$ is equivalent
to $(1 + |X_2^+|)$ for $X_1^+ <0$).  As a result $\rho^{2\gamma}$ is
equivalent to $(1 + |X_2^+|)^{2 \gamma}$. Besides, there
exist two constants $(C_1,C_2) \in (\R^+)^2$ such that
\begin{equation}\label{inegaliteBn}
C_1 \, n  \; \leq R^+ \; \leq \;  C_2 \, n \, ( 1+\frac{|X_2^+|^2}{n^2})^{1/2}\; \leq\; C_2 \, n \, (1 + |X_2^+|).  
\end{equation}
One the one hand, if $2\beta - 2 \gamma-2>0$, using the second inequality
in~\eqref{inegaliteBn} together with the fact that for any $\alpha
\in \R$ $(1+|X_2^+|)^\alpha e^{-|X_2^+|}$ is bounded, we have,
$$
 \mathcal{I}_3 \leq  C \| w \|^2_{\mathcal{V}^+(\mathcal{B})} \,
 \sum_{n\in \N} n^{2 \beta - 2 \gamma +  2 \lambda -4 -
   \varepsilon },
$$
which converges as soon as  
\begin{equation}\label{ConditionBetav3}
\beta < \frac{3}{2} + \gamma -\lambda - \varepsilon.
\end{equation}
One the other hand, if $2\beta - 2 \gamma-2>0$, we can use the first inequality
in~\eqref{inegaliteBn} and the fact that for any $\alpha
\in \R$, $(1+|X_2^+|)^\alpha e^{-|X_2^+|}$ is bounded to show that
$\mathcal{I}_3$ also converges under the
condition~\eqref{ConditionBetav3}. To summarize, for a given parameter
$\gamma\in (\frac{1}{2},1)$, $v_3$
belongs to $\mathcal{V}_{\beta, \gamma}^0(\widehat{\Omega}^+)$ for any
$\beta \in \R$ such that $\beta < \frac{3}{2} + \gamma -\lambda$.
   
   \item For the last equality, it is sufficient to investigate the two
    following kinds
    of integrals: 
\begin{equation}
\mathcal{I}_{4,1} =  \int_{\widehat{\Omega}^+} (R^+)^{2\beta-2\gamma-2\delta_{p,0}}
      \rho^{2\gamma -4 + 2p + 2\delta_{p,0}}\, \chi_-(X_1^+) \, |X_1^+|^{2(\lambda-1)}\, (\ln |X_1^+|)^{2q}
      \tilde{w}^2(X_1^+,X_2^+) d\mX,
\end{equation} 
and
\begin{equation}
\mathcal{I}_{4,2} =  \int_{\widehat{\Omega}^+\cap supp (\chi_-'(X_1^+))} (R^+)^{2\beta-2\gamma-2\delta_{p,0}}
      \rho^{2\gamma -4 + 2p + 2\delta_{p,0}} \, |X_1^+|^{2(\lambda-1)}\, (\ln |X_1^+|)^{2q}
      \tilde{w}^2(X_1^+,X_2^+) d\mX,
\end{equation} 
for a generic function $\tilde{w} \in
L_{\textrm{loc}}^2(\widehat{\Omega}^+)$, $1$-periodic with respect to $X_1^+$
such that $\| e^{|X_2^+|/2}\tilde{w}\|_{L^2(\mathcal{B})} < +
\infty$. Indeed, by a direct computation of the partial derivatives of
$v_4$ (up to order 2), we can see that for any $\mathbf{\alpha} = (\alpha_1, \alpha_2) \in
\N^2$ such that $\alpha_1 + \alpha_2 \leq 2$,  there exist three
functions $v_{4,0}$, $v_{4,1}$ and $v_{4,2}$ such that
$$
\partial_{\mX}^{\alpha} v_4
=  \partial_{X_1^+}^{\alpha_1} \partial_{X_2^+}^{\alpha_2} v_4 =  \chi_-''(X_1^+) v_{4,2} + \chi_-'(X_1^+) v_{4,1} + \chi_-(X_1^+) v_{4,0},   
$$
where, for $\ell \in \{0,1,2\}$, $v_{4,\ell}  = |X_1^+|^{\lambda_\ell} \ln |X_1^+|^{q_\ell}
w_{4,\ell}$ with $\lambda_\ell \leq \lambda-1$, $q_\ell \leq q$, and $w_{4,\ell} \in
L_{\textrm{loc}}^2(\widehat{\Omega}^+)$ is $1$-periodic with respect to $X_1^+$
and satisfies $\| e^{|X_2^+|/2}w_{4, \ell}\|_{L^2(\mathcal{B})} < +
\infty$.
For $\mathcal{I}_{4,1}$, repeating the arguments of the proof of the third
point, we see that
$$
\mathcal{I}_{4,1} \; \leq \; C \, \| e^{|X_2^+|/2}\tilde{w} \|_{L^2(\mathcal{B})} \,
 \sum_{n\in \N} n^{2 \beta - 2 \gamma +  2 \lambda -2 - 2\delta_{p,0} +\varepsilon}.
$$  
Consequently, $\mathcal{I}_{4,1}$ converges for any $p \in \{ 0,1, 2\}$ if $\beta < \gamma +
\frac{1}{2} - \lambda -\varepsilon/2$.  As for $\mathcal{I}_{4,2}$,
we note that on the support of $\chi_-'(X_1^+)$, $|X_1^+|$ is bounded from above and from
below so that $R^+$ is
equivalent to $|X_2^+|$. It follows that $\mathcal{I}_{4,2}$ converges for any $\beta
\in \R$.
\end{itemize}

\subsection{Proof of Lemma \ref{LemmeSuperDecroissance}}

$\mathcal{U}_{m,p,+}$ is a sum of a macroscopic contribution  
$$
\mathcal{U}_{m,p,\text{macro}} = \sum_{r=0}^p (R^+)^{\lambda_m-r} w_{n,r,+}(\ln R^+, \theta^+),
$$
modulated by the cut-off function $\chi_{\text{macro},+}$, and a boundary layer contribution (exponentially decaying with respect
to $|X_2^+|$)
$$
\mathcal{U}_{m,p,\text{BL}} = \sum_{r=0}^p (|X_1^+|)^{\lambda_m -r}
p_{m,r,+}(\ln (|X_1^+|),X_1^+,X_2^+). 
$$
modulated by the cut-off function $\chi_-(X_1^+)$. Consequently,
\begin{equation}\label{LaplaceAsymptoticBlock}
\Delta \mathcal{U}_{m,p,+}  =  
[ \Delta, \chi(R^+)] (\chi_{\text{macro},+}\mathcal{U}_{m,p,\text{macro}}  + \chi_-(X_1^+) \mathcal{U}_{m,p,\text{BL}} )+  \chi(R^+) \Delta  \left( \chi_{\text{macro},+}\mathcal{U}_{m,p,\text{BL}}   + \chi_-(X_1^+)
 \mathcal{U}_{m,p,\text{macro}} \right).
\end{equation} 
The first term in the right-hand side is compactly supported
(since $\nabla \chi(R^+)$ and $\Delta \chi(R^+)$ are compactly supported). Therefore
it belongs to $\mathfrak{V}_{\beta,\gamma}^0(\widehat{\Omega}^+)$ for any
real numbers $\beta$ and $\gamma$.  It remains to estimate the terms of
the second line. The proof is technical but the main idea to figure out
is that $\Delta \left(
\chi_{\text{macro},+}\mathcal{U}_{m,p,\text{BL}} \right)$ and $\Delta \left(\chi_-(X_1^+)
 \mathcal{U}_{m,p,\text{macro}} \right)$ counterbalance (up to a
given order).\\

\noindent We start with the explicit computation of $\Delta \left(
\chi_{\text{macro},+}\,\mathcal{U}_{m,p,\text{macro}}\right)$. We consider the function
$$\ttv_{m,r,+}(X_1^+,X_2^+) = (R^+)^{\lambda_m -r} w_{m,r,+}(\theta, \ln
R^+),$$ 
already defined in \eqref{Problemunqplus}.  $\ttv_{m,r,+}$ is 
defined in the union of  the two cones $\mathcal{K}^{+,1}$ and
$\mathcal{K}^{+,2}$ (see \eqref{definitionCones1et2}). It is smooth on $\mathcal{K}^{+,1}$ and
$\mathcal{K}^{+,2}$ and it satisfies $\Delta
\ttv_{m,r,+}=0$ in $\mathcal{K}^{+,1}$ and $\mathcal{K}^{+,2}$. Using the fact
that $\chi_{\macro,+} = \chi(X_2^+)$ on the support of $\chi_-(X_1^+)$
($\chi_{\macro,+} = \chi(X_2^+)$ for $X_1^+<-1$), we have
\begin{equation}\label{EquationPreuvePenible1}
\Delta \left(
\chi_{\text{macro},+}\,\mathcal{U}_{m,p,\text{macro}}\right)= \chi_-(X_1^+)\sum_{r=0}^p 
  \Delta (\chi(X_2^+) \ttv_{m,r,+})  +  (1 -\chi_-(X_1^+)) \sum_{r=0}^p \left\{  \Delta
  ( \chi_{\text{macro},+}(X_1^+,X_2^+)  \ttv_{m,r,+} ) \right\}.
\end{equation}
The second term
in \eqref{EquationPreuvePenible1}
belongs to $\mathfrak{V}_{\beta,\gamma}^0(\widehat{\Omega}^+)$ for any
real numbers $\beta$ and $\gamma$ because
$ (1 -\chi_-(X_1^+))\nabla\chi_{\text{macro},+}$ and is $ (1
-\chi_-(X_1^+)) \Delta\chi_{\text{macro},+}$ are compactly supported and $\Delta \ttv_{m,r,+}=0$. 
Next,
$$
\Delta \left( \chi(X_2^+) \ttv_{m,r,+} \right)= 2 \chi'(X_2^+) \partial_{X_2^+} \ttv_{m,r,+} +
\chi''(X_2^+) \ttv_{m,r,+}.
$$
Using the Taylor expansion with integral remainder,
$$
\ttv_{m, r,+}(X_1^+,X_2^+) = \sum_{k=0}^{K-1} \partial_{X_2^+}^k \ttv_{m,r}(X_1^+,
0^\pm) \frac{(X_2^+)^k}{k!}+ \frac{1}{(K-1)!} \,\int_{0}^{X_2^+} \frac{\partial^{K} \ttv_{m,r,+}(X_1^+,
t) } {\partial t^K} (X_2^+ -t)^{K-1} dt,
$$
reminding that $| \partial_{X_2^+}^K \ttv_{m,r,+} |$ is smaller than $C
|X_1^+|^{\lambda_m - r - K} \ln |X_1^+|^r$ on the support of the derivative of $\chi(X_2^+)$, 
it is verified that
\begin{multline}\label{formulePenible1}
\Delta (\chi(X_2^+) \ttv_{m,r,+})  = \sum_{k=0}^{p-r} \delta_{k}^{\text{even}}
(-1)^{\lfloor k/2\rfloor} \left(2 \partial_{X_1^+}^k  \langle \ttv_{m,r,+}
  \rangle \langle  g_k \rangle  + \frac{1}{2} \partial_{X_1^+}^k [
  \ttv_{m,r,+} ] [g_k]\right) \\
+  \sum_{k=0}^{p-r} \delta_{k}^{\text{odd}}
(-1)^{\lfloor k/2\rfloor} \left(2 \partial_{X_1^+}^{k-1}
  \langle \partial_{X_2^+} \ttv_{m,r,+}
  \rangle \langle  g_k \rangle  + \frac{1}{2}
\partial_{X_1^+}^{k-1} [\partial_{X_2^+}
  \ttv_{m,r,+} ]\,   [g_k] \right) + r_{m,r,p}(X_1^+, X_2^+),
\end{multline} 
where $r_{m,q,p}$ is a smooth function whose support is included in the band
$ 1 \leq |X_2^+| \leq 2$ that satisfies   
\begin{equation}
 | r_{m,r,p} | \leq  C |X_1^+|^{\lambda_m - p - 1} \ln | X_1^+|^r,
\end{equation}
for a constant $C$ depending on $m$, $r$ and $p$.  We remind that $\langle \ttv_{m,r,+}\rangle$ and $[ \ttv_{m,r,+}]$ denote the mean and jump
values of $\ttv_{m,r,+}$ across the interface $\partial
\mathcal{K}^{+,1}\cap \partial \mathcal{K}^{+,2}$ and that
the functions $\langle g_k\rangle$ and $[g_k]$ are defined
in~\eqref{SautMoyennegp}. These functions only depend on $X_2^+$ and
their support coincides with the support of $\chi'(X_2^+)$. To obtain
\eqref{formulePenible1}, we have used the fact that $\Delta
\ttv_{m,r,+}=0$ on the support of $\langle g_k\rangle$ and $[g_k]$ so
that we can use the formulas~\eqref{FormulePaire}-\eqref{FormuleImpaire}. 
 Noting that
$\partial_{X_1^+}^k \ttv_{m, r,+}(X_1^+, 0^\pm) = (-1)^k \partial_{R^+}^k
\left\{ (R^+)^{\lambda_m-r}
w_{m,r,+}(\ln R^+, \pi^\pm) \right\}$, and using the definition
\eqref{gnrqplust} of $g_{m,r,k,+}^{\mathfrak{t}}$, we see that
\begin{equation}
\partial_{X_1^+}^k\langle \ttv_{m,r,+} \rangle = (-1)^k  \partial_{R^+}^k
\left\{ 
\langle  (R^+)^{\lambda_m -r} w_{m,r,+}(\ln R^+) \rangle \right\} = |X_1^+|^{ \lambda_m -
  r-k} g_{m,r,k,+}^{\mathfrak{t}}(\ln |X_1^+|).  
\end{equation}
Analogously, since  $\partial_{X_2^+}\ttv_{m,r,+}(X_1^+,0^ \pm) = -  (R^+)^{\lambda_m-r-1} 
\partial_\theta \ w_{m,r,+}(\ln R^+, \pi^ \pm)$, using the definition~\eqref{gnrqplusn} of
$g_{m,r,k,+}^{\mathfrak{n}}$, we have
\begin{equation}
\partial_{X_1^+}^{k-1} \langle \partial_{X_2^+} \ttv_{m,r,+} \rangle = |X_1^+|^{ \lambda_m -
  r-k} g_{m,r,k,+}^{\mathfrak{n}}(\ln |X_1^+|).  
\end{equation}
Next, we replace the $[\ttv_{m,r,+}] = [
(R^+)^{\lambda_m -r } w_{m,r,+}]$ with
its explicit expression~\eqref{Defanpplus}, taking into account that  $\mathcal{D}_{0}^{\mathfrak{t}}=\mathcal{D}_{0}^{\mathfrak{n}}=0$, we obtain 
\begin{equation}
\partial_{X_1^+}^k[ \ttv_{m,r,+} ] = (|X_1^+|)^{\lambda_m - k - r } \left(
  \sum_{j=0}^{r} \mathcal{D}_{r-j}^{\mathfrak{t}}
  g_{m,j,k+r-j,+}^{\mathfrak{t}}(\ln |X_1^+|) + \mathcal{D}_{r-j}^{\mathfrak{n}}
  g_{m,j,k+r-j,+}^{\mathfrak{n}}(\ln |X_1^+|) \right).
\end{equation} 
Similarly, substituting $[\partial_{X_2^+}\ttv_{m,r,+}] =
-  (R^+)^{\lambda_m-r-1} [
\partial_\theta \ w_{m,r,+}(\ln R^+, \pi)]$ for
its explicit expression~\eqref{Defbnpplus}, using the
relation~\eqref{relationMagique1} and the fact that
$\mathcal{N}_{1}^{\mathfrak{n}}$ and $\mathcal{N}_{1}^{\mathfrak{t}}$ vanish, we get
\begin{equation}
\partial_{X_1^+}^{k-1} [ \partial_{X_2^+} \ttv_{m,r,+} ] = (|X_1^+|)^{\lambda_m - k - r } \left(
  \sum_{j=0}^{r} \mathcal{N}_{r+1-j}^{\mathfrak{t}}
  g_{m,j,k+r-j,+}^{\mathfrak{t}}(\ln |X_1^+|) + \mathcal{N}_{r+1-j}^{\mathfrak{n}}
  g_{m,j,k+r-j,+}^{\mathfrak{n}}(\ln |X_1^+|) \right).
\end{equation}
Substituting the left-hand sides of the last four equalities for their
right-hand sides in~\eqref{formulePenible1},
summing the contribution~\eqref{formulePenible1} from $r=0$ to $p$, we end up with
\begin{multline}\label{LaplacienRmpmacro}
\Delta \left(
\chi_{\text{macro},+}\,\mathcal{U}_{m,p,\text{macro}}\right)  =
 R_{m,p,\text{macro}}(X_1^+,X_2^+)
\\ + \chi_-(X_1^+)\sum_{r=0}^{p} |X_1^+|^{\lambda_m-r}
\sum_{k=0}^r \left( g_{m, r-k,k,+}^{\mathfrak{t}}(\ln |X_1^+|)
 \mathcal{A}_{k}^{\mathfrak{t}} (X_2^+) + g_{m, r-k,k,+}^{\mathfrak{n}}(\ln |X_1^+|)
  \mathcal{A}_{k}^{\mathfrak{n}} (X_2^+) \right) 
\end{multline} 
where,
\begin{equation}
R_{m,p,\text{macro}} = \chi_-(X_1^+)\sum_{r=0}^p r_{m,r,p}(X_1^+, X_2^+) + (1 -\chi_-(X_1^+)) \sum_{r=0}^p \left\{  \Delta
  ( \chi_{\text{macro},+}(X_1^+,X_2^+)  \ttv_{m,r,+} ) \right\},
\end{equation} 
\begin{equation}
  \mathcal{A}_{k}^{\mathfrak{t}} (X_2^+) = 2 \delta_{k}^{\text{even}}
  (-1)^{\lfloor k/2 \rfloor} < g_k > + \sum_{\ell=0}^{k-1} \left(
  \delta_{\ell}^{\text{even}}(-1)^{\lfloor \ell/2 \rfloor} \frac{[g_\ell]}{2}
  \mathcal{D}_{k-\ell}^{\mathfrak{t}}  + 
  \delta_{\ell}^{\text{odd}}(-1)^{\lfloor \ell/2 \rfloor} \frac{[g_\ell]}{2}
  \mathcal{N}_{k-\ell+1}^{\mathfrak{t}}   \right),
\end{equation}
and
\begin{equation}
  \mathcal{A}_{k}^{\mathfrak{n}} (X_2^+) = 2 \delta_{k}^{\text{odd}}
  (-1)^{\lfloor k/2 \rfloor} < g_k > + \sum_{\ell=0}^{k-1} \left(
  \delta_{\ell}^{\text{ even}}(-1)^{\lfloor \ell/2 \rfloor} \frac{[g_\ell]}{2}
  \mathcal{D}_{k-\ell}^{\mathfrak{n}}  + 
  \delta_{\ell}^{\text{odd}}(-1)^{\lfloor \ell/2 \rfloor} \frac{[g_\ell]}{2}
  \mathcal{N}_{k-\ell+1}^{\mathfrak{n}}   \right).
\end{equation}
Comparing the previous two formulas with \eqref{DefFpt}-\eqref{DefFpn}, it is
recognized that
\begin{equation}\label{FormuleMagiqueMacro}
\mathcal{A}_{k}^{\mathfrak{t}}  =-\Delta W_{k}^{\mathfrak{t}}-
2 \partial_{X_1^+} W_{k-1}^{\mathfrak{t}}- W_{k-2}^{\mathfrak{t}}
\quad \quad \mbox{and} \quad \quad \mathcal{A}_{k}^{\mathfrak{n}}  =-\Delta W_{k}^{\mathfrak{n}}-
2 \partial_{X_1^+} W_{k-1}^{\mathfrak{n}}- W_{k-2}^{\mathfrak{n}}.
\end{equation} 
Applying the first point of Proposition~\ref{propositionTechniqueSobolevPoids}, we note that $R_{m,p,\text{macro}}$
belongs to $\mathfrak{V}_{\beta, \gamma}^0(\widehat{\Omega}^+)$  for
$\beta < \gamma - (\lambda_m -p) + \frac{3}{2}$, and therefore for
any $\beta < 2 - (\lambda_m -p)$. \\

\noindent The computation of $\Delta \left( \chi_-(X_1^+)\mathcal{U}_{m,p,\text{BL}}
  \right)$ (last term to evaluate in~\eqref{LaplaceAsymptoticBlock}) is much easier. We remind that $p_{m,r,+}$, defined in~\eqref{definitionPmr}, has the following expression:
\begin{equation*}
p_{m,r,+}(\ln |X_1^+|,\mX^+) =
\left( 
\sum_{k=0}^r \, \;
g_{m,r-k,k,+}^{\mathfrak{t}}(\ln |X_1^+|)\;
  W_k^{\mathfrak{t}} \left(\mX^+\right)
  +\sum_{k=1}^r \;
  g_{m,r-k,k,+}^{\mathfrak{n}}(\ln |X_1^+|) \;
  W_k^{\mathfrak{n}}\left(\mX^+
  \right) \right).
\end{equation*}
Then, using the definition~\eqref{gnrqplusn} of
$g_{m,r,k,+}^{\mathfrak{t}}$ we see that
\begin{multline}
\dsp \Delta \left\{ |X_1^+|^{\lambda_{m}-r} g_{m,r-k,k,+}^{\mathfrak{t}}(\ln |X_1^+|)\;
  W_k^{\mathfrak{t}}(\mX^+) \right\}  =  |X_1^+|^{\lambda_{m}-r}\,
g_{m,r-k,k,+}^{\mathfrak{t}}(\ln |X_1^+|) \, \Delta
W_k^{\mathfrak{t}}(\mX^+)  \\ \dsp + |X_1^+|^{\lambda_{m}-r -1}  \left( 2
  \,g_{m,
  r-k,k+1,+}^{\mathfrak{t}}(\ln |X_1^+|) \,  \partial_{X_1^+}
W_k^{\mathfrak{t}}(\mX^+)  \right) + \; |X_1^+|^{\lambda_{m}-r -2}  \left( g_{m,
  r-k,k+2,+}^{\mathfrak{t}}(\ln |X_1^+|) \, W_k^{\mathfrak{t}}(\mX^+)\right)  .
\end{multline}
The reader may verify that a similar formula holds for $\Delta \left\{
  |X_1^+|^{\lambda_{m}-r} g_{m,r-k,k,+}^{\mathfrak{n}}(\ln |X_1^+|)
\right\}$, replacing the subindex $\mathfrak{t}$ with $\mathfrak{n}$ . It follows that
\begin{multline}\label{LaplacienRmpBL}
\Delta \left( \chi_-(X_1^+)\mathcal{U}_{m,p,\text{BL}}
  \right)= \chi_-(X_1^+) \left( \sum_{r=0}^p |X_1^+|^{\lambda_m -r}  \left( \sum_{k=0}^r
  g_{m,r-k,k,+}^{\mathfrak{t}}(\ln |X_1^+|) \left( \Delta W_k^{\mathfrak{t}}
    + 2 \partial_{X_1^+} W_{k-1}^{\mathfrak{t}} +
    W_{k-2}^{\mathfrak{t}}\right)  \right) \right) \\
+ \chi_-(X_1^+) \left(\sum_{r=0}^p |X_1^+|^{\lambda_m -r}  \left( \sum_{k=0}^r
  g_{m,r-k,k,+}^{\mathfrak{n}}(\ln |X_1^+|) \left( \Delta W_k^{\mathfrak{n}}
    + 2 \partial_{X_1^+} W_{k-1}^{\mathfrak{n}} +
    W_{k-2}^{\mathfrak{n}}\right)  \right) \right) + R_{m,p,\text{BL}}(X_1^+,
X_2^+),
\end{multline}
where,
\begin{multline}
 R_{m,p,\text{BL}}(X_1^+,X_2^+) = \\\chi_-(X_1^+) |X_1^+|^{\lambda_m-p-1} \sum_{k=0}^{p+1} \left(
   2 \,
   g_{m,p+1-k,k,+}^{\mathfrak{t}}(\ln |X_1^+|)\,\partial_{X_1^+} W_{k-1}^{\mathfrak{t}}\;+ \;2
   g_{m,p+1-k,k,+}^{\mathfrak{n}}(\ln |X_1^+|)  \, \partial_{X_1^+}  W_{k-1}^{\mathfrak{n}} \right)  \\
+\chi_-(X_1^+) \sum_{r=p+1}^{p+2} \sum_{k=0}^{r} |X_1^+|^{\lambda_m-r} \left(
  g_{m,r-k,k,+}^{\mathfrak{t}}(\ln |X_1^+|)  W_{k-2}^{\mathfrak{t}}
  +g_{m,r-k,k,+}^{\mathfrak{n}}(\ln |X_1^+|)  W_{k-2}^{\mathfrak{n}}  \right)\\ + 2 \nabla \chi_-(X_1^+) \cdot \nabla  \mathcal{U}_{m,p,\text{BL}}
+ \mathcal{U}_{m,p,BL}  \Delta (\chi_-(X_1^+)).
\end{multline}
The support of the terms of the fourth
line are included in a band $B = \text{supp}(\chi'_-(X_1^+)) \cap
\widehat{\Omega}^+$. Since, the function and the derivative of
$\mathcal{U}_{m,p,\text{macro}}$  are exponentially decaying in this band, it
is verified that these terms also belong to $\mathfrak{V}_{\beta,\gamma}^0(\widehat{\Omega}^+)$ for any
real numbers $\beta$ and $\gamma$. The terms of the second and third
lines are  of the form $|X_1^+|^{\lambda_m-p-1} w_1(\ln
|X_1^+|, X_1^+, X_2^+)$ or $|X_1^+|^{\lambda_m-p-2} w_2(\ln
|X_1^+|, X_1^+, X_2^+)$ where $w_1$ and $w_2$ are polynomial functions with
respect to $\ln |X_1^+|$: for $i=1$ or $i=2$,
$
w_i(t, X_1^+, X_2^+)  = \sum_{s=0}^q t^s g_{i,s}(X_1^+,X_2^+)$. The functions $g_{i,s}$ belong to $L^2(\mathcal{B})$ and are such
that $\| g_{i,s} e^{|X_2^+|/2} \|_{L^2(\mathcal{B})} < + \infty$ (they
are exponentially decaying).
Therefore, the third point of Proposition~\ref{propositionTechniqueSobolevPoids} ensures that
$R_{m,p,\text{BL}}$ belongs to $\mathfrak{V}_{\beta, \gamma}^0(\widehat{\Omega}^+)$  for
$\beta < \gamma - (\lambda_m -p) + \frac{3}{2}$, and consequently for
any $\beta < 2 - (\lambda_m -p)$.\\

\noindent Finally, collecting \eqref{LaplacienRmpmacro}-\eqref{FormuleMagiqueMacro} and
\eqref{LaplacienRmpBL}, we see that
\begin{multline*}
\Delta \mathcal{U}_{m,p,+} =  2 \nabla \chi(R^+) \cdot \nabla
(\chi_{\text{macro},+}\mathcal{U}_{m,p,\text{macro}}  + \chi_-(X_1^+) \mathcal{U}_{m,p,\text{BL}} ) +
\Delta \chi(R^+) (\chi_{\text{macro}}\mathcal{U}_{m,p,\text{macro}}  + \chi_-(X_1^+)
\mathcal{U}_{m,p,\text{BL}} )  \\
+ R_{m,p,\text{BL}}(X_1^+,X_2^+) + R_{m,p,\text{macro}}(X_1^+,X_2^+). 
\end{multline*} 
which belongs to $\mathfrak{V}_{\beta, \gamma}^0(\widehat{\Omega}^+)$  for
for any $\beta < 2 - (\lambda_m -p)$.

\subsection{Proof of Proposition~\ref{propositionAsymptoticNearField}} \label{AppendixNFPreuvePropAsymptotic}
The first part of the proof consists in showing that Problem~\eqref{ProblemeModeleChampProche} has a unique solution $u
\in \mathfrak{V}_{\beta', \gamma}^2{(\widehat{\Omega}^+)}$ for any $\beta' < 1/2$. As usual,
the proof of existence  relies on a variational existence result, which
is given by  Proposition~\ref{PropositionProblemeChampProche} in the
present case. We first verify that the hypothesis on $f$ and $g$ of
Proposition~\eqref{propositionAsymptoticNearField} guarantee that we
can apply  Proposition~\ref{PropositionProblemeChampProche}.
Since $\frac{1}{2} < \gamma < 1$, $\rho^{\gamma+1} \geq  1$, we have
$$
\left\| \sqrt{1 + (R^+)^2} f \right\|_{L^2 (\widehat{\Omega}^+)} \leq
C \| (1 + R^+) \rho^{\gamma+1} f  \|_{L^2 (\widehat{\Omega}^+)} .
$$
Therefore,  since $\beta >3$,$\frac{1}{2} < \gamma < 1$ and $\beta - \gamma - 1>1$, $\sqrt{1 +
  (R^+)^2} f$ belongs to $L^2 (\widehat{\Omega}^+)$: 
\begin{equation}\label{funplusRL2V}
\left\| \sqrt{1 + (R^+)^2} f \right\|_{L^2 (\widehat{\Omega}^+)} \leq
C \| (1 + R^+)^{\beta -\gamma -1} \rho^{\gamma+1} f  \| \leq C \| f\|_{\mathfrak{V}_{\beta, \gamma
  }^0(\widehat{\Omega}^+)}.
\end{equation}
Then,  $g \in H^{1/2}(\widehat{\Gamma}_\hole)$ being compactly
supported, Proposition~\ref{PropositionProblemeChampProche} ensures that Problem~\eqref{ProblemeModeleChampProche} has a unique solution $u
\in \mathfrak{V}(\widehat{\Omega}^+)$.  Next, we check that the
solution $u$ belongs to $\mathfrak{V}_{\beta'-1, \gamma-1
 }^1(\widehat{\Omega}^+)$ for any $\beta' < 1/2$: since $\gamma \in (\frac{1}{2},
1)$,
$\rho^{(\gamma-1) -1 + p + \delta_{p,0}} < 1$ for $p=0$ or $p=1$,
which means in particular that
$$
\left\| v \right\|_{\mathfrak{V}_{\beta'-1, \gamma-1
 }^1(\widehat{\Omega}^+) } \leq C \left( \left\| (1+R^+)^{\beta'
     -\gamma -1} v 
\right\|_{L^2(\widehat{\Omega}^+) }   + \left\| (1+R^+)^{\beta' -
    \gamma} \nabla v
\right\|_{L^2(\widehat{\Omega}^+) }   \right).
$$
For any $\beta' < 1/2$, the right-hand side of the previous equality is
controlled by $\| v \|_{\mathcal{V}(\widehat{\Omega}^+)}$, which
confirms that 
$u \in \mathfrak{V}_{\beta'-1, \gamma-1
 }^1(\widehat{\Omega}^+)$ for any $\beta' < 1/2$.  In fact, using a weighted
elliptic regularity argument (Proposition 4.1 in~\cite{Nazarov205}),
taking into account that $\widehat{\Gamma}_\hole$ is smooth
 and that $g \in
H^{1/2}(\widehat{\Gamma}_\hole)$ is compactly supported, we deduce
that $u$ actually belongs to $\mathfrak{V}_{\beta', \gamma}^2{(\widehat{\Omega}^+)}$ for
$\beta' < 1/2$. Moreover the following estimate holds: for any
$\beta'<1/2$, there exists a constant $C$ such that
\begin{equation}
\| u \|_{\mathfrak{V}_{\beta', \gamma}^2{(\widehat{\Omega}^+)}} <  C
\left( 
\| f \|_{\mathfrak{V}_{\beta, \gamma}^0{(\widehat{\Omega}^+)}} + \| g
\|_{H^{1/2}(\widehat{\Gamma}_\hole)} \right).
\end{equation}
It should be noted that the assumption on the smoothness of
$\widehat{\Gamma}_\hole$ is obviously required for having $v
\in \mathfrak{V}_{\beta', \gamma}^2{(\widehat{\Omega}^+)}$. 
Problem~\eqref{ProblemeModeleChampProche} being linear, the uniqueness
of $u \in \mathfrak{V}_{\beta', \gamma}^2{(\widehat{\Omega}^+)}$ turns out to be a direct consequence of Proposition~\ref{PropositionProblemeChampProche}.  \\

\noindent The second part of the proof consists in proving  the
asymptotic formula~\eqref{AsymptoticExpansionSolutionCP}.  It relies on a finite
number of applications of
Lemma~\ref{LemmeAsymptoticAppliNazarov} in order to exhibit the
asymptotic behavior of $u$ as $R^+$ tends to $+\infty$. We start with
applying Lemma~\ref{LemmeAsymptoticAppliNazarov} picking $\beta_1 =
\frac{2}{5}< \frac{1}{2}$ (the choice of $2/5$ is not important) and
$\beta_2 = \beta_1 +1 -\varepsilon$, $\varepsilon>0$. Then, for
$\varepsilon$ sufficiently small, $\beta^2$ is admissible and the sum in \eqref{sommeUx} is empty since
there is no integer value of $k$ such that
$$
 -\frac{2}{5} < 1 - \beta_2 < \frac{2}{3} k < 1 - \beta_1 = \frac{3}{5}.
$$  
As a result, choosing $\beta^0 = \beta^2-\varepsilon = \frac{7}{5} -2\varepsilon$, we deduce that $u$ is actually in
 $\mathfrak{V}_{\gamma, \frac{7}{5} -2\varepsilon}^2(\widehat{\Omega}^+)$ for
 any $\varepsilon>0$ and for $1-\gamma$ sufficiently small. In fact,
 applying a second time Lemma~
\ref{LemmeAsymptoticAppliNazarov}
 with $\beta^1=  \frac{7}{5}
 -2 \varepsilon$ and an admisible $\beta^2 < \beta^1 + 2\varepsilon$, we
 see that  $u$ is in
 $\mathfrak{V}_{\frac{7}{5}, \gamma}^2(\widehat{\Omega}^+)$ for
 $1-\gamma$ sufficiently small. Of course, the following estimate holds
\begin{equation}
\| u \|_{\mathfrak{V}_{7/5, \gamma}^2{(\widehat{\Omega}^+)}} <  C \left(
\| f \|_{\mathfrak{V}_{\beta, \gamma}^0{(\widehat{\Omega}^+)}} + \| g
\|_{H^{1/2}(\widehat{\Gamma}_\hole)} \right).
\end{equation}
Next, let $\beta^1 = \frac{7}{5}$ and $\beta^2 = \beta^1 + 1 -
\varepsilon = \frac{12}{5} -
\varepsilon$. For $\varepsilon$ small enough $\beta^2$ is admissible,
and we have
$$
-\frac{4}{3}<1 - \beta^2 < - \frac{2}{3} < 1 -\beta^2.
$$ 
Thanks to Lemma~\ref{LemmeAsymptoticAppliNazarov}, there exist a constant $\mathscr{L}_{-1}(u)$ and a remainder $\tilde{u}$
belonging to $\mathfrak{V}_{\beta_0,\gamma }^2(\widehat{\Omega}^+)$
for any $\beta^0<\beta^2$ (providing that $1-\gamma$ sufficiently small) such that
$$
u = \mathscr{L}_{-1}(u) \mathcal{U}_{-1,3,+} + \tilde{u} \quad
\mbox{with} \quad 
| \mathscr{L}_{-1}(u) | + \| \tilde{u} \|_{\mathfrak{V}_{\beta^0,
    \gamma}^2{(\widehat{\Omega}^+)}} <  C \left( 
\| f \|_{\mathfrak{V}_{\beta, \gamma}^0{(\widehat{\Omega}^+)}} + \| g
\|_{H^{1/2}(\widehat{\Gamma}_\hole)}  \right).
$$
Again, applying a second time
Lemma~\ref{LemmeAsymptoticAppliNazarov} with $\beta^1 = \frac{12}{5} -
\varepsilon$ and  $\beta^2 = \frac{12}{5} +
\varepsilon$ for $\varepsilon$ small enough, we see that $\tilde{u}
\in \mathfrak{V}_{12/5,\gamma}^2(\widehat{\Omega}^+)$ (the sum in
\eqref{sommeUx} is empty in this case), with $ \| \tilde{u}
\|_{\mathfrak{V}_{12/5, \gamma}^2{(\widehat{\Omega}^+)}} <  C \left( 
\| f \|_{\mathfrak{V}_{\beta, \gamma}^0{(\widehat{\Omega}^+)}} +\| g
\|_{H^{1/2}(\widehat{\Gamma}_\hole)} \right)$.\\

\noindent Now, we shall
pursue the procedure, working only on the remainder.  In fact, we
shall consider the slightly different remainder 
$$
u_1= u- \mathscr{L}_{-1}(u) \mathcal{U}_{-1,p,+},
$$ 
where the positive integer $p\geq 3$ is chosen sufficiently large in order to ensure that $\Delta
u_1$ belongs to $\mathfrak{V}_{\beta, \gamma}^0(\widehat{\Omega}^+)$ (Lemma~\ref{LemmeSuperDecroissance} guarantees the existence of such
an integer $p$ (take for instance the smallest integer $p$ such that $p> \beta - 2 - \frac{2}{3}$)). Moreover, since 
$$
u_1 = \tilde{u} +  \mathscr{L}_{-1}(u) (\mathcal{U}_{-1,3,+} - \mathcal{U}_{-1,p,+} ),
$$ 
$u_1$ belongs to $\mathfrak{V}_{12/5,\gamma}^2(\widehat{\Omega}^+)$
for $1 -\gamma$ sufficiently small. We naturally have
$$
\| u_1 \|_{\mathfrak{V}_{12/5, \gamma}^2{(\widehat{\Omega}^+)}} <  C \left(
\| f \|_{\mathfrak{V}_{\beta, \gamma}^0{(\widehat{\Omega}^+)}} +\| g
\|_{H^{1/2}(\widehat{\Gamma}_\hole)} \right).
$$
Note also that the restriction $g_1$ of $\partial_n u_1$ to
$\widehat{\Gamma}_\hole$ is compactly supported and belongs to
$H^{1/2}(\widehat{\Gamma}_\hole)$ since $\widehat{\Gamma}_\hole$ is smooth. Then, we shall
apply again Lemma~\ref{LemmeAsymptoticAppliNazarov} with
$\beta^1 = \frac{12}{5} $ and  $\beta^2 = \frac{17}{5}-\varepsilon $.
We remark that, for $\varepsilon$ sufficiently small, $\beta^2$ is
admissible and,  for $k=-2$, and $k=-3$, 
$$
1 - \beta^1 < \frac{2 k}{3}  < 1 - \beta^2.
$$
Then, Lemma~\ref{LemmeAsymptoticAppliNazarov} ensures that there are
two constants $\mathscr{L}_{-2}(u)$ and $\mathscr{L}_{-3}(u)$ such that
$$
u_1 = \mathscr{L}_{-2}(u) \mathcal{U}_{-2,3,+} +
\mathscr{L}_{-3}(u)\mathcal{U}_{-3,3,+} + \tilde{u}_1, \quad
\tilde{u}_1\in \mathfrak{V}_{\beta_0,\gamma }^2(\widehat{\Omega}^+) \mbox{
for any} \; \beta^0<\beta^2.
$$
 Moreover,
$$
| \mathscr{L}_{-2}(u) | + | \mathscr{L}_{-3}(u) | + \| \tilde{u}_2
\|_{\mathfrak{V}_{\beta^0, \gamma}^2{(\widehat{\Omega}^+)}} <  C
\left( 
\| f \|_{\mathfrak{V}_{\beta, \gamma}^0{(\widehat{\Omega}^+)}} + \| g
\|_{H^{1/2}(\widehat{\Gamma}_\hole)} \right).
$$
Applying a second time
Lemma~\ref{LemmeAsymptoticAppliNazarov} with $\beta^1 = \frac{17}{5} -
\varepsilon$ and  $\beta^2 = \frac{17}{5} +
 \varepsilon$ for $\varepsilon$ small enough, we see that
 $\tilde{u}_1$ is indeed in
 $\mathfrak{V}_{17/5,\gamma}^2(\widehat{\Omega}^+)$ and satisfies
\begin{equation*}
 \| \tilde{u}_2 \|_{\mathfrak{V}_{17/5,
     \gamma}^2{(\widehat{\Omega}^+)}} <  C \left(
\| f \|_{\mathfrak{V}_{\beta, \gamma}^0{(\widehat{\Omega}^+)}}+ \| g
\|_{H^{1/2}(\widehat{\Gamma}_\hole)} \right).
\end{equation*}

\noindent Then, we introduce the new remainder  
$$
u_2 = u_1 - \mathscr{L}_{-2}(u)\, \mathcal{U}_{-2,p,+} +  \mathscr{L}_{-3}(u)\, \mathcal{U}_{-3,p,+},
$$
where $p \geq 3$ is chosen sufficiently large in order to ensure that $\Delta
u_2$ belongs to $\mathfrak{V}_{\beta, \gamma}^0(\widehat{\Omega}^+)$. Of
course, $u_2$ belongs to
$\mathfrak{V}_{17/5,\gamma}^2(\widehat{\Omega}^+)$. Again, the restriction $g_2$ of $\partial_n u_2$ to
$\widehat{\Gamma}_\hole$ is compactly supported and belongs to $H^{1/2}(\widehat{\Gamma}_\hole)$ since $\widehat{\Gamma}_\hole$ is smooth.\\

\noindent Repeating the
procedure several times (applying
Lemma~\ref{LemmeAsymptoticAppliNazarov} at most $(2 \lfloor
\beta \rfloor+1)$ times) provides the asymptotic formula \eqref{AsymptoticExpansionSolutionCP} and the
associated estimate~\eqref{EstimationAsymptoticExpansionSolutionCP}.

\section{Technical results associated with the matching procedure:
  proof of Lemma~\ref{LemmeTechniqueChangementEchelle}}\label{sec:ProofLemmeTechniqueChangementEchelle}
We first prove~\eqref{FormuleMagiqueChangementEchelle}  by
induction. The base case $q = 0$ is obvious because $w_{m,0,\pm}$ is
independent of  $\ln R^+$ and $C_{m,0,0}=1$ (cf. Remark~\ref{RemCoeffCm2ki}).\\

\noindent Before we prove the inductive step, it is interesting to consider the cases $r=1$ and
$r=2$ separately. 
For $r=1$, we remark that $w_{m,1,+}(\theta^+, \ln R^+)$ does not
depend on $\ln R^+$. Indeed, $\lambda_m - 1 = \frac{2}{3} m -1$ does not
belong to $\Lambda$ ($\lambda_m -1$ is not a singular exponent). Then,
Lemma~\ref{LemmeTechniquesmq} ensures that $w_{m,1,+}(\theta^+, \ln
R^+)=w_{m,1,+}(\theta^+)$. It follows that $w_{m,1,+}(\theta^+, \ln
R^+ + \ln \delta) = w_{m,1,+}(\theta^+) {\;=\;} w_{m,1,+}(\theta^+,\ln
R^+)$. By contrast, For
$r=2$ and $m\neq 3$, $\lambda_m -2$ belongs to $\Lambda$. More precisely
$\lambda_{m} -2$ = $\lambda_{m-3}$. Then, we know that
$$
w_{m,2,+}(\theta^+, \ln R^+) = {w}_{m,2,0,+}(\theta^+) + 
w_{m,2,1,+}(\theta^+) \ln R^+,\; \mbox{with}\; w_{m,2,1,+}(\theta^+)= C \,w_{m-3,0,+}(\theta^+).
$$
and $C=\frac{4}{3\pi} \int_{0}^{\frac{3\pi}{2}} w_{m,2,1,+}(\theta^+)
,w_{m-3,0,+}(\theta^+)d\theta^+$. Therefore,
$$
w_{m,2,+}(\theta^+, \ln R^+ + \ln \delta) = w_{m,2,+}(\theta^+, \ln R^+) + C_{m,2,1,+}
w_{m-3,0,+}(\theta^+) \ln \delta,
$$
which proves formula~\eqref{FormuleMagiqueChangementEchelle} for
$r=2$. The previous argument may then be repeated inductively. The key
point to figure out
is that for any $q\in \N$ such that $q \neq m/3$, $\lambda_{m} -2q $ is always a singular
exponent, which  corresponds to $\lambda_{m-3q}$. \\

\noindent Let us start the inductive step.  Let $r \in \N$, and assume that
formula \eqref{FormuleMagiqueChangementEchelle} is valid up to order
$r -1$. Let us denote by $w_{LHS}^\delta$ and $w_{RHS}^\delta$
    the left and right-hand sides
of~\eqref{FormuleMagiqueChangementEchelle}, \ie,
\bds{
\begin{eqnarray*}
 w_{LHS}^\delta(\theta^+, \ln R^+) &=&  w_{m,r,+}^\delta(\theta^+, \ln R^+ + \ln \delta),\\
 w_{RHS}^\delta(\theta^+, \ln R^+) &=& \dsp \sum_{k=0}^{\lfloor r/2 \rfloor}
w_{m-3k,r-2k,+}(\theta^+, \ln R^+) \; \left( \sum_{i=0}^k C_{m,2k,i,\pm} \;
  (\ln \delta)^i \right).
\end{eqnarray*}
}
$w_{LHS}^\delta$ and $w_{RHS}^\delta$ have both a polynomial dependance
with respect to $\ln \delta$ and $\ln R^+$. 
 We shall see that $\ttv_{LHS}^\delta = (R^+)^{\lambda_m -r} w_{LHS}^\delta$
and $\ttv_{RHS}^\delta = (R^+)^{\lambda_m -r} w_{RHS}^\delta$ satisfy the
same problem and apply  Lemma~\ref{LemmeTechniquesmq} to conclude. 
First, we can rewrite $\ttv_{LHS}^\delta$ as
$$
\ttv_{LHS}^\delta(\theta^+, \ln R^+) = \delta^{r-\lambda_m}
\left( \varphi^\delta(R^+) \right)^{\lambda_m -r} \, w_{m,r,+}^\delta\left(\theta^+, \ln
\left(\varphi^\delta(R^+)\right) \right)\;\;
\mbox{with} \;\varphi^\delta(R^+) = R^+ \delta.
$$
Then, in view of the formula $ \left(R^+ \partial_{R^+}\right)\{ g(\varphi^\delta(R^+)) \}= \varphi^\delta(R^+) g'(\varphi^\delta(R^+))$ %
and reminding that the function
 $\varphi^{\lambda_m -r} w_{m,r,+}(\theta^+, \ln \varphi)$ is
harmonic in
$\mathcal{K}^{+,1}$ and $\mathcal{K}^{+,2}$, we have
\begin{equation*}
\Delta \ttv_{RHS}^\delta = \Delta \ttv_{LHS}^\delta = 0 \quad \mbox{in} \quad 
\mathcal{K}^{+,1} \cup \mathcal{K}^{+,2}. 
\end{equation*} 
 Then, we evaluate separately the jump values of $w_{LHS}^\delta$ and
 $w_{RHS}^\delta$. 
$$
[w_{LHS}^\delta] = \sum_{p=0}^{r-1} \mathcal{D}_{r-p}^{\mathfrak{t}}
g_{m,p,r-p,+}^{\mathfrak{t}} (\ln R^+ + \ln \delta) + \sum_{p=0}^{r-1}
\mathcal{D}_{r-p}^{\mathfrak{n}} g_{m,p,r-p,+}^{\mathfrak{n}} (\ln R^+ + \ln \delta)
$$
But, it is verified that
$$
(R^+)^{\lambda_n -i- j } g_{n,i,j,+}^{\mathfrak{t}}(\ln R^+ + \ln \delta)  =
(-1)^j \frac{\partial^j}{\partial (R^+)^j} \left( (R^+)^{\lambda_n -i} \langle
w_{n,i,+}(\pi, \ln R^+ + \ln \delta) \rangle \right).
$$
Then, for $p \leq r-1$, using the induction hypothesis and the fact
that $\lambda_m - p = \lambda_{m-3q} - (p-2q)$, we have
\begin{equation}\label{gtChangementEchelle}
g_{n,p,r-p,+}^{\mathfrak{t}}(\ln R^+ + \ln \delta)  = \sum_{q=0}^{\lfloor p/2 \rfloor}  \sum_{i=0}^q (\ln \delta)^i \, C_{m,2q,i,+} \,
g_{m-3q,p-2q,r-p,+}^{\mathfrak{t}}(\ln R^+),
\end{equation} 
and, analogously, 
\begin{equation}\label{gnChangementEchelle}
g_{n,p,r-p,+}^{\mathfrak{n}}(\ln R^+ + \ln \delta)  =
\sum_{q=0}^{\lfloor p/2 \rfloor} \sum_{i=0}^q (\ln \delta)^i \,
C_{m,2q,i,+} \,
g_{m-3q,p-2q,r-p,+}^{\mathfrak{n}}(\ln R^+) \quad (p \leq r-1).
\end{equation}
Therefore,
\begin{equation}
[w_{LHS}^\delta] =  \sum_{p=0}^{r-1} \sum_{q=0}^{\lfloor p/2 \rfloor} \sum_{i=0}^q (\ln \delta)^i \, C_{m,2q,i,+}
\left( \mathcal{D}_{r-p}^{\mathfrak{t}} \,
g_{m-3q,p-2q,r-p,+}^{\mathfrak{t}}(\ln R^+)  \;+ \;
\mathcal{D}_{r-p}^{\mathfrak{n}} \,
g_{m-3q,p-2q,r-p,+}^{\mathfrak{n}}(\ln R^+) \right). 
\end{equation}
On the other hand,
\begin{multline*}
[w_{RHS}^\delta] = \sum_{q=0}^{\lfloor r/2 \rfloor}  \sum_{i=0}^q (\ln
\delta)^i\, C_{m,2q,i,+}\, [w_{m-3q ,
  r-2p,+}]  \\
= \sum_{q=0}^{\lfloor r/2 \rfloor} \sum_{i=0}^q \sum_{p=0}^{r - 2q -1}  (\ln
\delta)^i\, C_{m,2q,i,+}
\left( \mathcal{D}_{r -2 q- p}^{\mathfrak{t}} g_{m-3q,p,r-2q-p,+}^{\mathfrak{t}}(\ln R^+)
+\mathcal{D}_{r -2 q- p}^{\mathfrak{n}} g_{m-3q,p,r-2q -p,+}^{\mathfrak{n}}(\ln R^+)
\right) 
\end{multline*}
Then,  using the change of index $p \leftarrow p+2q$, and
interchanging the sums over $p$ and $q$, we see that $[w_{RHS}] =
[w_{LHS}]$. Similar arguments yield $[\partial_{\theta^+} w_{RHS}] =
[\partial_{\theta^+} w_{LHS}]$. Finally
$$
\Delta (\ttv_{LHS} - \ttv_{RHS}) = 0, \quad  [\partial_{\theta^+} (w_{LHS}
-w_{RHS}) ] =0 ,\quad \mbox{and}  \quad [ w_{LHS}
-w_{RHS} ] =0.
$$
\noindent To conclude we consider separately the case $r$ odd or $r =
\frac{2m}{3}$ from the case and $r$ even and $r \neq \frac{2m}{3}$.
If $r$ is odd or $r = \frac{2m}{3}$, $\lambda_m -r$ is not a
singular exponent. Then, Lemma~\ref{LemmeTechniquesmq} ensures that
$\ttv_{LHS}^\delta  = \ttv_{RHS}^\delta$.  If $r$ is even and $r \neq \frac{2m}{3}$, then Lemma~\ref{LemmeTechniquesmq}
guarantees that 
$\ttv_{LHS}^\delta - \ttv_{RHS}^\delta$ is proportional to
$w_{m-3r/2,0,+} $. In other words, writing $$w_{LHS}^\delta(\theta^+, \ln
R^+) =\left(
\sum_{i=0}^{\lfloor r/2 \rfloor}  w_{LHS,i}^\delta(\theta^+) \, (\ln R^+)^i\right)
\;\mbox{ and } w_{RHS}^\delta(\theta^+, \ln R^+) =
\left( \sum_{i=0}^{\lfloor r/2 \rfloor}  w_{RHS,i}^\delta(\theta^+)\, (\ln
  R)^i\right),$$
we know that  $w_{LHS,i}^\delta=w_{RHS,i}^\delta$ for $i\neq0$, and $w_{LHS,0}^\delta=w_{RHS,0}^\delta + C(\delta) w_{m-r/2,0,+}$. We
 shall see that $C(\delta)$ vanishes. Indeed,
\bds{
\begin{multline}
 \int_{0}^{\frac{3 \pi}{2} } w_{LHS,0}^\delta(\theta^+)  \,
 w_{m -3r/2,0,+}(\theta^+) d \theta^+ = \sum_{i=0}^{\lfloor r/2 \rfloor } (\ln
 \delta)^i \int_{0}^{\frac{3\pi}{2}}  w_{m, r,i}(\theta^+) w_{m -3r/2,0}(\theta^+) 
d\theta^+\\ = \frac{3\pi}{4} \sum_{i=0}^{ r/2  }  C_{m,r,i,+} (\ln
 \delta)^i.
\end{multline}
}
On the other hand, using the orthogonality
condition~\eqref{ConditionOrthogonalitePlus} for  $k< r/2$, $k\neq
m/3$, the fact that $C_{m, 2k,i,\pm}=0$ for any integer $i$
such that $0 \leq i \leq k$ if $k=m/3$ (See Remark~\ref{RemCoeffCm2ki}), we
see that 
\begin{multline}
 \hspace{-0.4cm}\int_{0}^{\frac{3 \pi}{2} } w_{RHS}^\delta(\theta^+) \,  w_{m -3r/2,0,+}(\theta^+) d \theta^+ \hspace{-0.1cm}=\hspace{-0.1cm}
 \sum_{k=0}^{\lfloor r/2 \rfloor} \sum_{i=0}^{k} C_{m,2k,i,+} (\ln \delta)^i
 \int_{0}^{\frac{3 \pi}{2} }\hspace{-0.2cm} w_{m -3k,r-2k,0,+}(\theta^+) w_{m-3r/2, 0,+}(\theta^+)  d \theta^+ \\
=  \sum_{i=0}^{\lfloor r/2\rfloor} C_{m,r,i,+} (\ln \delta)^i
 \int_{0}^{\frac{3 \pi}{2} } |w_{m -3r/2,0,+}(\theta^+)|^2  d
 \theta^+ = \frac{3\pi}{4} \sum_{i=0}^{r/2}   C_{m,r,i,+} (\ln \delta)^i. 
\end{multline}
As a consequence $w_{LHS,0}^\delta = w_{RHS,0}^\delta$ and the proof of
formula~\eqref{FormuleMagiqueChangementEchelle} is complete.\\

\noindent Finally, Formula~\eqref{FormuleMagiqueChangementEchelleBL} then results from
\eqref{gtChangementEchelle}-\eqref{gnChangementEchelle} (which finally
hold for any $(n,p,r) \in \N^3$), and
\eqref{definitionPmr}:
\bds{
\begin{align*}
& p_{m,r,+}( \ln  (\delta|X_1^+|), \mX^+)  =   \sum_{p=0}^r \left(
  g_{m,r-p,p,+}^{\mathfrak{t}}(\ln  (\delta|X_1^+|)) W_p^\mathfrak{t}
  (\mX^+) + g_{m,r-p,p,+}^{\mathfrak{n}}(\ln  (\delta|X_1^+|))
  W_p^{\mathfrak{n}}(\mX^+)  \right)\\
& \quad =  \sum_{p=0}^r \sum_{q=0}^{\lfloor (r-p)/2\rfloor} \sum_{i=0}^q (\ln
\delta)^i C_{m,2q,i,+}  
\left( g_{m-3q, r-p-2q,p,+}^{\mathfrak{t}} W_p^\mathfrak{t}
  (\mX^+) +  g_{m-3q, r-p-2q,p,+}^{\mathfrak{n}} W_p^\mathfrak{n} 
  (\mX^+)\right) \\
& \quad =  
\quad \sum_{q=0}^{\lfloor r/2\rfloor} \left(\sum_{i=0}^q (\ln
\delta)^i C_{m,2q,i,+}  \right) \left( \sum_{p=0}^{r-2q}g_{m-3q, r-p-2q,p,+}^{\mathfrak{t}} W_p^\mathfrak{t}
  (\mX^+) +  g_{m-3q, r-2q-p,p,+}^{\mathfrak{n}} W_p^\mathfrak{n}
  (\mX^+)\right) \\
& \quad =  \sum_{q=0}^{\lfloor r/2\rfloor} p_{m-3q, r-2q,+}(\ln |X_1^+|,
\mX^+) \left( \sum_{i=0}^q (\ln
\delta)^i C_{m,2q,i,+} \right).
\end{align*}
}

\bds{
\section{Technical results for the justification of the asymptotic
  expansion}
}

\subsection{Evaluation of the modeling error: proof of Lemma~\ref{LemmeErreurMod2}}\label{AppendixProofModellingError}
On the one hand, a direct computation shows that
\begin{equation}\label{ErreurModela}
\sum_{(n,q)\in D_{N_0}}
  \delta^{\frac{2}{3} n +q} \Delta  \Pi_{n,q}^\delta =\sum_{(n,q)\in
    D_{N_0}}  \delta^{\frac{2}{3}n+q-2}\left(\Delta_X \Pi_{n,q}^\delta +
    2 \partial_{x_1} \partial_{X_1}\Pi_{n,q}^\delta + \partial_{x_1}^2
  \Pi_{n,q-2}\right)(x_1, \frac{\mx}{\delta}) + \mathcal{R}_{N_0,BL}^\delta(\mx)
\end{equation}
where 
\begin{equation}\label{definitionRBLN0}
\mathcal{R}_{N_0,BL}^\delta(\mx) = \sum_{n=0}^{\lfloor \frac{3 N_0}{2}
  \rfloor } 2 \delta^{\frac{2n}{3} + \alpha_n -1 } \partial_{x_1} \partial_{X_1} \Pi_{n,\alpha_n   } + \sum_{n=0}^{\lfloor \frac{3 N_0}{2}
  \rfloor }  \sum_{q= \alpha_n +1}^{\alpha_n +2} \delta^{\frac{2n}{3} + q -2}  \partial_{x_1}^2
\Pi_{n,q-2   } \quad \alpha_n = \lfloor  N_0 - \frac{2n}{3} \rfloor
\end{equation}
On the other hand, writing a Taylor expansion of $u_{n,q}^\delta$ for $\pm x_2>0$,
\begin{equation}
u_{n,q}^\delta(x_1, x_2) = \sum_{p=0}^{\alpha_n -q} \frac{\partial^p
 u_{n,q}^\delta}{\partial x_2^p}(x_1,0^\pm) \frac{x_2^p}{p!} +\int_{0}^{x_2} \frac{\partial^{\alpha_n -q+1}
 u_{n,q}^\delta}{\partial t^{\alpha_n -q+1}}(x_1,t)
\frac{(x_2-t)^{\alpha_n -q}}{(\alpha_n -q)!} dt,
\end{equation} 
we see that
\begin{multline}\label{ErreurModelb}
\hspace{-0.3cm}\sum_{(n,q)\in D_{N_0}} \hspace{-0.2cm}
  \delta^{\frac{2}{3} n +q}   [\Delta, \chi(\frac{x_2}{\delta}) ]
  u_{n,q}^\delta(\mx) = \sum_{(n,q)\in D_{N_0}}
  \delta^{\frac{2}{3} n +q-2}  \sum_{\pm }\sum_{p=0}^q  \frac{\partial^p
  u_{n,q-p}^\delta}{\partial x_2^p}(x_1,0^\pm) \left( [\Delta_{\mX}, \chi_\pm(X_2) ]
    \frac{X_2^ p}{p!} \right) (\frac{\mx}{\delta}) \\+  \mathcal{R}_{N_0,\text{taylor}}^\delta(\mx)
\end{multline}
where
\begin{equation}
 \mathcal{R}_{N_0,\text{taylor}}^\delta(\mx) = \sum_{\pm} \sum_{(n,q)\in D_{N_0}}
 \delta^{\frac{2}{3} n +q}   [\Delta, \chi_\pm(\frac{x_2}{\delta}) ]
 \int_{0}^{x_2} \frac{\partial^{\alpha_n -q+1}
  u_{n,q}^\delta}{\partial t^{\alpha_n -q+1}}(x_1,t) \frac{(x_2-t)^{\alpha_n -q}}{(\alpha_n -q)!} dt.
\end{equation}
Subtracting equalities~\eqref{ErreurModela} and \eqref{ErreurModelb},
taking into account the periodic corrector equations~\eqref{PeriodicCorrectorEquations}, we obtain
\begin{equation}\label{DecompositionEmod2}
\mathcal{E}_{\mod, 2} =  -(1 -  \chi_+^\delta(\mx) -
  \chi_-^\delta(\mx))   1_{\Omega_{\mod}^\delta}  \left(\mathcal{R}_{N_0,\text{taylor}}^\delta(\mx)  + \mathcal{R}_{N_0,BL}^\delta(\mx)\right). 
\end{equation}

\noindent It remains to estimate $\mathcal{R}_{N_0,\text{taylor}}^\delta(\mx) $ and
$\mathcal{R}_{N_0,BL}^\delta(\mx)$ in $\Omega_{\mod}^\delta$. This should be done precisely
because the two terms are singular close to the two corners. For
$\mathcal{R}_{N_0,\text{taylor}}^\delta(\mx)$, we prove the following estimate:
\begin{lema}\label{LemmeErreurModelFF} For any $\varepsilon >0$ small
  enough, there are two positive constant $C>0$ and $\delta_0>0$  such that,
  for any $\delta < \delta_0$,
\begin{equation}
\left\| (1 -  \chi_+^\delta(\mx) -
  \chi_-^\delta(\mx))   1_{\Omega_{\mod}^\delta}\mathcal{R}_{N_0,\text{taylor}}^\delta(\mx) \right\|_{L^2(\Omega^\delta)} \leq C
\delta^{-3/2-\varepsilon} \left( \frac{\delta}{\eta(\delta)} \right)^{N_0}.
\end{equation}
\end{lema}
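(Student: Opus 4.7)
The plan is to combine three ingredients: a support analysis pinning down where the integrand is nonzero, an elementary pointwise estimate of the Taylor remainder by its highest-order derivative, and the sharp corner asymptotics of $u_{n,q}^\delta$ furnished by Proposition~\ref{PropExplicitMacro} together with Proposition~\ref{PropDefinitionsmq}.

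First I would identify the effective support. The commutator $[\Delta,\chi_\pm(x_2/\delta)]$ is a first-order operator supported in the strip $\{\delta\leq|x_2|\leq 2\delta\}$, while $(1-\chi_+^\delta-\chi_-^\delta)$ vanishes for $r^\pm\leq\eta(\delta)$. Hence each summand of $\mathcal{R}_{N_0,\text{taylor}}^\delta$ effectively lives on
\begin{equation*}
S_\pm \; = \; \{\mx\in\Omega^\delta \;:\; \delta\leq|x_2|\leq 2\delta,\ \eta(\delta)\leq r^\pm \leq L\},
\end{equation*}
and since $\delta/\eta(\delta)\to 0$ one has $r^\pm \sim |x_1\mp L|$ on $S_\pm$. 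Writing $[\Delta,\chi_\pm(x_2/\delta)]\,v = \delta^{-2}\chi_\pm''(x_2/\delta)\,v + 2\delta^{-1}\chi_\pm'(x_2/\delta)\,\partial_{x_2}v$ and bounding the Taylor integral and its $x_2$-derivative in the usual way yields, on $S_\pm$,
\begin{equation*}
\Big|[\Delta,\chi_\pm(x_2/\delta)]\int_0^{x_2}\!\partial_t^{\alpha_n-q+1}u_{n,q}^\delta(x_1,t)\,\tfrac{(x_2-t)^{\alpha_n-q}}{(\alpha_n-q)!}\,dt\Big|
\lesssim \delta^{\alpha_n-q-1}\,M_{n,q}^\pm(x_1),
\end{equation*}
where $M_{n,q}^\pm(x_1)=\sup_{|t|\leq 2\delta}|\partial_t^{\alpha_n-q+1}u_{n,q}^\delta(x_1,t)|$.

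The crux is the estimate of $M_{n,q}^\pm$. By Proposition~\ref{PropExplicitMacro}, $u_{n,q}^\delta$ is a finite linear combination of the singularity functions $s_{-k,q-p}^\pm$ with $1\leq k\leq n$ and $0\leq p\leq q$ (or, for $n=0$, of the $u_{0,q}$ of Proposition~\ref{PropDefinitions0q}). Proposition~\ref{PropDefinitionsmq} provides an asymptotic expansion of each such function near $\mx_O^\pm$ whose leading term is $(r^\pm)^{-\frac{2k}{3}-(q-p)}$ times a polynomial in $\ln r^\pm$, the dominant contribution coming from $k=n$, $p=0$. Differentiating this expansion $(\alpha_n-q+1)$ times in $x_2$ and absorbing the logarithmic factors into an arbitrarily small power $\varepsilon$ gives, uniformly for $|t|\leq 2\delta\leq\eta(\delta)/2$,
\begin{equation*}
M_{n,q}^\pm(x_1) \; \leq \; C\,|x_1\mp L|^{-\frac{2n}{3}-\alpha_n-1-\varepsilon}.
\end{equation*}

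Setting $\beta_n=\frac{2n}{3}+\alpha_n$ and multiplying by $\delta^{\frac{2n}{3}+q}$, the $(n,q)$-summand is bounded pointwise on $S_\pm$ by $\delta^{\beta_n-1}|x_1\mp L|^{-\beta_n-1-\varepsilon}$. The $L^2$ norm squared then reads
\begin{equation*}
\delta^{2\beta_n-2}\int_\delta^{2\delta}\!\!\!\int_{\eta(\delta)}^L\!\xi^{-2\beta_n-2-2\varepsilon}\,d\xi\,dx_2
\;\lesssim\;\delta^{2\beta_n-1}\,\eta(\delta)^{-2\beta_n-1-2\varepsilon},
\end{equation*}
so that taking the square root yields $\delta^{\beta_n-1/2}\eta(\delta)^{-\beta_n-1/2-\varepsilon}$. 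Writing this as $(\delta/\eta(\delta))^{N_0}\cdot\delta^{\beta_n-N_0-1/2}\eta(\delta)^{N_0-\beta_n-1/2-\varepsilon}$ and using $\beta_n\in(N_0-1,N_0]$ together with $\eta(\delta)\geq\delta$ for $\delta$ small, the residual factor is bounded by $\delta^{-3/2-\varepsilon'}$ for some $\varepsilon'$ arbitrarily close to $\varepsilon$; summing over the finite index set $D_{N_0}$ and over the two signs closes the proof. I expect the main obstacle to be step three, the pointwise control of $M_{n,q}^\pm$: \emph{a priori} Proposition~\ref{PropExplicitMacro} gives $u_{n,q}^\delta$ only in $V_{2,\beta}^2$, so turning this into an $L^\infty$ bound on a high-order $x_2$-derivative forces one to use the \emph{explicit} representation of $u_{n,q}^\delta$ as a combination of the $s_{-k,j}^\pm$, whose corner expansions are known to arbitrary order with $\log$-polynomial control.
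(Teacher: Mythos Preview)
Your proposal is correct and reaches the stated bound, but the route differs from the paper's. The paper splits the thin band $\{\delta\le |x_2|\le 2\delta\}$ into a middle piece $B_1^0$ (away from both corners) and two near-corner pieces $B_1^\pm$, and works \emph{entirely in weighted $L^2$}: it first shows
\[
\int_{B_1^\pm}\Big|[\Delta,\chi_\pm(\tfrac{x_2}{\delta})]\!\int_0^{x_2}\!\partial_t^{\alpha_n-q+1}u_{n,q}^\delta\,\tfrac{(x_2-t)^{\alpha_n-q}}{(\alpha_n-q)!}\,dt\Big|^2\!dx
\;\le\; C\,\delta^{2(\alpha_n-q)-3}\,\|\partial_{x_2}^{\alpha_n-q+1}u_{n,q}^\delta\|_{L^2(B_1^\pm)}^2,
\]
and then bounds the last factor by invoking the Kondrat'ev framework: from $u_{n,q}^\delta\in V_{2,\beta_{n,q}}^2(\Omega_{\mathrm T})$ with $\beta_{n,q}=1+\tfrac{2n}{3}+q+\varepsilon$, weighted elliptic regularity (Corollary~6.3.3 in~\cite{MR1469972}) gives $u_{n,q}^\delta\in V_{2,\beta_{n,q}+\ell}^{2+\ell}$ for every $\ell$, hence $\partial_{x_2}^{\alpha_n-q+1}u_{n,q}^\delta\in V_{2,\beta_{n,q}+\alpha_n-q-1}^{0}$, and the weight $(r^+)^{2(\frac{2n}{3}+\alpha_n+\varepsilon)}$ can be pulled out on $B_1^+$ because $r^+\gtrsim\eta(\delta)$ there.

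You instead extract a \emph{pointwise} bound $M_{n,q}^\pm(x_1)\le C|x_1\mp L|^{-\frac{2n}{3}-\alpha_n-1-\varepsilon}$ from the explicit corner expansions of the $s_{-k,j}^\pm$. This is legitimate, and your final arithmetic is consistent with the paper's, but note that the step you yourself flag is real: the explicit part of the expansion in Proposition~\ref{PropDefinitionsmq} differentiates cleanly, yet the remainder $r_{-m,q,k,\pm}^+$ is only known in $V_{2,\beta'}^2$, so controlling its $(\alpha_n-q+1)$-th $x_2$-derivative in $L^\infty$ still forces you through the same weighted elliptic regularity argument the paper uses, followed by a Sobolev embedding. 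In short, your approach is not more elementary---it trades the paper's clean $L^2$ bookkeeping for an $L^\infty$ detour that ultimately rests on the same Kondrat'ev machinery. The paper's organisation (regular middle band plus two weighted near-corner bands) is what makes the $V_{2,\beta}^\ell$ calculus apply directly without ever passing to pointwise estimates.
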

\begin{proof}
We first note that, for $\delta$ sufficiently small,  the functions
$1_{\Omega_{\mod}^\delta} [\Delta, \chi_\pm(\frac{x_2}{\delta}) ]
 \int_{0}^{\pm |x_2|} \frac{\partial^{\alpha_n -q+1}
  u_{n,q}^\delta}{\partial t^{\alpha_n -q+1}}(x_1,t)
\frac{(x_2-t)^{\alpha_n -q}}{(\alpha_n -q)!}$ are supported in the
union of the bands $B_1$ and $B_2$ where
$$
 B_1 =  (-L+  \eta(\delta)/2, L -\eta(\delta)/2  ) \times
( \delta,  2 \delta) \quad  B_2 =  (-L+  \eta(\delta)/2, L -\eta(\delta)/2  ) \times
( -2 \delta,   -\delta).
$$
In what follows, we shall obtain an estimate for $\left\| (1 -  \chi_+^\delta(\mx) -
  \chi_-^\delta(\mx))   1_{\Omega_{\mod}^\delta}\mathcal{R}_{N_0,\text{taylor}}^\delta(\mx) \right\|_{L^2(B_1)}$ but a strictly similar analysis can be carried out for
$B_2$. \\

\noindent In order to separate the singular (close to the corners) and the
regular (far from the corner) behaviors of the macroscopic
terms, we localize the error, namely
$$
\left\| (1 -  \chi_+^\delta(\mx) -
  \chi_-^\delta(\mx))
  1_{\Omega_{\mod}^\delta}\mathcal{R}_{N_0,\text{taylor}}^\delta(\mx)
\right\|_{L^2(B_1)} 
\leq  C \left( \left\| \mathcal{R}_{N_0,\text{taylor}}^\delta(\mx)
\right\|_{L^2(B_1^0) } +  \sum_{\pm} \left\| \mathcal{R}_{N_0,\text{taylor}}^\delta(\mx)
\right\|_{L^2(B_1^\pm) } \right),
$$
where 
$$
B_1^+ = (-L+  \eta(\delta)/2, -L/2 ) \times
( 0, 2 \delta), B_1^- = (L/2 , L -  \eta(\delta)/2) \times
( 0, 2 \delta), B_1^0 = (-L/2, L/2 ) \times
( 0, 2 \delta).
$$
We start with the analysis of $ \left\| \mathcal{R}_{N_0,\text{taylor}}^\delta(\mx)
\right\|_{L^2(B_1^0) }$, where the functions $u_{n,q}^\delta$ do not
blow up. By a direct computation, we see that there exists a constant
$C>0$ (independent of $u_{n,q}^\delta$) such that
\begin{equation}\label{LemmaAppendixEstimation1}
\int_{B_1^0}  \left| [\Delta, \chi_\pm(\frac{x_2}{\delta}) ]
 \int_{0}^{x_2} \frac{\partial^{\alpha_n -q+1}
  u_{n,q}^\delta}{\partial t^{\alpha_n -q+1}}(x_1,t)
\frac{(x_2-t)^{\alpha_n -q}}{(\alpha_n -q)!} \right|^2 dx_1 dx_2\leq
C 
  \delta^{2 \alpha_n -2q -3}   \| \partial_{x_2}^{\alpha_n -q+1} u_{n,q}^\delta \|_{L^2(B_1^0)}^2 . 
\end{equation}
But, a standard
elliptic regularity estimates (see \eg \cite{BrezisAnglais}) shows that $u_{n,q}^\delta$ is
smooth in $B_1^0$. Reminding that $u_{n,q}^\delta$ has a polynomial dependence
with respect to $\ln \delta$, it is verified that, for any $k\in \N$, for any $\varepsilon>0$, there exists a constant $C$ such that
$
\| u_{n,q}^\delta \|_{H^k(B_1^0)} \leq C \delta^{-\varepsilon}$.
Summing~\eqref{LemmaAppendixEstimation1} over $(n,q) \in D_{N_0}$,
using the fact that $\frac{2}{3} n - \lfloor \frac{2}{3} n \rfloor
>0$, we see that
\begin{equation}\label{estimationB0}
 \left\| \mathcal{R}_{N_0,\text{taylor}}^\delta(\mx)
\right\|_{L^2(B_1^0) } \leq    C \delta^{ N_0   -3/2 - \varepsilon}.
\end{equation}
 We turn to the estimation of  $\left\| \mathcal{R}_{N_0,\text{taylor}}^\delta(\mx)
\right\|_{L^2(B_1^+) }$ .We remind that
  the restriction $u_{n,q}^\delta$ to $\OmegaTop$
 belongs to
  $V_{2,\beta_{n,q}}^2(\OmegaTop)$ where
  $\beta_{n,q} = 1 + \frac{2n}{3} +q + \varepsilon$ ($\varepsilon>0$). Using a weighted elliptic
  regularity argument (see Corollary 6.3.3 in \cite{MR1469972}),  $u_{n,q}^\delta$ belongs to
  $V_{2,\beta_{n,q}+\ell}^{2+\ell}(\OmegaTop)$, for any $\ell \in \N$,
  and, as
  a direct consequence $\partial_{x_2}^\alpha
  u_{n,q}^\delta$ is in
  ${V}_{2,\beta_{n,q}-2+\alpha}^0(\OmegaTop)$.  The
  estimation~\eqref{LemmaAppendixEstimation1} (replacing $B_1^0$ with $B_1^+$) remains valid unless
  $\|\partial_{x_2}^{\alpha_n -q+2} u_{n,q}^\delta \|_{L^2(B_1^+)}$
  and  $\|\partial_{x_2}^{\alpha_n -q+1} u_{n,q}^\delta
  \|_{L^2(B_1^+)}$ are not uniformly bounded anymore. Nevertheless, since $
  \frac{2n}{3} + \varepsilon + \alpha_n>N_0,$
\begin{equation}\label{LemmaAppendixEstimation2}
\int_{B_1^+} |\partial_{x_2}^{\alpha_n -q+1} u_{n,q}^\delta|^2 d\mx =
\int_{B_1^+} |\partial_{x_2}^{\alpha_n -q+1} u_{n,q}^\delta|^2 (r^+)^{2(
  \frac{2n}{3} + \varepsilon + \alpha_n)} 
(r^+)^{-2(
  \frac{2n}{3} + \varepsilon + \alpha_n) } d\mx  \leq C  \eta(\delta)^{-2(
  \frac{2n}{3} + \varepsilon + \alpha_n) }.
\end{equation}
Introducing~\eqref{LemmaAppendixEstimation2} into
\eqref{LemmaAppendixEstimation1} and summing over $(n,q) \in D_{N_0}$
yields
\begin{equation}
 \left\| \mathcal{R}_{N_0,\text{taylor}}^\delta(\mx)
\right\|_{L^2(B_1^+) } \leq C \sum_{(n,q)\in D_{N_0}} \delta^{-3/2 -\varepsilon} 
  \left(\frac{\delta}{\eta(\delta)} \right)^{\frac{2n}{3} +
    \varepsilon + \alpha_n}.
\end{equation}
Since $\delta < \eta(\delta)$ and that $\frac{2n}{3} +
    \varepsilon + \alpha_n = N_0 +\frac{2n}{3}-\lfloor \frac{2n}{3}
    \rfloor >0$, we end up with
\begin{equation}\label{estimationBplus}
 \left\| \mathcal{R}_{N_0,\text{taylor}}^\delta(\mx)
\right\|_{L^2(B_1^+) } \leq C \delta^{   -3/2 - \varepsilon}  \left(\frac{\delta}{\eta(\delta)} \right)^{N_0 +
    \varepsilon}. 
\end{equation}
Naturally a similar estimation holds for $B_1^-$.
\noindent Collecting the results of~\eqref{estimationB0},
\eqref{estimationBplus} 
 gives the desired result.
\end{proof}
\noindent The analysis of the $\mathcal{R}_{N_0,BL}^\delta(\mx)$ leads to the
following lemma:
\begin{lema}
\label{LemmeErreurModelBL} For any $\varepsilon >0$ small
  enough, there are two positive constant $C>0$ and $\delta_0>0$  such that,
  for any $\delta < \delta_0$,
\begin{equation}
\left\| (1 -  \chi_+^\delta(\mx) -
  \chi_-^\delta(\mx))   \,1_{\Omega_{\mod}^\delta}
  \,\mathcal{R}_{N_0,BL}^\delta(\mx) \right\|_{L^2(\Omega^\delta)}
\leq C \, \left(
  \frac{\delta}{\eta(\delta)}\right)^{N_0+3/2} \,\delta^{-\varepsilon -2}.
\end{equation}
\end{lema}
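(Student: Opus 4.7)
The plan is to exploit the explicit tensorial representation of the boundary layer correctors together with the super-algebraic decay of the profile functions transverse to $\Gamma$ and the corner asymptotics of the macroscopic terms. First, substituting~\eqref{definitionPiq} (see Remark~\ref{RemPropFormeGeneraleTransmission}) into the definition~\eqref{definitionRBLN0} and carrying out the $x_1$-differentiations, $\mathcal{R}_{N_0,BL}^\delta$ is expressed as a finite linear combination of terms of the generic form
$$\delta^{\sigma}\, \partial_{x_1}^{m}\langle v_{n,k}^\delta\rangle_\Gamma(x_1)\, \mathcal{W}_{p,j}\!\left(\frac{\mx}{\delta}\right),$$
where $v_{n,k}^\delta \in \{u_{n,k}^\delta,\partial_{x_2}u_{n,k}^\delta\}$, the microscopic factor $\mathcal{W}_{p,j}$ is one of $W_p^{\mathfrak{t}}, W_p^{\mathfrak{n}}, \partial_{X_1}W_p^{\mathfrak{t}}, \partial_{X_1}W_p^{\mathfrak{n}}$ (all in $\mathcal{V}^+(\mathcal{B})$), the scaling exponent is $\sigma\in\{\tfrac{2n}{3}+\alpha_n-1,\,\tfrac{2n}{3}+\alpha_n\}$, and the total number $m+k+\mathbf{1}_{v=\partial_{x_2}u}$ of tangential derivatives hitting the macroscopic trace does not exceed $\alpha_n+2$.

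Next, for a single such term I would decouple the $x_1$- and $x_2$-variables via Fubini and the rescaling $X_2=x_2/\delta$. Since $\mathcal{W}_{p,j}\in\mathcal{V}^+(\mathcal{B})$ is $1$-periodic in $X_1$ and super-algebraically decaying in $X_2$, the fiber $X_1\mapsto\int_\R |\mathcal{W}_{p,j}(X_1,X_2)|^2\,dX_2$ is a bounded periodic function, and the $x_2$-integral produces the crucial $\sqrt{\delta}$ factor. This yields
$$\bigl\|\delta^{\sigma}\,\partial_{x_1}^m\langle v_{n,k}^\delta\rangle_\Gamma\,\mathcal{W}_{p,j}(\cdot/\delta)\bigr\|_{L^2(\Omega^\delta)} \leq C\,\delta^{\sigma+1/2}\,\bigl\|\partial_{x_1}^m\langle v_{n,k}^\delta\rangle_\Gamma\bigr\|_{L^2(J_\delta)},$$
where $J_\delta:=(-L+c\eta(\delta),L-c\eta(\delta))$ is the effective $x_1$-interval imposed by the cut-off $(1-\chi_+^\delta-\chi_-^\delta)\mathbf{1}_{\Omega_{\mod}^\delta}$.

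To bound the remaining macroscopic norm I would invoke Proposition~\ref{PropExplicitMacro} together with the singular expansions~\eqref{Asymptoticsmq}, \eqref{AsymptoticsmqLeftCorner}, \eqref{Asymptotics0q}, \eqref{Asymptotics0qLeftCorner}: each $v_{n,k}^\delta$ is dominated near $\mx_O^\pm$ by $(r^\pm)^{-\frac{2n}{3}-k-\mathbf{1}_{v=\partial_{x_2}u}}$ times a polynomial of bounded degree in $\ln r^\pm$, and these expansions can be differentiated term-by-term to produce a pointwise control on $\partial_{x_1}^m\langle v_{n,k}^\delta(x_1,0)\rangle_\Gamma$ with singularity exponent shifted by $m$. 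Integrating the square on $(\eta(\delta),L)$ and absorbing the logarithms into an arbitrary $\varepsilon>0$ gives
$$\bigl\|\partial_{x_1}^m\langle v_{n,k}^\delta\rangle_\Gamma\bigr\|_{L^2(J_\delta)} \leq C_\varepsilon\, \eta(\delta)^{-(\frac{2n}{3}+k+m+\mathbf{1}_{v=\partial_{x_2}u})+\frac{1}{2}-\varepsilon}.$$

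Finally, assembling the two preceding estimates for each generic term, making use of the structural identity $\sigma+(\tfrac{2n}{3}+k+m+\mathbf{1}_{v=\partial_{x_2}u})=\tfrac{4n}{3}+2\alpha_n+\mathcal{O}(1)$ and the lower bound $\tfrac{2n}{3}+\alpha_n\geq N_0-1$ (saturated for $n$ such that $\alpha_n=\lfloor N_0-2n/3\rfloor<N_0-2n/3$), and absorbing the remaining subleading factors into $\varepsilon$ via $\delta/\eta(\delta)<1$, every individual contribution is majorised by $C_\varepsilon\,\delta^{-2-\varepsilon}(\delta/\eta(\delta))^{N_0+3/2}$; summing over the finite index set then yields the claim. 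The main technical obstacle is the third step: rigorously justifying the term-by-term differentiation of the singular expansions—in particular the polynomial–logarithmic contributions generated inductively by Lemma~\ref{LemmeTechniquesmq}—and controlling the associated remainders $r_{-m,q,k,\pm}^\pm$ in a weighted $L^2$ sense compatible with integration on the truncated interval $(\eta(\delta),L)$, which requires a weighted elliptic regularity argument of Kondrat'ev type.
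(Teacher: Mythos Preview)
Your approach is sound and shares the paper's core structure (tensorial representation~\eqref{definitionPiq}, periodicity and transverse decay of the profile functions, singular behaviour of the macroscopic traces near the corners), but differs in two technical choices. First, the paper splits the support of $(1-\chi_+^\delta-\chi_-^\delta)1_{\Omega_{\mod}^\delta}$ into a middle strip $\Omega_{\mod,0}^\delta$, where the traces $\partial_{x_1}^p\langle u_{n,q-p}^\delta\rangle_\Gamma$ are smooth and bounded by $C\delta^{-\varepsilon}$, and two corner strips $\Omega_{\mod,\pm}^\delta$, treated separately; your single Fubini estimate on $J_\delta$ handles both regimes at once. Second, on the corner strips the paper bounds the trace factors in $L^\infty$ via the weighted embedding $V_{2,\gamma}^{3/2}(\Gamma)\hookrightarrow |x_1\mp L|^{1/2-\gamma}L^\infty$ (Lemma~6.1.2 of \cite{MR1469972}), paired with the $L^2$-norm of the rescaled profile obtained by counting periodicity cells, whereas you bound the trace in $L^2(J_\delta)$ directly from the explicit singular asymptotics. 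Your route is slightly more elementary and avoids the half-power loss inherent in the $L^\infty$ embedding, but it relies on the fiber $X_1\mapsto\int_\R|\mathcal{W}_{p,j}(X_1,X_2)|^2\,dX_2$ being bounded---not automatic from $\mathcal{W}_{p,j}\in\mathcal{V}^+(\mathcal{B})$ alone, though it does follow from interior elliptic regularity of the profile problems~\eqref{ProblemWpt},~\eqref{ProblemWpn} given the assumed smoothness of $\widehat{\Omega}_\hole$. Finally, the paper sidesteps your ``main technical obstacle'' (justifying term-by-term differentiation of the corner expansions) by invoking the weighted regularity shift $u_{n,q}^\delta\in V_{2,\beta_{n,q}+\ell}^{2+\ell}(\OmegaTop)\cap V_{2,\beta_{n,q}+\ell}^{2+\ell}(\OmegaBottom)$ (Corollary~6.3.3 of \cite{MR1469972}) directly, which delivers the needed decay of the high-order tangential derivatives of the traces without unpacking the explicit asymptotics of Propositions~\ref{PropDefinitionsmq} and~\ref{PropDefinitions0q}.
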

\begin{proof}
As in the proof of Lemma~\ref{LemmeErreurModelFF}, we shall decompose
the error into three terms, which corresponds to evaluate the error in
three distinct regions of  $\Omega_{\mod}^\delta $ (where
$\Pi_{n,q}^\delta$ are either regular or singular):
\begin{equation*}
\left\| (1 -  \chi_+^\delta(\mx) -
  \chi_-^\delta(\mx))   \,1_{\Omega_{\mod}^\delta}
  \,\mathcal{R}_{N_0,BL}^\delta(\mx) \right\|_{L^2(\Omega^\delta)}
\leq \|
\mathcal{R}_{N_0,BL}^\delta(\mx)\|_{L^2(\Omega_{\mod,0})} +
\sum_{\pm}\| \mathcal{R}_{N_0,BL}^\delta(\mx)\|_{L^2(\Omega_{\mod,\pm}^\delta)}
\end{equation*}
where
\begin{equation*}
 \Omega_{\mod,0}^\delta = \left\{ (x_1,x_2) \in \Omega_{\mod}^\delta, \; -\frac{L}{2} < |x_1| <\frac{L}{2}   \right\}  \quad
\Omega_{\mod,\pm}^\delta = \left\{ (x_1,x_2) \in \Omega_{\mod}^\delta,
   \frac{\eta(\delta)}{2} < \pm (L-x_1) < \frac{L}{2}   \right\}. 
\end{equation*}
We start with the estimation of $\|
\mathcal{R}_{N_0,BL}^\delta(\mx)\|_{L^2(\Omega_{\mod,0}^\delta)}$. 
In view of the definition~\eqref{definitionPiq},
\begin{equation}\label{LemmaRModBL1}
\partial_{x_1} \partial_{X_1} \Pi_{n,q}^\delta(x_1, \frac{\mx}{\delta}) = \sum_{p=0}^q \partial_{x_1}^{p+1} \langle
u_{n,q-p}^\delta(x_1,0)
\rangle_{\Gamma} \partial_{X_1}W_{p}^{\mathfrak{t}}(\frac{\mx}{\delta}) +\sum_{p=1}^{q} 
\partial_{x_1}^{p} \langle \partial_{x_2}
u_{n,q-p}^\delta(x_1,0) \rangle_{\Gamma} \partial_{X_1} W_{p}^{\mathfrak{n}}(\frac{\mx}{\delta}).
\end{equation}
so that
\begin{multline}\label{LemmaRModBL2}
\| \partial_{x_1} \partial_{X_1}
\Pi_{n,q}^\delta \|_{{L^2(\Omega_{\mod,0}^\delta)} }^2 \leq C \big( \sum_{p=0}^q  \|\partial_{x_1}^{p+1} \langle
u_{n,q-p}^\delta(x_1,0)
\rangle_{\Gamma}  \|_{L^\infty(\Gamma_{\mod,0})}
\|\partial_{X_1}W_{p}^{\mathfrak{t}}(\frac{\mx}{\delta})\|_{L^2(\Omega_{\mod,0}^\delta)}
\\
+ \sum_{p=1}^{q} \| \partial_{x_1}^{p} \langle \partial_{x_2}
u_{n,q-p}^\delta(x_1,0) \rangle_{\Gamma}\|_{L^\infty(\Gamma_{\mod,0})} \| \partial_{X_1} W_{p}^{\mathfrak{n}}(\frac{\mx}{\delta})\|_{L^2(\Omega_{\mod,0}^\delta)}\big).
\end{multline}
where $\Gamma_{\mod,0} = \{ (x_1,x_2) \in \Gamma, |x_1| <\frac{L}{2} \}$.
In $\Omega_{\mod,0}^\delta$,  the function $u_{n,q}^\delta$ and its traces over
$ \partial \Omega_{\mod,0}^\delta \cap \Gamma$ are
smooth (they have a polynomial dependance with respect to $\ln \delta$). Consequently, for any $k\in \N$, for any $\varepsilon>0$,
there exists a constant $C>0$ such that  
\begin{equation}\label{LemmaRModBL3}
\|\partial_{x_1}^{p+1} \langle
u_{n,q-p}^\delta(x_1,0)
\rangle_{\Gamma}  \|_{L^\infty(\Gamma_{\mod,0})}
\leq C \delta^{-\varepsilon} \quad \mbox{and} \quad 
 \| \partial_{x_1}^{p} \langle \partial_{x_2}
u_{n,q-p}^\delta(x_1,0) \rangle_{\Gamma}\|_{L^\infty(\Gamma_{\mod,0})}  \leq C
\delta^{-\varepsilon}.
\end{equation}
Then, using the periodicity of the boundary layer terms, we obtain
\begin{equation}\label{LemmaRModBL4}
\int_{\Omega_{\mod,0} }  | \partial_{X_1}
W_{p}^{\mathfrak{n}}(\frac{\mx}{\delta}) |^2 d\mx  \leq \sum_{\ell
  =0}^{\lceil \frac{2L-2}{\delta} \rceil} \delta^2 
\| \partial_{X_1}
W_{p}^{\mathfrak{n}} \|^2_{L^2(\mathcal{B})}  \leq C \delta,
\end{equation}
Similarly,
\begin{equation}\label{LemmaRModBL5}
\int_{\Omega_{\mod,0} }  |
W_{p}^{\mathfrak{t}}(\frac{\mx}{\delta}) |^2 d\mx  \leq \delta^2 \sum_{\ell
  =0}^{\lceil \frac{2L-2}{\delta} \rceil} \| 
W_{p}^{\mathfrak{n}} \|^2_{L^2(\mathcal{B})}   \leq C \delta .
\end{equation}
Collecting the results of
\eqref{LemmaRModBL1}-\eqref{LemmaRModBL2}-\eqref{LemmaRModBL3}-\eqref{LemmaRModBL4}-\eqref{LemmaRModBL5}, we obtain
\begin{equation*}
\| \partial_{x_1} \partial_{X_1}
\Pi_{n,q}^\delta \|_{{L^2(\Omega_{\mod,0})} } \leq C \delta^{1/2-\varepsilon}
\end{equation*}
A similar analysis leads to 
\begin{equation*}
\| \partial_{x_1}^2 
\Pi_{n,q}^\delta \|_{{L^2(\Omega_{\mod,0})} } \leq C \delta^{1/2-\varepsilon}.
\end{equation*}
Then, summing over $n$ and $q$ in~\eqref{definitionRBLN0} gives
\begin{equation}\label{LemmaRModEst1}
\|
\mathcal{R}_{N_0,BL}^\delta(\mx)\|_{L^2(\Omega_{\mod,0})} \leq C
\delta^{N_0 -1/2 - \varepsilon}.
\end{equation}
We can now evaluate $\|
\mathcal{R}_{N_0,BL}^\delta(\mx)\|_{L^2(\Omega_{\mod,+}^\delta)}$ (the
evaluation of $\|
\mathcal{R}_{N_0,BL}^\delta(\mx)\|_{L^2(\Omega_{\mod,-}^\delta)}$ being
similar). Here again, the inequality \eqref{LemmaRModBL1} (replacing $\Gamma_{\mod,0}$ with $\Gamma_{\mod,+}  =\left\{ (x,_1,x_2) \in
  \Gamma, \eta(\delta) < x_1 < \frac{L}{2} \right\}$ and $\Omega_{\mod,0}^\delta$ with $\Omega_{\mod,+}^\delta$) is valid, but the
norm of the tangential traces of the macroscopic fields are not
uniformly bounded anymore. However, for any positive integer $\alpha>0$, $\partial_{x_1}^{\alpha} \partial_{x_2}
u_{n,q}^\delta$ and  $\partial_{x_1}^{\alpha+1} 
u_{n,q}^\delta$ belonging to both $V_{\beta_{n,q}+\alpha+1}^2(\OmegaTop)$ and
$V_{\beta_{n,q}+\alpha+1}^2(\OmegaBottom)$ ($\beta_{n,q} = 1
+\frac{2n}{3} + q + \varepsilon$). Consequently
$\partial_{x_1}^{\alpha} \langle \partial_{x_2}
u_{n,q}^\delta \rangle_{\Gamma}$ and  $\partial_{x_1}^{\alpha+1} \langle 
u_{n,q}^\delta \rangle_{\Gamma}$ belong to
$V_{\beta_{n,q}+\alpha+1}^{3/2}(\Gamma)$, which means in particular that
(see Lemma 6.1.2 in \cite{MR1469972}) that
$\| |x_1-L|^{\beta_{n,q}+\alpha+1/2}\partial_{x_1}^{\alpha} \langle \partial_{x_2}
u_{n,q}^\delta \rangle_{\Gamma} \|_{L^\infty(\Gamma_{\mod,+})}$ and $\| |x_1-L|^{\beta_{n,q}+\alpha+1/2}\partial_{x_1}^{\alpha} \langle \partial_{x_2}
u_{n,q}^\delta \rangle_{\Gamma} \|_{L^\infty(\Gamma_{\mod,+}) }$ are uniformly bounded.
As a consequence,
\begin{equation*}
\| \partial_{x_1}^{p} \langle \partial_{x_2}
u_{n,q-p}^\delta \rangle_{\Gamma} \|_{L^\infty(\Gamma_{\mod,+})} \leq
\eta(\delta)^{-(\frac{3}{2} + \varepsilon + \frac{2}{3} n + q)},\;
\mbox{and}\;
\| \partial_{x_1}^{p+1} \langle 
u_{n,q-p}^\delta \rangle_{\Gamma} \|_{L^\infty(\Gamma_{\mod,+})}   \leq
\eta(\delta)^{-(\frac{3}{2} + \varepsilon + \frac{2}{3} n + q)}.
\end{equation*}
Finally,
\begin{equation*}
\| \partial_{x_1} \partial_{X_1}
\Pi_{n,q}^\delta \|_{{L^2(\Omega_{\mod,+}^\delta)} } \leq C \delta^{1/2}
\eta(\delta)^{-(\frac{3}{2} + \varepsilon + \frac{2n}{3} +q)}.
\end{equation*}
Similarly,
\begin{equation*}
\| \partial_{x_1}^2 
\Pi_{n,q}^\delta \|_{{L^2(\Omega_{\mod,+}^\delta)} } \leq C \delta^{1/2}
\eta(\delta)^{-(\frac{5}{2} + \varepsilon + \frac{2n}{3} +q)}.
\end{equation*}
Then, summing over $n$ and $q$ in~\eqref{definitionRBLN0} gives
\begin{equation}\label{LemmaRModEst2}
\|
\mathcal{R}_{N_0,BL}^\delta(\mx)\|_{L^2(\Omega_{\mod,+}^\delta)} \leq C
 \left(
  \frac{\delta}{\eta(\delta)}\right)^{N_0+3/2} \,\delta^{-\varepsilon -2}.
\end{equation}
Collecting \eqref{LemmaRModEst1} and \eqref{LemmaRModEst2} finishes the proof.
\end{proof}
\noindent Finally, the estimate of the modeling error of Lemma~\ref{LemmeErreurMod2} results from~\eqref{DecompositionEmod2},
Lemma~\ref{LemmeErreurModelFF} and Lemma~\ref{LemmeErreurModelBL}. 
\subsection{Evaluation of the matching error}
\label{AppendixProofMacthingError}
\subsubsection{Proof of Lemma~\ref{LemmeMatchingResiduMacro}}
Since  $\mathcal{R}_{\macro, n,q,\delta} \in
V_{2,\beta}^2(\OmegaTop)\cap V_{2,\beta}^2(\OmegaBottom)$ for any
$\beta > 1 -\frac{2 (k(n,q,N_0)+1)}{3}$, using the fact that
$k(n,q,N_0) \geq \frac{3}{2} (N_0 -q) -n-\frac{1}{2}$  and the fact
that $r^+ \leq 2 \eta(\delta)$ on $\Omega_{\match}^+$, one can verify that, for any
$\varepsilon>0$, there is a positive constant $C>0$ such that
\begin{equation}
\|\mathcal{R}_{\macro, n,q,\delta}\|_{L^2(\Omega_{\match}^+)}
  \leq \eta(\delta)^{N_0 - \frac{2}{3} n - q -\varepsilon+\frac{4}{3}},
  \quad \|\nabla \mathcal{R}_{\macro, n,q,\delta}\|_{L^2(\Omega_{\match}^+)}
  \leq \eta(\delta)^{N_0  - \frac{2}{3} n - q -\varepsilon+\frac{1}{3}}. 
\end{equation}
As a consequence, summing over $(n,q)
\in D_{N_0}$ remarking that  $\delta < \eta(\delta)$ and $\nabla
\chi_{\macro,+}(\mx^+/\delta) \leq \delta^{-1}$ completes the proof.

\subsubsection{Proof of Lemma~\ref{LemmeMatchingResiduBL}}

The function $\mathcal{R}_{BL, N_0}^{\delta} $ is supported in the
domain $-L+\delta<x_1 <L- \delta$. We shall evaluate it in the domain
$$
\Omega_{\match,BL}^+ = \Omega_{\match}^+ \cap \{ (x_1,x_2) \in \R^2, x_1-L
< -\delta\}.
$$
that we separate into two parts $\Omega_{\match,BL,1}^+$ and
$\Omega_{\match,BL,2}^+$ defined as follows:
\begin{eqnarray*}
&& \Omega_{\match,BL,1}^+ = \Omega_{\match}^+ \cap \{ (x_1,x_2) \in
\R^2, -\frac{\eta(\delta)}{2} <x_1-L
< -\delta\} \\
&& \Omega_{\match,BL,2}^+ = \Omega_{\match}^+ \cap \{ (x_1,x_2) \in
\R^2,-2 \eta(\delta) <x_1-L
< -\frac{\eta(\delta)}{2}\} \\
\end{eqnarray*}
We shall now estimate $\mathcal{R}_{BL,n,q,\delta}$ (or, more
precisely we study each term of the form $\langle w_{n,q,j}(x_1,0) \rangle
W_{n,q,j}$) in $\Omega_{\match,BL,1}^+$ and $\Omega_{\match,BL,2}^+$. In
the domain $\Omega_{\match,BL,1}^+$, the profile functions $W_{n,q,j}$
are exponentially decaying (since $|x_2|> \frac{\sqrt{3}}{2}
\eta(\delta)$ on this domain). As a consequence, for any $N \in \N$ there exists a
constant $C>0$ such that 
\begin{equation}\label{ResiduBLOmega1}
\| \mathcal{R}_{BL, N_0}^{\delta} \|_{L^2(\Omega_{\match,BL,1}^+)} + \|
\nabla \mathcal{R}_{BL, N_0}^{\delta} \|_{L^2(\Omega_{\match,BL,1}^+)}
\leq C \eta(\delta)^N.
\end{equation}
Next, we remark that for any $\beta > 1 - \frac{2 (k(n,q,N_0) + 1)}{3}$, $\|
|x_1-L|^{\beta-\frac{1}{2} } \langle \bds{w_{n,q,j}}\rangle \|_{L^\infty(\Gamma_{\match}^+)} $ is bounded ($\langle w_{n,q,j}(x_1,0)
\rangle  \in V_{2,\beta+1}^{5/2}(\Gamma)$). As a consequence,for any $\varepsilon>0$,
$$
\| \langle w_{n,q,j}(x_1,0) \rangle \|_{L^\infty(\Gamma_{\match}^+)}  \leq C \eta(\delta)^{N_0 -\frac{1}{6} - q-
  \frac{2}{3} n - \varepsilon}.
$$
Then, using the previous estimation and  the periodicity of the profile function $W_{n,q,j} $, we have
\begin{multline}
\| \langle w_{n,q,j}(x_1,0) \rangle
W_{n,q,j}  \|_{L^2(\Omega_{\match,BL,2}^+)}\leq C \|
\langle w_{n,q,j}(x_1,0) \rangle \|_{L^\infty(\Gamma_{\match}^+)} \| W_{n,q,j} \|_{L^2(\Omega_{\match,BL,2}^+)} \\
\leq  C \delta^{\frac{1}{2}} \eta(\delta)^{N_0 + \frac{1}{3} -  q-
  \frac{2}{3} n - \varepsilon}.
\end{multline}
Analogously,  for any $\beta > 1 - \frac{2 k(n,q,N_0) + 1}{3}$, $\|
|x_1-L|^{\beta+\frac{1}{2} } \langle w_{n,q,j}(x_1,0) \rangle \|_{L^\infty(\Gamma_{\match}^+)} $ is bounded. As a consequence, for any $\varepsilon>0$,
 \begin{equation}
\| \nabla \left(  \langle w_{n,q,j}(x_1,0) \rangle 
W_{n,q,j} \right)  \|_{L^2(\Omega_{\match,BL,2}^+)} \leq \delta^{-\frac{1}{2}}\eta(\delta)^{N_0 + \frac{1}{3} -  q-
  \frac{2}{3} n - \varepsilon}.
\end{equation}
Summing up over $j \in (0, K)$ and $(n,q) \in D_{N_0}$, noting that
$\chi_-(\frac{x_1-L}{\delta})=1$ on $\Omega_{\match,BL,2}^+$, we obtain,
\begin{equation}\label{ResiduBLOmega2}
\| \mathcal{R}_{BL, N_0}^{\delta} \|_{L^2(\Omega_{\match,BL,2}^+)}
\leq \delta^{1/2} \eta(\delta)^{N_0 + \frac{1}{3} - \varepsilon} \quad
\quad \| \nabla \mathcal{R}_{BL, N_0}^{\delta} \|_{L^2(\Omega_{\match,BL,2}^+)}
\leq \delta^{-1/2} \eta(\delta)^{N_0 + \frac{1}{3}  - \varepsilon}.
\end{equation} 
\noindent Collecting \eqref{ResiduBLOmega1} and \eqref{ResiduBLOmega2}, and
carrying out an entirely similar analysis for $\| \mathcal{R}_{BL, N_0}^{\delta} \|_{L^2(\partial \Omega_{\match}^+
  \cap \Gamma^\delta)}$  leads to ~\eqref{ResiduBL}.

\subsubsection{Proof of Lemma~\ref{LemmeMatchingResiduNF}}
As previously, we first investigate separately $\mathcal{R}_{NF,
  n,q}$. We consider the scaled matching domains 
  $$\widehat{\Omega}_{\match}^\delta =  \{ \mX^+ \in
  \widehat{\Omega}^+,\frac{\eta(\delta)}{\delta} <R^+ <  2
  \frac{\eta(\delta)}{\delta} \}, \quad \hat{\Gamma}_\match^\delta = \{ \mX^+ \in
  \partial \widehat{\Omega}_\hole,\frac{\eta(\delta)}{\delta} <R^+ <  2
  \frac{\eta(\delta)}{\delta} \}.$$
Then, making the change of scale $\mX^+ = \mx^+/\delta$, we have
\begin{align*}
& \| \mathcal{R}_{NF,
  n,q} \|_{L^2(\Omega_{\match}^+)}^2  \leq   \delta^2 \int_{\hat{\Omega}_{\match}^\delta }
|  \mathcal{R}_{NF,
  n,q}|^2 d\mX^+  \\
 & \qquad \leq  \delta^2 \| (1+R^+)^{-(\beta -\gamma -1)} \rho^{-(\gamma-1)}
\|_{L^\infty(\widehat{\Omega}_{\match}^\delta )}^2 \| (1+R^+)^{\beta
  -\gamma-1} \rho^{\gamma-1}  \mathcal{R}_{NF,
  n,q} \|_{L^2(\widehat{\Omega}^+)}^2 \\
 & \qquad \leq   \delta^2 \| (1+R^+)^{-(\beta -\gamma -1)} \rho^{-(\gamma-1)}
\|_{L^\infty(\widehat{\Omega}_{\match}^\delta )}^2 \|  \mathcal{R}_{NF,
 n,q} \|_{\mathfrak{V}_{\beta,\gamma}^2(\hat{\Omega}^+)}^2.
\end{align*}
 If $\gamma-1<0$,
$
 \rho^{-(\gamma-1)}  \leq (1 + R^+)^{-(\gamma -1)}. 
$ 
As a result, for any  $\beta < 1 + \frac{2\lfloor
  \alpha_{n,q}\rfloor +1 }{3}$, $\gamma \in (\frac{1}{2},1)$, $\gamma -1$  sufficiently small, 
\begin{equation*}
\| \mathcal{R}_{NF,
  n,q} \|_{L^2(\Omega_{\match}^+)} \leq C \delta \left( \frac{\delta}{\eta(\delta)} \right)^{\beta-2}.
\end{equation*}
Taking $\beta = \frac{5}{3} +  (N_0 -q) -
\frac{2}{3} n  - 1 -\varepsilon$,  $\varepsilon>0$ ($\frac{2}{3} \lfloor \alpha_{n,q} \rfloor \geq (N_0 -q) -
\frac{2}{3} n -1$), we have
\begin{equation*}
\| \mathcal{R}_{NF,
  n,q} \|_{L^2(\Omega_{\match}^+)} \leq C \delta \left(
  \frac{\eta(\delta)}{\delta} \right)^{N_0 -q -\frac{2}{3} n - 2-\varepsilon}.
\end{equation*}
Summing over $(n,q) \in \mathcal{D}_{N_0}$, we obtain
\begin{equation*}
\| \mathcal{R}_{NF, N_0}^{\delta}\|_{L^2(\Omega_{\match}^+)} \leq C 
\sum_{(n,q) \in D_{N_0}} \delta^{\frac{2}{3} n+q } \delta \left( \frac{\delta}{\eta(\delta)} \right)^{N_0 -q -\frac{2}{3} n -2-
\varepsilon} \leq C \delta\left( \frac{\delta}{\eta(\delta)} \right)^{N_0 -2-
\varepsilon}.
\end{equation*}
The gradient of $\mathcal{R}_{NF, N_0}^{\delta}$  can be estimated in
the same way noticing that
\begin{align*}
& \| \nabla \mathcal{R}_{NF,
  n,q} \|_{L^2(\Omega_{\match}^+)}^2  \leq \int_{\widehat{\Omega}_{\match}^\delta }
|  \nabla_{\mX^+}\mathcal{R}_{NF,
  n,q}|^2 d\mX^+  \\
 & \qquad \leq  \| (1+R^+)^{-(\beta -\gamma )} \rho^{-(\gamma-1)}
\|_{L^\infty(\hat{\Omega}_{\match}^\delta )}^2 \| (1+R^+)^{\beta
  -\gamma} \rho^{\gamma-1}  \nabla_{\mX^+} \mathcal{R}_{NF,
  n,q} \|_{L^2(\widehat{\Omega}^+)}^2 \\
 & \qquad \leq   \| (1+R^+)^{-(\beta -\gamma )} \rho^{-(\gamma-1)}
\|_{L^\infty(\widehat{\Omega}_{\match}^\delta )}^2 \|  \mathcal{R}_{NF,
 n,q} \|_{\mathfrak{V}_{\beta,\gamma}^2(\widehat{\Omega}^+)}^2.
\end{align*}
As for the trace estimate, we note that $\| (R^+)^\gamma \mathcal{R}_{NF,
  n,q} \|_{ H^2(\widehat{\Omega}_{\match}^\delta)}$ is uniformly bounded  for any $
\gamma \leq \frac{2\lfloor
  \alpha_{n,q}\rfloor +1 }{3} -1$. Consequently, since $\hat{\Gamma}_\match^\delta$ is smooth, under the same
condition for $\gamma$, $\|\mathcal{R}_{NF,
  n,q} (R^+)^\gamma  \|_{L^\infty(\hat{\Gamma}_\match^\delta)}$ is uniformly bounded. Then
\begin{equation*}
\|\mathcal{R}_{NF,
  n,q}   \|_{L^2(\partial {\Omega_\match^+} \cap \Gamma^\delta)} \leq C \delta^{\frac{1}{2}} \|\mathcal{R}_{NF,
  n,q}   \|_{L^2(\hat{\Gamma}_\match^\delta)} \leq C
\delta^{\frac{1}{2}} \left(
  \frac{\delta}{\eta(\delta)}\right)^{\theta-\frac{1}{2}} \leq C
\delta^{\frac{1}{2}}    \left(
  \frac{\delta}{\eta(\delta)}\right)^{N_0 -q - \frac{2}{3} n - \frac{13}{6}-\varepsilon}.  
\end{equation*}   

\end{appendix}
\bibliographystyle{plain}
\bibliography{ResearchReport}
\end{document}